\documentclass[11pt]{article}

\usepackage[utf8]{inputenc}

\usepackage{amssymb}
\usepackage{makeidx}
\usepackage[english]{babel}
\usepackage{graphicx}
\usepackage{amsfonts,amsmath,amssymb,amsthm}
\usepackage{oldgerm}
\usepackage{mathrsfs}
\usepackage[active]{srcltx}
\usepackage{verbatim}
\usepackage{enumitem} %label option for enumerate \alph* \Alph* \roman* \Roman* or other... [label=$\bullet$], [label=\alph*]
\usepackage[toc,page]{appendix}
\usepackage{aliascnt,bbm}
\usepackage{bm}
\usepackage{array}
\usepackage[colorlinks = true, citecolor = blue]{hyperref}
\usepackage{xargs}
\usepackage{cellspace}
\usepackage{slashbox}

\usepackage[Symbolsmallscale]{upgreek}

\usepackage{accents}
\usepackage{dsfont}
\usepackage{aliascnt}
\usepackage{cleveref}
\makeatletter
\newtheorem{theorem}{Theorem}
\crefname{theorem}{theorem}{Theorems}
\Crefname{Theorem}{Theorem}{Theorems}

\newaliascnt{lemma}{theorem}
\newtheorem{lemma}[lemma]{Lemma}
\aliascntresetthe{lemma}
\crefname{lemma}{lemma}{lemmas}
\Crefname{Lemma}{Lemma}{Lemmas}

\newaliascnt{corollary}{theorem}
\newtheorem{corollary}[corollary]{Corollary}
\aliascntresetthe{corollary}
\crefname{corollary}{corollary}{corollaries}
\Crefname{Corollary}{Corollary}{Corollaries}

\newaliascnt{proposition}{theorem}
\newtheorem{proposition}[proposition]{Proposition}
\aliascntresetthe{proposition}
\crefname{proposition}{proposition}{propositions}
\Crefname{Proposition}{Proposition}{Propositions}

\newaliascnt{definition}{theorem}

\aliascntresetthe{definition}
\crefname{definition}{definition}{definitions}
\Crefname{Definition}{Definition}{Definitions}

\newaliascnt{remark}{theorem}

\aliascntresetthe{remark}
\crefname{remark}{remark}{remarks}
\Crefname{Remark}{Remark}{Remarks}

\crefname{example}{example}{examples}
\Crefname{Example}{Example}{Examples}

\crefname{figure}{figure}{figures}
\Crefname{Figure}{Figure}{Figures}

\newtheorem{assumption}{\textbf{H}\hspace{-3pt}}
\Crefname{assumption}{\textbf{H}\hspace{-3pt}}{\textbf{H}\hspace{-3pt}}
\crefname{assumption}{\textbf{H}}{\textbf{H}}

\Crefname{assumptionG}{\textbf{G}\hspace{-3pt}}{\textbf{G}\hspace{-3pt}}
\crefname{assumptionG}{\textbf{G}}{\textbf{G}}

\newtheorem{assumptionAR}{\textbf{AR}\hspace{-3pt}}
\Crefname{assumptionAR}{\textbf{AR}\hspace{-3pt}}{\textbf{AR}\hspace{-3pt}}
\crefname{assumptionAR}{\textbf{AR}}{\textbf{AR}}

\newenvironment{enumerateList}{ \begin{enumerate}[label=(\roman*),wide=0pt, labelindent=\parindent]}{\end{enumerate}}

\usepackage[a4paper]{geometry} % required for statsoc+pdflatex

\def\xstar{x^\star}

\def\trace{\operatorname{Tr}}
\newcommandx{\functionspace}[2][1=+]{\mathbb{F}_{#1}(#2)}

%% argmin, argmax

\newcommand{\estimateur}[1]{\hat{\pi}_n^N(#1)}

\newcommandx{\VarDeux}[3][3=]{\operatorname{Var}^{#3}_{#1}\left\{#2 \right\}}
\newcommand{\VarDeuxLigne}[2]{\operatorname{Var}_{#1}\{#2 \}}

\newcommand{\1}{\mathbbm{1}}

\newcommand{\B}{\mathcal{B}}

\newcommand{\LeftEqNo}{\let\veqno\@@leqno}

%%%% Floating Points Notation

\newcommand{\floor}[1]{\left\lfloor #1 \right\rfloor}
\newcommand{\floorLigne}[1]{\lfloor #1 \rfloor}
\newcommand{\ceil}[1]{\left\lceil #1 \right\rceil}

%voc

%order

% Sets
\newcommand{\N}{\ensuremath{\mathbb{N}}}

\newcommand{\PE}{\mathbb{E}}
\newcommand{\PP}{\mathbb{P}}

% Operands

\newcommand{\abs}[1]{\left\vert #1 \right\vert}

\newcommand{\tvnorm}[1]{\| #1 \|_{\mathrm{TV}}}
\newcommand{\tvnormEq}[1]{\left \| #1 \right \|_{\mathrm{TV}}}
\newcommandx{\Vnorm}[2][1=V]{\| #2 \|_{#1}}
\newcommandx{\VnormEq}[2][1=V]{\left\| #2 \right\|_{#1}}
\newcommandx{\norm}[2][1=]{\ifthenelse{\equal{#1}{}}{\left\Vert #2 \right\Vert}{\left\Vert #2 \right\Vert^{#1}}}

\newcommandx{\normLigne}[2][1=]{\ifthenelse{\equal{#1}{}}{\Vert #2 \Vert}{\Vert #2\Vert^{#1}}}

\newcommand{\parenthese}[1]{\left(#1 \right)}
\newcommand{\parentheseDeux}[1]{\left[ #1 \right]}

\newcommand{\defEns}[1]{\left\lbrace #1 \right\rbrace }

\newcommand{\defEnsG}[1]{\left\lbrace #1 \right. }
\newcommand{\defEnsD}[1]{\left. #1 \right  \rbrace }

\newcommand{\ps}[2]{\left\langle#1,#2 \right\rangle}
\newcommand{\eqdef}{=}

% Relations

% Proba
\newcommand{\proba}[1]{\mathbb{P}\left( #1 \right)}

\newcommandx\probaMarkovTilde[2][2=]
{\ifthenelse{\equal{#2}{}}{{\widetilde{\mathbb{P}}_{#1}}}{\widetilde{\mathbb{P}}_{#1}\left[ #2\right]}}
\newcommand{\probaMarkov}[2]{\mathbb{P}_{#1}\left[ #2\right]}
\newcommand{\expe}[1]{\PE \left[ #1 \right]}

\newcommand{\expeMarkov}[2]{\PE_{#1} \left[ #2 \right]}

\newcommand{\expeMarkovTilde}[2]{\widetilde{\PE}_{#1} \left[ #2 \right]}

% Landau notation (big O)
\newcommand{\bigO}{\ensuremath{\mathcal O}}

% Environments

%\renewenvironment{proof}[1][{\textit{Proof:}}]{\begin{trivlist} \item[\em{\hskip \labelsep #1}]}{\ensuremath{\qed} \end{trivlist}}

%\renewenvironment{proof}[1][{\textit{Proof:}}]{\begin{trivlist} \item[\em{\hskip \labelsep #1}]}{\ensuremath{\qed} \end{trivlist}}

\newcommand{\couplage}[2]{\Pi(#1,#2)}
\newcommand{\Pens}{\mathcal{P}}

\newcommand{\filtration}{\mathcal{F}}
\newcommand{\filtrationTilde}{\widetilde{\mathcal{F}}}

%fleche limite

%constante

%notation infini
\newcommand{\plusinfty}{+\infty}

%notation egale

%plusieurs ligne indice
%\sum\limits_{\substack{i=0 \\ i \neq i_0}}^{n}{A_

\newcounter{hypoconbis}
\newcounter{saveconbis}
\newcommand\debutH{\begin{list}
{\textbf{H\arabic{hypoconbis}}}{\usecounter{hypoconbis}}\setcounter{hypoconbis}{\value{saveconbis}}}
\newcommand\finH{\end{list}\setcounter{saveconbis}{\value{hypoconbis}}}

\def\ie{\textit{i.e.}}
\def\eqsp{\;}
\newcommand{\coint}[1]{\left[#1\right)}
\newcommand{\ocint}[1]{\left(#1\right]}
\newcommand{\ooint}[1]{\left(#1\right)}
\newcommand{\ccint}[1]{\left[#1\right]}
\renewcommand{\iint}[2]{\{#1,\ldots,#2\}}

\newcommandx{\weight}[2][2=n]{\omega_{#1,#2}^N}
\newcommandx{\weightD}[2][2=n]{\omega_{#1,#2}^0}

\newcommand{\boule}[2]{\operatorname{B}(#1,#2)}

\def\rmd{\mathrm{d}}
\newcommandx\sequence[3][2=,3=]
{\ifthenelse{\equal{#3}{}}{\ensuremath{\{ #1_{#2}\}}}{\ensuremath{\{ #1_{#2}, \eqsp #2 \in #3 \}}}}
\newcommandx{\sequencen}[2][2=n\in\N]{\ensuremath{\{ #1, \eqsp #2 \}}}
\newcommandx\sequenceDouble[4][3=,4=]
{\ifthenelse{\equal{#3}{}}{\ensuremath{\{ (#1_{#3},#2_{#3}) \}}}{\ensuremath{\{  (#1_{#3},#2_{#3}), \eqsp #3 \in #4 \}}}}
\newcommandx{\sequencenDouble}[3][3=n\in\N]{\ensuremath{\{ (#1_{n},#2_{n}), \eqsp #3 \}}}
\newcommand{\wrt}{w.r.t.}

\def\iid{i.i.d.}
\def\rme{\mathrm{e}}

\def\eg{e.g.}

\def\rset{\mathbb{R}}
\def\setProba{\mathcal{P}}

\def\nset{\mathbb{N}}
\def\tildem{\varpi}

\def\MSE{\operatorname{MSE}}
\def\Lip{\operatorname{Lip}}

%\newcommandx{\CPE}[3][1=]{{\mathbb E}_{#1}\left[\left. #2 \, \right| #3 \right]} %%%% esperance conditionnelle
\newcommandx{\CPE}[3][1=]{{\mathbb E}^{#3}_{#1}\left[#2 \right]} %%%% esperance conditionnelle
\newcommandx{\CPELigne}[3][1=]{{\mathbb E}^{#3}_{#1}[#2 ]} %%%% esperance conditionnelle
\newcommandx{\CPVar}[3][1=]{\mathrm{Var}^{#3}_{#1}\left\{ #2 \right\}}
\newcommand{\CPP}[3][]
{\ifthenelse{\equal{#1}{}}{{\mathbb P}\left(\left. #2 \, \right| #3 \right)}{{\mathbb P}_{#1}\left(\left. #2 \, \right | #3 \right)}}

\def\generator{\mathscr{A}}

\newcommandx{\osc}[2][1=]{\mathrm{osc}_{#1}(#2)}

\def\mcf{\mathcal{F}}
\def\mcg{\mathcal{G}}

\def\overlineY{\overline{Y}}

\newcommand{\chunk}[4][]%
{\ifthenelse{\equal{#1}{}}{\ensuremath{{#2}_{#3:#4}}}{\ensuremath{#2^#1}_{#3:#4}}
}

\def\Ybar{\bar{Y}}
\def\Id{\operatorname{Id}}
\def\IdM{\operatorname{I}_d}

\def\bfbeta{\pmb{\beta}}

\def\vrho{\varrho}

\def\VStar{V}

\def\boreleanA{\mathrm{A}}

\def\martInc{\Phi}
\def\martIncF{\Psi}

\def\Phibf{\mathbf{\Phi}}

\def\QKer{Q}
\def\RKer{R}
\def\PKer{P}
\def\gaStep{\gamma}
\def\GaStep{\Gamma}

\def\nFun{\mathsf{n}}
\def\nFunD{n}
\def\DnFunD{2^n}

\def\rateFun{\vartheta}
\def\FunMomentDEuler{\varrho}

%%%%%%%%%%a mettre
\def\eventA{\boreleanA}

\def\diagSet{\mathsf{D}}

\newcommandx\sequenceg[3][2=,3=]
{\ifthenelse{\equal{#3}{}}{\ensuremath{( #1_{#2})}}{\ensuremath{( #1_{#2})_{ #2 \geq #3}}}}

\def\XSDE{\mathsf{X}}

\def\YSDE{\mathsf{Y}}
\def\fune{\mathtt{e}}

\def\LambdarMSE{\Lambda}

\def\constD{\mathsf{D}}

\def\constE{\mathsf{E}}

%%%greek

\def\rated{\chi}
\def\transar{\tau}

\newcommand{\defEnsE}[2]{\left\lbrace \left. #1 \, \right| #2 \right\rbrace}
\newcommand{\expeMarkovTildeD}[3]{\widetilde{\PE}_{#1}^{#3} \left[ #2 \right]}

\def\PPtilde{\widetilde{\PP}}
\def\PEtilde{\widetilde{\PE}}
\def\transfrr{\mathrm{F}}
\def\Deltar{\diagSet}
\def\complem{\operatorname{c}}
\def\alphar{\alpha}
\def\tildex{\tilde{x}}

\def\tildey{\tilde{y}}
\def\ar{\mathrm{a}}
\def\Kr{\mathsf{K}}
\def\Xr{\mathrm{X}}
\def\Yr{\mathrm{Y}}
\def\Xrd{\mathit{X}}
\def\Yrd{\mathit{Y}}
\def\Zr{\mathrm{Z}}

\def\sigmaD{\sigma^2}
\def\sigmakD{\sigma^2_k}
\newcommandx{\phibfs}[1][1=]{\pmb{\varphi}_{\sigmaD_{#1}}}
\def\phibfvs{\pmb{\varphi}_{\varsigma^2}}
\def\funreg{h}
\def\kappar{\varpi}
\def\Pr{\mathsf{P}}
\def\Qr{\mathsf{Q}}
\def\eventA{\mathtt{A}}
\newcommandx\sequenceD[3][2=,3=]
{\ifthenelse{\equal{#3}{}}{\ensuremath{\{ #1\}}}{\ensuremath{\{ #1, \eqsp #2 \in #3 \}}}}
\def\borelSet{\B}
\def\Er{\mathrm{E}}
\def\er{\mathrm{e}}
\def\transp{\operatorname{T}}

\newcommand\fracmiddle[2]{\left. #1 \middle / #2 \right.}

\newcommand\sqrta[1]{(#1)^{1/2}}
\def\tmsu{\tilde{\mathsf{U}}}
\def\tT{\tilde{T}}

%%% Local Variables:
%%% mode: latex
%%% TeX-master: "main"
%%% End:

\usepackage{xr}
\externaldocument{main_supplement_arxiv}

\usepackage{authblk}

\author[1]{Alain Durmus}
\author[2]{\'Eric Moulines}

\title{High-dimensional Bayesian inference via  the Unadjusted Langevin Algorithm}
\affil[1]{CMLA - \'Ecole normale supérieure Paris-Saclay, CNRS, Université Paris-Saclay, 94235 Cachan, France.}
\affil[2]{Centre de Math\'ematiques Appliqu\'ees, UMR 7641, Ecole Polytechnique, France.}

\begin{document}
\footnotetext[1]{
Email: alain.durmus@cmla.ens-cachan.fr}
 \footnotetext[2]{eric.moulines@polytechnique.edu }
\maketitle

\smallskip

\noindent
{\it Keywords:\,} total variation distance, Langevin diffusion, Markov Chain Monte Carlo, Metropolis
Adjusted Langevin Algorithm, Rate of convergence
\smallskip

\noindent
{\it AMS subject classification (2010):\,}
primary 65C05, 60F05, 62L10; secondary 65C40, 60J05,93E35

\begin{abstract}:
  We consider in this paper the problem of sampling a
  high-dimensional probability distribution $\pi$ having a density
  \wrt\ the Lebesgue measure on $\mathbb{R}^d$, known up to a normalization
  constant $x \mapsto \pi(x)= \mathrm{e}^{-U(x)}/\int_{\rset^d} \mathrm{e}^{-U(y)} \rmd y$.
  Such problem naturally occurs for example in Bayesian inference and
  machine learning.  Under the assumption that $U$ is continuously
  differentiable, $\nabla U$ is globally Lipschitz and $U$ is strongly
  convex, we obtain non-asymptotic bounds for the convergence to
  stationarity in Wasserstein distance of order $2$ and total
  variation distance of the sampling method based on the Euler
  discretization of the Langevin stochastic differential equation, for
  both constant and decreasing step sizes. The dependence on the
  dimension of the state space of these bounds is explicit.  The convergence of an
  appropriately weighted empirical measure is also investigated and
  bounds for the mean square error and exponential deviation
  inequality are reported for functions which are measurable and bounded.
   An illustration to Bayesian inference for binary regression is presented to support our claims.
\end{abstract}

  \section{Introduction}
Interest for Bayesian inference methods for high-dimensional models  has recently received renewed attention often motivated by  machine learning
 applications. Rather than obtaining a point estimate, Bayesian methods attempt to sample the full
 posterior distribution over the parameters and possibly latent
 variables which provides a way to assert uncertainty in the
 model and prevents from overfitting \cite{neal:1992}, \cite{welling:the:2011}.

The problem can be formulated as follows.
We aim at sampling a posterior distribution $\pi$ on $\rset^d$, $d \geq 1$, with  density $x \mapsto
 \rme^{-U(x)}/\int_{\rset^d} \rme^{-U(y)} \rmd y$ \wrt~the Lebesgue
 measure, where $U$ is continuously differentiable. The Langevin stochastic differential equation
 associated with $\pi$ is defined by:
\begin{equation}
\label{eq:langevin_2}
\rmd Y_t = -\nabla U (Y_t) \rmd t + \sqrt{2} \rmd B_t \eqsp,
\end{equation}
where $(B_t)_{t\geq0}$ is a $d$-dimensional Brownian motion defined on
the filtered probability space $(\Omega,\mcf,(\mcf_t)_{t \geq
  0},\PP)$, satisfying the usual conditions.  Under mild technical
conditions, the Langevin diffusion admits $\pi$ as its unique
invariant distribution.
% Under the stated assumptions on $U$, $\pi$ satisfies a log-Sobolev
% inequalities (see
% \cite{bakry:cattiaux:guillin:2008,bakry:gentil:ledoux:2014})
% and the Markov semi-group associated with the Langevin diffusion
% $(Y_t)_{t \geq 0}$ converges exponentially fast to $\pi$ with a rate
% independent of the dimension $d$.

We study the sampling method based on the
Euler-Maruyama discretization of \eqref{eq:langevin_2}. This scheme
defines the (possibly) non-homogeneous, discrete-time Markov chain
$(X_k)_{k \geq 0}$
given by
\begin{equation}
\label{eq:euler-proposal-2}
X_{k+1}= X_k - \gamma_{k+1} \nabla U(X_k) + \sqrt{2 \gamma_{k+1}} Z_{k+1} \eqsp,
\end{equation}
where $(Z_k)_{k \geq 1}$ is an \iid\ sequence of $d$-dimensional standard Gaussian
random variables and $(\gamma_k)_{k \geq 1}$ is a sequence of
step sizes, which can either be held constant or be chosen to decrease
to $0$.  This algorithm has been first proposed by \cite{ermak:1975}
and \cite{parisi:1981} for molecular dynamics applications. Then it
has been popularized in machine learning by \cite{grenander:1983},
\cite{grenander:miller:1994} and computational statistics by
\cite{neal:1992} and \cite{roberts:tweedie:1996}. Following
\cite{roberts:tweedie:1996}, in the sequel this method will be referred
to as the \emph{unadjusted} Langevin algorithm (ULA). When the step sizes are held constant, under appropriate conditions on $U$, the homogeneous
Markov chain $(X_k)_{k \geq 0}$ has a unique stationary distribution
$\pi_{\gaStep}$, which in most  cases differs from
the distribution $\pi$. It has been proposed in
\cite{rossky:doll:friedman:1978} and \cite{roberts:tweedie:1996} to
use a Metropolis-Hastings step at each iteration to enforce
reversibility \wrt\ $\pi$. This algorithm is referred to as the
Metropolis adjusted Langevin algorithm (MALA).

The ULA algorithm has already been studied in depth for constant
step sizes in \cite{talay:tubaro:1991}, \cite{roberts:tweedie:1996} and
\cite{mattingly:stuart:higham:2002}. In particular, \cite[Theorem
4]{talay:tubaro:1991} gives an asymptotic expansion for the weak error
between $\pi$ and $\pi_{\gamma}$. When
$\lim_{k\to \plusinfty} \gamma_k = 0$ and $\sum_{k=1} ^{\infty}
\gamma_k = \infty$, weak convergence of the weighted empirical
distribution of the ULA algorithm has been established in
\cite{lamberton:pages:2002}, \cite{lamberton:pages:2003} and
\cite{lemaire:2005}.

Contrary to these reported works, we focus in this paper on
non-asymptotic results. These questions have been addressed previously in
\cite{dalalyan:2014} and \cite{durmus:moulines:2016}. 
\cite{dalalyan:2014} establishes  explicit bounds on the total
variation distance between the distribution of the $n$-th iterate of
the Markov chain defined in \eqref{eq:euler-proposal-2} and the target
distribution $\pi$ for fixed step size and a strongly convex potential
$U$. It is shown that if the initial distribution is an appropriately chosen Gaussian or if a warm-start is used, the number of iterations required to
get a sample $\epsilon$-close to $\pi$ in total variation is of order $\bigO(d^3 \varepsilon^{-2})$ and $\bigO(d \varepsilon^{-2})$ respectively.
The results of \cite{dalalyan:2014} were later sharpened in \cite{durmus:moulines:2016}, using different technical arguments. In particular, \cite{durmus:moulines:2016} shows that starting from a minimizer of $U$, the number of iterations to
get a sample $\varepsilon$-close from $\pi$ in total variation is of order $\bigO(d \varepsilon^{-2})$ and that therefore a warm start is not necessary. \cite{durmus:moulines:2016} also extends the results of \cite{dalalyan:2014} to non-convex potentials and non-increasing sequences of step sizes. It also establish some bounds between $\pi$ and $\pi_{\gamma}$ in $V$-norm which scale as $\gamma^{1/2}$ as $\gamma \to 0$.

In this work, we focus on the case where $U$ is strongly
convex. Compared to \cite{dalalyan:2014} and
\cite{durmus:moulines:2016}, our contributions are as follows.
\begin{enumerate}[label=$\bullet$]
\item We give explicit bounds between the distribution of the $n$-th
  iterate of the Markov chain defined in \eqref{eq:euler-proposal-2}
  and the target distribution $\pi$ in Wasserstein and total variation
  distance for fixed and non-increasing step sizes. The obtained
  bounds improve those reported in \cite{dalalyan:2014} and
  \cite{durmus:moulines:2016} for the total variation distance.
\item For fixed step sizes ($\gamma_k= \gamma$ for all $k \geq 0$), 
  we analyse both fixed horizon (the total
  computational budget is fixed and the step size is chosen to
  minimize the upper bound on the Wasserstein or total variation
  distance) and fixed precision (for a fixed target precision, the
  number of iterations and the step size are optimized simultaneously
  to meet this constraint). For a fixed precision
  $\varepsilon >0$, we show that the number of
  iterations $n \geq 0$, for ULA to get a sample  $\varepsilon$-close to  $\pi$ in Wasserstein distance / total variation of order
  $\bigO(d \varepsilon^{-2})$ or $\bigO(d \varepsilon^{-1})$ (up to logarithmic terms),
  depending on the smoothness of $U$.  We show that our result
  is optimal (up to logarithmic factors again) for $d$-dimensional Gaussian distribution. 
  We show  in the finite horizon setting that if the total number of iterations is $n$, we may choose the step size $\gamma= \gamma_n >0$ such that the Wasserstein distance between the
  distribution of the $n$-th iterate and $\pi$ is bounded by $\bigO(n^{-1/2})$ and $\bigO(n^{-1})$ depending on the smoothness of $U$.
\item
  When $\lim_{k\to \plusinfty}\gamma_k = 0$
 and $\sum_{k=1}^\infty \gamma_k = \infty$, we show
that the marginal distribution of the non-homogeneous Markov chain
$(X_k)_{k \geq 0}$ converges to the target distribution $\pi$ and
provide explicit convergence bounds  in the case $\gamma_k = \gamma_1 k^{-\alpha}$, $\alpha \in \ocint{0,1}$. The optimal rate of convergence derived from our bounds for the Wasserstein/total variation distance is obtained for $\alpha=1$ with $\gamma_1>0$ large enough. The convergence rates we report, improve those given in \cite{durmus:moulines:2016}. 
\item Quantitative estimates between $\pi$ and
  $\pi_{\gamma}$ are obtained in Wasserstein and total variation distance. The bound on the total variation distance between $\pi$ and $\pi_{\gamma}$ we derive improves the one reported in \cite{durmus:moulines:2016}. In particular, when $U$ is smooth enough, $\tvnorm{\pi-\pi_{\gamma}}$ scales as $\gamma$ as $\gamma \to 0$. 
\item Convergence of   weighted empirical measure is
  studied through bounds on the mean square error and exponential
  deviation of an estimator of $\int_{\rset^d} f(x) \rmd \pi(x)$, for
  functions $f : \rset^d\to \rset$ which are either Lipschitz or
  bounded and measurable.  When $f$ is Lipschitz, $U$
  is smooth enough and in the any-time setting, the
  optimal rate of convergence for the MSE, using non-increasing
  sequences $\gamma_k = \gamma_1/k^{\alpha}$, is obtained for
  $\alpha = 1/3$ (which coincides with the rate used in \cite{lamberton:pages:2002} to derive a central limit theorem).  If the step size is held
  constant, we get that the  number of iterations for the
  mean square error to be smaller than $\varepsilon >0$ is of order
  $\bigO(d \varepsilon^{-4})$ or $\bigO(d \varepsilon^{-3})$,
  depending on the smoothness of $U$. 
  The case where $f$ is bounded
  and measurable is an important result in Bayesian statistics to
  estimate credibility regions. For that purpose, we study the
  convergence of the Euler-Maruyama discretization towards its
  stationary distribution in total variation using a discrete time
  version of reflection coupling introduced in
  \cite{bubley:dyer:jerrum:1998}. For fixed step size, the conclusion
  on the sufficient number of iterations for the mean square error to
  be smaller than $\varepsilon >0$ is the same (up to logarithmic
  terms) as for Lipschitz functions.
\end{enumerate}
In this paper, a special attention is paid
to the dependency of the  obtained bounds on the dimension of the state space, since we are particularly interested in the applications of this method to sampling in high-dimension.

%This result is addressed neither in \cite{dalalyan:2014} nor \cite{durmus:moulines:2016}.

  The paper is organized as follows. In \Cref{sec:non-asympt-bounds},
  we study the convergence in the Wasserstein distance of order $2$ of
  the Euler discretization for constant and decreasing step sizes.  In
  \Cref{sec:quant-bounds-total}, we give non asymptotic bounds in
  total variation distance between the Euler discretization and $\pi$.
  This study is completed in \Cref{SEC:MSE_TV} by non-asymptotic
  bounds of convergence of the weighted empirical measure applied to
  functions which are either Lipschitz or bounded and measurable.
%  In \Cref{sec:conv-wass-dist_p} the convergence
% in the Wasserstein distance of order $p$, for $p \geq 2$ is
% investegated.
Our claims are supported in a Bayesian inference for a binary
regression model in
\Cref{sec:numerics}.  Finally in \Cref{sec:conv-total-vari_AR}, some
results of independent interest, used in the proofs, on functional
autoregressive models are gathered. Most proofs and derivations are
postponed and carried out in Appendices and a supplementary paper
\cite{durmus:moulines:2015:supplement}.

%%% Local Variables:
%%% mode: latex
%%% TeX-master: "main_arxiv"
%%% End:

 \def\transp{\operatorname{T}}

\subsection*{Notations and conventions}
Denote by $\mathcal{B}(\rset^d)$ the Borel $\sigma$-field of
$\rset^d$, $\functionspace[]{\rset^d}$ the set of all Borel measurable
functions on $\rset^d$ and for $f \in \functionspace[]{\rset^d}$,
$\Vnorm[\infty]{f}= \sup_{x \in \rset^d} \abs{f(x)}$.  For $\mu$ a probability measure
on $(\rset^d, \mathcal{B}(\rset^d))$ and $f \in
\functionspace[]{\rset^d}$ a $\mu$-integrable function, denote by
$\mu(f)$ the integral of $f$ \wrt~$\mu$.  We say that $\zeta$ is a
transference plan of $\mu$ and $\nu$ if it is a probability measure on
$(\rset^d \times \rset^d, \mathcal{B}(\rset^d \times \rset^d) )$ such
that for all measurable set $\boreleanA$ of $\rset^d$,
$\zeta(\boreleanA \times \rset^d) = \mu(\boreleanA)$ and
$\zeta(\rset^d \times \boreleanA) = \nu(\boreleanA)$. We denote by
$\couplage{\mu}{\nu}$ the set of transference plans of $\mu$ and
$\nu$. Furthermore, we say that a couple of $\rset^d$-random variables
$(X,Y)$ is a coupling of $\mu$ and $\nu$ if there exists $\zeta \in
\couplage{\mu}{\nu}$ such that $(X,Y)$ are distributed according to
$\zeta$.  For two probability measures $\mu$ and $\nu$, we define the
Wasserstein distance of order $p \geq 1$ as
\begin{equation*}
%\label{eq:definition_wasserstein}
W_p(\mu,\nu) \eqdef \left( \inf_{\zeta \in \couplage{\mu}{\nu}} \int_{\rset^d \times \rset^d} \norm[p]{x-y}\rmd \zeta (x,y)\right)^{1/p} \eqsp.
\end{equation*}
By \cite[Theorem 4.1]{VillaniTransport}, for all $\mu,\nu$ probability measures on $\rset^d$, there exists a transference plan $\zeta^\star \in \couplage{\mu}{\nu}$ such that for any coupling $(X,Y)$ distributed according to $\zeta^\star$, $W_p(\mu,\nu) = \PE[\norm[p]{X-Y}]^{1/p}$. This kind of transference plan (respectively coupling) will be called an optimal transference plan (respectively optimal coupling) associated with $W_p$.  We denote by $\setProba_p(\rset^d)$ the set of probability measures with finite $p$-moment:  for all $\mu \in \setProba_p(\rset^d)$, $\int_{\rset^d} \norm[p]{x} \rmd \mu( x) < \plusinfty$. By \cite[Theorem 6.16]{VillaniTransport}, $\setProba_p(\rset^d)$ equipped with the Wasserstein distance $W_p$ of order $p$  is a complete separable metric space.

Let $f : \rset^d \to \rset$ be a Lipschitz function, namely there exists $C \geq 0$ such that for all $x,y \in \rset^d$, $ \abs{f(x) - f(y)} \leq C \norm{x-y}$. Then we denote 
\begin{equation*}
\norm{f}_{\Lip} = \inf \{  \abs{f(x) - f(y)} \norm[-1]{x-y} \ | \ x,y \in \rset^d , x \not = y \} \eqsp.
\end{equation*}
The Monge-Kantorovich theorem (see \cite[Theorem 5.9]{VillaniTransport}) implies that for all $\mu,\nu$ probability measures on $\rset^d$,
\begin{equation*}
%\label{eq:duality_Monge_kanto}
W_1(\mu,\nu) = \sup \defEns{\int_{\rset^d} f(x) \rmd  \mu (x) - \int_{\rset^d} f(x) \rmd \nu ( x) \ | \ f: \rset^d \to \rset \ ; \   \norm{f}_{\Lip} \leq 1}\eqsp.
\end{equation*}
Denote by $\functionspace[b]{\rset^d}$ the set of all bounded Borel measurable functions on $\rset^d$. For $f \in \functionspace[b]{\rset^d}$ set $\osc{f} = \sup_{x,y \in \rset^d}\abs{f(x)-f(y)}$. 
For two probability measures $\mu$ and $\nu$ on $\rset^d$,  the total variation distance distance between $\mu$ and $\nu$ is defined by
$\tvnorm{\mu-\nu} = \sup_{\eventA \in \B(\rset^d)}\abs{\mu(\eventA) - \nu(\eventA)}$.
%\begin{equation*}
% %\label{eq:definition_TV}
% \tvnorm{\mu-\nu} = \sup_{\eventA \in \B(\rset^d)}\abs{\mu(\eventA) - \nu(\eventA)} = (1/2) \sup_{\substack{f \in \functionspace[b]{\rset^d} \\
% \Vnorm[\infty]{f} \leq 1}}\abs{\mu(f) - \nu(f)}\eqsp.
% \end{equation*}
By the Monge-Kantorovich theorem the total variation distance between $\mu$ and $\nu$ can be written on the form:
\begin{equation*}
  \tvnorm{\mu-\nu} = \inf_{\zeta \in \couplage{\mu}{\nu}} \int_{\rset^d \times \rset^d} \1_{\diagSet^{\operatorname{c}}}(x,y) \rmd \zeta (x,y) \eqsp,
\end{equation*}
where $\diagSet = \{(x,y)\in \rset^d \times \rset^d \, | x=y \}$.
%  By
% \cite[Theorem 4.1]{VillaniTransport}, for all $\mu,\nu$ probability
% measures on $\rset^d$, there exists a transference plan $\zeta^\star
% \in \couplage{\mu}{\nu}$ such that for any coupling $(X,Y)$
% distributed according to $\zeta^\star$, $\tvnorm{\mu-\nu} = \PP(X\not
% =Y)$. This kind of transference plan (respectively coupling) will be
% called an optimal transference plan (respectively optimal coupling)
% associated with the total variation distance.  
For all $x \in \rset^d$
and $M >0$, we denote by $\boule{x}{M}$, the ball centered at $x$ of
radius $M$. For a subset $\eventA \subset \rset^d$, denote by
$\eventA^{\operatorname{c}}$ the complementary of $\eventA$.  Let $n
\in \nset^*$ and $M$ be a $n \times n$-matrix, then denote by
$M^{\transp}$ the transpose of $M$ and $\norm{M}$ the operator norm
associated with $M$ defined by $\norm{M} = \sup_{\norm{x} = 1} \norm{M
  x}$. Define the Frobenius norm associated with $M$ by 
$\norm{M}_{\operatorname{F}}^2 = \trace(M^T M)$.  Let $n,m \in \nset^*$ and $F : \rset^n \to \rset^m$ be a twice
continuously differentiable function. Denote by $\nabla F$ and
$\nabla^2 F$ the Jacobian and the Hessian of $F$ respectively. Denote
also by $\vec{\Delta} F$ the vector Laplacian of $F$ defined by:
for all $x \in \rset^d$, $\vec{\Delta} F(x)$ is the vector of
$\rset^m$ such that for all $i\in \{1,\cdots, m\}$, the $i$-th
component of $\vec{\Delta}F(x)$ equals to $\sum_{j=1}^d (\partial^2
F_i /\partial x_j^2)(x)$. 
 In the sequel, we take the convention that
$\sum_{p}^n =0$ and $\prod_p ^n = 1$ for $n,p \in \nset$, $n <p$.

%%% Local Variables:
%%% mode: latex
%%% TeX-master: "main"
%%% End:

\section{Non-asymptotic bounds in Wasserstein distance of order $2$ for ULA}
\sectionmark{Non-asymptotic bounds in $W_2$ for ULA}
\label{sec:non-asympt-bounds}

Consider the following assumption on the potential $U$:
\begin{assumption}
\label{assum:regularity_2}
\label{assum:regularity}
The function $U$ is  continuously differentiable on $\rset^d$ and gradient Lipschitz: there exists $L \geq 0$ such that for all $x,y \in \rset^d$, $
\norm{\nabla U(x) - \nabla U(y)} \leq L \norm{x-y}$.
\end{assumption}
Under \Cref{assum:regularity_2}, for all $x \in \rset^d$
 by  \cite[Theorem~2.5, Theorem~2.9~Chapter 5]{karatzas:shreve:1991} there exists a unique strong
solution $(Y_t)_{t \geq 0}$ to \eqref{eq:langevin_2} with $Y_0 = x$.
Denote by $(P_t)_{t \geq 0}$ the semi-group associated with \eqref{eq:langevin_2}. It is well-known that $\pi$ is its (unique) invariant probability.
%By
%\cite[Theorem 2.1]{roberts:tweedie:1996}, under \Cref{assum:regularity_2}
%%(and even under the sole assumption that
%%$\ps{\nabla U(x)}{x} \leq a \norm[2]{x} + b$ for all $\norm{x} \geq R$ where $a,b < \infty$ and $R >0$)
%the diffusion is irreducible, aperiodic, strong Feller and all compact sets are small which implies, for any initial distribution $\mu_0$, that
%\begin{equation}
%\label{eq:ergodicity-langevin}
%\lim_{t \to \plusinfty} \norm{\mu_0 P_t - \pi}_{\TV} = 0 \eqsp.
%\end{equation}
To get geometric convergence of $(P_t)_{t \geq 0}$ to $\pi$ in Wasserstein distance of order $2$, we make the following additional assumption on the potential $U$.
\begin{assumption}
\label{assum:potentialU}
$U$ is strongly convex, \ie~there exists $m >0$ such that for all $x,y \in \rset^d$,
\begin{equation*}
 U(y) \geq U(x)+  \ps{ \nabla U(x)}{y-x} +  (m/2) \norm[2]{x-y} \eqsp.
\end{equation*}
\end{assumption}
Under  \Cref{assum:potentialU}, \cite[Theorem 2.1.8]{nesterov:2004} shows that $U$ has a unique minimizer $x^\star \in \rset^d$.
We briefly summarize some background material on the stability
  and the convergence in $W_2$ of the overdamped Langevin diffusion
  under \Cref{assum:regularity_2} and \Cref{assum:potentialU}. Most
of the statements in \Cref{theo:convergence-WZ-strongly-convex} are
known and are recalled here for ease of references; see \eg~\cite{chen:shao:1989}.
\begin{proposition}
\label{theo:convergence-WZ-strongly-convex}
%\label{theo:convergence-WZ-strongly-convexD}
  Assume \Cref{assum:regularity_2} and \Cref{assum:potentialU}.
\begin{enumerate}[label= (\roman*)]
\item
\label{item:moment_diffusion_2}
For all $t \geq 0$ and  $x \in \rset^d$,
\begin{equation*}
\int_{\rset^d} \norm[2]{y-x^{\star}} P_t(x,\rmd y)  \leq \norm[2]{x-x^\star} \rme^{-2 m t} + (d/m) (1-\rme^{-2 m t}) \eqsp.
\end{equation*}
\item \label{item:moment_diffusion_item}
The stationary distribution $\pi$  satisfies $\int_{\rset^d}  \norm[2]{x-x^\star}  \pi(\rmd x)  \leq d/m$.
\item
\label{item:1:theo:convergence-WZ-strongly-convexD}
For any $x, y \in \rset^d$ and  $t > 0$, $W_{2}(\delta_x P_t , \delta_y  P_t ) \leq \rme^{-mt} \norm{x-y}$.
\item
\label{item:2:theo:convergence-WZ-strongly-convexD}
For any $x\in \rset^d$ and  $t > 0$, $W_{2}(\delta_x P_t , \pi) \leq \rme^{-mt} \defEns{\norm{x-\xstar}+ (d/m)^{1/2}}$.
\end{enumerate}
\end{proposition}
\begin{proof}
The proof  is given  in the supplementary document \Cref{proof:theo:convergence-WZ-strongly-convex}.
\end{proof}
Note that the convergence rate in \Cref{theo:convergence-WZ-strongly-convex}-\ref{item:2:theo:convergence-WZ-strongly-convexD} does not depend on the dimension. Let $(\gamma_k)_{k \geq 1}$ be a sequence of positive and non-increasing step sizes and for $n,\ell \in \nset$, denote by
 \begin{equation}
 \label{eq:def_Gamma}
 \Gamma_{n,\ell} \eqdef \sum_{k=n}^\ell\gamma_k \eqsp, \qquad \Gamma_n = \Gamma_{1,n} \eqsp.
\end{equation}
For $\gamma >0$, consider the Markov kernel $R_\gamma$ given for all $\boreleanA \in \mathcal{B}(\rset^d)$
and $x \in \rset^d$ by
\begin{equation}
\label{eq:definition-Rgamma}
R_\gamma(x,\boreleanA) =
\int_\boreleanA (4\uppi \gamma)^{-d/2} \exp \parenthese{-(4 \gamma)^{-1}\norm[2]{y-x+ \gamma \nabla U(x)}} \rmd y
\eqsp.
\end{equation}
%Under \Cref{assum:regularity_2}, $R_\gamma$ is strongly Feller, irreducible and strongly aperiodic.
The process $(X_k)_{k \geq 0}$ given in \eqref{eq:euler-proposal-2} is an inhomogeneous  Markov chain with respect to the family of Markov kernels $(R_{\gamma_k})_{k \geq 1}$.
%This Markov chain is inhomogeneous if the step sizes $(\gamma_k)_{k \geq 1}$ are not constant and homogeneous otherwise.
For $\ell,n \in \nset^* $, $\ell \geq n$, define
\begin{equation}
\label{eq:iterate_kernel}
Q^{n,\ell}_\gamma = R_{\gamma_n} \cdots R_{\gamma_{\ell}}  \eqsp, \qquad Q^{n}_\gamma = Q^{1,n}_\gamma
\end{equation}
with the convention that for $n,\ell \in \nset$, $\ell < n$, $Q^{n,\ell}_\gamma$ is the identity operator.
\label{sec:bound_W2}
% The stability of the Euler discretization of a one-dimensional Langevin diffusion with
% constant step size has been studied in \cite[Section~3]{roberts:tweedie:1996};
% We generalize these results to multidimensional diffusions and decreasing step_sizes.

%%% Local Variables:
%%% mode: latex
%%% TeX-master: "main"
%%% End:

We first derive a Foster-Lyapunov drift condition for $Q^{n,\ell}_\gamma $,  $\ell,n \in \nset^*$, $\ell \geq n$. Set
\begin{equation}
\label{eq:definition-kappa}
\kappa = \frac{2 m L }{m+L} 
\end{equation}
where $m$ and $L$ are defined in \Cref{assum:regularity_2}
\begin{proposition}
\label{theo:kind_drift}
Assume \Cref{assum:regularity_2} and \Cref{assum:potentialU}.
\begin{enumerate}[label=(\roman*)]
\item
\label{item:kind_drift_1}
 Let $(\gamma_k)_{k \geq 1}$ be a non-increasing sequence with $\gamma_1 \leq 2/(m+L)$. Let $x^\star$ be the unique minimizer of $U$.
Then for all $x \in \rset^d$ and  $n,\ell \in \nset^*$,
 \begin{equation*}
  \int_{\rset^d} \norm[2]{y - x^\star}  Q^{n,\ell}_\gamma(x,\rmd y) \leq
 \FunMomentDEuler_{n,\ell}(x) \eqsp,
 \end{equation*}
where $\FunMomentDEuler_{n,\ell}(x)$ is given by
\begin{equation}
  \label{eq:FunMomentDEuler}
  \FunMomentDEuler_{n,\ell}(x)=  \prod_{k=n}^\ell (1-\kappa \gamma_k)  \norm[2]{x - x^\star}
+2d \kappa^{-1}\defEns{1-\kappa^{-1} \prod_{i=n}^{\ell}(1 - \kappa \gamma_i)}\eqsp,
\end{equation}
\item
\label{item:kind_drift_2}
 For any $\gamma \in \ocint{0,2/(m+L)}$, $R_\gamma$ has a unique stationary distribution $\pi_\gamma$ and
 \[
 \int_{\rset^d} \norm[2]{x-\xstar} \pi_{\gamma} (\rmd x) \leq 2 d \kappa^{-1} \eqsp.
 \]
\end{enumerate}
\end{proposition}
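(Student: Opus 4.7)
The argument reduces to a one-step drift inequality for $R_\gamma$. For $x \in \rset^d$ and $Y = x - \gamma \nabla U(x) + \sqrt{2\gamma}\,Z$ with $Z \sim \mathcal{N}(0,\IdM)$, the identities $\PE[Z]=0$ and $\PE\bracketC{\norm[2]{Z}} = d$ give $\PE\bracketC{\norm[2]{Y-\xstar}} = \norm[2]{x - \gamma\nabla U(x) - \xstar} + 2\gamma d$. Expanding the quadratic, using $\nabla U(\xstar)=0$, and applying \eqref{eq:convex_forte_contra1} to the pair $(x,\xstar)$ yields
\begin{equation*}
R_\gamma V(x) \leq (1 - \kappa\gamma)\,V(x) + \gamma\parentheseLigne{\gamma - 2/(m+L)}\, \norm[2]{\nabla U(x)} + 2\gamma d\eqsp,
\end{equation*}
where $V(x) = \norm[2]{x - \xstar}$. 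Since the sequence is nonincreasing with $\gamma_1 \leq 2/(m+L)$, every $\gamma_k$ makes the middle term nonpositive, so $R_{\gamma_k} V(x) \leq (1-\kappa\gamma_k)V(x) + 2\gamma_k d$ for every $k \geq 1$.

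For \ref{item:kind_drift_1}, I would iterate via the recursion $Q^{n,\ell}_\gamma = Q^{n,\ell-1}_\gamma R_{\gamma_\ell}$, proving by induction on $\ell - n$ that
\begin{equation*}
Q^{n,\ell}_\gamma V(x) \leq \prod_{k=n}^\ell (1-\kappa\gamma_k)\, V(x) + 2d\sum_{k=n}^\ell \gamma_k \prod_{j=k+1}^\ell (1-\kappa\gamma_j)\eqsp.
\end{equation*}
The telescoping identity $\kappa\gamma_k\prod_{j=k+1}^\ell(1-\kappa\gamma_j) = \prod_{j=k+1}^\ell(1-\kappa\gamma_j) - \prod_{j=k}^\ell(1-\kappa\gamma_j)$ collapses the sum to $\kappa^{-1}\bracketC{1-\prod_{j=n}^\ell(1-\kappa\gamma_j)}$, matching the closed form $\FunMomentDEuler_{n,\ell}(x)$ of \eqref{eq:FunMomentDEuler}.

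For \ref{item:kind_drift_2}, I would first show that $R_\gamma$ is a strict $W_2$-contraction on $\setProba_2(\rset^d)$ with ratio $(1-\kappa\gamma)^{1/2}$. Synchronous coupling $Y_x = x - \gamma\nabla U(x) + \sqrt{2\gamma}Z$ and $Y_y$ analogously with the same $Z$, combined with \eqref{eq:convex_forte_contra1} applied to the pair $(x,y)$, gives $\PE\bracketC{\norm[2]{Y_x-Y_y}} \leq (1-\kappa\gamma)\norm[2]{x-y}$ by the same cancellation as in the drift step. Lifting this to an optimal $W_2$-coupling of $\mu,\nu \in \setProba_2(\rset^d)$ yields $W_2(\mu R_\gamma, \nu R_\gamma) \leq (1-\kappa\gamma)^{1/2} W_2(\mu,\nu)$; since $(\setProba_2(\rset^d), W_2)$ is complete, the Banach fixed point theorem produces a unique invariant probability $\pi_\gamma \in \setProba_2(\rset^d)$. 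Taking $x=\xstar$ in part \ref{item:kind_drift_1} with constant step $\gamma$ gives $\delta_{\xstar}R_\gamma^n(V) \leq 2d\kappa^{-1}$ for every $n$. By the contraction, $\delta_{\xstar}R_\gamma^n \to \pi_\gamma$ in $W_2$, and since $W_2$-convergence implies convergence of second moments, passing to the limit yields $\pi_\gamma(V) \leq 2d\kappa^{-1}$.

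No step is technically challenging; the key ingredients are the co-coercivity inequality \eqref{eq:convex_forte_contra1} (already at hand) and the telescoping identity. The only mild subtlety is in \ref{item:kind_drift_2}, where one must ensure that $\pi_\gamma$ inherits a finite second moment before invoking the drift bound, which is why the Wasserstein contraction in $\setProba_2(\rset^d)$ is a prerequisite.
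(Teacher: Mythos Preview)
Your argument is correct and follows the standard route: the one-step drift via the co-coercivity inequality \eqref{eq:convex_forte_contra1}, iteration and telescoping for \ref{item:kind_drift_1}, and the synchronous-coupling $W_2$-contraction plus Banach fixed point for \ref{item:kind_drift_2}. The paper defers the proof to the supplement, but this is the canonical approach and almost certainly coincides with it (note that your contraction step is exactly \Cref{theo:convergence_p_Euler}-\ref{item:contraction_Euler-i}, stated immediately afterwards).

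One small remark: your telescoping gives the second term as $2d\kappa^{-1}\bigl\{1-\prod_{k=n}^{\ell}(1-\kappa\gamma_k)\bigr\}$, whereas the displayed $\FunMomentDEuler_{n,\ell}(x)$ in \eqref{eq:FunMomentDEuler} reads $2d\kappa^{-1}\bigl\{1-\kappa^{-1}\prod_{k=n}^{\ell}(1-\kappa\gamma_k)\bigr\}$. The extra $\kappa^{-1}$ in the paper is almost certainly a typographical slip; your expression is the one the telescoping actually produces, and both versions yield the same limit $2d\kappa^{-1}$ used in \ref{item:kind_drift_2}.
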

\begin{proof}
The proof is postponed to \Cref{proof:theo:kind_drift}.
\end{proof}

We now proceed to establish that $\QKer_{\gamma}^{n}$ is a strict contraction in $W_2$ for any $n \geq 1$. This result implies the geometric convergence of the sequence $(\delta_x \RKer_{\gaStep}^n)_{n \geq 1}$ to $\pi_{\gaStep}$ in $W_2$ for all $x \in \rset^d$. Note that the convergence rate again does not depend on the dimension.
\begin{proposition}
  \label{theo:convergence_p_Euler}
  Assume \Cref{assum:regularity_2} and \Cref{assum:potentialU}. Then,
  \begin{enumerate}[label=(\roman*)]
  \item \label{item:contraction_Euler-i} Let $(\gamma_k)_{k \geq 1}$ be a non-increasing sequence with $\gamma_1 \leq 2/(m+L)$.
  For all $x,y \in \rset^d$ and $\ell \geq n \geq 1$,
\[
W_{2}(\delta_x Q^{n,\ell}_\gamma,\delta_y Q^{n,\ell}_\gamma) \leq \defEns{\prod_{k=n}^{\ell} (1- \kappa \gamma_k) }^{1/2} \norm{x-y}\eqsp.
\]
\item \label{eq:rate_euler_p}
 For any $\gamma \in \ooint{0,2/(m+L)}$, for all $x \in \rset^d$ and $n \geq 1$,
$$W_{2}(\delta_x R_{\gamma}^n,\pi_{\gamma}) \leq (1-\kappa \gamma)^{n/2}\defEns{\norm[2]{x-\xstar} + 2 \kappa^{-1}d}^{1/2} \eqsp.$$
% \begin{equation}
%  \eqsp.
% \end{equation}
\end{enumerate}
\end{proposition}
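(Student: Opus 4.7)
The plan is to use a synchronous coupling in both parts.

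For \ref{item:contraction_Euler-i}, I would let $(X_j)_{j\geq 0}$ and $(Y_j)_{j\geq 0}$ follow the Euler recursion \eqref{eq:euler-proposal-2} driven by the same Gaussian increments, with $X_0 = x$, $Y_0 = y$, using step sizes $\gamma_n,\gamma_{n+1},\ldots,\gamma_\ell$ in that order, so that $(X_{\ell-n+1}, Y_{\ell-n+1})$ is a coupling of $\delta_x Q_\gamma^{n,\ell}$ and $\delta_y Q_\gamma^{n,\ell}$. The noise cancels in the difference, and for a one-step transition from $(X,Y)$ to $(X',Y')$ with step $\gamma_k$,
\[
\norm[2]{X'-Y'} = \norm[2]{X-Y} - 2\gamma_k \ps{\nabla U(X) - \nabla U(Y)}{X-Y} + \gamma_k^2 \norm[2]{\nabla U(X) - \nabla U(Y)}.
\]
I would then invoke the co-coercivity bound \eqref{eq:convex_forte_contra1}: the inner-product term contributes $-\kappa\gamma_k\norm[2]{X-Y}$ together with $-2\gamma_k(m+L)^{-1}\norm[2]{\nabla U(X)-\nabla U(Y)}$, so $\norm[2]{\nabla U(X)-\nabla U(Y)}$ receives the combined coefficient $\gamma_k^2 - 2\gamma_k/(m+L) \leq 0$ thanks to $\gamma_k \leq \gamma_1 \leq 2/(m+L)$. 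What remains is the almost-sure one-step contraction $\norm[2]{X'-Y'} \leq (1-\kappa\gamma_k)\norm[2]{X-Y}$. Iterating across $\ell-n+1$ steps and taking expectations yields the stated product bound on $W_2^2$.

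For \ref{eq:rate_euler_p}, I would specialize (i) to constant step $\gamma$. The almost-sure per-sample contraction, applied along an optimal $W_2$-coupling of arbitrary initial distributions and then driven synchronously, upgrades to
\[
W_2(\mu R_\gamma^n, \nu R_\gamma^n) \leq (1-\kappa\gamma)^{n/2}\, W_2(\mu,\nu)
\]
for all $\mu,\nu$. Taking $\mu = \delta_x$, $\nu = \pi_\gamma$, and using the invariance $\pi_\gamma R_\gamma^n = \pi_\gamma$ from \Cref{theo:kind_drift}\ref{item:kind_drift_2},
\[
W_2^2(\delta_x R_\gamma^n,\pi_\gamma) \leq (1-\kappa\gamma)^n \int_{\rset^d}\norm[2]{x-y}\,\pi_\gamma(\rmd y).
\]
The right-hand integral is then controlled by expanding $\norm[2]{x-y}$ about $\xstar$ and invoking the stationary second-moment estimate $\int \norm[2]{y-\xstar}\pi_\gamma(\rmd y) \leq 2\kappa^{-1}d$ supplied by \Cref{theo:kind_drift}\ref{item:kind_drift_2}.

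The main obstacle is the per-step contraction in (i): strong convexity alone cannot absorb the $\gamma_k^2$ term, and it is specifically the sharper co-coercivity estimate \eqref{eq:convex_forte_contra1} — which provides both a $\kappa$-contraction and a reserve of $2(m+L)^{-1}\norm[2]{\nabla U(X)-\nabla U(Y)}$ — coupled with the step-size restriction $\gamma_k \leq 2/(m+L)$, that allows the squared-gradient contributions to cancel. Once this is secured, part (ii) is a routine consequence of invariance of $\pi_\gamma$, convexity of $W_2^2$, and the drift-based moment bound.
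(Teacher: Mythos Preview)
Your approach is correct and coincides with the paper's: the per-step almost-sure contraction $\norm[2]{x-\gamma_k\nabla U(x)-y+\gamma_k\nabla U(y)}\leq(1-\kappa\gamma_k)\norm[2]{x-y}$, obtained from \eqref{eq:convex_forte_contra1} under $\gamma_k\leq 2/(m+L)$, is exactly the key inequality (it reappears verbatim in the paper's proof of \Cref{theo:convergence_discrete_chain}\ref{item:convergence_discrete_chain_1}), and \ref{eq:rate_euler_p} then follows from the invariance of $\pi_\gamma$ together with \Cref{theo:kind_drift}\ref{item:kind_drift_2} as you outline. One minor caveat on your last step: expanding $\int_{\rset^d}\norm[2]{x-y}\,\pi_\gamma(\rmd y)$ about $\xstar$ leaves a cross term $-2\ps{x-\xstar}{\int(y-\xstar)\pi_\gamma(\rmd y)}$ that need not vanish, so your route recovers the displayed bound only up to a harmless constant (e.g.\ via $\norm[2]{x-y}\leq 2\norm[2]{x-\xstar}+2\norm[2]{y-\xstar}$).
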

\begin{proof}
The proof is postponed to \Cref{sec:proof:convergence_p_Euler}.
\end{proof}
\begin{corollary}
\label{propo:contraction_Euler-new}
Assume \Cref{assum:regularity_2} and  \Cref{assum:potentialU}. Let  $(\gamma_k)_{k \geq 1}$ be a non-increasing sequence with $\gamma_1 \leq 2/(m+L)$. Then
 for all Lipschitz functions $f : \rset^d \to \rset$ and $ \ell \geq n \geq 1$, $Q^{n,\ell}_\gamma f$ is a Lipschitz function with
$\normLigne{Q^{n,\ell}_\gamma f}_{\Lip} \leq \prod_{k=n}^\ell (1- \kappa \gamma_k)^{1/2} \normLigne{f}_{\Lip}$.
\end{corollary}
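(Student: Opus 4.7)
The corollary is the Kantorovich--Rubinstein dual of the contraction in $W_2$ established in \Cref{theo:convergence_p_Euler}--\ref{item:contraction_Euler-i}, upgraded along the way by the inequality $W_1\leq W_2$. My plan is therefore to reduce the Lipschitz bound for $Q^{n,\ell}_\gamma f$ directly to that $W_2$ contraction via an optimal coupling.

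Concretely, fix $x,y\in\rset^d$ and a Lipschitz function $f:\rset^d\to\rset$. By the existence of optimal couplings recalled in the Notations from \cite[Theorem~4.1]{VillaniTransport}, I would pick a coupling $(X,Y)$ of $\delta_x Q^{n,\ell}_\gamma$ and $\delta_y Q^{n,\ell}_\gamma$ that is optimal for $W_2$, so that $\PE[\norm[2]{X-Y}]^{1/2}=W_2(\delta_xQ^{n,\ell}_\gamma,\delta_yQ^{n,\ell}_\gamma)$. Writing $Q^{n,\ell}_\gamma f(x)-Q^{n,\ell}_\gamma f(y)=\PE[f(X)-f(Y)]$, applying the Lipschitz bound on $f$, and then the Cauchy--Schwarz inequality gives
\begin{equation*}
\absolute{Q^{n,\ell}_\gamma f(x)-Q^{n,\ell}_\gamma f(y)}\leq \normLigne{f}_{\Lip}\PE[\norm{X-Y}]\leq \normLigne{f}_{\Lip}\PE[\norm[2]{X-Y}]^{1/2}=\normLigne{f}_{\Lip}W_2(\delta_x Q^{n,\ell}_\gamma,\delta_y Q^{n,\ell}_\gamma).
\end{equation*}
Plugging in the bound from \Cref{theo:convergence_p_Euler}--\ref{item:contraction_Euler-i} then yields exactly $\normLigne{Q^{n,\ell}_\gamma f}_{\Lip}\leq \prod_{k=n}^\ell(1-\kappa\gamma_k)^{1/2}\normLigne{f}_{\Lip}$.

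I do not anticipate any real obstacle: once the $W_2$ contraction and the existence of optimal couplings are taken for granted, the argument is essentially three lines. The only minor points of hygiene are (i) verifying that $Q^{n,\ell}_\gamma f$ is well defined as an integral, which follows because Lipschitz functions are at most linearly growing and $\delta_x Q^{n,\ell}_\gamma$ has finite second moment by \Cref{theo:kind_drift}--\ref{item:kind_drift_1}, and (ii) noting that the hypothesis $\gamma_1\leq 2/(m+L)$ is exactly the one required to invoke \Cref{theo:convergence_p_Euler}--\ref{item:contraction_Euler-i}. Alternatively one could bypass the explicit optimal coupling by invoking directly the Monge--Kantorovich duality for $W_1$ recalled in the preliminaries, but the coupling argument is slightly shorter and makes the reduction to the $W_2$ bound most transparent.
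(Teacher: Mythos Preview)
Your proof is correct and is essentially identical to the paper's own argument: the paper also bounds $\abs{Q^{n,\ell}_\gamma f(y)-Q^{n,\ell}_\gamma f(z)}$ by $\normLigne{f}_{\Lip}W_2(\delta_y Q^{n,\ell}_\gamma,\delta_z Q^{n,\ell}_\gamma)$ and then invokes \Cref{theo:convergence_p_Euler}--\ref{item:contraction_Euler-i}. Your added justification of the inequality via an optimal $W_2$ coupling and Cauchy--Schwarz is exactly the standard way to unpack this step.
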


\begin{proof}
The proof  follows from \Cref{theo:convergence_p_Euler}-\ref{item:contraction_Euler-i} using
\begin{equation*}
\abs{Q^{n,\ell}_\gamma f(y) - Q^{n,\ell}_\gamma f(z)} \leq  \norm{f}_{\Lip}W_2(\delta_y Q^{n,\ell}_\gamma , \delta_z Q_\gamma ^{n,\ell})\eqsp.
\end{equation*}
\end{proof}
We now proceed to establish explicit bounds for $W_2(\delta_x Q^n_\gamma, \pi)$, with $x \in \rset^d$.
\begin{theorem}
\label{theo:distance_Euler_diffusion}
Assume \Cref{assum:regularity_2} and \Cref{assum:potentialU}. Let $(\gamma_k)_{k \geq 1}$ be a non-increasing sequence with $\gamma_1 \leq 1/(m+L)$. Then for all $x \in \rset^d$ and $n \geq 1$,
\begin{equation*}
%\label{eq:borne-W2}
W_2^2(\delta_x Q^n_\gamma, \pi)\leq u_n^{(1)}(\gamma)\defEns{\norm[2]{x-\xstar} +d/m} + u_n^{(2)}(\gamma) \eqsp,
\end{equation*}
where
\begin{equation}
\label{eq:def_u1}
u_n^{(1)}(\gamma) =  2 \prod_{k=1}^n (1 - \kappa \gamma_k/2) % \exp\left(- (\kappa/2) \Gamma_n   \right)
\end{equation}
 $\kappa$ is defined in \eqref{eq:definition-kappa} and
\begin{multline}
\label{eq:def_u2}
u_n^{(2)}(\gamma)=  L^2 d \sum_{i=1}^n \left[ \gamma_i^2 \defEns{\kappa^{-1} + \gamma_i} \defEns{2 +\frac{ L^2 \gamma_i}{ m} +\frac{ L^2 \gamma_i^2}{6}} \prod_{k=i+1}^n(1- \kappa \gamma_k/2) \right]
\eqsp.
\end{multline}
\end{theorem}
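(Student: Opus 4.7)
The plan is to use the synchronous coupling $(Y_t,\bar Y_t)_{t \geq 0}$ defined in \eqref{eq:definition_couplage_2} with the initial condition $Y_0 \sim \pi$ coupled to $\bar Y_0 = x$ (which, since $\bar Y_0$ is deterministic, amounts to drawing $Y_0 \sim \pi$ independently). Because $\pi$ is invariant for $(P_t)_{t \geq 0}$, one has $Y_{\Gamma_n} \sim \pi$ and therefore $W_2^2(\delta_x Q^n_\gamma,\pi) \leq \PE[\|Y_{\Gamma_n} - \bar Y_{\Gamma_n}\|^2]$. Using $\|a+b\|^2 \leq 2\|a\|^2 + 2\|b\|^2$ together with \Cref{theo:convergence-WZ-strongly-convexD}-\ref{item:moment_diffusion_item}, the initial discrepancy satisfies $\PE[\|Y_0 - x\|^2] \leq 2\|x-\xstar\|^2 + 2d/m$, which accounts for the prefactor $2$ in $u_n^{(1)}$. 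Write $e_t \defeq Y_t - \bar Y_t$; on each block $[\Gamma_n,\Gamma_{n+1}]$ the Brownian increments cancel and $e_t$ evolves deterministically.

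The heart of the proof is to establish the one-step recursion
\[
\PE[\|e_{\Gamma_{n+1}}\|^2] \leq (1 - \kappa \gamma_{n+1}/2)\, \PE[\|e_{\Gamma_n}\|^2] + L^2 \gamma_{n+1}^2\bracketC{\kappa^{-1}+\gamma_{n+1}} \bracketC{2d + dL^2\gamma_{n+1}/m + dL^2\gamma_{n+1}^2/6}\eqsp.
\]
To derive it, I would decompose $e_{\Gamma_{n+1}} = B_n + A_n$, where
\[
B_n = e_{\Gamma_n} - \gamma_{n+1}\bracketC{\nabla U(Y_{\Gamma_n}) - \nabla U(\bar Y_{\Gamma_n})}\eqsp, \quad
A_n = -\int_{\Gamma_n}^{\Gamma_{n+1}} \bracketC{\nabla U(Y_s) - \nabla U(Y_{\Gamma_n})} \rmd s\eqsp,
\]
and apply Young's inequality $\|B_n + A_n\|^2 \leq (1+\eta)\|B_n\|^2 + (1+\eta^{-1})\|A_n\|^2$ with $\eta = \kappa\gamma_{n+1}/2$. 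Expanding $\|B_n\|^2$ and applying \eqref{eq:convex_forte_contra1} to the inner product term, the hypothesis $\gamma_{n+1} \leq 1/(m+L)$ ensures the $\|\nabla U(Y_{\Gamma_n}) - \nabla U(\bar Y_{\Gamma_n})\|^2$ contribution is non-positive and is discarded, leaving $\|B_n\|^2 \leq (1-\kappa\gamma_{n+1})\|e_{\Gamma_n}\|^2$. The choice $\eta = \kappa\gamma_{n+1}/2$ is tailored so that $(1+\eta)(1-\kappa\gamma_{n+1}) \leq 1 - \kappa\gamma_{n+1}/2$, and simultaneously $1+\eta^{-1} = 1 + 2/(\kappa\gamma_{n+1})$ produces the $(\kappa^{-1}+\gamma_{n+1})$ factor once multiplied by the $\gamma_{n+1}$ prefactor coming from $A_n$.

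For the cross-step error, Jensen's inequality and \Cref{assum:regularity} yield $\PE[\|A_n\|^2] \leq \gamma_{n+1} L^2 \int_{\Gamma_n}^{\Gamma_{n+1}} \PE[\|Y_s - Y_{\Gamma_n}\|^2] \rmd s$, so the task reduces to bounding $\PE[\|Y_s - Y_{\Gamma_n}\|^2]$ on $[\Gamma_n,\Gamma_{n+1}]$. Since $Y_0 \sim \pi$, one has $Y_u \sim \pi$ for every $u \geq 0$, hence by $\nabla U(\xstar) = 0$, Lipschitzness, and \Cref{theo:convergence-WZ-strongly-convexD}-\ref{item:moment_diffusion_item}, $\PE_\pi[\|\nabla U(Y)\|^2] \leq L^2 d/m$. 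Writing $Y_s - Y_{\Gamma_n} = -\int_{\Gamma_n}^s \nabla U(Y_u) \rmd u + \sqrt 2 (B_s - B_{\Gamma_n})$ and exploiting It\^o's formula (rather than the crude $\|a+b\|^2 \leq 2\|a\|^2 + 2\|b\|^2$ bound) on $\|Y_s-Y_{\Gamma_n}\|^2$ yields a sharp expansion of the form $2d(s-\Gamma_n) + L^2 d(s-\Gamma_n)^2/m + L^2 d(s-\Gamma_n)^3/6 \cdot (\cdots)$, which, after integration over $[\Gamma_n,\Gamma_{n+1}]$, produces the factor $2d + dL^2\gamma_{n+1}/m + dL^2\gamma_{n+1}^2/6$ appearing in $u_n^{(2)}$. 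The main obstacle here is precisely this sharp moment estimate: a Cauchy--Schwarz treatment of the drift/Brownian cross term wastes constants, and recovering the announced $1/6$ coefficient requires carrying the It\^o correction in full. Once the recursion is established, unrolling it from $n$ down to $0$ and combining with the initial bound $\PE[\|e_0\|^2] \leq 2(\|x-\xstar\|^2 + d/m)$ produces the stated expression for $u_n^{(1)}$ and $u_n^{(2)}$.
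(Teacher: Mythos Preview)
Your strategy is the paper's: synchronous coupling with $Y_0\sim\pi$, a one-step recursion via \eqref{eq:convex_forte_contra1}, the short-time moment estimate of \Cref{lem:moment_diffusion}, and then unrolling. The only structural difference is cosmetic: you split $e_{\Gamma_{n+1}}=B_n+A_n$ and apply the multiplicative Young inequality $(1+\eta)\|B_n\|^2+(1+\eta^{-1})\|A_n\|^2$, whereas the paper (in \Cref{lem:distance_Euler_diffusion_part1}) expands $\|e_{\Gamma_{n+1}}\|^2$ directly and handles the cross term $\langle e_{\Gamma_n},\nabla U(Y_s)-\nabla U(Y_{\Gamma_n})\rangle$ by the additive Young bound $|\langle a,b\rangle|\le\epsilon\|a\|^2+(4\epsilon)^{-1}\|b\|^2$ with $\epsilon=\kappa/4$. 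Both routes land on the contraction factor $1-\kappa\gamma_{n+1}/2$; your $A_n$ prefactor is in fact slightly sharper.

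Two minor slips to flag. First, $(1+\eta^{-1})\gamma_{n+1}=\gamma_{n+1}+2\kappa^{-1}$, not $\kappa^{-1}+\gamma_{n+1}$; the stated $u_n^{(2)}$ still follows because the paper's own prefactor is $2(\kappa^{-1}+\gamma_{n+1})$ and the extra $2$ is absorbed into the ``$2d$'' in the bracket. Second, your description of the moment estimate is imprecise: the drift/Brownian cross term has zero mean (it is a martingale increment), so there is nothing to lose there. The delicate part is bounding the pure drift contribution $\PE[\langle\nabla U(Y_{\Gamma_n}),Y_s-Y_{\Gamma_n}\rangle]$, and the paper does this (in \Cref{lem:moment_diffusion}) via the generator identity and Cauchy--Schwarz on the drift, obtaining $\PE_x[\|Y_t-x\|^2]\le 2dt+(3/2)L^2t^2\|x-x^\star\|^2+dL^2t^3/3$. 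Integrating this over $[0,\gamma]$ and using $\PE_\pi[\|Y-x^\star\|^2]\le d/m$ is what produces the exact constants $2d$, $dL^2\gamma/m$, $dL^2\gamma^2/6$ in $u_n^{(2)}$.
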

\begin{proof}
The proof is postponed to \Cref{sec:proof:theo:distance_Euler_diffusion}.
\end{proof}

\begin{corollary}
\label{coro:distance_Euler_target}
Assume \Cref{assum:regularity_2} and \Cref{assum:potentialU}. Let $(\gamma_k)_{ k\geq 1}$ be a non-increasing sequence with $\gamma_1 \leq 1/(m+L)$. Assume that $\lim_{k \to \infty} \gamma_k= 0$ and $\lim_{n \to \plusinfty} \Gamma_n =\plusinfty$.
Then for all $x \in \rset^d$,
$
\lim_{n \to \infty} W_2( \delta_x Q^n_\gamma, \pi)=0 .
$
\end{corollary}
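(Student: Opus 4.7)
The bound from \Cref{theo:distance_Euler_diffusion} reduces the problem to showing that both $u_n^{(1)}(\gamma)$ and $u_n^{(2)}(\gamma)$ defined in \eqref{eq:def_u1}--\eqref{eq:def_u2} go to $0$ as $n\to\infty$. My plan is to treat these two terms separately, using the standing assumptions $\gamma_k\to 0$ and $\Gamma_n\to\infty$ together with $\gamma_1\leq 1/(m+L)$ (which ensures $\kappa\gamma_k/2\in(0,1)$, so every factor of the form $1-\kappa\gamma_k/2$ is nonnegative and bounded above by $\rme^{-\kappa\gamma_k/2}$).

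For $u_n^{(1)}(\gamma)$, the convergence is immediate from the elementary inequality $1-t\leq \rme^{-t}$ applied factor by factor:
\begin{equation*}
u_n^{(1)}(\gamma) \;=\; 2\prod_{k=1}^n(1-\kappa\gamma_k/2) \;\leq\; 2\exp\!\bigl(-\kappa\Gamma_n/2\bigr)\;\longrightarrow\; 0
\end{equation*}
since $\Gamma_n\to\infty$.

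For $u_n^{(2)}(\gamma)$, the main step is to absorb the factors depending on $\gamma_i$ that are bounded by constants (using $\gamma_i\leq\gamma_1$), so that $u_n^{(2)}(\gamma)\leq C\sum_{i=1}^n \gamma_i^2\prod_{k=i+1}^n(1-\kappa\gamma_k/2)$ for some explicit constant $C=C(L,m,d,\gamma_1)$. The key identity is the telescoping
\begin{equation*}
(\kappa\gamma_i/2)\prod_{k=i+1}^n(1-\kappa\gamma_k/2) \;=\; \prod_{k=i+1}^n(1-\kappa\gamma_k/2) - \prod_{k=i}^n(1-\kappa\gamma_k/2)\eqsp,
\end{equation*}
which yields $\sum_{i=1}^n \gamma_i\prod_{k=i+1}^n(1-\kappa\gamma_k/2)\leq 2/\kappa$.

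The hard (but standard) part is then a two-scale splitting argument. Given $\varepsilon>0$, first pick $N$ so large that $\gamma_{N+1}$ is small enough to make the tail contribution
\begin{equation*}
\sum_{i=N+1}^n \gamma_i^2\prod_{k=i+1}^n(1-\kappa\gamma_k/2) \;\leq\; \gamma_{N+1}\sum_{i=N+1}^n \gamma_i\prod_{k=i+1}^n(1-\kappa\gamma_k/2) \;\leq\; 2\gamma_{N+1}/\kappa
\end{equation*}
less than $\varepsilon/(2C)$; this uses $\gamma_k\to 0$. Then, with $N$ fixed, choose $n$ large enough that
\begin{equation*}
\sum_{i=1}^N \gamma_i^2\prod_{k=i+1}^n(1-\kappa\gamma_k/2) \;\leq\; \exp\!\bigl(-\kappa\Gamma_{N+1,n}/2\bigr)\sum_{i=1}^N \gamma_i^2
\end{equation*}
falls below $\varepsilon/(2C)$, which is possible since $\Gamma_{N+1,n}\to\infty$. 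Summing the two estimates gives $u_n^{(2)}(\gamma)\to 0$, and combining with the control on $u_n^{(1)}(\gamma)$ concludes the proof.
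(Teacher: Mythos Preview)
Your argument is correct and follows the paper's overall structure: reduce via \Cref{theo:distance_Euler_diffusion} to showing $u_n^{(1)}(\gamma)\to 0$ and $u_n^{(2)}(\gamma)\to 0$, handle $u_n^{(1)}$ with $1-t\leq\rme^{-t}$, and for $u_n^{(2)}$ reduce (by bounding the extra powers of $\gamma_i$ by constants) to showing $\sum_{i=1}^n \gamma_i^2\prod_{k=i+1}^n(1-\kappa\gamma_k/2)\to 0$.

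The difference lies in how this last sum is controlled. The paper invokes \Cref{lem:suite_recurrence_2} with a moving split point $\ell=\lfloor c\Gamma_n\rfloor$ chosen so that $\ell\leq n-1$, and then appeals to the Ces\`aro theorem to show $\Gamma_{\lfloor c\Gamma_n\rfloor}/\Gamma_n\to 0$, which forces both pieces of the bound to vanish. Your approach replaces this by a standard $\varepsilon$--$N$ argument: fix $N$ using $\gamma_k\to 0$, then send $n\to\infty$ using $\Gamma_{N+1,n}\to\infty$. Your telescoping identity is exactly the $j=1$ case of \Cref{lem:suite_recurrence_2} (with $\ell=1$), so you are in effect re-deriving the piece of that lemma you need. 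Your route is more self-contained and avoids the Ces\`aro step entirely; the paper's route has the advantage of packaging the estimate through a lemma that is reused elsewhere (e.g.\ in \Cref{coro:asympt_bias_2} and the proof of \Cref{lem:var_2}), and its $n$-dependent choice of $\ell$ is what one would want if a quantitative rate were sought rather than mere convergence.
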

\begin{proof}
  The proof is postponed to \Cref{sec:proof-lem_rec_2}.
\end{proof}

In the case of constant step sizes $\gamma_k = \gamma$ for all $k \geq 1$, we can deduce from \Cref{theo:distance_Euler_diffusion}, a bound between $\pi$ and the stationary distribution $\pi_{\gamma}$ of $R_{\gamma}$.
\begin{corollary}
\label{coro:asympt_bias_2}
  Assume \Cref{assum:regularity_2} and \Cref{assum:potentialU}. Let $(\gamma_k)_{ k\geq 1}$ be a constant sequence $\gamma_k = \gamma$ for all $k \geq 1$ with $\gamma\leq 1/(m+L)$. Then
  \begin{equation*}
    W_2^2(\pi,\pi_{\gamma}) \leq 2 \kappa^{-1}  L^2  \gamma \defEns{\kappa^{-1} + \gamma} (2d +d L^2 \gamma /m +d L^2 \gamma^2/6) \eqsp.
  \end{equation*}
\end{corollary}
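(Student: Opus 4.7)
The plan is to exploit that $\pi_\gamma$ is invariant for $R_\gamma$, apply \Cref{theo:distance_Euler_diffusion} for every starting point, and then let $n\to\plusinfty$. Concretely, by invariance $\pi_\gamma = \pi_\gamma \QKer_\gamma^n$ for every $n\ge 1$, where $\QKer_\gamma^n = R_\gamma^n$ in this constant-step-size setting. Using the standard convexity of $W_2^2$ under mixtures (if $\zeta_x$ is an optimal $W_2$ coupling of $\delta_x \QKer_\gamma^n$ and $\pi$, then $\int \zeta_x \,\pi_\gamma(\rmd x)$ is a transference plan of $\pi_\gamma \QKer_\gamma^n = \pi_\gamma$ and $\pi$), one obtains
\begin{equation*}
W_2^2(\pi_\gamma,\pi) \;=\; W_2^2(\pi_\gamma \QKer_\gamma^n,\pi) \;\leq\; \int_{\rset^d} W_2^2(\delta_x \QKer_\gamma^n,\pi) \, \pi_\gamma(\rmd x) \eqsp.
\end{equation*}

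Next, I would plug in the non-asymptotic bound from \Cref{theo:distance_Euler_diffusion}: for the constant sequence $\gamma_k=\gamma$, the integrand is at most $u_n^{(1)}(\gamma)\{\norm[2]{x-\xstar}+d/m\} + u_n^{(2)}(\gamma)$. Integrating against $\pi_\gamma$ and using \Cref{theo:kind_drift}-\ref{item:kind_drift_2}, which gives $\int \norm[2]{x-\xstar}\pi_\gamma(\rmd x)\leq 2d/\kappa$, yields
\begin{equation*}
W_2^2(\pi_\gamma,\pi) \leq u_n^{(1)}(\gamma)\bigl\{ 2d/\kappa + d/m\bigr\} + u_n^{(2)}(\gamma) \eqsp.
\end{equation*}

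Finally I would let $n\to\plusinfty$. Since $\gamma\leq 1/(m+L)$, one has $\kappa\gamma/2 \in (0,1)$, hence $u_n^{(1)}(\gamma)=2(1-\kappa\gamma/2)^n \to 0$. For the constant step-size regime, $u_n^{(2)}(\gamma)$ is a geometric sum that evaluates to
\begin{equation*}
u_n^{(2)}(\gamma) = L^2 \gamma^2\defEns{\kappa^{-1}+\gamma}\defEns{2d + dL^2\gamma/m + dL^2\gamma^2/6}\cdot\frac{1-(1-\kappa\gamma/2)^n}{\kappa\gamma/2} \eqsp,
\end{equation*}
whose limit as $n\to\plusinfty$ is exactly $2\kappa^{-1} L^2 \gamma\{\kappa^{-1}+\gamma\}\{2d + dL^2\gamma/m + dL^2\gamma^2/6\}$, matching the announced bound.

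There is no real obstacle here beyond bookkeeping; the one subtle point is justifying the mixture-convexity inequality for $W_2^2$, which follows from stitching optimal couplings together as indicated above. An alternative route — coupling $Y_0\sim\pi$ with $\Ybar_0\sim\pi_\gamma$ via the synchronous scheme \eqref{eq:definition_couplage_2} and repeating the induction used to prove \Cref{theo:distance_Euler_diffusion} — would reach the same result without invoking convexity, but at the price of duplicating the calculation already done there, so the invariance-plus-convexity route is preferable.
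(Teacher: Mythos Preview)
Your argument is correct, but it differs from the paper's route. The paper does not integrate over $\pi_\gamma$ via convexity of $W_2^2$; instead it fixes a single $x\in\rset^d$, invokes \Cref{theo:convergence_p_Euler}-\ref{eq:rate_euler_p} to get $W_2(\delta_x R_\gamma^n,\pi_\gamma)\to 0$, and then passes to the limit in the bound of \Cref{theo:distance_Euler_diffusion} using the triangle inequality $W_2(\pi_\gamma,\pi)\le W_2(\pi_\gamma,\delta_x R_\gamma^n)+W_2(\delta_x R_\gamma^n,\pi)$. The sum defining $u_n^{(2)}(\gamma)$ is then controlled by \Cref{lem:suite_recurrence_2} with $\ell=1$, which yields exactly the stated constant without computing the geometric series explicitly. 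Your approach trades the appeal to \Cref{theo:convergence_p_Euler} for the mixture-convexity inequality plus the second-moment bound of \Cref{theo:kind_drift}-\ref{item:kind_drift_2}; this is perfectly valid (and you correctly flag the gluing-of-couplings justification), but the paper's route is marginally leaner since it avoids both the convexity argument and the integration step, needing only the pointwise convergence already established.
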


\begin{proof}
Since by \Cref{theo:convergence_p_Euler}, for all $x \in \rset^d$, $(\delta_x R_{\gamma}^n)_{n\geq 0}$ converges to $\pi_{\gamma}$ as $n \to \infty$ in $(\Pens_{2}(\rset^d),W_2)$, the proof then follows from \Cref{theo:distance_Euler_diffusion} and \cite[\Cref{lem:suite_recurrence_2}]{durmus:moulines:2016} applied with $\ell =1$.
\end{proof}
We can improve the bound provided by \Cref{theo:distance_Euler_diffusion} under additional regularity assumptions on the potential $U$.

\begin{assumption}
  \label{assum:reg_plus}
The potential $U$ is three times continuously differentiable and there exists $\tilde{L}$ such that for all $x,y \in \rset^d$, $\norm{  \nabla^2 U(x) - \nabla^2 U(y)} \leq \tilde{L}\norm{x-y}$.
% \begin{equation}
% \norm{  \nabla^2 U(x) - \nabla^2 U(y)} \leq \tilde{L}\norm{x-y}\eqsp.
% \end{equation}
\end{assumption}

Note that under \Cref{assum:regularity_2} and \Cref{assum:reg_plus}, we have that for all $x,y \in \rset^d$,
\begin{equation}
  \label{eq:borne_Lap_vec}
 \norm{\nabla^2 U(x)y} \leq L \norm{y} \eqsp, \eqsp \norm[2]{ \vec{\Delta} (\nabla U)(x) } \leq d^2 \tilde{L}^2 \eqsp.
\end{equation}

\begin{theorem}
\label{theo:distance_Euler_diffusionD}
Assume \Cref{assum:regularity_2}, \Cref{assum:potentialU} and \Cref{assum:reg_plus}. Let $(\gamma_k)_{k \geq 1}$ be a non-increasing sequence with $\gamma_1 \leq 1/(m+L)$. Then for all $x \in \rset^d$ and $n \geq 1$,
\begin{equation*}
%\label{eq:borne-W2-2}
W_2^2(\delta_x Q^n_\gamma, \pi)\leq u_n^{(1)}(\gamma) \defEns{\norm[2]{x-\xstar} +d/m} + u_n^{(3)}(\gamma) \eqsp,
\end{equation*}
where $u_n^{(1)}$ is given by \eqref{eq:def_u1}, $\kappa$ in \eqref{eq:definition-kappa} and
\begin{align}
\nonumber
u_n^{(3)}(\gamma) =\sum_{i=1}^n  \left[ d\gamma_{i}^3\defEns{2L^2 + \gamma_{i} L^4\parenthese{\frac{\gamma_i}{6} +m^{-1}} + \kappa^{-1} \parenthese{\frac{4 d \tilde{L}^2}{3} + \gamma_{i}L^4+ \frac{4L^4}{3m}}  } \right.\\
\label{eq:def_u2D}
\left. \times \prod_{k=i+1}^n\parenthese{1- \frac{\kappa\gamma_k}{2}} \right]
\eqsp.
\end{align}
\end{theorem}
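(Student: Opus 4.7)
The plan is to mirror the structure of the proof of \Cref{theo:distance_Euler_diffusion} and produce a sharper one-step bound by exploiting \Cref{assum:reg_plus} through It\^o's formula applied to $\nabla U(Y_t)$. Exactly as before, set $\zeta_0 = \pi \otimes \delta_x$, let $(Y_t,\overline{Y}_t)_{t \geq 0}$ be the synchronous coupling defined in \eqref{eq:definition_couplage_2} with $(Y_0,\overline{Y}_0) \sim \zeta_0$, and use $W_2^2(\delta_x Q^n_\gamma,\pi) \leq \expeMarkov{\zeta_0}{\norm[2]{Y_{\Gamma_n} - \overline{Y}_{\Gamma_n}}}$. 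On each interval $t \in \coint{\Gamma_n,\Gamma_{n+1}}$ the Brownian increments cancel, so
\[
Y_t - \overline{Y}_t = (Y_{\Gamma_n}-\overline{Y}_{\Gamma_n}) - \int_{\Gamma_n}^t \defEns{\nabla U(Y_s) - \nabla U(\overline{Y}_{\Gamma_n})} \rmd s
\]
and I would split $\nabla U(Y_s) - \nabla U(\overline{Y}_{\Gamma_n}) = [\nabla U(Y_s) - \nabla U(Y_{\Gamma_n})] + [\nabla U(Y_{\Gamma_n}) - \nabla U(\overline{Y}_{\Gamma_n})]$. The second bracket is treated exactly as in the proof of \Cref{theo:distance_Euler_diffusion}, yielding contributions controlled by the contraction factor $(1 - \kappa\gamma_{n+1}/2)$ on $\expeMarkov{\zeta_0}{\norm[2]{Y_{\Gamma_n}-\overline{Y}_{\Gamma_n}}}$.

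The new ingredient is the handling of the first bracket. Applying It\^o's formula componentwise to $\nabla U$ using the SDE satisfied by $(Y_t)_{t \geq 0}$ gives, for $s \in \coint{\Gamma_n,\Gamma_{n+1}}$,
\[
\nabla U(Y_s) - \nabla U(Y_{\Gamma_n}) = \int_{\Gamma_n}^s \defEns{-\nabla^2 U(Y_r)\nabla U(Y_r) + \vec{\Delta}(\nabla U)(Y_r)} \rmd r + \sqrt{2}\int_{\Gamma_n}^s \nabla^2 U(Y_r) \rmd B_r \eqsp.
\]
Then I would bound each piece using \eqref{eq:borne_Lap_vec}: the drift gives a pointwise bound of order $L \norm{\nabla U(Y_r)} + d\tilde L$, while the It\^o isometry produces
\[
\expeMarkov{\zeta_0}{\normLigne{\int_{\Gamma_n}^s \nabla^2 U(Y_r)\rmd B_r}^2} \leq \int_{\Gamma_n}^s \expeMarkov{\zeta_0}{\normLigne{\nabla^2 U(Y_r)}_{\operatorname{F}}^2}\rmd r \leq dL^2 (s - \Gamma_n) \eqsp.
\]
Double-integrating in time, squaring, and taking expectation, the drift part contributes $\bigO(\gamma_{n+1}^4)$ and the martingale part contributes $\bigO(\gamma_{n+1}^3)$; combined with $\norm{\nabla U(Y_r)} = \norm{\nabla U(Y_r) - \nabla U(x^\star)} \leq L \norm{Y_r - x^\star}$ and \Cref{theo:convergence-WZ-strongly-convex}\ref{item:moment_diffusion_2}, this yields a bound of the schematic form
\[
\expeMarkov{\zeta_0}{\normLigne{\int_{\Gamma_n}^{\Gamma_{n+1}}[\nabla U(Y_s)-\nabla U(Y_{\Gamma_n})]\rmd s}^2} \leq C\, d\, \gamma_{n+1}^3 \eqsp,
\]
with $C$ an explicit combination of $L,\tilde L, m$ and $\gamma_{n+1}$. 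This is one power of $\gamma$ better than what the naive Lipschitz bound on $\nabla U$ produces in \Cref{theo:distance_Euler_diffusion}, which is precisely the source of the improvement from $u_n^{(2)}$ to $u_n^{(3)}$.

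The remainder of the argument is purely mechanical: I would combine the two bracket estimates via Young's inequality (as in the proof of \Cref{lem:distance_Euler_diffusion_part1}, with parameter $\epsilon = \kappa/4$) to obtain a one-step recursion
\[
\expeMarkov{\zeta_0}{\normLigne{Y_{\Gamma_{n+1}}-\overline{Y}_{\Gamma_{n+1}}}^2} \leq (1 - \kappa\gamma_{n+1}/2)\,\expeMarkov{\zeta_0}{\normLigne{Y_{\Gamma_n} - \overline{Y}_{\Gamma_n}}^2} + r_{n+1}(\gamma) \eqsp,
\]
where $r_{n+1}(\gamma)$ collects the $\bigO(\gamma_{n+1}^3)$ terms. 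A straightforward induction and the bound $\int_{\rset^d}\norm[2]{y-x}\pi(\rmd y) \leq \norm[2]{x-\xstar} + d/m$ from \Cref{theo:convergence-WZ-strongly-convex}\ref{item:moment_diffusion_item} then produce the announced estimate with $u_n^{(1)}(\gamma)$ unchanged and $u_n^{(3)}(\gamma)$ as in \eqref{eq:def_u2D}. The main obstacle is bookkeeping: one must carefully track the constants produced by Young's inequality and the stationary second-moment bound so that the factor $d\tilde L^2/3$, the $L^4/m$ terms, and the $\gamma_i^2/6$ remainder appearing in \eqref{eq:def_u2D} match exactly; all other manipulations (Minkowski and Cauchy--Schwarz for the drift integral, It\^o isometry for the martingale, and the induction) are routine.
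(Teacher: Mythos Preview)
Your overall architecture is correct and matches the paper: synchronous coupling, one-step recursion, It\^o's formula applied to $\nabla U(Y_s)-\nabla U(Y_{\Gamma_n})$, then induction exactly as in \Cref{theo:distance_Euler_diffusion}. However, there is a genuine gap in how you handle the cross term $-2\int_{\Gamma_n}^{\Gamma_{n+1}}\ps{\Theta_n}{\nabla U(Y_s)-\nabla U(Y_{\Gamma_n})}\,\rmd s$, and your proposed route does \emph{not} yield an $O(\gamma_{n+1}^3)$ remainder.

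You propose to bound $\PE\bigl[\normLigne{\int_{\Gamma_n}^{\Gamma_{n+1}}[\nabla U(Y_s)-\nabla U(Y_{\Gamma_n})]\,\rmd s}^2\bigr]$ by $Cd\gamma_{n+1}^3$ via the It\^o isometry on the martingale part, and then to feed this into Young's inequality ``as in the proof of \Cref{lem:distance_Euler_diffusion_part1}''. But Young applied to the cross term must produce a contribution $2\epsilon\gamma_{n+1}\norm[2]{\Theta_n}$ (with the factor $\gamma_{n+1}$) in order to be absorbed into the contraction $(1-\kappa\gamma_{n+1})\norm[2]{\Theta_n}$. If you scale Young accordingly, the companion term becomes $(2\epsilon\gamma_{n+1})^{-1}\normLigne{\int[\cdots]\rmd s}^2$, and your $O(\gamma_{n+1}^3)$ bound is divided by $\gamma_{n+1}$, giving only $O(\gamma_{n+1}^2)$---no improvement over \Cref{lem:distance_Euler_diffusion_part1}. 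If instead you keep $\epsilon$ fixed, you get $2\epsilon\norm[2]{\Theta_n}$ without the $\gamma_{n+1}$ factor and the recursion no longer contracts.

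The paper's key step (this is the content of \Cref{lem:distance_Euler_diffusion_part1D}) is different: take the conditional expectation of the cross term \emph{before} applying Young. Since $\Theta_n$ is $\mcf_{\Gamma_n}$-measurable and $\int_{\Gamma_n}^s\nabla^2 U(Y_u)\,\rmd B_u$ is a martingale increment, the stochastic-integral part of the It\^o expansion contributes exactly zero to $\CPE{\ps{\Theta_n}{\nabla U(Y_s)-\nabla U(Y_{\Gamma_n})}}{\mcf_{\Gamma_n}}$; only the drift integral $\int_{\Gamma_n}^s\{-\nabla^2 U(Y_u)\nabla U(Y_u)+\vec\Delta(\nabla U)(Y_u)\}\rmd u$ survives. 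Applying Young pointwise in $s$ to this (now martingale-free) inner product gives $\epsilon\gamma_{n+1}\norm[2]{\Theta_n}+(2\epsilon)^{-1}A$ with $A=\int_{\Gamma_n}^{\Gamma_{n+1}}\normLigne{\CPE{\text{drift integral}}{\mcf_{\Gamma_n}}}^2\rmd s = O(\gamma_{n+1}^3)$, which is the desired gain. The It\^o isometry plays no role here; the improvement comes from the martingale \emph{vanishing} in conditional expectation, not from being bounded.
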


\begin{proof}
  The proof is postponed to \Cref{sec:proof:distance_Euler_diffusionD}.
\end{proof}
If $\gamma_k = \gamma$ for all $k \geq 1$, we can deduce from \Cref{theo:distance_Euler_diffusionD}, a sharper bound between $\pi$ and the stationary distribution $\pi_{\gamma}$ of $R_{\gamma}$.
\begin{corollary}
\label{coro:asympt_biasD}
  Assume \Cref{assum:regularity_2}, \Cref{assum:potentialU} and \Cref{assum:reg_plus}. Let $(\gamma_k)_{ k\geq 1}$ be a constant sequence $\gamma_k = \gamma$ for all $k \geq 1$ with $\gamma \leq 1/(m+L)$. Then
  \begin{equation*}
    W_2^2(\pi,\pi_{\gamma}) \leq 2 \kappa^{-1} d\gamma^2 \defEns{2L^2+ \gamma L^4(\gamma /6 +m^{-1})+ \kappa^{-1}\parenthese{\frac{4 d \tilde{L}^2}{3} + \gamma L^4+ \frac{4L^4}{3m}} }  \eqsp.
  \end{equation*}
\end{corollary}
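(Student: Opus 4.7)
The plan is to mirror, step for step, the proof of \Cref{coro:asympt_bias_2}, simply replacing the ingredient \Cref{theo:distance_Euler_diffusion} by its sharpened version \Cref{theo:distance_Euler_diffusionD}. The only point requiring verification is that the explicit expression \eqref{eq:def_u2D} for $u_n^{(3)}(\gamma)$ still admits a clean limit as $n \to \plusinfty$ when the step size is held constant, and that this limit matches the right-hand side of the stated bound.

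First, since $\gamma_k = \gamma$ for all $k$, we have $Q_\gamma^n = R_\gamma^n$, and \Cref{theo:convergence_p_Euler}-\ref{eq:rate_euler_p} guarantees $\delta_x R_\gamma^n \to \pi_\gamma$ in $W_2$ as $n \to \plusinfty$ for any fixed $x \in \rset^d$. By the triangle inequality in $W_2$,
\begin{equation*}
W_2(\pi,\pi_\gamma) \leq W_2(\pi,\delta_x R_\gamma^n) + W_2(\delta_x R_\gamma^n,\pi_\gamma) \eqsp,
\end{equation*}
and the second term vanishes in the limit. Applying \Cref{theo:distance_Euler_diffusionD} to the first term yields
\begin{equation*}
W_2^2(\delta_x R_\gamma^n,\pi) \leq u_n^{(1)}(\gamma)\defEns{\norm[2]{x - \xstar} + d/m} + u_n^{(3)}(\gamma) \eqsp.
\end{equation*}
With constant step size, $u_n^{(1)}(\gamma) = 2(1-\kappa\gamma/2)^n$ tends to $0$ since $\kappa \gamma/2 \in (0,1)$ (using $\gamma \leq 1/(m+L) \leq 2/(m+L)$ and $\kappa = 2mL/(m+L)$), so this contribution disappears.

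It remains to control $\limsup_{n \to \plusinfty} u_n^{(3)}(\gamma)$. Inspecting \eqref{eq:def_u2D}, each summand is a product of $d\gamma_i^j$ for some $j \in \{3,4,5\}$, a constant depending only on $L,\tilde L, m, \kappa$, and the tail product $\prod_{k=i+1}^n(1-\kappa\gamma_k/2)$. The main (and only non-trivial) step is to invoke \Cref{lem:suite_recurrence_2} with $\tildem = \kappa/2$ and $\ell = 1$: the empty sum $\sum_{i=1}^{0}\gamma_i^j$ vanishes, leaving
\begin{equation*}
\sum_{i=1}^{n} \prod_{k=i+1}^{n}\parenthese{1 - \frac{\kappa \gamma_k}{2}} \gamma_i^{j} \leq \frac{2 \gamma^{\,j-1}}{\kappa} \eqsp,
\end{equation*}
uniformly in $n$. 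Substituting this term by term in \eqref{eq:def_u2D} (factoring the common $d\gamma^2$ out of the resulting $\gamma^{j-1}$ factors with $j=3,4,5$) produces exactly the bracketed expression in the statement, multiplied by $2\kappa^{-1} d \gamma^2$. Passing to the limit $n \to \plusinfty$ in the triangle inequality concludes.

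I do not expect any real obstacle here: the convergence $\delta_x R_\gamma^n \to \pi_\gamma$ is already in place from \Cref{theo:convergence_p_Euler}, the sharper per-step bound is furnished by \Cref{theo:distance_Euler_diffusionD}, and \Cref{lem:suite_recurrence_2} with $\ell = 1$ is precisely the tool designed to convert such weighted sums into $O(\gamma^{j-1})$ quantities. The only care to take is bookkeeping the powers of $\gamma_i$ in \eqref{eq:def_u2D}, since some coefficients contain an additional factor of $\gamma_i$ hidden inside the bracket, shifting $j=3$ up to $j=4$ or $j=5$; this is a routine but strictly algebraic check.
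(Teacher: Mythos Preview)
Your proposal is correct and follows exactly the approach the paper intends: the paper's proof of \Cref{coro:asympt_biasD} simply reads ``The proof follows the same line as the proof of \Cref{coro:asympt_bias_2} and is omitted,'' and you have reproduced that line of argument in full, invoking \Cref{theo:convergence_p_Euler}, \Cref{theo:distance_Euler_diffusionD}, and \Cref{lem:suite_recurrence_2} with $\ell=1$ in the same roles. One inconsequential slip: you say each coefficient in \eqref{eq:def_u2D} depends only on $L,\tilde L,m,\kappa$, but the $d\tilde L^2/3$ term carries an extra factor of $d$; this does not affect the argument since the lemma is applied at fixed $d$.
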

\begin{proof}
  The proof follows the same line as the proof of \Cref{coro:asympt_bias_2} and is omitted.
\end{proof}
 % Let $x^\star$ be the unique minimizer of $U$. Since for all $y \in \rset^d$ $\norm[2]{x-y} \leq 2(\norm[2]{x-x^\star} +\norm[2]{x^\star-y})$, using \Cref{theo:convergence-WZ-strongly-convex}-\ref{item:moment_diffusion_item}, we get $W_2^2(\delta_x , \pi) \leq 2(\norm[2]{x-x^\star} + d/m) $.
% \begin{equation}
% \label{eq:bound_W2_deltax_pi}
% W_2^2(\delta_x , \pi) \leq 2(\norm[2]{x-x^\star} + d/m) \eqsp.
% \end{equation}
% Plugging this bounds in the results provided by \Cref{theo:distance_Euler_diffusion} and
% \Cref{theo:distance_Euler_diffusionD}, we obtain explicit
% bounds for $W_2^2(\delta_x Q_\gamma^n , \pi)$ for all $x \in
% \rset^d$.
% When $\gamma_k = \gamma$ for all $k \geq 1$,
% \eqref{eq:def_u1}, \eqref{eq:def_u2} and \eqref{eq:def_u2D} imply
% \begin{align}
% \label{eq:explicit_bound_cst_step}
% u_n^{(1)}(\gamma)  &= (1 - \kappa \gamma/2)^n %\exp \parenthese{-\kappa \gamma n /2 }
% \eqsp, \quad  u_n^{(2)}(\gamma)  \leq 2 \kappa^{-1} \gamma \defEns{\kappa^{-1} + \gamma} (2d +d L^2 \gamma /m +d L^2 \gamma^2/6) \eqsp,\\
% \nonumber
%  u_n^{(3)}(\gamma) &\leq 2 \kappa^{-1}d\gamma^2\defEns{2L^2+ \kappa^{-1}(\tilde{L}^2/3 + \gamma L^4+ 4L^4/(3m)) + \gamma L^4(\gamma /6 +m^{-1})}
%  \eqsp.
% \end{align}
Using \Cref{theo:convergence_p_Euler}-\ref{eq:rate_euler_p} and
\Cref{coro:distance_Euler_target} or \Cref{coro:asympt_biasD}, given
$\varepsilon > 0$, we determine the number of iterations
$n_{\varepsilon}$ and an associated step size $\gamma_{\varepsilon}$
to ensure that
$W_2(\delta_{\xstar} \RKer_{\gamma_{\varepsilon}}^n, \pi) \leq
\varepsilon$ for all $n \geq n_{\varepsilon}$. The precise expression
of $n_{\varepsilon}$ directly computed using
\Cref{theo:distance_Euler_diffusion} and
\Cref{theo:distance_Euler_diffusionD} are also given in
\cite[\Cref{sec:expl-bounds-fixed_precis_1}-\Cref{sec:expl-bounds-fixed_precis_2}]{durmus:moulines:2015:supplement}. Dependencies in dimension $d$ and precision $\varepsilon$ of
$n_{\varepsilon}$ are reported in
\Cref{tab:comparison_nDZero}. Under \Cref{assum:regularity} and
\Cref{assum:potentialU}, the complexity matches the results reported in \cite{durmus:moulines:2016} for the total variation
distance. Under \Cref{assum:reg_plus}, the dependency in  the precision
$\varepsilon$ can be improved. If $\tilde{L}=0$ (for
example for non-degenerate $d$-dimensional Gaussian distributions),
then the dependency in $d$ given by \Cref{theo:distance_Euler_diffusionD} is of order  $\bigO(d^{1/2} \log(d))$.

In a recent work \cite{dalalyan:2017} (based on a previous version of this paper), an improvement of
the proof of \Cref{theo:distance_Euler_diffusion} has been proposed for constant step size.
Whereas the constants are sharper, dependency in
dimension $d$ and  precision  $\varepsilon >0$  is
the same (first line of \Cref{tab:comparison_nDZero}).

%In  the proof of \Cref{theo:distance_Euler_diffusion} to get a
%similar result for the Stochastic Gradient Langevin Dynamic algorithm
%\cite{welling:the:2011}, in which unbiased estimates of $\nabla U$ are
%used.

\begin{table}[h]
\centering
\begin{normalsize}
\begin{tabular}{|c|c|c|c|}
    \hline
    Parameter & $d,\varepsilon$  \\
    \hline
    \Cref{theo:distance_Euler_diffusion}  and \Cref{theo:convergence_p_Euler}-\ref{eq:rate_euler_p} & $ \bigO(d \log(d) \varepsilon^{-2} \abs{\log(\varepsilon)})$
 \\
    \Cref{theo:distance_Euler_diffusionD}  and \Cref{theo:convergence_p_Euler}-\ref{eq:rate_euler_p} & $\bigO(d \log(d) \varepsilon^{-1} \abs{\log(\varepsilon)})$   \\
   \hline
\end{tabular}
\end{normalsize}
\caption{\normalsize{Dependencies of the number of iterations $n_{\varepsilon}$ to get $W_2(\delta_{\xstar} R^{n_{\varepsilon}}_{\gamma_{\varepsilon}}, \pi) \leq \varepsilon$}}
\label{tab:comparison_nDZero}
\end{table}

Under \Cref{assum:regularity_2} and \Cref{assum:potentialU}, by \Cref{theo:distance_Euler_diffusion}, in the finite horizon setting, then  for any $n  \geq 1$, we may choose a step size $\gamma = \gamma_n >0$ such that $W_2^2(\delta_{\xstar} R_{\gamma_n}^n , \pi) = \bigO(\log(n) / n)$ and $W_2^2(\delta_{\xstar} R_{\gamma_n}^n , \pi) \leq \bigO (\log(n) / n)^2$ if \Cref{assum:reg_plus} holds by \Cref{theo:distance_Euler_diffusionD}. The precise statement of these results are given by \cite[\Cref{coro:fixed_iteration}-\Cref{coro:fixed_iterationD}]{durmus:moulines:2015:supplement} in  \cite[\Cref{sapp:optimal_strategy}-\Cref{sapp:optimal_strategyD}]{durmus:moulines:2015:supplement}.

For simplicity, consider sequences $(\gamma_k)_{k\geq 1}$ defined for
all $k \geq 1$ by $\gamma_k = \gamma_1 /k^{\alpha}$, for $\gamma_1 <
1/(m+L)$ and $\alpha \in \ooint{0,1}$.  Then for $n \geq 1$,
$u_n^{(1)} = \bigO(\rme^{-\kappa \Gamma_n /2})$, $u_n^{(2)} = d
\bigO(n^{-\alpha})$ and $u_n^{(3)} = d^2 \bigO(n^{-2 \alpha})$ (see
\cite[~\Cref{sapp:explicit_bound_k_alpha}-\Cref{sapp:explicit_bound_k_alphaD}]{durmus:moulines:2015:supplement}
for details).  For $\gamma_k = \gamma_1 /k$, we need to extend \Cref{theo:distance_Euler_diffusion} and \Cref{theo:distance_Euler_diffusionD} to non-increasing sequence such that there exists $n_1\geq 1$ such that $\gamma_{n_1} < 1/(m+L)$. It is done in \cite[\Cref{theo:distance_Euler_diffusion_annexe} in \Cref{sec:gener-crefth-1}]{durmus:moulines:2015:supplement}. Using this result in \cite[\Cref{sec:explicit-bound-based_gamma_1}]{durmus:moulines:2015:supplement}, we get that under \Cref{assum:regularity_2} and \Cref{assum:potentialU}, that $W^2_2(\delta_{\xstar} Q^n_{\gamma},\pi) = \bigO(n^{-1})$ for $\gamma_1 > 2 \kappa^{-1}$. If in addition \Cref{assum:reg_plus} holds, we have $W^2_2(\delta_{\xstar} Q^n_{\gamma},\pi) = \bigO(n^{-1})$ for $\gamma_1 > 4 \kappa^{-1}$. However, note that the constants are exponential in $\gamma_1$.
The conclusions of this discussion are summarized in \Cref{tab:order_convergence_k_alpha}.

% Based on these upper bounds, we can obtain
% the convergence rates provided by \Cref{theo:distance_Euler_diffusion} and
% \Cref{theo:distance_Euler_diffusionD} for this particular setting, see

Note that these rates are explicit compared to those reported in \cite[Proposition 3]{durmus:moulines:2016}.
In addition, two regimes can be observed as in stochastic
approximation in the case $\alpha =1$.

\begin{table}[h]
\centering
\begin{normalsize}
\begin{tabular}{|c|c|c|}
    \hline
    &$\alpha \in \ooint{0,1}$ & $\alpha =1$ \\
    \hline
 \Cref{theo:distance_Euler_diffusion}& $d \, \bigO( n^{-\alpha})$& $d  \,  \bigO( n^{-1})$ for $\gamma_1 > 2\kappa^{-1}$ see \cite[\Cref{sec:explicit-bound-based_gamma_1}]{durmus:moulines:2015:supplement}  \\
   \hline
 \Cref{theo:distance_Euler_diffusionD}& $ d^2 \,  \bigO( n^{-2\alpha})$ & $d^2  \,  \bigO( n^{-2})$ for $\gamma_1 > 4\kappa^{-1}$ see \cite[\Cref{sec:explicit-bound-based_gamma_1}]{durmus:moulines:2015:supplement} \\
   \hline
\end{tabular}
\end{normalsize}
\caption{\normalsize{Order of convergence of $W_2^2(\delta_{\xstar} Q^n_\gamma,\pi)$ for $\gamma_k = \gamma_1 /k^{\alpha}$ }}
\label{tab:order_convergence_k_alpha}
\end{table}
Details and further discussions are included in \cite[\Cref{sec:disc-crefth_theo_1}
-\Cref{sec:disc-crefth_theo_2}]{durmus:moulines:2015:supplement}.
In particular, the dependencies of the obtained bounds with respect to
the constants $m$ and $L$ which appear in \Cref{assum:regularity},
\Cref{assum:potentialU} are evidenced.

 \newcommand{\defEnsPointDeux}[1]{\left. #1 \right  \rbrace}

\section{Quantitative bounds in total variation distance}
\sectionmark{Quantitative bounds in TV}
\label{sec:quant-bounds-total}
We develop in this section quantitative bounds in total variation
distance. For Bayesian inference application, total variation bounds
are useful for computing highest posterior density (HPD)
credible regions and intervals. For computing such bounds we will use
the results of \Cref{sec:non-asympt-bounds} combined with the
regularizing property of the semigroup $(P_t)_{t \geq 0}$.

The first key result consists in upper-bounding the total variation distance $\tvnorm{\mu P_t- \nu P_t}$ for $\mu,\nu \in \Pens_1(\rset^d)$.
To that purpose, we use the coupling by reflection; see \cite[Section 3]{lindvall:rogers:1986} or
\cite[Example 3.7]{chen:shao:1989} for its construction, and \cite{eberle:2015,egz-16,Bubeck:2015} for applications.
It is defined as the
unique strong solution $(\XSDE_t,\YSDE_t)_{ t \geq 0}$  of the SDE:
\begin{equation}
  \label{eq:def_couplage_par_reflection}
\begin{cases}
  \rmd \XSDE_t &= -\nabla U(\XSDE_t) \rmd t + \sqrt{2}\rmd B_t^d \\
 \rmd \YSDE_t &= -\nabla U(\YSDE_t) \rmd t +  \sqrt{2} (\Id - 2 e_t e_t^T)
  \rmd B_t^d   \eqsp,
\end{cases}
\quad \text{ where } e_t =\fune(\XSDE_t -\YSDE_t)
\end{equation}
with $\XSDE_0 = x$, $\YSDE_0=y$, $\fune(z) = z/\norm{z}$ for $z \not=0$ and $\fune(0) = 0$ otherwise.
Define the coupling time $T_c =  \inf \{s \geq 0 \ | \ \XSDE_s  = \YSDE_s \}$.
By construction $\XSDE_t = \YSDE_t$ for $t\geq T_c$.  Using Levy's
characterization, $\tilde{B}_t^d = \int_0^t (\Id - 2 e_s
e_s^T) \rmd B_s^d$ is a $d$-dimensional Brownian motion, therefore
$(\XSDE_t)_{t \geq 0}$ and $(\YSDE_t)_{t \geq 0}$ are weak solutions
to \eqref{eq:langevin_2} started at $x$ and $y$ respectively. Then by Lindvall's inequality, for all $t >0$ we have
$\tvnorm{P_t(x,\cdot) - P_t(y,\cdot)} \leq \PP \left( \XSDE_t \not = \YSDE_t \right)$.

Denote by $\Phibf$  the cumulative distribution function of the standard normal distribution. For $a > 0$,  define $\rated_a$ for all $t \geq 0$ by
\begin{equation}
\label{eq:def_rated}
  \rated_a(t) = \sqrt{(4/a)(\rme^{2at}-1)} \eqsp.
\end{equation}

\begin{theorem}
  \label{propo:reflexion_coupling_lip}
  Assume \Cref{assum:regularity_2} and \Cref{assum:potentialU}.
  \begin{enumerate}[label=(\roman*)]
  \item
  \label{item:reflexion_coupling_lip_1}
For any $x,y \in \rset^d$ and $t > 0$, it holds
 \[
 \tvnorm{P_t(x,\cdot) - P_t(y,\cdot)} \leq   1 - 2 \Phibf\{-\norm{x-y}/\rated_m(t)\} \eqsp,
 \]
  % \begin{equation*}
  % \tvnorm{P_t(x,\cdot) - P_t(y,\cdot)} \leq   1 - 2 \Phibf\defEns{-\frac{\norm{x-y}}{\sqrt{(4/m)(\rme^{2mt}-1)}}}
  % \end{equation*}
 where $\rated_m$ is defined in \eqref{eq:def_rated} and $m$ is the strong convexity constant.
%   \begin{equation*}
% \tvnorm{P_t(x,\cdot) - P_t(y,\cdot)}  \leq 1-2\Phibf\parenthese{-\norm{x-y}/(2\sqrt{2t})} \eqsp.
%   \end{equation*}
\item
\label{item:coro:lip_smigroup_ii}
For any   $\mu,\nu \in \Pens_1(\rset^d)$ and  $t >0$,
 \begin{equation*}
     \tvnorm{\mu P_t- \nu P_t} \leq 2^{1/2}\fracmiddle{ W_1(\mu,\nu)}{ (\uppi^{1/2}\chi_m(t))}  \eqsp.
   \end{equation*}
  \item
  \label{item:reflexion_coupling_lip_2}
For any $x \in \rset^d$ and $t \geq 0$,
   \begin{equation*}
 %    \label{eq:3}
  \tvnorm{\pi-\delta_x P_t}    \leq \fracmiddle{2^{1/2} \defEns{(d/m)^{1/2} +  \norm{x-\xstar}}}{ (\uppi^{1/2}\chi_m(t))} \eqsp.
   \end{equation*}
  \end{enumerate}
\end{theorem}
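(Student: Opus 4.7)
The plan is to handle the three parts in succession via the reflection coupling $(\XSDE_t, \YSDE_t)_{t \geq 0}$. By Lindvall's inequality (already recalled just above), assertion (i) reduces to bounding $\PP(T_c > t)$. Set $Z_t = \XSDE_t - \YSDE_t$ and $r_t = \norm{Z_t}$: subtracting the two SDEs, $\rmd Z_t = -\{\nabla U(\XSDE_t) - \nabla U(\YSDE_t)\}\rmd t + 2\sqrt{2}\, e_t e_t^T \rmd B_t^d$, so the noise is confined to the one-dimensional subspace $\rset e_t$. Applying Itô's formula to $z \mapsto \norm{z}$ on $[0,T_c)$ (with a standard localization at $\inf\{t: r_t \leq \varepsilon\}$ followed by $\varepsilon \downarrow 0$), the Hessian $\norm{z}^{-1}(\Id - e(z)e(z)^T)$ annihilates $e_t e_t^T$, so the second-order correction vanishes and
\begin{equation*}
\rmd r_t = -r_t^{-1}\ps{\nabla U(\XSDE_t) - \nabla U(\YSDE_t)}{Z_t}\rmd t + 2\sqrt{2}\,\rmd W_t,
\end{equation*}
where $W_t = \int_0^t e_s^T \rmd B_s^d$ is a one-dimensional standard Brownian motion by Lévy's characterization. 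Strong convexity (\Cref{assum:potentialU}) then bounds the drift by $-m r_t$, hence $\rmd r_t \leq -m r_t\,\rmd t + 2\sqrt{2}\,\rmd W_t$.

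Setting $V_t = \rme^{mt} r_t$, the drift is absorbed and $\rmd V_t \leq 2\sqrt{2}\,\rme^{mt}\rmd W_t$, so $V_t \leq V_0 + M_t$ on $[0,T_c)$ with $M_t = \int_0^t 2\sqrt{2}\,\rme^{ms}\rmd W_s$ a Gaussian martingale and $\langle M\rangle_t = (4/m)(\rme^{2mt}-1) = \rated_m(t)^2$. Since $V_s \geq 0$, whenever $V_0 + M_s \leq 0$ at some $s \leq t$ one must have $V_s = 0$, so $\{T_c > t\} \subset \{\min_{s \leq t} M_s > -\norm{x-y}\}$. By Dubins--Schwarz, $(M_s)_{s \leq t}$ has the same law as $(\tilde B_{\rated_m(s)^2})_{s \leq t}$ for a standard Brownian motion $\tilde B$, so the reflection principle yields $\PP(T_c > t) \leq 1 - 2\Phibf(-\norm{x-y}/\rated_m(t))$, which proves (i).

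For (ii), I would take any $(X,Y) \in \couplage{\mu}{\nu}$, condition on $(X,Y)$ and apply (i) together with the elementary estimate $1 - 2\Phibf(-a) \leq a\sqrt{2/\uppi}$ valid for $a \geq 0$, which follows from $\Phibf(a) - 1/2 = \int_0^a (2\uppi)^{-1/2}\rme^{-s^2/2}\rmd s \leq a(2\uppi)^{-1/2}$. This gives
\begin{equation*}
\tvnorm{\mu P_t - \nu P_t} \leq \expe{\norm{X-Y}}\sqrt{2/\uppi}/\rated_m(t) = \expe{\norm{X-Y}}/\sqrt{(2\uppi/m)(\rme^{2mt}-1)},
\end{equation*}
and taking the infimum over transference plans yields (ii). Assertion (iii) is then the specialization $\mu = \pi$, $\nu = \delta_x$, using invariance $\pi P_t = \pi$ and the bound $W_1(\pi,\delta_x) \leq \norm{x-\xstar} + \int\norm{y-\xstar}\pi(\rmd y) \leq \norm{x-\xstar} + (d/m)^{1/2}$ derived from Jensen's inequality and \Cref{theo:convergence-WZ-strongly-convex}-\ref{item:moment_diffusion_item}.

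The main technical subtlety is justifying the Itô computation through the singularity of $\norm{\cdot}$ at the origin so that the Hessian cancellation with $e_t e_t^T$ is rigorous on the full stochastic interval $[0,T_c)$; the stopping-time localization indicated above handles this. Once this step is in place, the remaining ingredients reduce to elementary stochastic calculus and the reflection-principle hitting-time computation.
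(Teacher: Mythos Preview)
Your proof is correct and follows essentially the same route as the paper's: reflection coupling, It\^o's formula on $\norm{\cdot}$ (with the Hessian cancellation against $e_t e_t^T$), strong convexity to bound the drift by $-m r_t$, and then a comparison with an Ornstein--Uhlenbeck-type process to control the coupling time. The only cosmetic difference is that the paper writes the upper bound as the OU process $\mathsf{U}_t = \rme^{-mt}\norm{x-y} + 2\sqrt{2}\int_0^t \rme^{m(s-t)}\rmd \mathsf{B}_s^1$ and cites the hitting-time formula from Borodin--Salminen, whereas you pass to $V_t = \rme^{mt} r_t$, identify the bound as a time-changed Brownian motion via Dubins--Schwarz, and invoke the reflection principle directly; these are the same computation (since $\rme^{mt}\mathsf{U}_t = \norm{x-y} + M_t$), and your version has the mild advantage of being self-contained. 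Parts (ii) and (iii) match the paper's argument as well, with (iii) differing only in that you bound $W_1(\pi,\delta_x)$ via the moment estimate \Cref{theo:convergence-WZ-strongly-convex}-\ref{item:moment_diffusion_item} and Jensen, while the paper cites \Cref{theo:convergence-WZ-strongly-convexD}-\ref{item:2:theo:convergence-WZ-strongly-convexD}; both yield the same bound.
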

\begin{proof}
\begin{enumerateList}
\item Denote for $t > 0$, $\mathsf{B}_t^1 = \int_0^t \1_{\{s < T_c\}} e_s^T \rmd B_s^d$.
 We compute a bound for the coupling time. On  $\{t < T_c\}$, by \eqref{eq:def_couplage_par_reflection}, we get
\begin{equation*}
  % \label{eq:difference_reflection}
  \rmd \{ \XSDE_t - \YSDE_t \} = -\defEns{ \nabla U (\XSDE_t)- \nabla U(\YSDE_t)} \rmd t + 2 \sqrt{2}  e_t \rmd \mathsf{B}_t^1 \eqsp.
\end{equation*}
Itô's formula on $\{t < T_c\}$ yields
\begin{multline*}
%  \label{eq:difference_norm_reflection}
 \rme^{m t }  \norm{\XSDE_t-\YSDE_t} =    \norm{x-y} + m \int_0^t \rme^{m s} \norm{\XSDE_s - \YSDE_s} \rmd s \\- \int_0^{t}  \rme^{m s }  \ps{\nabla U(\XSDE_s)-\nabla U (\YSDE_s)}{e_s} \rmd s +2\sqrt{2} \int_0^t \rme^{m s }  \rmd \mathsf{B}_s^1 \eqsp.
%&  \leq \norm{x-y}  +2 \sqrt{2} \rme^{m t } \mathsf{B}_t^1  \eqsp.
\end{multline*}
Then by \Cref{assum:potentialU}, we obtain on $\{t < T_c\}$, $  \norm{\XSDE_t-\YSDE_t} \leq \mathsf{U}_t $,
where $(\mathsf{U}_t)_{t \in \ooint{0,T_c}}$ is the one-dimensional Ornstein-Uhlenbeck process defined by
\begin{equation*}
  \mathsf{U}_t = \rme^{-m t}\norm{x-y} + 2 \sqrt{2} \int_0^t \rme^{m(s-t)} \rmd \mathsf{B}_s^1 \eqsp.
%= \rme^{-8 m t / 8}\norm{x-y} +  \int_0^{8 t}\rme^{8 m(s-t) /8} \rmd \tilde{B}_s^1
%%= OU de variance 1 au temps 8t et de parametre m changer en m/8
\end{equation*}
Therefore, for all $x,y \in \rset^d $ and $t \geq 0$, we get $$\PP (T_c > t ) \leq  \PP \parenthese{ \min_{0 \leq s \leq t} \mathsf{U}_s  > 0 } \eqsp.$$
% \begin{equation*}
% \PP \left(T_c > t \right) \leq  \PP \left( \min_{0 \leq s \leq t} \mathsf{U}_t  > 0 \right)  \eqsp.
% % = \int_{8t} ^{\plusinfty}  \frac{\norm{x-y}}{\sqrt{2 \uppi \psi^3(s)}} \psi'(s) \exp\parenthese{-\norm[2]{x-y}/(2\psi(s))} \rmd s
% %  = \int_0^{\norm{x-y}/\sqrt{\psi(8t)}} \rme^{u^2/2} \rmd u
% %   = 1 - 2 \Phibf\defEns{-\frac{\norm{x-y}}{\sqrt{\psi(8t)}}}
% % On utilise le changement de var u^2 =  \norm[2]{x-y}/\psi(s)
% \end{equation*}
Finally the proof follows from \cite[Formula 2.0.2, page 542]{borodin:salminen:2002}. For completeness, this formula is given in \Cref{sec:distr-hitt-time}.
% where $\psi(t) = (4/m)(\rme^{mt/4}-1)$. A change of variable concludes the proof.
% \begin{align*}
% \PP \left(T_c > t \right) &\leq  \PP \left( \min_{0 \leq s \leq t} U_t  > 0 \right) \\
% &= \int_{8t} ^{\plusinfty}  \frac{\norm{x-y}}{\sqrt{2 \uppi \phi^3(s)}} \exp\parenthese{-\norm[2]{x-y}/(2\phi(s))} \rmd s \eqsp,
% \end{align*}
% where $\phi(t) = (4/m)(\rme^{mt/4}-1)$.
\item Let $\mu,\nu \in \Pens_1(\rset^d)$ and $\xi \in
  \couplage{\mu}{\nu}$ be an optimal transference plan for $(\mu,\nu)$
  \wrt~$W_1$.  Since for all $s >0$, $1/2-\Phibf(-s) \leq (2
  \uppi)^{-1/2} s$, \ref{item:reflexion_coupling_lip_1} implies that
  for all $x,y \in \rset^d$ and $t >0$,
  \begin{equation*}
    \tvnorm{\mu P_t - \nu P_t} %\leq \int_{\rset^d \times \rset^d} \tvnorm{\delta_x P_t - \delta_y P_t} \, \rmd \xi( x, y)
\leq 2 \int_{\rset^d \times \rset^d} \frac{\norm{x-y}}{\sqrta{2 \uppi} \chi_m(t) } \,  \rmd \xi( x, y)\eqsp,
  \end{equation*}
  which is the desired result.
\item The proof is a straightforward consequence of \ref{item:coro:lip_smigroup_ii} and
\Cref{theo:convergence-WZ-strongly-convex}-\ref{item:2:theo:convergence-WZ-strongly-convexD}.
\end{enumerateList}
\end{proof}
Since for all $s >0$,  $s
\leq \rme^s -1$, note that
\Cref{propo:reflexion_coupling_lip}-\ref{item:coro:lip_smigroup_ii}
implies that for all $t >0$ and $\mu,\nu \in \Pens_1(\rset^d)$,
\begin{equation}
\label{eq:item:coro:lip_smigroup_ii}
    \tvnorm{\mu P_t- \nu P_t} \leq (4 \uppi t)^{-1/2} W_1(\mu,\nu)  \eqsp.
  \end{equation}
Therefore for all bounded measurable function $f$, $P_tf$ is a Lipschitz function for all $t > 0$ with Lipshitz constant
\begin{equation}
\label{eq:lipshitz-constant-Ptf}
\Vnorm[\Lip]{P_t f} \leq (4 \uppi t)^{-1/2} \osc{f} \eqsp.
\end{equation}

We will now study the contraction of  $\QKer_{\gaStep}^{n,\ell}$ in total variation for non-increasing sequences $(\gaStep_k)_{k \geq 1}$. Strikingly, we are able to derive results which closely parallel \Cref{propo:reflexion_coupling_lip}. The proof is nevertheless completely different because the reflection coupling is no longer applicable in discrete time.  We use a coupling construction inspired by the method of \cite[Section~3.3]{bubley:dyer:jerrum:1998} for Gaussian random walks.  This construction has been used in \cite{eberle:2016} to establish convergence of homogeneous Markov chain in Wasserstein distances using different method of proof. So as not to interrupt the argument, this construction is postponed to \Cref{sec:conv-total-vari_AR}.

For all $n,\ell \geq 1$, $n < \ell$ and $(\gaStep_k)_{k \geq 1}$  a non-increasing sequence denote by
\begin{equation}
  \label{eq:def_xirmse}
\LambdarMSE_{n,\ell}(\gamma) =   \kappa^{-1} \defEns{\prod_{j=n}^{\ell}(1-\kappa \gaStep_j)^{-1} -1} \eqsp, \qquad \LambdarMSE_{\ell}(\gamma) = \LambdarMSE_{1,\ell}(\gamma) \eqsp.
\end{equation}

\begin{theorem}
  \label{theo:convergence_discrete_chain}
  Assume \Cref{assum:regularity_2} and \Cref{assum:potentialU}.
  \begin{enumerate}[label=(\roman*)]
  \item
\label{item:convergence_discrete_chain_1}
 Let $(\gaStep_k)_{k \geq 1}$ be a non-increasing sequence satisfying $\gaStep_1 \leq 2/(m+L)$. Then for all $x,y \in \rset^d$ and $n,\ell \in \nset^*$, $n < \ell$, we have $$\tvnorm{\delta_x \QKer_{\gaStep}^{n,\ell}-\delta_y \QKer_{\gaStep}^{n,\ell}} \leq 1-2 \Phibf\{- \norm{x-y}/\{8\, \LambdarMSE_{n,\ell}(\gamma)\}^{1/2} \} \eqsp.$$
  % \begin{equation*}
  %   \tvnorm{\delta_x \QKer_{\gaStep}^{n,\ell}-\delta_y \QKer_{\gaStep}^{n,\ell}} \leq 1-2 \Phibf\parenthese{- \norm{x-y}/(8\, \LambdarMSE_{n,\ell})^{1/2} } \eqsp.
  % \end{equation*}
% % \begin{equation*}
% %   \LambdarMSE_k = \sum_{i=1}^k \gaStep_{i} \defEns{\prod_{j=1}^i (1-\kappa \gaStep_j)}^{-1} \eqsp.
% % \end{equation*}
% \begin{equation*}
% %\label{eq:def_XirMSE}
%   \LambdarMSE_k = \kappa^{-1} \defEns{\prod_{j=1}^k(1-\kappa \gaStep_j)^{-1} -1} \eqsp.
% \end{equation*}
\item
\label{item:convergence_discrete_chain_3}
 Let $(\gaStep_k)_{k \geq 1}$ be a non-increasing sequence satisfying $\gaStep_1 \leq 2/(m+L)$.
Then, for all $\mu,\nu \in \Pens_1(\rset^d)$ and $\ell, n \in \nset^*$, $n < \ell$, we have
 \begin{equation*}
 %\label{eq:lipshitz-constant-Q}
     \tvnorm{\mu \QKer_{\gaStep}^{n,\ell}-\nu \QKer_{\gaStep}^{n,\ell}} \leq \{4 \uppi \LambdarMSE_{n,\ell}(\gamma)\}^{-1/2} W_1(\mu,\nu) \eqsp.
 \end{equation*}
% for all $f \in \functionspace[b]{\rset^d}$ and $ \ell, n \geq 1$, $n < \ell$, $Q^{n,\ell}_\gamma f$ is a Lipschitz function with
% \begin{equation*}
% %\label{eq:lipshitz-constant-Q}
% \norm{Q^{n,\ell}_\gamma f}_{\Lip} \leq   \osc{f}/(4 \uppi \LambdarMSE_{n,\ell})^{1/2} \eqsp.
% \end{equation*}
\item
\label{item:convergence_discrete_chain_2}
 Let $\gaStep \in \ocint{0, 2/(m+L)}$. Then for any $x\in \rset^d$ and $n \geq 1$,
%$$ \tvnorm{\pi_{\gamma} - \delta_x R_{\gamma}^n} \leq (4 \uppi \LambdarMSE_{n})^{-1/2} \defEns{\norm{x-\xstar} + (2\kappa^{-1}d)^{1/2}}\eqsp.$$
  \begin{equation*}
    \tvnorm{\pi_{\gamma} - \delta_x R_{\gamma}^n} \leq \{4 \uppi \kappa(1-(1-\kappa \gamma)^{n/2})   \}^{-1/2} (1-\kappa \gamma)^{n/2} \defEns{\norm{x-\xstar} + (2\kappa^{-1}d)^{1/2}} \eqsp.
  \end{equation*}
%\eric{expression dépendant plus explicitement de $\gamma$, quand $\gamma$ est constant- relis}
\end{enumerate}
\end{theorem}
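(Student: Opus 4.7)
The plan is to build the three statements in a cascade: part (i) provides a pointwise total-variation bound via a discrete coupling construction, part (ii) follows by integrating (i) against an optimal $W_1$-plan and linearizing the Gaussian cdf, and part (iii) follows by splitting the $n$ steps into a $W_2$-contractive phase followed by a regularizing phase. Throughout, the structure exploited is that each $\RKer_{\gaStep_k}(x,\cdot)$ is a Gaussian $\mathcal{N}(x-\gaStep_k\nabla U(x), 2\gaStep_k \IdM)$, so the analogue of the continuous-time reflection coupling can be implemented in discrete time by coupling the Gaussian innovations along the current difference direction, as suggested by the Bubley--Dyer--Jerrum construction referenced in \Cref{sec:conv-total-vari_AR}.

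For \ref{item:convergence_discrete_chain_1}, I would construct a coupling $(X_k, Y_k)_{k=n-1}^{\ell}$ with initial condition $(x,y)$ and marginal transitions $\RKer_{\gaStep_k}$. As long as $X_k \neq Y_k$, set $e_k = (X_k - Y_k)/\norm{X_k - Y_k}$, decompose each Gaussian innovation into parallel and orthogonal parts with respect to $e_k$, share the orthogonal part between the two chains, and couple the one-dimensional parallel parts by a reflection (or maximal) coupling. The deterministic Euler step applied to $(X_k,Y_k)$ is contractive by \eqref{eq:convex_forte_contra1} and the assumption $\gaStep_1 \leq 2/(m+L)$, namely $\norm{X_k - \gaStep_{k+1}\nabla U(X_k) - Y_k + \gaStep_{k+1}\nabla U(Y_k)}^2 \leq (1-\kappa\gaStep_{k+1})\norm{X_k-Y_k}^2$. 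Projecting $X_{k+1}-Y_{k+1}$ onto $e_k$ yields a scalar process that is, up to coupling time, a Gaussian random walk with multiplicative factor $\sqrt{1-\kappa\gaStep_k}$ and innovation variance $8\gaStep_k$. Rescaling by the running product $\prod (1-\kappa\gaStep_j)^{-1/2}$ turns it into a zero-drift process whose aggregate variance telescopes to
\[
\sum_{j=n}^{\ell} 8\gaStep_j \prod_{k=n}^{j}(1-\kappa\gaStep_k)^{-1} \;=\; 8\LambdarMSE_{n,\ell}(\gamma),
\]
using the identity $\LambdarMSE_{n,\ell}(\gamma) = \sum_{j=n}^{\ell}\gaStep_j\prod_{k=n}^j(1-\kappa\gaStep_k)^{-1}$ that one checks from the definition \eqref{eq:def_xirmse}. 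A one-dimensional Gaussian hitting-time estimate then bounds $\PP(X_\ell \neq Y_\ell)$ by $1 - 2\Phibf\{-\norm{x-y}/\sqrt{8\LambdarMSE_{n,\ell}(\gamma)}\}$, and Lindvall's inequality closes the argument.

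For \ref{item:convergence_discrete_chain_3}, let $\xi \in \couplage{\mu}{\nu}$ be optimal for $W_1$. Disintegrating and applying \ref{item:convergence_discrete_chain_1},
\[
\tvnorm{\mu \QKer_\gaStep^{n,\ell} - \nu \QKer_\gaStep^{n,\ell}} \leq \int_{\rset^d \times \rset^d}\left[1 - 2\Phibf\bigl(-\norm{x-y}/\sqrt{8\LambdarMSE_{n,\ell}(\gamma)}\bigr)\right]\rmd\xi(x,y),
\]
and the elementary bound $1 - 2\Phibf(-s) \leq s\sqrt{2/\uppi}$ for $s\geq 0$ (from $\Phibf'(0) = (2\uppi)^{-1/2}$) converts the right-hand side into $W_1(\mu,\nu)/\sqrt{4\uppi\LambdarMSE_{n,\ell}(\gamma)}$. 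For \ref{item:convergence_discrete_chain_2}, split $n = m_1 + m_2$ with $m_1 = \lfloor n/2\rfloor$ and $m_2 = n - m_1$, use invariance $\pi_\gaStep \RKer_\gaStep^{m_2} = \pi_\gaStep$, apply \ref{item:convergence_discrete_chain_3} to the last $m_2$ steps, and combine with $W_1 \leq W_2$ and \Cref{theo:convergence_p_Euler}-\ref{eq:rate_euler_p} to get $W_1(\pi_\gaStep, \delta_x\RKer_\gaStep^{m_1}) \leq (1-\kappa\gaStep)^{m_1/2}\{\norm{x-\xstar} + \sqrt{2\kappa^{-1}d}\}$. Rewriting $(1-\kappa\gaStep)^{-m_2} - 1 = (1-\kappa\gaStep)^{-m_2}\{1-(1-\kappa\gaStep)^{m_2}\}$ and regrouping the factors yields the stated expression.

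The main obstacle is in \ref{item:convergence_discrete_chain_1}: building the discrete coupling carefully enough that the projected process genuinely is (or is dominated by) a one-dimensional Gaussian random walk with exactly the variance $8\LambdarMSE_{n,\ell}(\gamma)$, despite the fact that the direction $e_k$ is $\mathcal{F}_k$-measurable and changes at each step. In continuous time, Itô's formula applied to $\norm{\XSDE_t-\YSDE_t}$ under reflection coupling produced a clean one-dimensional scalar SDE (as in the proof of \Cref{propo:reflexion_coupling_lip}); the discrete construction must instead keep track of the residual orthogonal displacement coming from $-\gaStep_{k+1}(\nabla U(X_k) - \nabla U(Y_k))$, which is not aligned with $e_k$, and verify that the telescoping of contraction factors matches the definition of $\LambdarMSE_{n,\ell}$ exactly. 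This is precisely where the device of \cite{bubley:dyer:jerrum:1998}, detailed in \Cref{sec:conv-total-vari_AR}, is indispensable.
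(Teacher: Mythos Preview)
Your plan for \ref{item:convergence_discrete_chain_1} and \ref{item:convergence_discrete_chain_3} matches the paper. The only point to flag in \ref{item:convergence_discrete_chain_1} is that in the construction of \Cref{sec:conv-total-vari_AR} the reflection direction $e_k$ is taken along the \emph{post-drift} difference $\funreg_k(X_{k-1})-\funreg_k(Y_{k-1})$, not along $X_{k-1}-Y_{k-1}$ as you write; with this choice the orthogonal Gaussian parts cancel exactly and $\norm{X_k-Y_k}$ becomes a genuinely one-dimensional quantity. The paper does not analyze this as a forward random walk hitting zero but instead proves \Cref{theo:strict_convergence_AR} by a backward induction on $k$ using the Gaussian integral identity of \Cref{lem:proof_AR_2}; your telescoping computation $\sum_{j}\gaStep_j\prod_{i\leq j}(1-\kappa\gaStep_i)^{-1}=\LambdarMSE_{n,\ell}(\gamma)$ is exactly what makes that induction close, so the outcome is the same.

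For \ref{item:convergence_discrete_chain_2} you take a genuinely different route. The paper does not split $n$ into two phases: it simply applies \ref{item:convergence_discrete_chain_3} to all $n$ steps with $\mu=\pi_\gamma$ and $\nu=\delta_x$, and bounds $W_1(\pi_\gamma,\delta_x)\leq \norm{x-\xstar}+(2\kappa^{-1}d)^{1/2}$ directly from the moment estimate \Cref{theo:kind_drift}-\ref{item:kind_drift_2}. Neither \Cref{theo:convergence_p_Euler} nor any $W_2$-contraction phase is needed, because the factor $(1-\kappa\gamma)^{n/2}$ already emerges from $\LambdarMSE_{1,n}(\gamma)=\kappa^{-1}(1-\kappa\gamma)^{-n}\{1-(1-\kappa\gamma)^{n}\}$ once you pull $(1-\kappa\gamma)^{-n}$ out of the square root. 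Your two-phase split is valid and yields an equivalent bound up to the form of the denominator, but it introduces an unnecessary $\lfloor n/2\rfloor$ versus $\lceil n/2\rceil$ bookkeeping and an extra dependency on \Cref{theo:convergence_p_Euler}. The paper's one-step argument is shorter and shows that the total-variation contraction here is \emph{entirely} a regularization effect of the Gaussian innovations over the full trajectory, not a composition of Wasserstein contraction followed by smoothing.
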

\begin{proof}
%The proof is postponed to \Cref{sec:proof_theo:convergence_discrete_chain}.
\begin{enumerateList}
\item By \eqref{eq:convex_forte_contra1} for all $x,y$ and $k \geq 1$, we have
\begin{equation*}
 \norm{ x-\gaStep_k \nabla U(x) - y + \gaStep_k \nabla U(y) }\leq (1-\kappa \gaStep_k )^{1/2} \norm{x-y} \eqsp.
\end{equation*}
Let $n,\ell \geq 1$, $n < \ell$,
then applying \Cref{theo:strict_convergence_AR} in \Cref{sec:conv-total-vari_AR}, we get
\begin{equation*}
    \tvnorm{\delta_x \QKer_{\gaStep}^{n,\ell}-\delta_y \QKer_{\gaStep}^{n,\ell}} \leq 1-2 \Phibf\parenthese{- \norm{x-y}/\{8\, \LambdarMSE_{n,\ell}(\gamma)\}^{1/2} } \eqsp,
\end{equation*}
\item Let $f \in \functionspace[b]{\rset^d}$ and $ \ell > n \geq 1$.
For all $x,y \in \rset^d$ by definition of the total variation distance and \ref{item:convergence_discrete_chain_1}, we have
\begin{align*}
\abs{  Q^{n,\ell}_\gamma f(x) - Q^{n,\ell}_\gamma f(y) } &\leq  \osc{f} \tvnorm{\delta_x Q^{n,\ell}_\gamma - \delta_y Q^{n,\ell}_\gamma}  \\
& \leq \osc{f} \defEns{1-2 \Phibf\parenthese{- \norm{x-y}/\{8\, \LambdarMSE_{n,\ell}(\gamma)\}^{1/2} }} \eqsp,
\end{align*}
Using that  for all $s >0$, $1/2-\Phibf(-s) \leq (2 \uppi)^{-1/2} s$ concludes the proof.
\item The proof follows from \ref{item:convergence_discrete_chain_2}, the bound for all $s >0$, $1/2-\Phibf(-s) \leq (2 \uppi)^{-1/2} s$ and \Cref{theo:kind_drift}-\ref{item:kind_drift_2}.
\end{enumerateList}
\end{proof}

  We can combine  \Cref{theo:distance_Euler_diffusion} or
  \Cref{theo:distance_Euler_diffusionD} with \Cref{propo:reflexion_coupling_lip} and \Cref{theo:convergence_discrete_chain} to obtain
  explicit bounds in total
  variation between the Euler-Maruyama discretization and the target
  distribution $\pi$. To that purpose, we  use the following
  decomposition, for all non-increasing sequence $(\gaStep_k)_{k \geq 1}$, initial point $x \in \rset^d$ and $\ell \geq 0$:
  \begin{equation}
\label{eq:decomposition_tv}
    \tvnorm{\pi-\delta_x \QKer_{\gaStep}^\ell}
    \leq \tvnorm{\pi -\delta_x P_{\GaStep_\ell}}
    + \tvnorm{\delta_x  P_{\GaStep_\ell}- \delta_x  \QKer_{\gaStep}^\ell} \eqsp.
  \end{equation}
The first term is dealt with \Cref{propo:reflexion_coupling_lip}-\ref{item:reflexion_coupling_lip_2}.
It remains to bound the second term in
\eqref{eq:decomposition_tv}. Since we will use
\Cref{theo:distance_Euler_diffusion} and
\Cref{theo:distance_Euler_diffusionD},  we have two different results depending on the assumptions on
 $U$.
Define for all $x \in \rset^d$ and $n,p \in \nset$,
\begin{align}
\label{eq:defrateFun_1}
\rateFun^{(1)}_{n,p}(x) =    L^2 \sum_{i=1}^n \gamma_i^2  \left.\prod_{k=i+1}^n(1- \kappa\gamma_k/2) \left[\defEns{\kappa^{-1} + \gamma_i} (2d + d L^2 \gamma_i^2/6) \right. \right.\\
\nonumber
 \left. \left. +L^2 \gamma_i \delta_{i,n,p}(x)   \defEns{\kappa^{-1} + \gamma_{i}}  \right] \right.
\end{align}
\begin{align}
\label{eq:defrateFun_2}
\rateFun_{n,p}^{(2)}(x) =  \sum_{i=1}^n \gamma_i^3 \prod_{k=i+1}^n(1- \kappa\gamma_k/2)
\left[ L^4\delta_{i,n,p}(x) (4 \kappa^{-1}/3+\gamma_{n+1}) \right. \\
\nonumber
\left. +   d\defEns{2L^2+ 4 \kappa^{-1}(d \tilde{L}^2/3 + \gamma_{n+1}L^4/4) + \gamma_{n+1}^2 L^4 /6 } \right] \eqsp,
\end{align}
where
\begin{equation*}
  \delta_{i,n,p}(x) = \rme^{-2m \Gamma_{i-1}}\FunMomentDEuler_{n,p}(x) + (1-\rme^{-2m \Gamma_{i-1}} )(d/m) \eqsp,
\end{equation*}
and $\FunMomentDEuler_{n,p}(x)$ is given by \eqref{eq:FunMomentDEuler}.
  \begin{theorem}
    \label{theo:tv_decreasing}
Assume \Cref{assum:regularity_2} and \Cref{assum:potentialU}.
Let $(\gaStep_k)_{k \geq 1}$  be a non-increasing sequence with $\gaStep_1 \leq 1/(m+L)$.
Then for all $x \in \rset^d$ and $\ell,n \in \nset^*$, $\ell > n$,
\begin{multline}
  \label{eq:bound_tv_first}
  \tvnorm{\delta_x P_{\GaStep_\ell} - \delta_x \QKer_{\gaStep}^\ell} \leq
  (\rateFun_n(x) /( 4\uppi \GaStep_{n+1,\ell}))^{1/2}  \\
+2^{-3/2} L \parenthese{ \sum_{k=n+1}^{\ell} \defEns{(\gaStep_{k}^3 L^2/3) \FunMomentDEuler_{1,k-1}(x)  + d \gaStep_{k}^2}}^{1/2}  \eqsp,
\end{multline}
% \begin{multline}
%   \label{eq:bound_tv_first}
%   \tvnorm{\delta_x P_{\GaStep_p} - \delta_x \QKer_{\gaStep}^p} \leq
% (2\rateFun_n(x) /( \uppi \GaStep_{n+1,p}))^{1/2}
% \\
% +2^{-1/2} L \parenthese{ \sum_{k=n+1}^p \defEns{(\gaStep_{k}^3 L^2/3)  \parenthese{\prod_{i=1}^{k-1}(1-\kappa \gaStep_i) \norm[2]{x-\xstar}+2\kappa^{-1} d} + d \gaStep_{k}^2}}^{1/2}  \eqsp,
% \end{multline}
where $\FunMomentDEuler_{1,n}(x)$ is defined by \eqref{eq:FunMomentDEuler}, $\rateFun_n(x)$ is equal to $\rateFun^{(2)}_{n,0}(x)$ given by \eqref{eq:defrateFun_2}, if  \Cref{assum:reg_plus} holds, and to $\rateFun^{(1)}_{n,0}(x)$ given by \eqref{eq:defrateFun_1} otherwise.
% where the sequence $(w_k)_{k \geq 1}$ is equal to $(u_{k}^{(3)})_{k
%   \geq 1}$ if \Cref{assum:reg_plus} holds and $(u_{k}^{(2)})_{k \geq
%   1}$ otherwise\alain{faux à corriger}.
  \end{theorem}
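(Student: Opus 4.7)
The plan is to interpose the intermediate measure $\delta_x Q^n_\gamma P_{\Gamma_{n+1,\ell}}$ and apply the triangle inequality:
\begin{equation*}
\tvnorm{\delta_x P_{\Gamma_\ell} - \delta_x Q^\ell_\gamma} \leq \tvnorm{\delta_x P_{\Gamma_n} P_{\Gamma_{n+1,\ell}} - \delta_x Q^n_\gamma P_{\Gamma_{n+1,\ell}}} + \tvnorm{\delta_x Q^n_\gamma P_{\Gamma_{n+1,\ell}} - \delta_x Q^n_\gamma Q^{n+1,\ell}_\gamma}.
\end{equation*}

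For the first summand, I would apply the regularisation estimate \eqref{eq:item:coro:lip_smigroup_ii} with $t = \Gamma_{n+1,\ell}$ together with $W_1 \leq W_2$ to reduce it to $(4 \uppi \Gamma_{n+1,\ell})^{-1/2}\, W_2(\delta_x P_{\Gamma_n}, \delta_x Q^n_\gamma)$. The squared Wasserstein distance is then bounded by re-running the synchronous coupling \eqref{eq:definition_couplage_2} of the proof of \Cref{theo:distance_Euler_diffusion} (or \Cref{theo:distance_Euler_diffusionD} under \Cref{assum:reg_plus}), now with initial joint law $\zeta_0 = \delta_x \otimes \delta_x$ instead of $\pi \otimes \delta_x$. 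The induction then involves second moments of $(Y_t)$ along the trajectory, which \Cref{theo:convergence-WZ-strongly-convexD}-\ref{item:moment_diffusion_2} controls uniformly by $\delta_{i,n,0}(x)$; this yields $W_2^2(\delta_x P_{\Gamma_n}, \delta_x Q^n_\gamma) \leq \rateFun_n(x)$ and hence the first claimed summand.

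For the second summand, conditioning on the time-$\Gamma_n$ state and using the data-processing inequality give
\begin{equation*}
\tvnorm{\delta_x Q^n_\gamma P_{\Gamma_{n+1,\ell}} - \delta_x Q^\ell_\gamma} \leq \int_{\rset^d} \tvnorm{\delta_y P_{\Gamma_{n+1,\ell}} - \delta_y Q^{n+1,\ell}_\gamma}\, \delta_x Q^n_\gamma(\rmd y).
\end{equation*}
For each $y \in \rset^d$ I would bound the integrand by Girsanov plus Pinsker. On $[\Gamma_n, \Gamma_\ell]$, consider the continuous-time interpolation $(\overline{Y}_t)$ of the Euler chain started at $y$, driven by the same Brownian motion as the diffusion $(Y_t)$ started at $Y_{\Gamma_n}=y$. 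Both satisfy SDEs with diffusion coefficient $\sqrt{2}$, and Girsanov's theorem provides a path-space KL bound equal to $(1/4)\,\PE[\int_{\Gamma_n}^{\Gamma_\ell}\norm[2]{\nabla U(\overline{Y}_t)-\nabla U(\overline{Y}_{\lfloor t\rfloor})}\rmd t]$, where $\lfloor t\rfloor$ denotes the largest grid point $\Gamma_{k-1}$ not exceeding $t$. Applying \Cref{assum:regularity_2}, the Euler increment identity
\begin{equation*}
\PE[\norm[2]{\overline{Y}_t-\overline{Y}_{\Gamma_{k-1}}}\mid \mcf_{\Gamma_{k-1}}] = (t-\Gamma_{k-1})^2\norm[2]{\nabla U(\overline{Y}_{\Gamma_{k-1}})} + 2d(t-\Gamma_{k-1}),
\end{equation*}
and the bound $\norm{\nabla U(z)}\leq L\norm{z-\xstar}$ yields, after integrating over each $[\Gamma_{k-1}, \Gamma_k]$, a KL bound of the shape $(L^2/4)\sum_{k=n+1}^\ell\{(L^2 \gamma_k^3/3)\,\PE^y\norm[2]{\overline{Y}^{(y)}_{\Gamma_{n+1,k-1}}-\xstar}+d\gamma_k^2\}$.

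Finally, I would combine Pinsker's inequality $\tvnorm{\cdot}\leq \sqrt{\KL/2}$, the data-processing inequality (to pass to the time-$\Gamma_\ell$ marginals), and Jensen's inequality: by concavity of $\sqrt{\cdot}$, the $\delta_x Q^n_\gamma(\rmd y)$-integration can be pushed inside the square root, at which point the averaged second moment $\int \PE^y\norm[2]{\overline{Y}^{(y)}_{\Gamma_{n+1,k-1}}-\xstar}\,\delta_x Q^n_\gamma(\rmd y)$ collapses to the second moment of $\delta_x Q^{k-1}_\gamma$, which \Cref{theo:kind_drift}-\ref{item:kind_drift_1} bounds by $\FunMomentDEuler_{1,k-1}(x)$. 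The remaining Pinsker factor $\sqrt{1/8}=2^{-3/2}$ then produces the second summand exactly. The main technical obstacle is the careful bookkeeping of the $W_2$ bound when both processes start from the same deterministic point $x$ (adapting the proof of \Cref{theo:distance_Euler_diffusion}) and the Girsanov--Pinsker--Jensen chain that replaces the $y$-dependent moments by the full-chain moment $\FunMomentDEuler_{1,k-1}(x)$.
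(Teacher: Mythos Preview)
Your proposal is correct and follows essentially the same route as the paper. The paper uses the identical triangle-inequality split with the intermediate measure $\delta_x Q^n_\gamma P_{\Gamma_{n+1,\ell}}$, bounds the first summand via \eqref{eq:item:coro:lip_smigroup_ii} together with the synchronous-coupling lemmas (\Cref{lem:distance_Euler_diffusion_part1} or \Cref{lem:distance_Euler_diffusion_part1D}) applied with both processes started at $x$, and handles the second summand by invoking the Girsanov--Pinsker bound of \cite[Equation~11]{dalalyan:2014} directly with initial law $\delta_x Q^n_\gamma$, then \Cref{assum:regularity_2} and \Cref{theo:kind_drift}. The only cosmetic difference is that the paper exploits the linearity of the KL divergence in the initial distribution, so the moment $\int\norm[2]{\nabla U(z)-\nabla U(\xstar)}\,Q_\gamma^{k-1}(x,\rmd z)$ appears immediately without your detour through pointwise TV bounds plus Jensen; both routes give the same constant $2^{-3/2}L$.
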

  \begin{proof}
The proof is postponed to \Cref{sec:proof-crefth_tv_decreasing}.
  \end{proof}
Consider the case of decreasing step sizes of the form $\gaStep_k =
\gaStep_1 /k^{\alpha}$ for $k \geq 1$ and $\alpha \in
\ooint{0,1}$. Under \Cref{assum:regularity_2} and
\Cref{assum:potentialU}, setting $n =\ell- \floor{\ell^{\alpha}}$, $\ell \in \nset^*$, we have for $i=2,3$,
\begin{equation}
  \label{eq:bound_tv_non-increasing_proof_1}
  \lim_{n\to \plusinfty} \Gamma_{n,\ell} = 1 \eqsp, \sum_{k=n+1}^{\ell}  \gaStep_{k}^{i}  \leq  \gaStep_{n+1}^i (\ell-n) \leq \gaStep_1^i \floor{\ell^{\alpha}}/(\ell-\floor{\ell^{\alpha}})^{i \alpha} \eqsp.
\end{equation}
In addition, by \Cref{tab:order_convergence_k_alpha}, $\vartheta_n(x) = d\bigO(\ell^{-\alpha})$. Therefore combining this result and \eqref{eq:bound_tv_non-increasing_proof_1} in the bound of \Cref{theo:tv_decreasing}, we get that  $
\tvnorm{\delta_{\xstar} \QKer_{\gaStep}^\ell-\pi} = d^{1/2} \bigO(
\ell^{-\alpha/2})$. In the case $\gaStep_k =
\gaStep_1 /k^{\alpha}$ for $k \geq 1$ and $\alpha =1$, setting $n =\ell- \floor{\ell/2}$, $\ell \in \nset^*$, $\ell >2$, we have for $i=2,3$,
\begin{equation}
  \label{eq:bound_tv_non-increasing_proof_alpha_1}
  \lim_{n\to \plusinfty} \Gamma_{n,\ell} = 1/2 \eqsp, \sum_{k=n+1}^{\ell}  \gaStep_{k}^{i}  \leq  \gaStep_{n+1}^i (\ell-n) \leq \gaStep_1^i/(\ell/2-1) \eqsp.
\end{equation}
In addition, by \Cref{tab:order_convergence_k_alpha},
$\vartheta_n(x) = d\bigO(\ell^{-1})$, for
$\gaStep_1 > 2 \kappa^{-1}$. Therefore combining this result and
\eqref{eq:bound_tv_non-increasing_proof_alpha_1} in the bound of
\Cref{theo:tv_decreasing}, we get that
$ \tvnorm{\delta_{\xstar} \QKer_{\gaStep}^\ell-\pi} = d^{1/2} \bigO(
\ell^{-1/2})$.

Note that these rates for $\gamma_k=\gaStep_1/k^{\alpha}$, $k \in \nset^*$ and $\alpha \in \ocint{0,1}$ improve those obtained in \cite[Proposition
3]{durmus:moulines:2016},  for potentials satisfying
\Cref{assum:regularity_2} but not necessarily convex since \cite[Proposition
3]{durmus:moulines:2016} only requires the additional assumption that $(P_t)_{t \geq 0}$ is geometrically ergodic in total variation.

Assume  \Cref{assum:regularity_2},
\Cref{assum:potentialU} and \Cref{assum:reg_plus} and that $\gaStep_k =
\gaStep_1 /k^{\alpha}$ for $k \geq 1$ and $\alpha \in
\ocint{0,1}$.  setting $n =\ell- \floor{\ell^{\alpha/2}}$, $\ell \in \nset^*$, we have for $i=2,3$,
\begin{equation}
  \label{eq:bound_tv_non-increasing_proof_2}
  \lim_{n\to \plusinfty} \Gamma_{n,\ell} = 1 \eqsp, \sum_{k=n+1}^{\ell}  \gaStep_{k}^{i}  \leq  \gaStep_{n+1}^i (\ell-n) \leq \gaStep_1^i \floorLigne{\ell^{\alpha/2}}/(\ell-\floorLigne{\ell^{\alpha/2}})^{i \alpha} \eqsp.
\end{equation}
In addition (see \Cref{tab:order_convergence_k_alpha}) $\vartheta_n(x) = d^2\bigO(\ell^{-2\alpha})$, with $\gaStep_1 > 4 \kappa^{-1}$ in the case $\alpha=1$. Therefore combining this result and \eqref{eq:bound_tv_non-increasing_proof_2} in the bound of \Cref{theo:tv_decreasing}, we get that  $
\tvnorm{\delta_{\xstar} \QKer_{\gaStep}^\ell-\pi} = d^{1/2} \bigO(
\ell^{-3 \alpha/4})$. These discussions are summarized in
\Cref{tab:order_convergence_k_alpha_tv}.

\begin{table}[h]
\centering
\begin{normalsize}
\begin{tabular}{|c|c|c|}
    \hline
    &$\alpha \in \ooint{0,1}$ & $\alpha =1$ \\
    \hline
 \Cref{theo:distance_Euler_diffusion}& $d^{1/2} \, \bigO( \ell^{-\alpha/2})$& $d^{1/2}  \,  \bigO( \ell^{-1/2})$ for $\gamma_1 > 2\kappa^{-1}$  \\
   \hline
 \Cref{theo:distance_Euler_diffusionD}& $ d^{1/2} \,  \bigO( \ell^{-3\alpha/4})$ & $d^{1/2}  \,  \bigO( \ell^{-3/4})$ for $\gamma_1 > 4\kappa^{-1}$  \\
   \hline
\end{tabular}
\end{normalsize}
\caption{\normalsize{Order of convergence of $\tvnorm{\delta_{\xstar} Q^\ell_\gamma-\pi}$ for $\gamma_k = \gamma_1 /k^{\alpha}$ based on \Cref{theo:tv_decreasing} }}
\label{tab:order_convergence_k_alpha_tv}
\end{table}

When $\gaStep_k = \gaStep \in \ooint{0,1/(m+L)}$ for all
$k \geq 1$, under \Cref{assum:regularity_2} and \Cref{assum:potentialU}, for $\ell > \ceil{\gaStep^{-1}}$ choosing
$n=\ell-\ceil{\gaStep^{-1}}$ implies that (see  \Cref{sec:proof-eqref})
\begin{equation}
\label{eq:borne_tv_1_fixed_step_size}
  \tvnorm{\delta_{x} \RKer^{\ell}_{\gaStep}- \delta_x \PKer_{\ell \gaStep}}
\leq (4 \uppi)^{-1/2}\parentheseDeux{\gamma \constD_1(\gamma,d) + \gamma^3 \constD_2(\gamma) \constD_3(\gamma,d,x) }^{1/2}  + \constD_4(\gamma,d,x) \eqsp,
\end{equation}
where
\begin{align}
\label{eq:def_D_1_2}
&\constD_1(\gamma,d)  = 2 L^2 \kappa^{-1} \parenthese{\kappa^{-1}+ \gamma}\parenthese{2d + L^2 \gamma^2/6}  \eqsp, \constD_2(\gamma) =L^4  \parenthese{\kappa^{-1}+ \gamma} \\
\nonumber
&\constD_3(\gamma,d,x)=  \defEns{(\ell-\ceil{\gaStep^{-1}}) \rme^{-m \gaStep(\ell-\ceil{\gaStep^{-1}}-1)}  \norm[2]{x-\xstar} + 2d(\kappa \gamma m)^{-1} }\\
\nonumber
&\constD_4(\gamma,d,x)= 2^{-3/2} L \left[ d \gamma (1+\gamma)  \right.\\
\nonumber
& \left. + (L^{2}\gaStep^3/3)\defEns{(1+\gaStep^{-1} )(1-\kappa \gamma)^{\ell-\ceil{\gaStep^{-1}}}\norm[2]{x-\xstar} + 2(1+\gamma) \kappa^{-1}  d} \right]^{1/2} \eqsp.
\end{align}
Using this bound and
\Cref{propo:reflexion_coupling_lip}-\ref{item:reflexion_coupling_lip_2},
the number of iterations $\ell_{\varepsilon} >0$ to achieve $\tvnorm{\delta_{\xstar}
  \RKer_{\gaStep_{\varepsilon}}^{\ell_{\varepsilon}} - \pi} \leq
\varepsilon$ is of order 
$d \log(d)\bigO(\abs{ \log(\varepsilon)}
\varepsilon^{-2})$ (the proper choice of the  step size
$\gamma_{\varepsilon}$ is given in \Cref{tab:order_tv_precision}). This result is the same than the one obtained in \cite{durmus:moulines:2016}.

Letting $\ell$ go to infinity in
\eqref{eq:borne_tv_1_fixed_step_size} we get the following result.
\begin{corollary}
\label{theo:bias_tv_gamma_fixed}
Assume \Cref{assum:regularity_2} and \Cref{assum:potentialU}.
Let $\gaStep \in \ocint{0,1/(m+L)}$. Then it holds
\begin{multline*}
  \tvnorm{\pi_{\gaStep}-\pi} \leq 2^{-3/2} L\parentheseDeux{ d \gamma(1+\gamma) + 2 (L^{2}\gaStep^3/3) (1+\gamma) \kappa^{-1}  d }^{1/2}
\\
+(4 \uppi)^{-1/2}\parentheseDeux{\gamma \constD_1(\gamma,d) + 2 d \gamma^2 \constD_2(\gamma) (\kappa m)^{-1} }^{1/2} \eqsp,
\end{multline*}
where $\constD_1(\gamma)$ and $\constD_2(\gamma)$ are given in \eqref{eq:def_D_1_2}.
\end{corollary}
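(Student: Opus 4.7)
The plan is to apply the triangle inequality through iterates of $R_\gamma$ and the diffusion semigroup $(P_t)_{t \geq 0}$ evaluated at a common, well-chosen starting point, and then let the number of iterations tend to infinity. Concretely, for any $x \in \rset^d$ and $\ell \in \nset^*$,
\begin{equation*}
\tvnorm{\pi_\gamma - \pi} \leq \tvnorm{\pi_\gamma - \delta_x R_\gamma^\ell} + \tvnorm{\delta_x R_\gamma^\ell - \delta_x P_{\ell \gamma}} + \tvnorm{\delta_x P_{\ell \gamma} - \pi} \eqsp,
\end{equation*}
so the task reduces to showing that the two outer terms vanish as $\ell \to \infty$, while the middle term admits a finite limit that matches the claimed bound.

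First I would dispose of the two outer terms. The third summand $\tvnorm{\delta_x P_{\ell\gamma} - \pi}$ is controlled by \Cref{propo:reflexion_coupling_lip}-\ref{item:reflexion_coupling_lip_2}, whose bound carries a factor $(\rme^{2m\ell\gamma} - 1)^{-1/2}$ and therefore vanishes as $\ell \to \infty$. The first summand $\tvnorm{\pi_\gamma - \delta_x R_\gamma^\ell}$ is controlled by \Cref{theo:convergence_discrete_chain}-\ref{item:convergence_discrete_chain_2}, whose bound decays like $(1-\kappa\gamma)^{\ell/2}$ up to an $\ell$-bounded factor, and therefore also vanishes.

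For the middle term I would specialize $x = \xstar$ and invoke the explicit bound \eqref{eq:borne_tv_1_fixed_step_size} together with the definitions in \eqref{eq:def_D_1_2}. With $x = \xstar$, the contributions proportional to $\norm[2]{x-\xstar}$ inside $\constD_3(\gamma,d,\xstar)$ and $\constD_4(\gamma,d,\xstar)$ (which carry the factors $(\ell - \ceil{\gaStep^{-1}}) \rme^{-m\gaStep(\ell-\ceil{\gaStep^{-1}}-1)}$ and $(1-\kappa\gamma)^{\ell-\ceil{\gaStep^{-1}}}$ respectively) vanish, leaving $\constD_3(\gamma,d,\xstar) = 2d(\kappa\gamma m)^{-1}$ and reducing the bracket inside $\constD_4(\gamma,d,\xstar)$ to $d\gamma(1+\gamma) + 2(L^2\gaStep^3/3)(1+\gamma)\kappa^{-1}d$. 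Plugging these limiting values into \eqref{eq:borne_tv_1_fixed_step_size} and using $\gamma^3 \constD_2(\gamma) \cdot 2d(\kappa\gamma m)^{-1} = 2d\gamma^2 \constD_2(\gamma)(\kappa m)^{-1}$ produces exactly the claimed inequality.

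The main obstacle, I expect, is purely bookkeeping: one must verify that every $\ell$-dependent piece in \eqref{eq:borne_tv_1_fixed_step_size} either vanishes or stabilises as $\ell \to \infty$ at fixed $\gamma$, and then match the surviving constants against \eqref{eq:def_D_1_2}. The only mildly delicate point is the $(\ell-\ceil{\gaStep^{-1}}) \rme^{-m\gaStep(\ell-\ceil{\gaStep^{-1}}-1)}$ factor in $\constD_3$, which grows polynomially before being killed by the exponential; however, choosing $x = \xstar$ sidesteps it altogether. No new analytic estimates beyond \Cref{propo:reflexion_coupling_lip}, \Cref{theo:convergence_discrete_chain} and the already established bound \eqref{eq:borne_tv_1_fixed_step_size} are required.
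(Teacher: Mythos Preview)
Your proposal is correct and follows essentially the same approach as the paper, which simply states ``letting $\ell$ go to infinity in \eqref{eq:borne_tv_1_fixed_step_size}'' and implicitly uses the same triangle-inequality decomposition together with \Cref{propo:reflexion_coupling_lip}-\ref{item:reflexion_coupling_lip_2} and \Cref{theo:convergence_discrete_chain}-\ref{item:convergence_discrete_chain_2} to handle the outer terms. Your choice $x=\xstar$ is a harmless simplification; the paper keeps $x$ generic and lets the $\ell$-dependent factors $(\ell-\ceil{\gaStep^{-1}})\rme^{-m\gaStep(\ell-\ceil{\gaStep^{-1}}-1)}$ and $(1-\kappa\gamma)^{\ell-\ceil{\gaStep^{-1}}}$ vanish in the limit, which works just as well.
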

Note that \Cref{theo:bias_tv_gamma_fixed} shows that
$\norm{\pi_{\gamma}-\pi}_{V^{1/2}} \leq C_1 \gaStep^{1/2}$ for some
constant $C_1 \geq 0$.  Under \Cref{assum:regularity_2} and the
assumption and $R_\gamma$ and $(P_t)_{t \geq 0}$ are $V$-uniformly
geometrically ergodic, \cite[Theorem 10]{durmus:moulines:2016}
establishes that
$\norm{\pi_{\gamma}-\pi}_{V^{1/2}} \leq C_2 \gaStep^{1/2}$ for some
explicit constant $C_2 \geq 0$. In the case where $U$ satisfies \Cref{assum:potentialU}, then we can take $V = \norm{\cdot}^2$ and $C_2$ is very similar to $C_1$. In particular both $C_1$ and $C_2$ are of order $d^{1/2}$.

However, if  \Cref{assum:reg_plus} holds, for constant step sizes,
we can improve with respect to the step size $\gamma$, the bounds given by \Cref{theo:bias_tv_gamma_fixed}.
\begin{theorem}
\label{theo:bias_tv_gamma_fixed_imp}
Assume \Cref{assum:regularity_2}, \Cref{assum:potentialU} and \Cref{assum:reg_plus}.
Let $\gaStep \in \ocint{0,1/(m+L)}$. Then it holds
\begin{multline*}
  \tvnorm{\pi_{\gaStep}-\pi} \leq  ( 4 \uppi)^{-1/2}\defEns{
\gamma^2 \constE_1(\gamma,d) + 2d \gamma^2 \constE_2(\gamma)/ (\kappa m)  }^{1/2} \\
+ ( 4 \uppi)^{-1/2} \ceil{\log\parenthese{\gaStep^{-1}}/ \log(2)} \defEns{\gamma^2  \constE_1(\gamma,d)  +  \gamma^2  \constE_2(\gamma) ( 2\kappa^{-1}d+d/m ) }^{1/2} \\
+2^{-3/2} L  \defEns{  2d \gaStep^3L ^2/(3\kappa) +d \gaStep^2 }^{1/2} \eqsp,
\end{multline*}
where $\constE_1(\gamma,d)$ and $\constE_2(\gamma)$ are defined by
\begin{align*}
\constE_1(\gamma,d) &= 2 d \kappa^{-1}  \defEns{2L^2+ 4 \kappa^{-1}(d \tilde{L}^2/3 + \gamma L^4/4) + \gamma^2 L^4 /6 } \\
 \constE_2(\gamma) &= L^4(4 \kappa^{-1}/3 + \gamma) \eqsp.
\end{align*}
\end{theorem}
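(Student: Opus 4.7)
The plan is to proceed as in the proof of \Cref{theo:bias_tv_gamma_fixed}, but to exploit the sharper bound $W_2^2(\pi,\pi_\gamma) = O(\gamma^2)$ available under \Cref{assum:reg_plus} from \Cref{coro:asympt_biasD} and \Cref{theo:distance_Euler_diffusionD}. Converting the $W_2$ control to total variation through the smoothing inequality \eqref{eq:item:coro:lip_smigroup_ii} naively loses a factor $\gamma^{1/2}$; the role of the dyadic telescoping (which explains the $\lceil \log(\gamma^{-1})/\log 2\rceil$ prefactor) is to recover one full power of $\gamma$ at the price of a logarithmic factor.

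Set $N = \lceil \log(\gamma^{-1})/\log 2\rceil$, $n_0 = 0$ and $n_k = 2^{k-1}$ for $k = 1,\ldots,N$, so that $n_N \gamma \geq 1$, and let $\Delta_k = n_{k+1}-n_k$ (hence $\Delta_0 = 1$ and $\Delta_k = 2^{k-1}$ for $k\geq 1$). Using $\pi = \pi P_{n_N \gamma}$ and $\pi_\gamma = \pi_\gamma R_\gamma^{\Delta_k}$, decompose
\begin{equation*}
\pi_\gamma - \pi \;=\; (\pi_\gamma - \pi)\, P_{n_N \gamma} \;+\; \sum_{k=0}^{N-1}\bigl(\pi_\gamma R_\gamma^{\Delta_k} - \pi_\gamma P_{\Delta_k \gamma}\bigr)\,P_{n_k \gamma}\eqsp.
\end{equation*}
For the first summand, the smoothing inequality \eqref{eq:item:coro:lip_smigroup_ii} together with $W_1 \leq W_2$, \Cref{coro:asympt_biasD} and $n_N\gamma \geq 1$ give directly the first term of the bound after the algebraic rewriting $W_2^2(\pi_\gamma,\pi) \leq \gamma^2 \constE_1(\gamma,d) + 2d\gamma^2\constE_2(\gamma)/(\kappa m)$. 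For the boundary index $k=0$, smoothing is unavailable; bound $\tvnorm{\pi_\gamma R_\gamma - \pi_\gamma P_\gamma}$ by Dalalyan's one-step Girsanov estimate and Pinsker exactly as in the proof of \Cref{theo:tv_decreasing}, using \Cref{theo:kind_drift}-\ref{item:kind_drift_2} to control $\int \norm[2]{z-\xstar}\pi_\gamma(\rmd z) \leq 2d/\kappa$; this reproduces the third term of the claim.

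The remaining indices $k = 1,\ldots,N-1$ form the core of the argument. Apply \eqref{eq:item:coro:lip_smigroup_ii} to each one,
\begin{equation*}
\tvnormEq{(\pi_\gamma R_\gamma^{\Delta_k} - \pi_\gamma P_{\Delta_k \gamma})\, P_{n_k\gamma}} \leq (4\uppi n_k \gamma)^{-1/2}\, W_2(\pi_\gamma R_\gamma^{\Delta_k}, \pi_\gamma P_{\Delta_k \gamma})\eqsp,
\end{equation*}
and plug in the following analogue of \Cref{theo:distance_Euler_diffusionD} with common initial law $\pi_\gamma$: redoing the synchronous coupling of \Cref{lem:distance_Euler_diffusion_part1D} from $\pi_\gamma \otimes \pi_\gamma$ and exploiting the $R_\gamma$-invariance of $\pi_\gamma$ (so that all intermediate second moments are uniformly bounded by $2d/\kappa$, while the diffusion's side is bounded by $d/m$ via \Cref{theo:convergence-WZ-strongly-convex}-\ref{item:moment_diffusion_item}), one obtains
\begin{equation*}
W_2^2(\pi_\gamma R_\gamma^{\Delta_k}, \pi_\gamma P_{\Delta_k\gamma}) \;\leq\; \Delta_k \gamma\, \bigl\{\gamma^2\constE_1(\gamma,d) + \gamma^2\constE_2(\gamma)(2\kappa^{-1}d + d/m)\bigr\}\eqsp.
\end{equation*}
Because the dyadic scheme enforces $n_k = \Delta_k$ for $k\geq 1$, the factor $\Delta_k\gamma$ cancels exactly against $(4\uppi n_k\gamma)^{-1/2}$, and every dyadic summand is bounded by $(4\uppi)^{-1/2}\{\gamma^2\constE_1(\gamma,d) + \gamma^2\constE_2(\gamma)(2\kappa^{-1}d + d/m)\}^{1/2}$. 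Summing the $N$ terms produces the middle contribution of the claim.

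The main technical obstacle is the $W_2$ inequality in the last display: the discretization error must scale \emph{linearly} in the horizon $\Delta_k\gamma$ (equivalently, $O(\gamma^3)$ per step, which is precisely the improvement granted by \Cref{assum:reg_plus}) so that the smoothing factor and the horizon cancel cleanly and yield a summand independent of $k$. This forces a careful bookkeeping of the drift-vs-Brownian contributions in the synchronous coupling when both marginals start from the same (non-Dirac) law $\pi_\gamma$, and in particular tracking how the moments $\int\norm[2]{z-\xstar}\pi_\gamma R_\gamma^i(\rmd z)$ and $\int\norm[2]{z-\xstar}\pi_\gamma P_{i\gamma}(\rmd z)$ combine to produce the factor $2\kappa^{-1}d + d/m$ appearing on the $\constE_2$ term.
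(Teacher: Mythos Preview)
Your approach is essentially the paper's: a dyadic telescoping of depth $\nFun(\gamma)=\lceil\log(\gamma^{-1})/\log 2\rceil$, the smoothing inequality \eqref{eq:item:coro:lip_smigroup_ii} on each block, the $O(\gamma^3)$-per-step synchronous coupling error of \Cref{lem:distance_Euler_diffusion_part1D} on the inner part, and Pinsker/Girsanov for the single unsmoothed step. The only structural difference is that the paper carries out the decomposition for $\tvnorm{\delta_x P_{\ell\gamma}-\delta_x R_\gamma^\ell}$ (this is \Cref{theo:tv_fix}) and then lets $\ell\to\infty$ using \Cref{propo:reflexion_coupling_lip}-\ref{item:reflexion_coupling_lip_2} and \Cref{theo:convergence_discrete_chain}-\ref{item:convergence_discrete_chain_2}; you telescope directly from $\pi_\gamma$ using its $R_\gamma$-invariance, which is slightly cleaner and lands on the same constants.

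Two small slips to fix. First, with $n_k=2^{k-1}$ and $N=\nFun(\gamma)$ you only get $n_N\gamma\geq 1/2$, not $n_N\gamma\geq 1$ (take $\gamma=2^{-j}$); run the scheme one level further, i.e.\ use the outer smoothing time $2^{\nFun(\gamma)}\gamma\geq 1$ as the paper does, which also makes the number of dyadic summands exactly $\nFun(\gamma)$ and matches the prefactor in the statement. Second, the initial law for the synchronous coupling must be the \emph{diagonal} coupling of $\pi_\gamma$ with itself (so that $\Theta_0=0$), not the product $\pi_\gamma\otimes\pi_\gamma$; with the product you would pick up an $O(d)$ initial contribution and lose the claimed $W_2^2\leq \Delta_k\gamma\cdot\{\cdots\}$ scaling.
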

  \begin{proof}
The proof is postponed to \Cref{sec:proof-theo_tv_fix}.
  \end{proof}

  Note that the bound provided by \Cref{theo:bias_tv_gamma_fixed_imp} is of order $d\bigO(\gaStep \abs{\log(\gaStep)})$, improving the dependency given by \Cref{theo:bias_tv_gamma_fixed} and \cite[Theorem 10]{durmus:moulines:2016},  with respect to the step size $\gamma$, but  \Cref{theo:bias_tv_gamma_fixed_imp} requires that \Cref{assum:reg_plus} holds contrary to \Cref{theo:bias_tv_gamma_fixed} and \cite[Theorem 10]{durmus:moulines:2016}. % Using this bound and
% \Cref{propo:reflexion_coupling_lip}-\ref{item:reflexion_coupling_lip_2},
% the minimal number of iterations $\ell_{\varepsilon} >0$ for a target
% precision $\varepsilon >0$ to get $\tvnorm{\delta_{\xstar}
%   \RKer_{\gaStep_{\varepsilon}}^{\ell_{\varepsilon}} - \pi} \leq
% \varepsilon$ is of order $d \log^2(d)\bigO(\abs{ \log^2(\varepsilon)}
% \varepsilon^{-1})$ (the proper choice of the  step size
% $\gamma_{\varepsilon}$ is given in \Cref{tab:order_tv_precision}).
  Furthermore when $\tilde{L} = 0$, this bound given by \Cref{theo:bias_tv_gamma_fixed_imp}  is of order
  $d^{1/2}\bigO(\gaStep \abs{\log(\gaStep)})$ and  is sharp up to a logarithmic factor.
Indeed, assume that $\pi$ is the $d$-dimensional standard Gaussian
distribution. In such case, the ULA sequence $(X_k)_{k \geq 0}$ is the autoregressive process given for all $k \geq 0$ by $X_{k+1}= (1 - \gaStep) X_k + \sqrt{2 \gaStep} Z_{k+1}$.
For $\gaStep \in \ooint{0,1}$, this sequence has a stationary distribution $\pi_\gaStep$, which is  a $d$-dimensional Gaussian distribution with zero-mean and covariance
matrix $\sigma_{\gaStep}^{2}\IdM$, with $\sigma_{\gaStep}^2=(1-\gaStep/2)^{-1}$. Therefore, using \cite[Lemma~4.9]{klartag:2007} (or the Pinsker inequality), we get the following upper bound: $\tvnorm{\pi-\pi_{\gaStep}} \leq C d^{1/2} | \sigma_\gaStep^2 -1| = C  d^{1/2} \gaStep/2$, where $C$ is a universal constant.

  We can also for a precision target $\varepsilon >0$ choose
  $\gaStep_{\varepsilon}>0$ and the number of iterations
  $n_{\varepsilon} >0$ to get $\tvnorm{\delta_x
    \RKer_{\gaStep_{\varepsilon}}^{n_{\varepsilon}} - \pi} \leq
  \varepsilon$.  By
  \Cref{propo:reflexion_coupling_lip}-\ref{item:reflexion_coupling_lip_2},
  \Cref{theo:convergence_discrete_chain}-\ref{item:convergence_discrete_chain_2}
  and \Cref{theo:bias_tv_gamma_fixed_imp}, a sufficient number of
  iterations $\ell_{\varepsilon}$ is of order $d \log^2(d)
  \bigO( \varepsilon^{-1}\log^{2}(\varepsilon))$ for a well chosen
  step size $\gamma_{\varepsilon}$. This result improves the conclusion of \cite{durmus:moulines:2016} and \Cref{theo:bias_tv_gamma_fixed} with respect to the precision parameter $\varepsilon$, which provides an upper bound of the number of iterations of order $d \log(d)
  \bigO( \varepsilon^{-2}\log^{2}(\varepsilon))$. We can also compare our reported upper bound
  with the one obtained for the $d$-dimensional standard
  Gaussian distribution. If the initial distribution is the Dirac mass at zero (the minimum of the potential $U(x)= \norm{x}^2/2$) and $\gaStep \in \ooint{0,1}$,
  the distribution of the ULA sequence after $n$ iterations is zero-mean Gaussian with covariance $(1 - (1 - \gaStep)^{2(n+1)}) / (1- \gaStep/2) \IdM$. If we use \cite[Lemma~4.9]{klartag:2007} again, we get for $\gaStep \in \ooint{0,1}$,
  \begin{equation*}
  \tvnorm{\delta_0 \RKer_{\gaStep}^n - \pi} \leq C d^{1/2} \gaStep  |1 -2 \gaStep^{-1}(1-\gaStep)^{2(n+1)}|
   \eqsp,
  \end{equation*}
  where $C$ is a universal constant. To get an $\varepsilon$ precision we need to choose $\gaStep_\varepsilon= d^{-1/2} \varepsilon/ (2 C)$ and then $n_\varepsilon= \lceil (1/2) \log(\gaStep_\varepsilon/4)/\log(1-\gaStep_\varepsilon) \rceil = d^{1/2}\log(d) \bigO(\varepsilon^{-1} |\log(\varepsilon)|)$. On the other hand since $\tilde{L}=0$, based on the bound given by \Cref{theo:bias_tv_gamma_fixed_imp}, a sufficient number of iterations to get $\tvnorm{\delta_x
    \RKer_{\gaStep_{\varepsilon}}^{n_{\varepsilon}} - \pi} \leq
  \varepsilon$ is  of order $d^{1/2} \log^2(d)
  \bigO( \varepsilon^{-1}\log^{2}(\varepsilon))$.
  It follows that our upper bound for the step size and the optimal number of iterations is again sharp up to a logarithmic factor in the dimension and the precision.
  The discussions on the bounds for constant sequences of step sizes are
  summarized in \Cref{tab:order_tv_pi_pig} and
  \Cref{tab:order_tv_precision}.
%The first result is almost sharp because if $\pi$ is the standard $d$ dimensional Gaussian distribution, then a number of iteration of order
\begin{table}[h]
\centering
\begin{normalsize}
\begin{tabular}{|c|c|c|}
    \hline
    & \Cref{assum:regularity_2}, \Cref{assum:potentialU} &\Cref{assum:regularity_2}, \Cref{assum:potentialU} and \Cref{assum:reg_plus} \\
    \hline
$\tvnorm{\pi-\pi_{\gaStep}}$ & $d^{1/2}\bigO( \gaStep ^{1/2})$& $d \bigO( \gaStep
\abs{  \log(\gaStep)})$   \\
   \hline
\end{tabular}
\end{normalsize}
\caption{\normalsize{Order of the bound between $\pi$ and $\pi_{\gaStep}$ in total variation function of the step size $\gaStep >0$ and the dimension $d$.}}
\label{tab:order_tv_pi_pig}
\end{table}

\begin{table}[h]
\centering
\begin{normalsize}
\begin{tabular}{|c|c|c|}
    \hline
    & \Cref{assum:regularity_2}, \Cref{assum:potentialU} &\Cref{assum:regularity_2}, \Cref{assum:potentialU} and \Cref{assum:reg_plus} \\
    \hline
$\gaStep_{\varepsilon}$ & $d^{-1} \bigO(  \varepsilon^2)$& $d^{-1}
  \log^{-1}(d) \bigO( \varepsilon\abs{ \log^{-1}(\varepsilon)})$   \\
\hline
$n_{\varepsilon}$ &$d \log(d)\bigO(\varepsilon^{-2} \abs{\log(\varepsilon)})$& $d \log^2(d) \bigO( \varepsilon^{-1}\log^{2}(\varepsilon))$  \\
   \hline
\end{tabular}
\end{normalsize}
\caption{\normalsize{Order of the step size $\gaStep_{\varepsilon} >0$ and the number of iterations $n_{\varepsilon} \in \nset^*$ to get $\tvnorm{\delta_{\xstar} \RKer_{\gaStep_{\varepsilon}}^{n_{\varepsilon}} - \pi} \leq \varepsilon$ for $\varepsilon >0$.}}
\label{tab:order_tv_precision}
\end{table}

%%% Local Variables:
%%% mode: latex
%%% TeX-master: "main_arxiv"
%%% End:

\section{Mean square error and concentration for bounded measurable functions}
\sectionmark{Mean square error and concentration for b. m. functions}
\label{SEC:MSE_TV}

Let $(X_k)_{k \geq 0}$ be the Euler discretization of the Langevin
diffusion \eqref{eq:euler-proposal-2} associated with the sequence of
non-increasing step sizes $(\gamma_k)_{k \geq 1}$.  The result of the
previous section allows us to study the approximation of
$\pi(f)$ by the weighted average estimator
$\hat{\pi}^N_n(f)$ defined, for $f : \rset^d \to \rset$,
$N,n \in \nset$, $n \geq 1$ by
\begin{equation}
 \label{eq:def_GammaN_plusn}
\hat{\pi}^N_n(f)= \sum_{k=N+1} ^{N+n} \weight{k} f(X_k) \eqsp, \quad  \weight{k}= \gamma_{k+1} \Gamma_{N+2 , N+n+1}^{-1}\eqsp.
\end{equation}

In all this section, $\PP_x$ and $\PE_x$ denote the probability and the expectation respectively, induced on $((\rset^{d})^{\nset}, \mathcal{B}(\rset^d)^{\nset})$ by the Markov chain $(X_n)_{n \geq 0}$ started at $x \in \rset^d$.
First we derive a bound on the mean-square error, defined as $$\MSE_f^{N,n} = \expeMarkov{x}{\abs{\hat{\pi}_n^N(f)- \pi(f) }^2}\eqsp,$$  for $f: \rset^d\to \rset$, which is either Lipschitz or measurable and bounded. This quantity can be decomposed as the sum of the squared bias and variance:
\begin{equation*}
%\label{eq:decomp_MSE}
\MSE_f^{N,n}  = \left\{ \PE_x[ \estimateur{f} ] - \pi(f) \right\}^2 + \VarDeux{x}{ \estimateur{f} }\eqsp.
\end{equation*}

We first obtain a bound for the bias for $f$  Lipschitz.
 For all $k \in \{ N+1, \dots, N+n \}$, denote by $\xi_k$ the optimal transference plan between $\delta_x Q_\gamma^k$ and $\pi$ for $W_2$, \ie~$W_2^2(\delta_x Q_\gamma^k,\pi) = \int_{\rset^d \times \rset^d} \norm[2]{x-y} \rmd\xi_k( x, y)$. Then by the Jensen inequality and because $f$ is Lipschitz, we have:
 \begin{align}
   \nonumber
\defEns{\PE_x[\estimateur{f}]-\pi(f) }^2
   &= \parenthese{\sum_{k=N+1} ^{N+n} \weight{k} \int_{\rset^d \times \rset^d} \{f(z) - f(y)\} \xi_k (\rmd z , \rmd y) }^2 \\
      \nonumber
&\leq \norm{f}_{\Lip}^2  \sum_{k=N+1}^{N+n} \weight{k} \int_{\rset^d \times \rset^d}  \norm{z-y}^2 \xi_k (\rmd z , \rmd y) \\
\label{theo:bound_biasD}
& \leq \norm{f}_{\Lip}^2  \sum_{k=N+1}^{N+n} \weight{k} W_2^2(\delta_x Q_\gamma^k,\pi) \eqsp.
\end{align}

Similarly, if $f$ is bounded,
\begin{equation*}
\parenthese{\PE_x[\estimateur{f}]-\pi(f) }^2
% &= \parenthese{\sum_{k=N+1} ^{N+n} \weight{k} \int_{\rset^d \times \rset^d} \{f(z) - f(y)\} \xi_k (\rmd z , \rmd y) }^2 \\
% &\leq \osc{f}^2  \sum_{k=N+1}^{N+n} \weight{k} \int_{\rset^d \times \rset^d}  \1_{\diagSet^{\operatorname{c}}}(z,y) \xi_k (\rmd z , \rmd y)\\
%& \leq \defEns{\sum_{k=N+1}^{N+n} \weight{k} \parenthese{\delta_x\QKer_{\gaStep}^k(f) - \pi(f)}}^2\\
\leq   \osc{f}^2  \sum_{k=N+1}^{N+n} \weight{k} \tvnorm{ \delta_x\QKer_{\gaStep}^k - \pi}^{2} \eqsp;
\end{equation*}
%where $\diagSet = \{(x,y)\in \rset^d \times \rset^d \, | \, x=y \}$.
Using the results of \Cref{sec:non-asympt-bounds,sec:quant-bounds-total}, we can deduce different bounds for the bias,
depending on the assumptions on $U$ and the sequence of step sizes
$(\gaStep_k)_{k \geq 1}$. We now derive a bound for the variance.
We get then two different results depending on the class to which the function  $f$ belongs.
In the case of Lipschitz function, we adapt the proof of \cite[Theorem 2]{joulin:ollivier:2010} for homogeneous Markov chain to our inhomogeneous setting.
\begin{theorem}
\label{theo:var}
Assume \Cref{assum:regularity_2} and \Cref{assum:potentialU}.
Let $(\gamma_k)_{k \geq 1}$ be a non-increasing sequence with $\gamma_1 \leq 2/(m+L)$ and $f : \rset^d \to \rset$ be a Lipschitz function.
%Let $(X_n)_{n \geq 0} $ be given by \eqref{eq:euler-proposal-2} and started at $x \in \rset^d$.
Then for all $N \geq 0$ and $ n \geq 1$, we get
$\VarDeuxLigne{x}{\hat{\pi}_n^N(f)} \leq  8 \kappa^{-2} \norm{f}_{\Lip}^2 \Gamma_{N+2,N+n+1}^{-1} v_{N,n}(\gamma)$,
where
\begin{equation}
\label{eq:def_u_n_3}
v_{N,n}(\gamma) \eqdef \defEns{1+\Gamma_{N+2,N+n+1}^{-1}(\kappa^{-1}+2/(m+L))} \eqsp.
%\sum_{k=1}^N \gamma_k \prod_{j=k+1}^N (1 - \kappa \gamma_j)  \eqsp.
\end{equation}
\end{theorem}
\begin{proof}
  The proof is postponed to \Cref{sec:proof-creftheo:var}.
\end{proof}
It is noteworthy to observe that the bound for the variance does not depend on the dimension.
We may now discuss the bounds on the MSE (obtained by combining the bounds for the squared bias \eqref{theo:bound_biasD} from \Cref{theo:distance_Euler_diffusion,theo:distance_Euler_diffusionD}, and the variance \Cref{theo:var}) for step sizes given for $k \geq 1$ by $\gamma_k = \gamma_1/ k^{\alpha}$ where $\alpha \in \ccint{0,1}$ and $\gamma_1 < 1/(m+L)$. Details of these calculations are postponed to \cite[\Cref{sapp:bound_MSE_strongly_convex,sapp:bound_MSE_strongly_convexD}]{durmus:moulines:2015:supplement}.  The order of the bounds (up to numerical constants) of the MSE are summarized in \Cref{tab:bound_MSE_strongly_convex_fixe_gamma} as a function of  $\gamma_1$, $n$ and $N$. Then, we can conclude  that in the infinite horizon setting, it is optimal to take $\alpha =1/2$ under \Cref{assum:regularity_2} and \Cref{assum:potentialU}, and $\alpha =1/3$ under \Cref{assum:regularity_2}, \Cref{assum:potentialU} and \Cref{assum:reg_plus}. Note that  \cite{lamberton:pages:2002} shows also that the optimal value for $\alpha$ is $1/3$ by studying the asymptotic behaviour of $\hat{\pi}^0_n(f)$ as $n \to \plusinfty$ for smooth functions $f :\rset^d \to \rset$. 
\begin{table}[h]
\centering
\begin{normalsize}
\begin{tabular}{|c|c|c|}
\hline
& Bound for the MSE\\
    \hline
    $\alpha =0$ & $\gamma_1 + (\gamma_1 n)^{-1}\defEns{1+\exp(-\kappa \gamma_1 N /2)}$ \\
\hline
$\alpha \in \ooint{0,1/2}$ &$\gamma_1n^{-\alpha} + (\gamma_1 n^{1-\alpha})^{-1}\defEns{1+\exp(-\kappa \gamma_1 N^{1-\alpha} /(2(1-\alpha)))}$\\
\hline
$\alpha =1/2$& $\gamma_1 \log(n) n^{-1/2} + (\gamma_1 n^{1/2})^{-1}\defEns{1+\exp(-\kappa \gamma_1 N^{1/2} /4)}$\\
\hline
 $\alpha \in \ooint{1/2,1}$& $n^{\alpha-1}\parentheseDeux{\gamma_1 + \gamma_1^{-1}\defEns{1+ \exp(-\kappa \gamma_1  N^{1-\alpha} /(2(1-\alpha)))}}$
\\
\hline
 $\alpha =1$& $\bigO(\log(n)^{-1})$ for $\gamma_1 > 2 \kappa^{-1}$
\\
\hline
\end{tabular}
\caption{\normalsize{Bound for the MSE for $\gamma_k = \gamma_1 k^{-\alpha}$ for fixed $\gamma_1$ and $N$ under \Cref{assum:regularity_2} and \Cref{assum:potentialU}}}
\label{tab:bound_MSE_strongly_convex_fixe_gamma}
\end{normalsize}
\end{table}
\begin{table}[h]
\centering
\begin{normalsize}
\begin{tabular}{|c|c|c|}
\hline
& Bound for the MSE\\
    \hline
    $\alpha =0$ & $\gamma_1^2 + (\gamma_1 n)^{-1}\{1+\exp(-\kappa \gamma_1 N /2)\}$ \\
\hline
$\alpha \in \ooint{0,1/3}$ &$\gamma_1^2n^{-2\alpha} + (\gamma_1 n^{1-\alpha})^{-1}\{1+\exp(-\kappa \gamma_1 N^{1-\alpha} /(2(1-\alpha)))\}$\\
\hline
$\alpha =1/3$& $\gamma_1^2 \log(n) n^{-2/3} + (\gamma_1 n^{2/3})^{-1}\{1+\exp(-\kappa \gamma_1 N^{1/2} /4)\}$\\
\hline
 $\alpha \in \ooint{1/3,1}$& $n^{\alpha-1}\parentheseDeux{\gamma_1^2 + \gamma_1^{-1}\{1+\exp(-\kappa \gamma_1  N^{1-\alpha} /(2(1-\alpha)))\}}$
\\
\hline
 $\alpha =1$& $\bigO(\log(n)^{-1})$ for $\gamma_1 > 4 \kappa^{-1}$
\\
\hline
\end{tabular}
\caption{\normalsize{Bound for the MSE for $\gamma_k = \gamma_1 k^{-\alpha}$ for fixed $\gamma_1$ and $N$ under \Cref{assum:regularity_2}, \Cref{assum:potentialU} and \Cref{assum:reg_plus}}}
\label{tab:bound_MSE_strongly_convex_fixe_gammaD}
\end{normalsize}
\end{table}

In the case $\gamma_k = \gamma$ for all $k \in \nset^*$ and  the total number of iterations $n+N$ is held fixed (fixed horizon
setting), we optimize the value of the
step size $\gamma$ but also of the burn-in period $N$ to get an  upper bound of order $n^{-1/2}$ under \Cref{assum:regularity_2} and \Cref{assum:potentialU}, and $n^{-2/3}$ under \Cref{assum:regularity_2}, \Cref{assum:potentialU} and \Cref{assum:reg_plus}.%    on the MSE and get an upper bound on . The order (in $n$) for different values

In the case where $f$ is measurable and bounded, we have the following result.

\begin{theorem}
\label{theo:var_tv}
Assume \Cref{assum:regularity_2} and \Cref{assum:potentialU}.
Let $(\gamma_k)_{k \geq 1}$ be a non-increasing sequence with $\gamma_1 \leq 2/(m+L)$ and $f : \rset^d \to \rset$ be a measurable and bounded function.
%Let $(X_n)_{n \geq 0} $ be given by \eqref{eq:euler-proposal-2} and started at $x \in \rset^d$.
Then for all $N \geq 0$, $ n \geq 1$, $x \in \rset^d$, we get
\[
\VarDeuxLigne{x}{\hat{\pi}_n^N(f)} \leq  \osc{f}^2 \{2 \gaStep_1 \GaStep_{N+2,N+n+1}^{-1}+u_{N,n}^{(4)}(\gaStep)\}
\]
% \begin{multline*}
% \VarDeux{x}{\hat{\pi}_n^N(f)} \leq
% 16 (\kappa(1-\epsilon))^{-1} \norm{f}_{\infty}^2 \defEns{\Gamma_{N+2,N+n+1}^{-1}  + (\kappa^{-1}+ 2/(m+L) ) \Gamma_{N+2,N+n+1}^{-2}} \\
% +\log^2(\epsilon) \kappa^{-2}\norm{f}_{\infty}^2 \defEns{ \Gamma_{N+2,N+n+1}^{-1}+ (1+ 2/(m+L) ) \Gamma_{N+2,N+n+1}^{-2}}\eqsp.
% \end{multline*}
\begin{multline}
  \label{eq:u_lap_tv}
 u_{N,n}^{(4)}(\gaStep)=    \sum_{k=N}^{N+n-1} \gaStep_{k+1} \defEns{ \sum_{i=k+2}^{N+n} \frac{\weight{i}}{(\uppi \LambdarMSE_{k+2,i}(\gamma))^{1/2}} }^2 \\
+  \kappa^{-1} \defEns{ \sum_{i=N+1}^{N+n}\frac{ \weight{i} }{(4 \uppi \LambdarMSE_{N+1,i}(\gamma))^{1/2}} }^2 \eqsp,
\end{multline}
for $n_1,n_2 \in \nset$, $\LambdarMSE_{n_1,n_2}(\gamma)$ is given by \eqref{eq:def_xirmse}.
\end{theorem}
\begin{proof}
  The proof is postponed to \Cref{sec:proof_theo_var_tv}.
\end{proof}
%\subsection*{Concentration inequalities}
To illustrate the result \Cref{theo:var_tv}, we first illustrate numerically the
behaviour $(u_{N,n}^{(4)})_{n \geq 1}$ for $\kappa =1$ $N=0$, and four
different non-increasing sequences of step sizes $(\gamma_k)_{k \geq
  1}$, $\gamma_k = (1+k)^{-\alpha}$ for $\alpha =1/4,1/2,3/4$ and $\gamma_k = 1/2$ for $k \geq 1$. These
results are gathered in \Cref{fig:u_n_4}, where it can be observed
that $(\Gamma_n u_{0,n}^{(4)}(\gamma))_{n \geq 1}$ converges to a limit  as $n \to \plusinfty$.  In \Cref{sec:bounds-u_0-n4gamma}, we show that there exist $C_1,C_2 >0$ independent of $(\gamma_k)_{k \geq 1}$, such that $C_1 \Gamma_n^{-1}  \leq u_{0,n}^{(4)}(\gamma) \leq C_2 \Gamma_n^{-1}$, for non-increasing sequence  $(\gamma_k)_{k \geq 1}$ satisfying $\lim_{k \to \plusinfty} \gamma_k = 0$ and $\lim_{k \to \plusinfty} \Gamma_k = \plusinfty$. 
Therefore, the consequences of \Cref{theo:var_tv} are similar to those of \Cref{theo:var} and are omitted.

\begin{figure}
\begin{tabular}{cc}
  \includegraphics[width=0.5\textwidth]{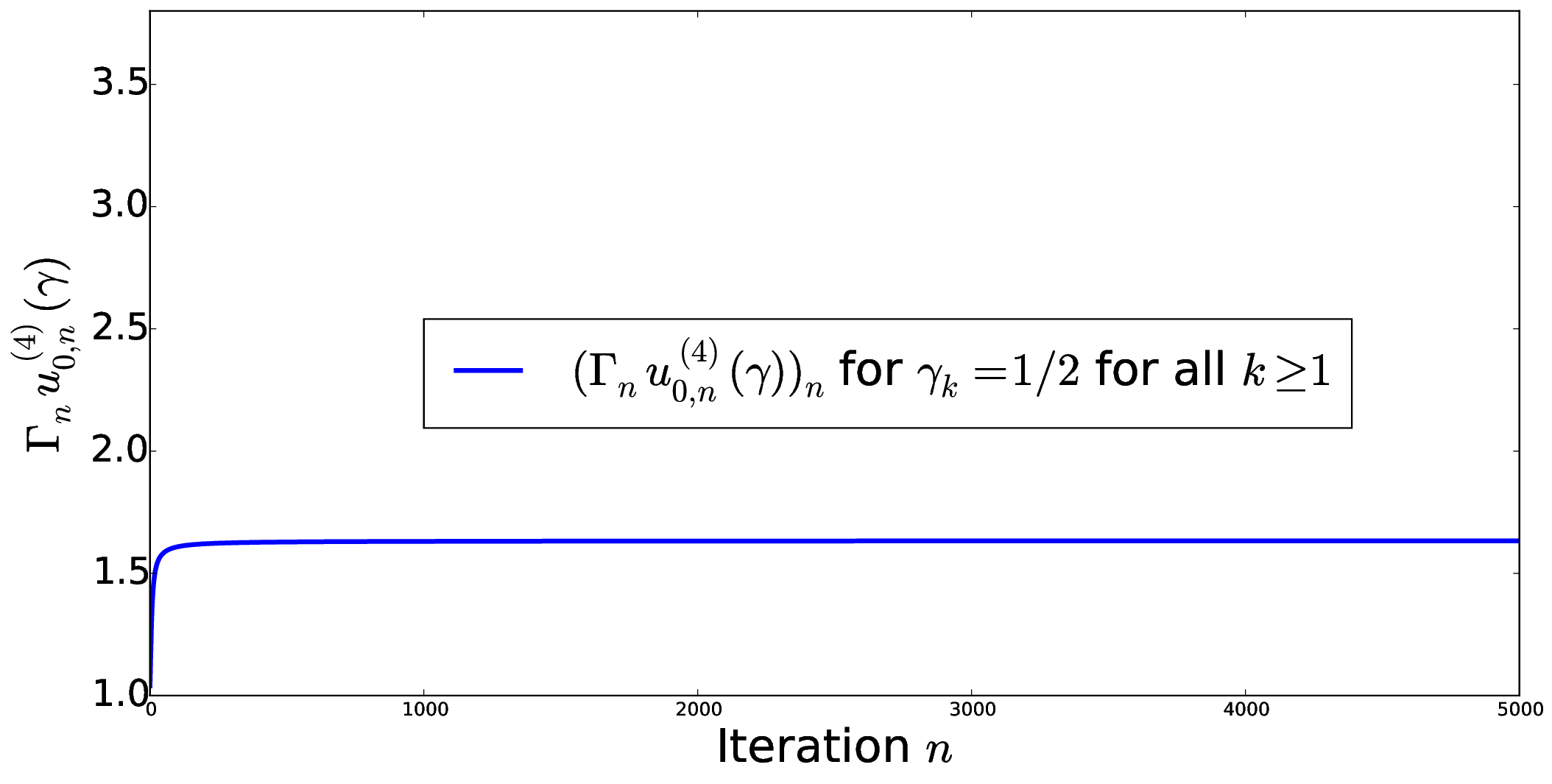} & \includegraphics[width=0.5\textwidth]{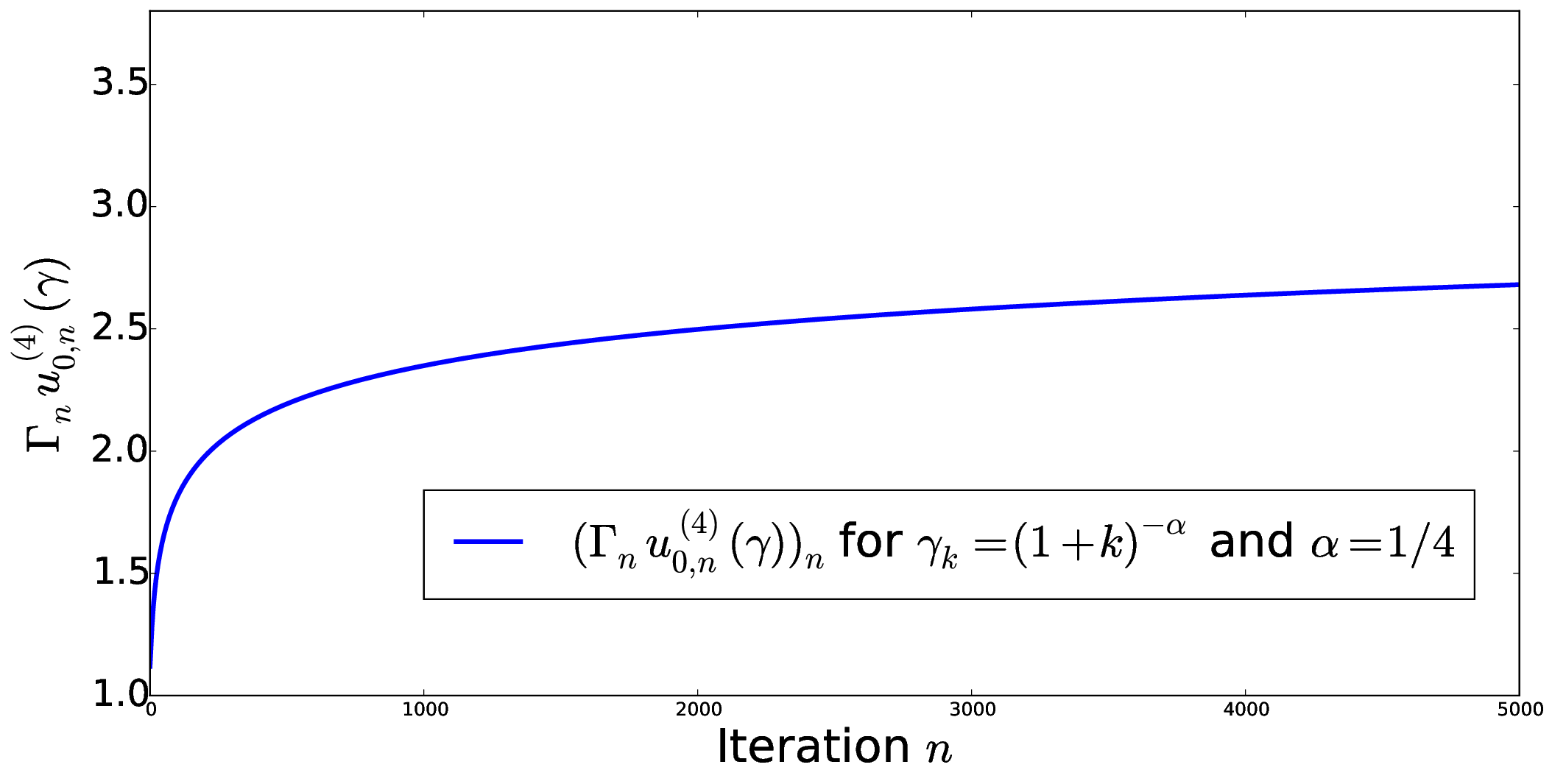} \\
  \includegraphics[width=0.5\textwidth]{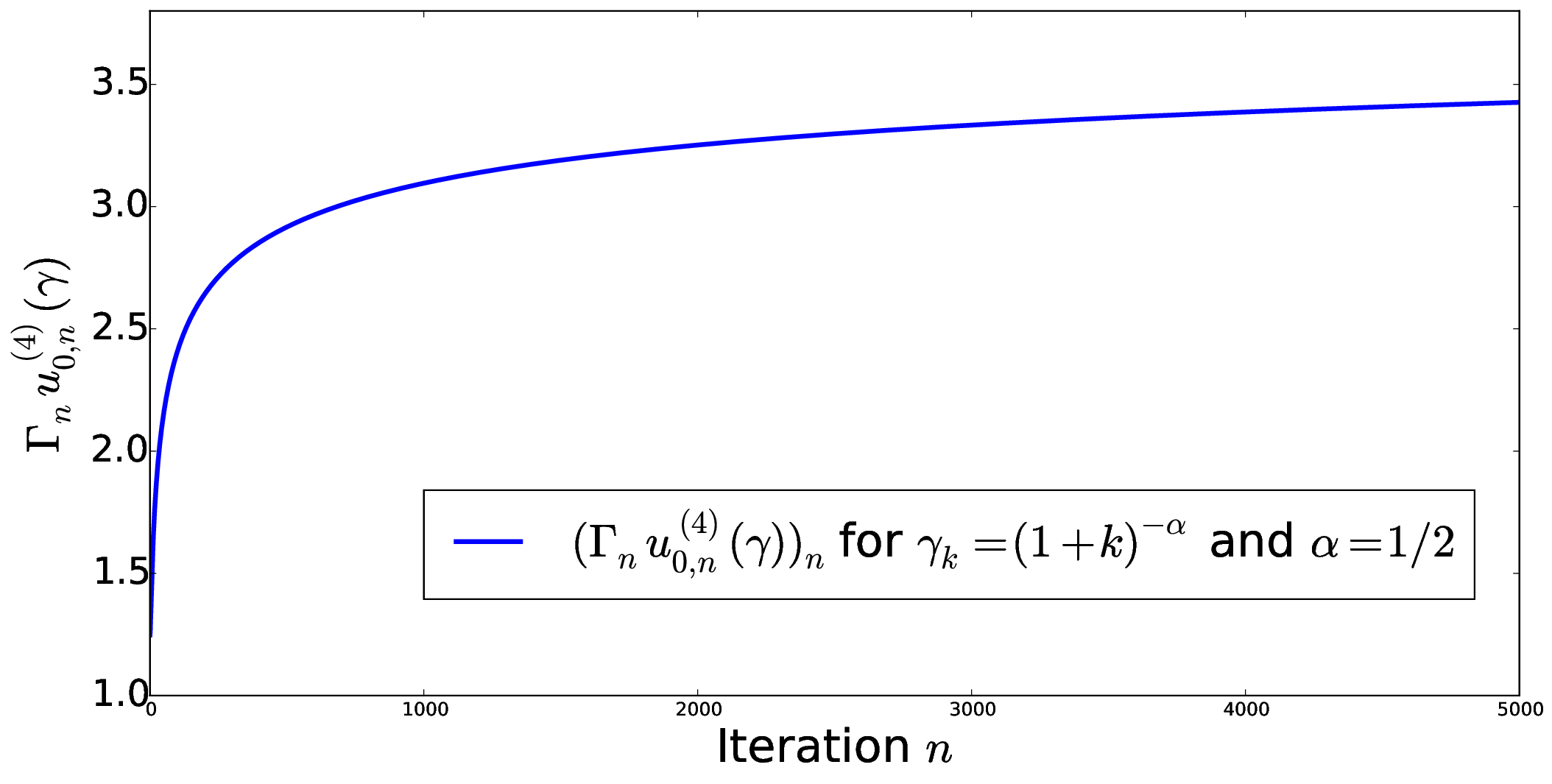} & \includegraphics[width=0.5\textwidth]{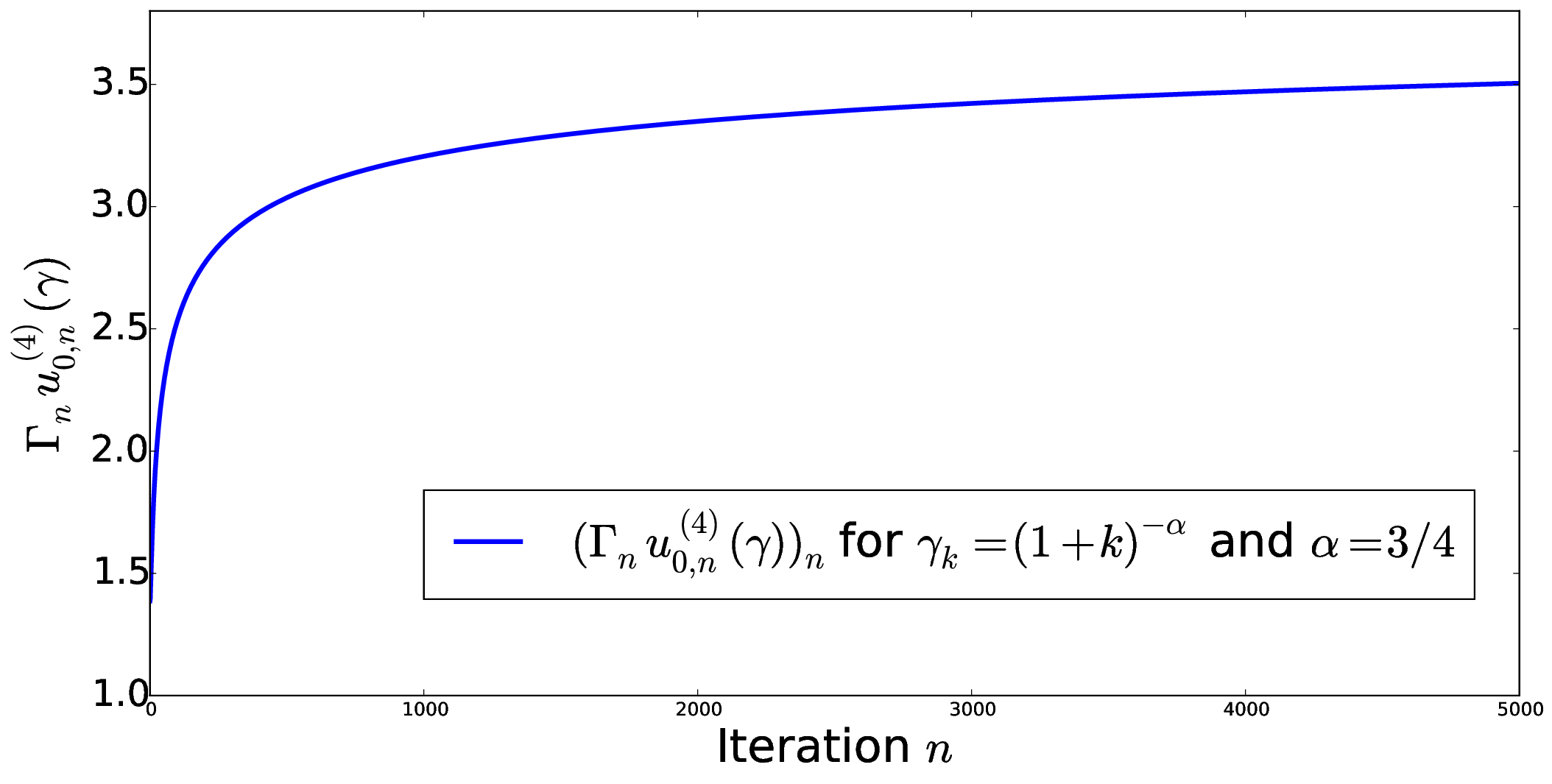}
% \\ &
\end{tabular}
\caption[caption]{Plots of $(u_{0,n}^{(4)})_{n \geq 1}\Gamma_{n}$ for four sequences of step sizes $(\gamma_k)_{k \geq
  1}$, $\gamma_k = (1+k)^{-\alpha}$ for $\alpha =0,1/4,1/2,3/4$}
\label{fig:u_n_4}
\end{figure}

We now establish an exponential deviation inequality for
$\estimateur{f} - \PE_x[ \estimateur{f} ]$ given by
\eqref{eq:def_GammaN_plusn} for a bounded measurable function $f$.

\begin{theorem}
\label{theo:concentration_gauss}
Assume \Cref{assum:regularity_2} and  \Cref{assum:potentialU}. Let $(\gamma_k)_{k \geq 1}$ be a non-increasing sequence with $\gamma_1 \leq 2/(m+L)$. Then for all $N \geq 0$, $n \geq 1$, $r>0$ and Lipschitz functions $f : \rset^d \to \rset$:
\begin{equation*}
 \probaMarkov{x}{\estimateur{f} \geq  \PE_x[ \estimateur{f} ]+r }\leq  \exp\left(  - \frac{r^2 \kappa ^2 \Gamma_{N+2, N+n+1}}{16 \norm{f}_{\Lip}^2 v_{N,n}(\gamma)}\right) \eqsp,
\end{equation*}
where $v_{N,n}(\gamma)$ is defined by \eqref{eq:def_u_n_3}.
\end{theorem}

\begin{proof}
The proof is postponed to \Cref{sec:proof-crefth_conc_gauss}.
\end{proof}

If we apply this result to the sequence $(\gamma_k)_{k \geq 1}$ defined for all $k \geq 1$ by $\gamma_k = \gamma_1 k^{-\alpha}$, for $\alpha \in \ccint{0,1}$, we end up with a concentration of order $\exp(-Cr^2 \gamma_1 n^{1-\alpha})$ for $\alpha \in \coint{0,1}$, for some constant $C \geq 0$ independent of $\gamma_1$ and $n$.

\begin{theorem}
\label{theo:concentration_gauss_tv}
Assume \Cref{assum:regularity_2} and  \Cref{assum:potentialU}. Let $(\gamma_k)_{k \geq 1}$ be a non-increasing sequence with $\gamma_1 \leq 2/(m+L)$. Let $(X_n)_{n \geq 0}$ be given by \eqref{eq:euler-proposal-2} and started at $x \in \rset^d$. Then for all $N \geq 0$, $n \geq 1$, $r>0$,  and functions $f\in \functionspace[b]{\rset^d}$:
\begin{equation*}
 \probaMarkov{x}{\estimateur{f} \geq  \PE_x[ \estimateur{f} ]+r }
\leq  \rme^{  - \defEns{r - \osc{f}(\Gamma_{N+2,N+n+1})^{-1}}^2 /\{2 \osc{f}^2 u_{N,n}^{(5)}(\gaStep) \}}\eqsp,
\end{equation*}
where
\begin{equation*}
%  \label{eq:u_lap_tvD}
u_{N,n}^{(5)}(\gaStep) =
\sum_{k=N}^{N+n-1} \gaStep_{k+1} \defEns{ \sum_{i=k+2}^{N+n} \frac{\weight{i}}{ (\uppi \LambdarMSE_{k+2,i})^{1/2}} }^2 \\
+  \kappa^{-1} \defEns{ \sum_{i=N+1}^{N+n} \frac{\weight{i}}{ (\uppi \LambdarMSE_{N+1,i})^{1/2}} }^2 \eqsp.
\end{equation*}
\end{theorem}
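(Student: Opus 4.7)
The plan is to mirror the martingale decomposition used in the proof of \Cref{theo:var_tv} but to estimate the conditional moment generating function (MGF) of each increment rather than its second moment, and then to conclude by the Cram\'er--Chernoff method. Specifically, I would reuse the decomposition
\[
\hat\pi_n^N(f) - \PE_x[\hat\pi_n^N(f)] = \sum_{k=N}^{N+n-1}\defEnsLigne{\martInc_{n,k+1}^N(X_{k+1}) - R_{\gamma_{k+1}}\martInc_{n,k+1}^N(X_k)} + \defEnsLigne{\martIncF_n^N(X_N) - \PE_x[\martIncF_n^N(X_N)]}
\]
with $\martInc_{n,k+1}^N,\martIncF_n^N$ as in \eqref{eq:def_mart_f}--\eqref{eq:def_F_f}, together with the splitting $\martInc_{n,k+1}^N = \weight{k+1}f + \tilde\martInc_{n,k+1}^N$, where $\tilde\martInc_{n,k+1}^N = \sum_{i=k+2}^{N+n}\weight{i} Q_\gamma^{k+2,i}f$ is Lipschitz by \Cref{theo:convergence_discrete_chain}-\ref{item:convergence_discrete_chain_3}, with the bound \eqref{eq:bound_norm_lip_phitilde}.

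To control each conditional MGF, I would use that conditionally on $\mcg_k$ the variable $X_{k+1}$ is Gaussian with covariance $2\gamma_{k+1}\IdM$; the MGF counterpart of the Gaussian Poincar\'e inequality \eqref{lem:var_1}, obtained via the Herbst argument from the Gaussian log-Sobolev inequality, then gives, for any Lipschitz $g$,
\[
\PE\parentheseDeuxLigne{\exp\parentheseLigne{\lambda\defEnsLigne{g(X_{k+1}) - R_{\gamma_{k+1}}g(X_k)}}\mid \mcg_k} \leq \exp\parentheseLigne{\lambda^2 \gamma_{k+1}\norm{g}_{\Lip}^2}.
\]
Applying this with $g = \tilde\martInc_{n,k+1}^N$ and combining with Hoeffding's lemma for the bounded contribution $\weight{k+1}\parentheseLigne{f(X_{k+1}) - R_{\gamma_{k+1}}f(X_k)}$ (whose conditional support has diameter at most $\weight{k+1}\osc{f}$) produces a sub-Gaussian conditional MGF bound for each increment whose variance proxy matches the first summand of $u_{N,n}^{(5)}(\gamma)$. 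For the second piece, I would telescope $\martIncF_n^N(X_N) - \PE_x[\martIncF_n^N(X_N)]$ as a further martingale sum indexed by $k=0,\dots,N-1$, with increments $Q_\gamma^{k+2,N}\martIncF_n^N(X_{k+1}) - R_{\gamma_{k+1}}Q_\gamma^{k+2,N}\martIncF_n^N(X_k)$; each is a Lipschitz function of a Gaussian, with Lipschitz constant controlled through \Cref{propo:contraction_Euler-new} and \eqref{eq:bound_norm_lip_psitilde}, so that iterating the Gaussian MGF bound together with \Cref{lem:suite_recurrence_2} (applied with $\ell=1$ and $\tilde m = \kappa$) yields the sub-Gaussian bound $\PE_x[\exp(\lambda\{\martIncF_n^N(X_N) - \PE_x[\martIncF_n^N(X_N)]\})] \leq \exp(\lambda^2 \kappa^{-1}\norm{\martIncF_n^N}_{\Lip}^2)$, matching the second summand of $u_{N,n}^{(5)}(\gamma)$.

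Assembling these estimates by iterated conditioning from $\mcg_{N+n-1}$ down to $\mcg_0$ produces an overall MGF bound of the form $\PE_x[\exp(\lambda(\hat\pi_n^N(f) - \PE_x[\hat\pi_n^N(f)]))] \leq \exp\parentheseLigne{\lambda\,\osc{f}/\Gamma_{N+2,N+n+1} + \lambda^2\osc{f}^2 u_{N,n}^{(5)}(\gamma)/2}$, after which Markov's inequality with the optimal choice $\lambda = (r - \osc{f}/\Gamma_{N+2,N+n+1})/(\osc{f}^2 u_{N,n}^{(5)}(\gamma))$ gives the announced deviation bound. The delicate step is the combination, inside each increment $D_k$, of the Gaussian-Lipschitz MGF contribution coming from $\tilde\martInc_{n,k+1}^N$ with the bounded-but-non-Lipschitz Hoeffding contribution coming from $\weight{k+1}f$; the latter accumulates, through the elementary estimate $\sum_{k=N}^{N+n-1}\weight{k+1}^2 \leq \gamma_1/\Gamma_{N+2,N+n+1}$, into both the linear-in-$\lambda$ shift $\osc{f}/\Gamma_{N+2,N+n+1}$ and the precise variance proxy $\osc{f}^2 u_{N,n}^{(5)}(\gamma)$, and its careful bookkeeping is the main obstacle of the argument.
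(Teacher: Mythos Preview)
Your overall strategy is correct and is essentially what the paper does (the proof is deferred to the supplement, but it is the natural MGF analogue of the variance computation in \Cref{theo:var_tv}): the same martingale decomposition \eqref{eq:decomposition-variance}--\eqref{eq:relation_mart_var_cond}, the same splitting $\martInc^N_{n,k+1}=\weight{k+1}f+\tilde\martInc^N_{n,k+1}$, the Gaussian log-Sobolev/Herbst bound for the Lipschitz piece $\tilde\martInc^N_{n,k+1}$ (with constant \eqref{eq:bound_norm_lip_phitilde}), and for $\martIncF^N_n(X_N)$ a further telescoping into Lipschitz martingale increments controlled via \Cref{propo:contraction_Euler-new} and \eqref{eq:bound_norm_lip_psitilde}.

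The one place where your write-up is muddled is the handling of the bounded piece $B_k=\weight{k+1}\{f(X_{k+1})-R_{\gamma_{k+1}}f(X_k)\}$ inside each increment $D_k=B_k+L_k$. You invoke Hoeffding's lemma, which yields a \emph{quadratic} contribution $\exp(\lambda^2(\weight{k+1}\osc{f})^2/2)$, yet in the same breath you attribute to it the ``linear-in-$\lambda$ shift'' $\osc{f}/\Gamma_{N+2,N+n+1}$. These are different mechanisms. Because $B_k$ and $L_k$ are dependent (both functions of $X_{k+1}$), you cannot simply multiply their individual MGF bounds; the clean route is the pointwise estimate $\exp(\lambda B_k)\le\exp(\lambda\weight{k+1}\osc{f})$, after which the conditional Gaussian MGF bound applies to $L_k$ alone. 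This does produce a linear shift, but note that $\sum_{k=N}^{N+n-1}\weight{k+1}=1$, so the accumulated shift is $\lambda\,\osc{f}$, not $\lambda\,\osc{f}/\Gamma_{N+2,N+n+1}$; your claimed bookkeeping via $\sum_k(\weight{k+1})^2\le\gamma_1/\Gamma_{N+2,N+n+1}$ pertains to the Hoeffding $\lambda^2$ route, not to the linear shift. In short, the architecture of your argument is right and matches the paper, but you should be explicit about which of the two devices (pointwise bound versus Hoeffding) you use for $B_k$ and check that the resulting constant actually matches the statement.
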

\begin{proof}
  The proof is postponed to \Cref{sec:proof-concentration_tv}.
\end{proof}

Note that $u_{N,n}^{(5)}(\gaStep)$ is up to numerical constants similar to $u_{N,n}^{(4)}(\gaStep)$ given in \eqref{eq:u_lap_tv}. Therefore, using the same calculations as in \Cref{sec:bounds-u_0-n4gamma}, there exist $C_1,C_2 >0$ such that $C_1 \Gamma_n^{-1}  \leq u_{0,n}^{(5)}(\gamma) \leq C_2 \Gamma_n^{-1}$, for $\gamma_k = \gamma_1 / k^{-\alpha}$, $\alpha \in \ccint{0,1}$.  Then, if we apply \Cref{theo:concentration_gauss_tv} to the sequence $(\gamma_k)_{k \geq 1}$ defined for all $k \geq 1$ by $\gamma_k = \gamma_1 k^{-\alpha}$, for $\alpha \in \ccint{0,1}$, we end up with a concentration of order $\exp(-Cr^2 \gamma_1 n^{1-\alpha})$ for $\alpha \in \coint{0,1}$, for some constant $C \geq 0$ independent of $\gamma_1$ and $n$.

%%% Local Variables:
%%% mode: latex
%%% TeX-master: "main"
%%% End:

 \section{Numerical experiments}
\label{sec:numerics}
%We illustrate our findings using the Bayesian analysis of the logistic regression model which is defined as follows.
Consider a binary regression set-up in which the binary observations
(responses) $\{Y_i\}_{i=1}^{p}$ are conditionally independent Bernoulli
random variables with parameters $
\{\vrho(\bfbeta^T X_i)\}_{i=1}^p$,
where $\vrho$ is the
logistic function defined for $z \in \rset$ by $\vrho(z) =
\rme^z/(1+\rme^z)$ and $\{X_i\}_{i=1}^p$ and $\bfbeta$ are $d$ dimensional vectors of known
covariates and unknown regression coefficients, respectively.  The prior distribution for the parameter
$\bfbeta$ is a zero-mean Gaussian distribution with covariance matrix
$\Sigma_{\bfbeta}$. %
%\[
%\CPP{\chunk{Y}{1}{p}}{\beta} = \prod_{i=1}^p \parenthese{1-\vrho(\bfbeta ^T \bfx_i)}^{1-Y_i}
% \parenthese{\vrho(\bfbeta ^T \bfx_i)}^{Y_i} \eqsp,
%\]
The density of the posterior distribution of $\bfbeta$ is up to a proportionality constant given by
\[
\pi_{\bfbeta}(\bfbeta | \{(X_i,Y_i)\}_{i=1}^p) \propto \exp \parenthese{\sum_{i=1}^p \defEns{ Y_i \bfbeta^T X_i - \log(1+\rme^{\bfbeta^T X_i})} - 2^{-1} \bfbeta^T \Sigma^{-1}_{\bfbeta} \bfbeta } \eqsp.
\]
Bayesian inference for the logistic regression model has long been recognized as a numerically involved problem. Several algorithms have been proposed, trying to mimick the data-augmentation (DA) approach  of \cite{albert:chib:1993} for probit regression; see \cite{holmes:held:2006}, \cite{fruehwirth:fruehwirth:2010} and \cite{gramacy:polson:2012}. Recently, a very promising  DA algorithm has been proposed in \cite{polson:scott:windle:2013}, using the Polya-Gamma distribution in the DA part. This algorithm has been shown to be uniformly ergodic for the total variation by \cite[Proposition~1]{choi:hobert:2013}, which provides an explicit expression for the ergodicity constant. This constant is exponentially small in the dimension of the parameter space and the number of samples. Moreover, the complexity of the augmentation step is cubic in the dimension, which prevents from using this algorithm when the dimension of the regressor is large.

We apply ULA to sample from the posterior distribution
$\pi_{\bfbeta}(\cdot | \{(X_i,Y_i)\}_{i=1}^p)$. The gradient of
its log-density may be expressed as
\[
\nabla \log\{\pi_{\bfbeta}(\bfbeta | \{X_i,Y_i\}_{i=1}^p) \} = \sum_{i=1}^p \defEns{Y_i X_i - \frac{X_i}{1+ \rme^{-\bfbeta^T X_i}}}
- \Sigma_{\bfbeta}^{-1} \bfbeta\eqsp,
\]
Therefore $-\log \pi_{\bfbeta}(\cdot | \{X_i,Y_i\}_{i=1}^p)$
is strongly convex \Cref{assum:potentialU} with $m
=\lambda_{\max}^{-1}(\Sigma_{\bfbeta})$ and satisfies \Cref{assum:regularity_2}
with $L = (1/4)\sum_{i=1}^p X_i^{\operatorname{T}} X_i+
\lambda^{-1}_{\min}(\Sigma_{\bfbeta}) $, where $\lambda_{\min}(\Sigma_{\bfbeta})$
and $\lambda_{\max}(\Sigma_{\bfbeta})$ denote the minimal and maximal eigenvalues
of $\Sigma_{\bfbeta}$, respectively.  We first
compare the histograms produced by ULA and the P\`olya-Gamma Gibbs
sampling from \cite{polson:scott:windle:2013}. For that purpose, we take
$d=5$, $p=100$, generate synthetic data $ (Y_i)_{1 \leq i \leq p}$ and
$(X_i)_{1 \leq i \leq p}$, and set $\Sigma_{\bfbeta}^{-1} = (dp)^{-1}  (\sum_{i=1}^p
X_i^{\operatorname{T}} X_i)\operatorname{I}_d$. We produce $10^8$
samples from the P\'olya-Gamma sampler using the \textsf{R} package
\textsf{BayesLogit} \cite{windle:polson:scott:BayesLogit}. Next, we
make $10^3$ runs of the Euler approximation scheme with $n=10^6$
effective iterations, with a constant sequence $(\gamma_k)_{k \geq
  1}$, $\gamma_k =  10 (\kappa
n^{1/2})^{-1}$ for all $k \geq 0$ and a burn-in period
$N=n^{1/2}$. The histogram of the P\'olya-Gamma Gibbs sampler for first component, the corresponding  mean of the obtained histograms for ULA and the $0.95$ quantiles are displayed  in \Cref{fig:hist_Euler_constant}.
 The same procedure is
also applied with the decreasing step size sequence $(\gamma_k)_{k \geq
  1}$ defined by $\gamma_k = \gamma_1 k^{-1/2}$, with $\gamma_1 = 10 (\kappa
\log(n)^{1/2})^{-1}$ and for the burn in period
$N = \log(n)$, see also \Cref{fig:hist_Euler_constant}.
\begin{figure}[!h]
\begin{tabular}{cc}
\includegraphics[width=0.45\textwidth,height=3.7cm]{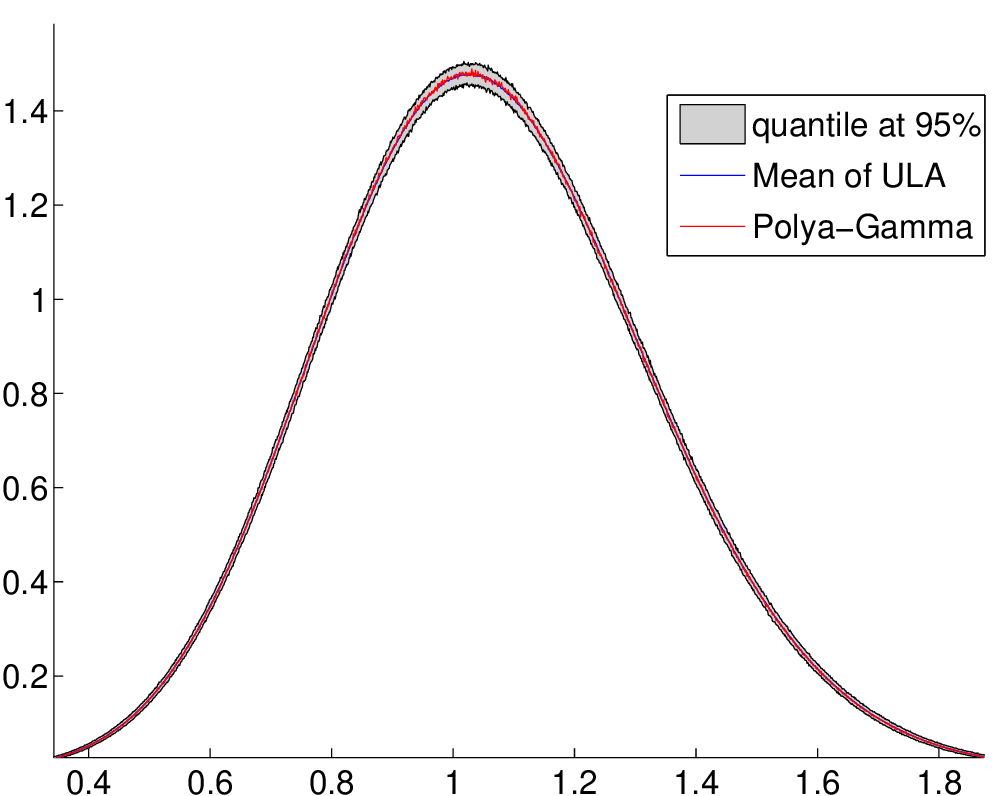} &  \includegraphics[width=0.45\textwidth]{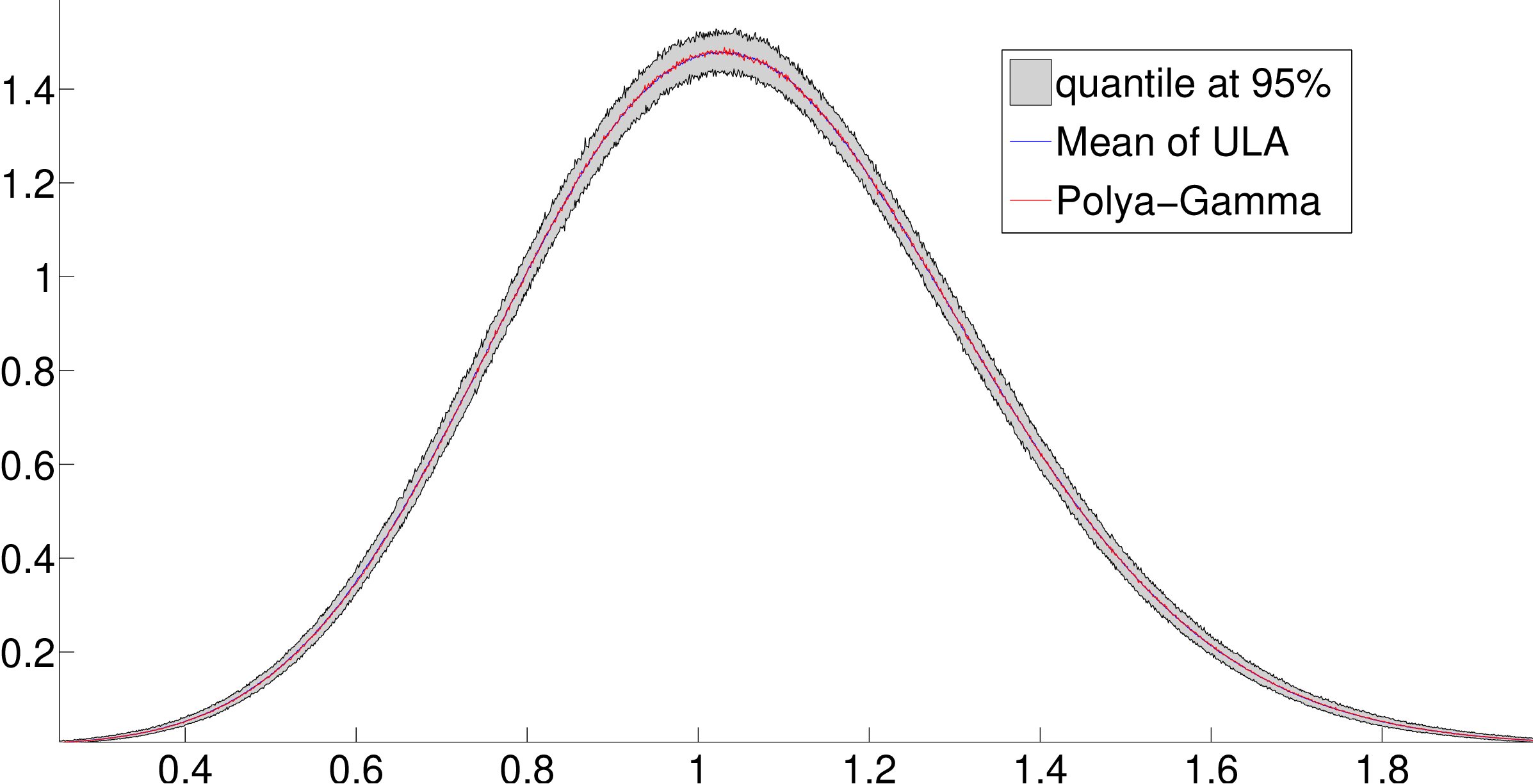}
\end{tabular}
\caption{Empirical distribution comparison between the Polya-Gamma Gibbs Sampler and ULA. Left panel:  constant step size $\gamma_k = \gamma_1$ for all $k \geq 1$; right panel: decreasing step size $\gamma_k = \gamma_1 k^{-1/2}$ for all $k \geq 1$}
\label{fig:hist_Euler_constant}
\end{figure}
In addition, we also compare MALA and ULA
on five real data sets, which are summarized in
\Cref{tab:benchmark}. Note that for the Australian credit data set,
the ordinal covariates have been stratified by dummy
variables. Furthermore, we normalized the data sets and consider the
Zellner prior setting $\Sigma^{-1} = (\uppi^2d /3)\Sigma_X^{-1}$ where
$\Sigma_X= p^{-1} \sum_{i=1}^p X_i X_i^T $ ; see
\cite{sabane:held:2011}, \cite{hanson:branscum:wesley:2014} and the
references therein. Also, we apply a pre-conditioned version of MALA and ULA,
targeting the probability density $\tilde{\pi}_{\bfbeta}(\cdot) \propto \pi_{\bfbeta}(\Sigma_X^{1/2}
\cdot )$.
%\pi_{\bfbeta}(\Sigma_X^{1/2}\cdot |(X_i,Y_i)_{1 \leq i \leq p})
Then, we obtain samples from $\pi_{\bfbeta}$  by post-multiplying the obtained draws by
$\Sigma_X^{1/2}$.
We compare MALA and ULA for each
data sets by estimating for each component $i \in \{1,\ldots,d\}$ the
marginal accuracy between their $d$ marginal empirical distributions
and the $d$ marginal posterior distributions, where the marginal accuracy
between two probability measure $\mu,\nu$ on $(\rset,\mathcal{B}(\rset))$ is defined by
\begin{equation*}
  \operatorname{MA}(\mu,\nu) = 1-(1/2)\tvnorm{\mu-\nu} \eqsp.
\end{equation*}
This quantity has already been considered in
\cite{faes:ormerod:wand:2011} and \cite{chopin:ridgway:2015} to
compare approximate samplers. To estimate the $d$ marginal
posterior distributions, we run $2 \cdot 10^7$ iterations of the
Polya-Gamma Gibbs sampler.  Then $100$ runs of MALA and ULA ($10^6$
iterations per run) have been performed. For MALA, the step size is
chosen so that the acceptance probability at stationarity is
approximately equal to $0.5$ for all the data sets.  For ULA, we
choose the same constant step size than MALA. We display the boxplots
of the mean of the estimated marginal accuracy across all the
dimensions in \Cref{fig:box_plots_accuracy}. These results all imply
that ULA is an alternative to the Polya-Gibbs sampler and the MALA
algorithm.

% For each data sets,  $100$ runs  of the
% Polya-Gamma Gibbs sampler ($10^5$ iterations per run), and $100$ runs of
% MALA and ULA ($10^6$ iterations per run) have been performed. Despite the fact that longer runs are carried out,
% the computational time of ULA is still two orders of magnitude lower than the P\'olya-Gamma simulator. For MALA, the step size is chosen
% so that the acceptance probability in stationarity is approximately equal to $0.5$.
% For ULA, we choose constant step sizes $\gamma= 5 \times 10^{-3}$ for all the data
% sets. We display the
% boxplots of the  estimators for the mean of one component of
% $\bfbeta$ in \Cref{fig:box_plots_german}. Note that there are some discrepancies between the posterior mean estimators obtained using either the DA, MALA and ULA. These differences are of order $10^{-3}$ and are likely to be due to accumulations of numerical errors. These differences are negligible compared to the posterior variance of these estimators, which is of order $10^{-1}$.
% These results all imply that ULA is a much simpler and faster  alternative to the Polya-Gamma Gibbs sampler and MALA algorithm.
\begin{table}
\centering
\begin{tabular}{|c|c|c|}
\hline
\backslashbox{Data set}{Dimensions} & Observations $p$  & Covariates $d$ \\
\hline
German credit \footnotemark[1]& 1000 & 25  \\
\hline
Heart disease \footnotemark[2] & 270 & 14 \\
\hline
Australian credit\footnotemark[3]& 690& 35 \\
\hline
Pima indian diabetes\footnotemark[4]& 768& 9 \\
\hline
Musk\footnotemark[5]& 476 & 167 \\
\hline
\end{tabular}
\caption{\label{tab:benchmark}Dimension of the data sets}
\end{table}
\footnotetext[1]{\url{http://archive.ics.uci.edu/ml/datasets/Statlog+(German+Credit+Data)}}
\footnotetext[2]{\url{http://archive.ics.uci.edu/ml/datasets/Statlog+(Heart)}}
\footnotetext[3]{\url{http://archive.ics.uci.edu/ml/datasets/Statlog+(Australian+Credit+Approval)}}
\footnotetext[4]{\url{http://archive.ics.uci.edu/ml/datasets/Pima+Indians+Diabetes}}
\footnotetext[5]{\url{https://archive.ics.uci.edu/ml/datasets/Musk+(Version+1)}}
% \begin{figure}
% \begin{tabular}{cc}
% \includegraphics[width=0.45\textwidth]{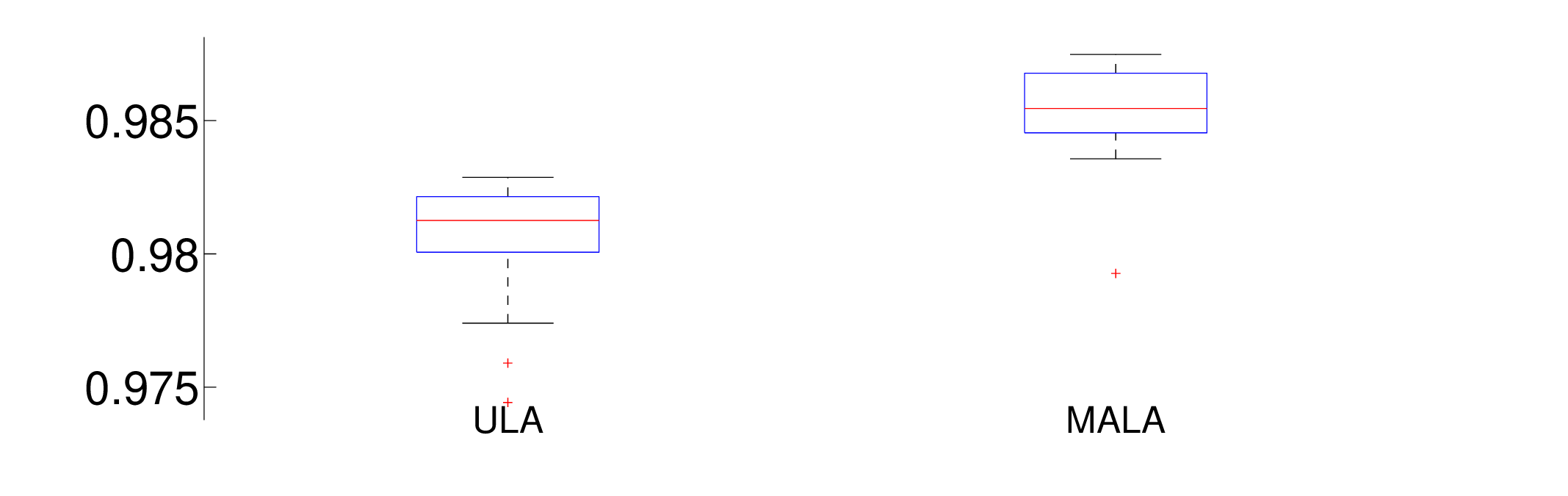} & \includegraphics[width=0.45\textwidth]{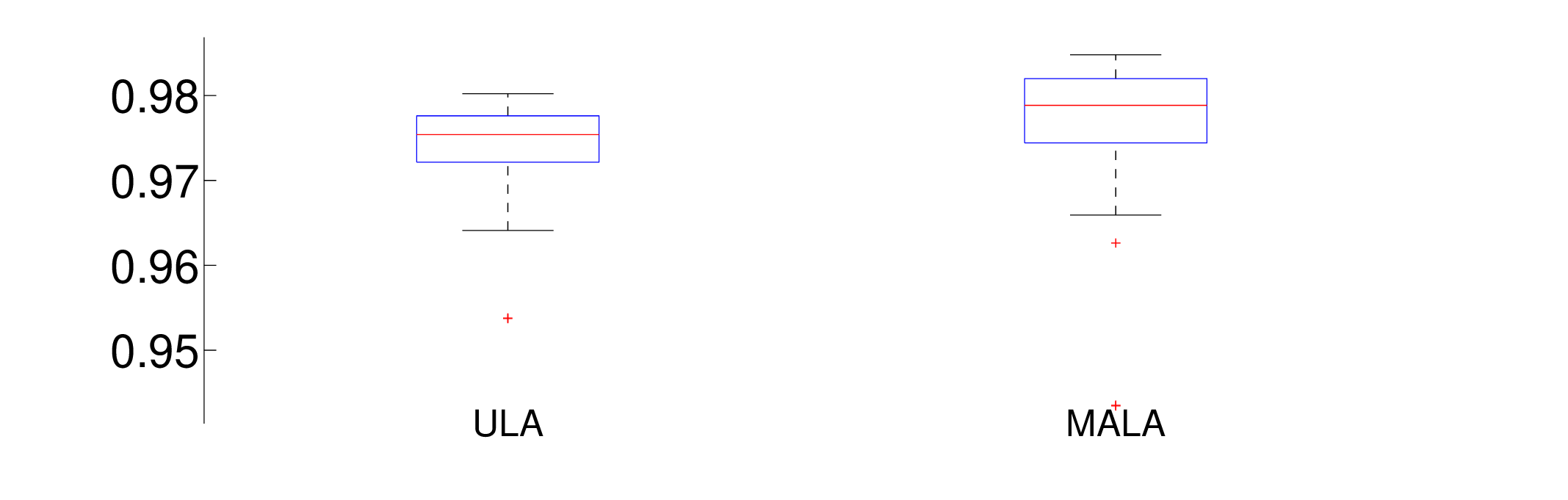} \\
% \includegraphics[width=0.45\textwidth]{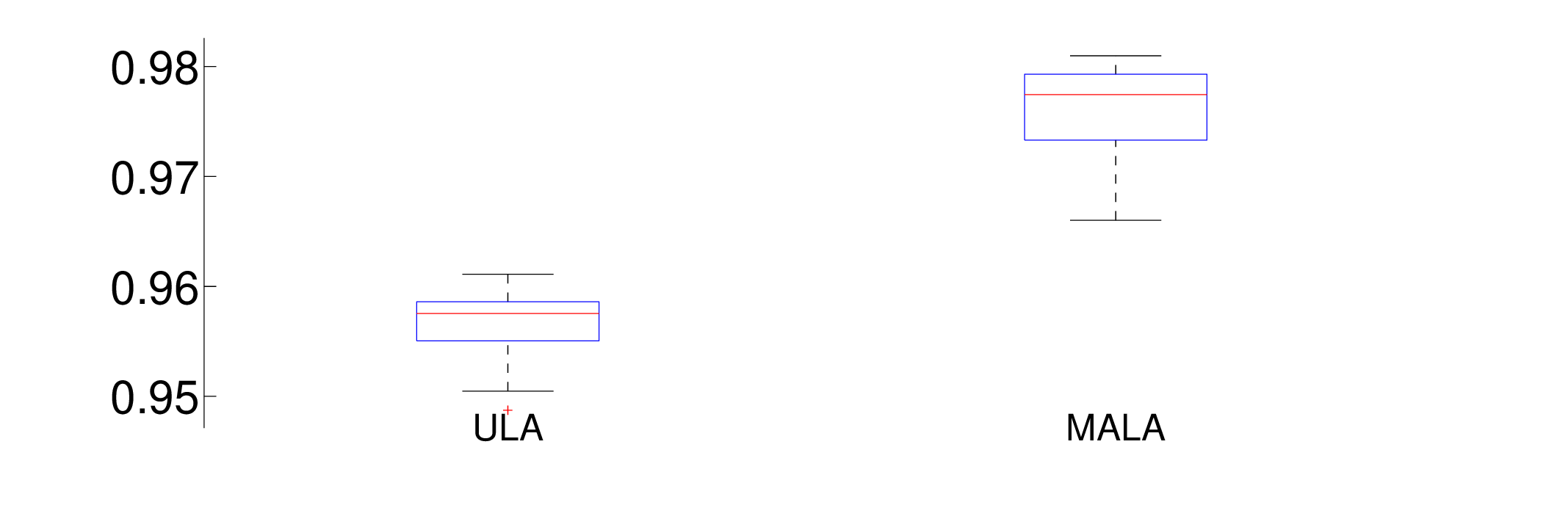}& \includegraphics[width=0.45\textwidth]{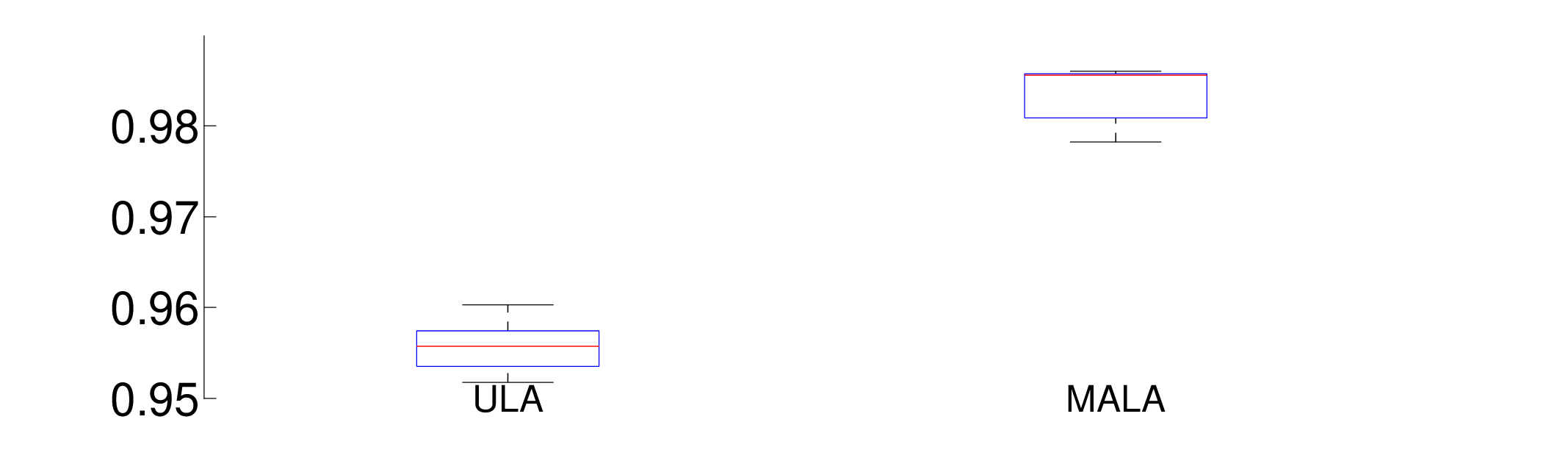} \\
% \end{tabular}
% %\includegraphics[width=0.45\textwidth]{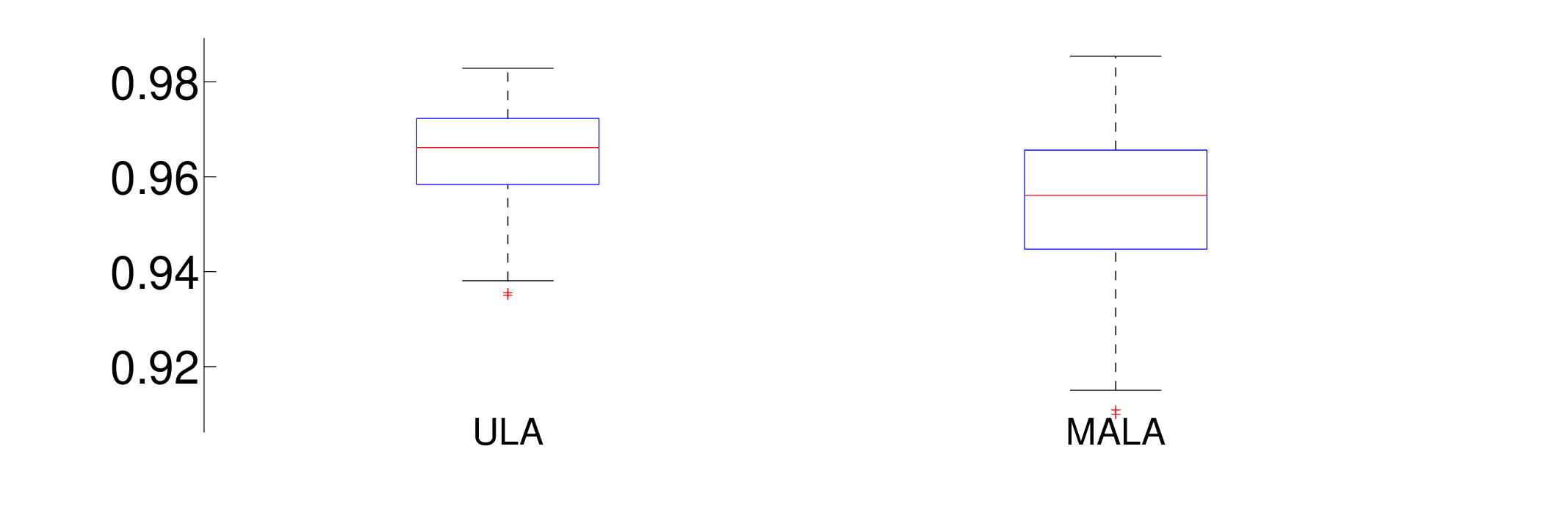}
% \caption{Marginal accuracy across all the dimensions.\\\hspace{\textwidth} Upper left: German credit data set. Upper right: Australian credit data set.
% Lower left: Heart disease data set. Lower right: Prima indian diabetes data set.
% At the bottom: Musk data set}
% \label{fig:box_plots_german}
% \label{fig:box_plots_austra}
% \end{figure}

% \begin{figure}
% \begin{tabular}{cc}
% \includegraphics[width=0.45\textwidth]{German_box_plot} & \includegraphics[width=0.45\textwidth]{austra_box_plot} \\
% \includegraphics[width=0.45\textwidth]{heart_box_plot}& \includegraphics[width=0.45\textwidth]{indians_box_plot}
% \end{tabular}
% \caption{Upper left: German credit data set. Upper right: Australian credit data set.
% Lower left: Heart disease data set. Lower right: Pima Indian diabetes data set. At the bottom: Musk data set }
% \label{fig:box_plots_german}
% \label{fig:box_plots_austra}
% \end{figure}

\begin{figure}
\begin{tabular}{cc}
\includegraphics[width=0.45\textwidth]{german} & \includegraphics[width=0.45\textwidth]{austra} \\
\includegraphics[width=0.45\textwidth]{heart}& \includegraphics[width=0.45\textwidth]{indians}
\end{tabular}
\centering
\includegraphics[width=0.45\textwidth]{musk}
\caption[caption]{Marginal accuracy across all the dimensions.\\\hspace{\textwidth} Upper left: German credit data set. Upper right: Australian credit data set. Lower left: Heart disease data set. Lower right: Pima Indian diabetes data set. At the bottom: Musk data set}
\label{fig:box_plots_accuracy}
\end{figure}

\section{Contraction  in total variation for  functional autoregressive models}
\sectionmark{Contraction results}
\label{sec:conv-total-vari_AR}
In this section, we consider
functional autoregressive models defined for $k \geq 0$ by 
\begin{equation}
\label{eq:def:functio_AR}
  \Xr_{k+1} = \funreg_{k+1}(\Xr_k) + \sigma_{k+1} \Zr_{k+1} \eqsp,
\end{equation}
where $\sequenceg{\Zr}[k][ 1]$ is a sequence of \iid~$d$ dimensional
standard Gaussian random variables, $\sequenceg{\sigma}[k][1]$ is a
sequence of positive real numbers and $\sequenceg{\funreg}[k][ 1]$  is
a sequence of measurable functions from $\rset^d$ to $\rset^d$ which
satisfies the following assumption:
% \begin{assumptionAR}
%   \label{assum:contraction_AR}
% For all $k \geq 1$, $\funreg_k$ is $1$-Lipschitz, \ie~for all $x,y \in \rset^d$, $\norm{\funreg_k(x)-\funreg_k(y)} \leq \norm{x-y}$.
% \end{assumptionAR}
% We consider also the case where the functions $\funreg_k$ are  strict contractions.
\begin{assumptionAR}
  \label{assum:strict_contraction_AR}
For all $k \geq 1$,   $\funreg_k$ is $\kappar_k$-Lipschitz.
\end{assumptionAR}
The sequence $\sequence{\Xr}[k][\nset]$ is an inhomogeneous
Markov chain with  Markov kernels
$\sequenceg{\Pr}[k][1]$ on $(\rset^d, \borelSet(\rset^d))$ given for all $x
\in \rset^d$ and $\eventA \in \rset^d$ by
\begin{equation}
\label{eq:def_markov_kernel_AR}
  \Pr_k(x,\eventA) = \frac{1}{( 2 \pi \sigmakD)^{d/2}}\int_{\eventA}\exp\parenthese{-\norm[2]{y-\funreg_k(x)}/(2\sigmakD)} \rmd y\eqsp.
\end{equation}
We denote for all $n \geq 1$ by $\Qr^n$ the marginal distribution of $\Xr_n$ given by
\begin{equation}
  \label{eq:def_marg_AR}
  \Qr^n = \Pr_1 \cdots \Pr_n \eqsp.
\end{equation}
In this section we compute an upper bound of $\tvnorm{\delta_x \Qr^n - \delta_y \Qr^n}$ which does not depend on the dimension $d$. Define for $x,y \in \rset^d$ 
\begin{equation}
\label{eq:def_Er}
 \Er_k(x,y) =
\funreg_k(y)-\funreg_k(x) \eqsp,  \er_k(x,y) =
\begin{cases}
\Er_k(x,y)/\norm{\Er_k(x,y)} & \text{ if } \Er_k(x,y) \not = 0 \\
0 & \text{otherwise} \eqsp,
\end{cases}
\end{equation}
For all  $x,y,z \in \rset^d$,  $x \not = y$, define
\begin{align}
\label{eq:def_F_k}
    \transfrr_k(x,y,z) &= \funreg_k(y) +  \parenthese{\Id-2\er_k(x,y)\er_k(x,y)^{\transp}}z\\
\label{def:alphar}
\alphar_k(x,y,z)&= \frac{\phibfs[k](  \norm{\Er_k(x,y)}-\ps{\er_k(x,y)}{z})}
{\phibfs[k]( \ps{\er_k(x,y)}{z})} \eqsp,
\end{align}
where  $\phibfs[k]$ is the probability density of a zero-mean gaussian variable with variance $\sigma_k^2$.
Let $\Zr_1$ be  a standard $d$-dimensional Gaussian random variable.
Set $\Xr_1= \funreg_k(x) + \sigma_k \Zr_1$ and
\[
\Yr_1 =
\begin{cases}
\funreg_k(y) +\sigma_k \Zr_1 & \text{ if }\Er_k(x,y) =0 \\
B_1  \, \Xr_1  + (1-B_1)  \, \transfrr_k(x,y,\Zr_1) & \text{ if } \Er_k(x,y) \not = 0 \eqsp,
\end{cases}
\]
where $B_1$ is a
Bernoulli random variable independent of $\Zr_1$ with
success probability
\[
p_k(x,y,z)= 1 \wedge \ \alphar_k(x,y,z) \eqsp.
\]
The construction above defines for all $(x,y) \in \rset^d \times \rset^d$ the Markov kernel $\Kr_k$ on $(\rset^d \times \rset^d , \borelSet(\rset^d) \otimes \borelSet(\rset^d))$ given for all $(x,y) \in \rset^d \times \rset^d$ and $\eventA \in \borelSet(\rset^d) \otimes \borelSet(\rset^d)$ by
\begin{align}
  \label{eq:def_coupling_kernel_AR}
&
\Kr_k((x,y) , \eventA) = \frac{\1_{\Deltar}(\funreg_k(x),\funreg_k(y))} {(2 \uppi \sigmakD)^{d/2}}\int_{\rset^d} \1_{\eventA}(\tildex,\tildex) \rme^{-\norm[2]{\transar_k(\tildex,x)}/(2\sigmakD)} \rmd \tildex \\
\nonumber
&+ \frac{\1_{\Deltar^{\complem}}(\funreg_k(x),\funreg_k(y))}{ (2 \uppi \sigmakD)^{d/2}} \left[ \int_{\rset^d} \1_{\eventA}(\tildex,\tildex)
p_k(x,y,\transar_k(\tildex,x)) \rme^{-\norm[2]{\transar_k(\tildex,x)}/(2\sigmakD)} \rmd \tildex \right.
\\
\nonumber
&\left. +  \int_{\rset^d} \1_{\eventA}(\tildex,\transfrr_k(x,y,\transar_k(\tildex,x)))
\defEns{1-p_k(x,y,\transar_k(\tildex,x))} \rme^{-\norm[2]{\transar_k(\tildex,x)}/(2\sigmakD)} \rmd \tildex \right] \eqsp,
\end{align}
where for all $\tildex \in \rset^d$, $\transar_k(\tildex,x) = \tildex-\funreg_k(x)$ and $ \Deltar = \defEnsE{(\tildex,\tildey) \in \rset^d \times \rset^d }{\tildex = \tildey} $.
%  : (\rset^d) ^{3} \to \rset^d$ is  defined by for all $(\tildex,\tildey,\tildez)$, $\tildex \not = \tildey$,
% \begin{align*}
% %  \label{eq:4}
%   \transfrr_k(\tildex,\tildey,\tildez) &= \funreg_k(\tildey) +\parenthese{\Id-2\er_k(\tildex,\tildey)\er_k(\tildex,\tildey)^{\transp}}\tildez \\
%  \Deltar &= \defEnsE{(\tildex,\tildey) \in \rset^d \times \rset^d }{\tildex = \tildey} \eqsp.
% \end{align*}
%The following lemma shows that  $x,y \in \rset^d$,
%$(\Xr_1,\Yr_1)$ is a coupling of $\Pr(x,\cdot)$ and $\Pr(y,\cdot)$.
It is shown in \cite[Section 3.3]{bubley:dyer:jerrum:1998} that for all $x,y \in \rset^d$ and $k \geq 1$, $\Kr_k((x,y),\cdot)$ is a transference plan of $\Pr_k(x,\cdot)$ and $\Pr_k(y,\cdot)$. For completeness, the proof is given in \Cref{sec:coupling}. Furthermore, we have for all $x,y \in \rset^d$ and $k \geq 1$
  % \begin{equation}
  %   \label{eq:1}
  %   \Kr_k((x,y), \Deltar) \geq 2\Phibf\parenthese{-\frac{\norm{\funreg_k(x)-\funreg_k(y)}}{2 \sigma_k}} \eqsp.
  % \end{equation}
  \begin{equation}
    \label{eq:proof_AR_1}
    \Kr_k((x,y), \Deltar) = 2\Phibf\parenthese{-\frac{\norm{\Er_k(x,y)}}{2 \sigma_k}} \eqsp.
  \end{equation}

For all initial distribution $\mu_0$ on $(\rset^d\times \rset^d,
\borelSet(\rset^d) \otimes \borelSet(\rset^d))$, $\PPtilde_{\mu_0}$
and $\PEtilde_{\mu_0}$ denote the probability and the expectation
respectively, associated with the sequence of Markov kernels
$\sequenceg{\Kr}[k][1]$ defined in \eqref{eq:def_coupling_kernel_AR}
and $\mu_0$ on the canonical space $((\rset^d\times
\rset^d)^{\nset},(\borelSet(\rset^d) \otimes
\borelSet(\rset^d))^{\otimes \nset})$,
$\sequenceD{(\Xr_i,\Yr_i)}[i][\nset]$ denotes the canonical process
and $\sequence{\filtrationTilde}[i][\nset]$ the corresponding
filtration.  Then  if $(\Xr_0,\Yr_0) =
(x,y) \in \rset^d \times \rset^d$, for all $k \geq 1$ $(\Xr_k,\Yr_k)$
is a coupling of $\delta_x \Qr^k$ and $\delta_y \Qr^k$. Using Lindvall's inequality, bounding
$ \tvnorm{\delta_x \Qr^{n}- \delta_y \Qr^{n}}$ amounts to evaluate $\PPtilde_{(x,y)}(\Xr_n \not =
\Yr_n)$. 
\begin{theorem}
  \label{theo:strict_convergence_AR}
  Assume \Cref{assum:strict_contraction_AR}. Then for all $x,y \in \rset^d$ and $n \geq 1$,
  \begin{equation*}
    \tvnorm{\delta_x \Qr^{n}- \delta_y \Qr^{n}} \leq \1_{\Deltar^{\complem}}((x,y)) \defEns{1-2\Phibf\parenthese{-\frac{\norm{x-y} }{2\Xi_n^{1/2}}}} \eqsp,
  \end{equation*}
where $\sequenceg{\Xi}[i][1]$ is defined for all $k \geq 1$ by $\Xi_{k} = \sum_{i=1}^{k} \{ \sigma_{i}^2  /  \prod_{j=1}^{i} \kappar_{j}^{2}\} $.
\end{theorem}
We preface the proof by a technical Lemma.

\begin{lemma}
  \label{lem:proof_AR_2}
  For all $\varsigma,\ar >0$ and $t \in \rset_+$, the following identity holds
  \begin{multline*}
    %\label{eq:3}
    \int_{\rset}\phibfvs(y) \defEns{1- 1 \wedge \frac{\phibfvs(t-y)}{\phibfvs(y)}}\defEns{1-2 \Phibf\parenthese{-\frac{\abs{2y-t}}{2 \ar}}
}\rmd y \\
= 1-2\Phibf\parenthese{-\frac{t}{2 (\varsigma^2+\ar^2)^{1/2}}} \eqsp.
  \end{multline*}
\end{lemma}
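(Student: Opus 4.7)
The plan is to treat $t=0$ trivially (both sides vanish) and assume $t>0$ throughout. The first observation is that the ratio $\phibfvs(t-y)/\phibfvs(y)=\exp\{t(2y-t)/(2\varsigma^2)\}$ is $\leq 1$ precisely on $\{y\leq t/2\}$, so the factor $1-1\wedge\{\phibfvs(t-y)/\phibfvs(y)\}$ equals $\{1-\phibfvs(t-y)/\phibfvs(y)\}\1_{y<t/2}$, and on that half-line $|2y-t|=t-2y$. Hence the left-hand side equals
\[
I=\int_{-\infty}^{t/2}\{\phibfvs(y)-\phibfvs(t-y)\}\,h(y)\,\rmd y,\qquad h(y)=1-2\Phibf\parenthese{-|2y-t|/(2\ar)}.
\]
Applying the change of variable $y\mapsto t-y$ in the term involving $\phibfvs(t-y)$ and noting that $h$ is invariant under this substitution (since $|2y-t|$ is), one rewrites
\[
I=\int_{-\infty}^{t/2}\phibfvs(y)h(y)\,\rmd y-\int_{t/2}^{+\infty}\phibfvs(y)h(y)\,\rmd y.
\]

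The next step is to express $h$ probabilistically. For $Z$ standard Gaussian and $s\geq 0$ one has $1-2\Phibf(-s)=\PP(|Z|\leq s)$, so $h(y)=\PP(|V|\leq|2y-t|)$ with $V=2\ar Z\sim N(0,4\ar^2)$. Inserting this into $I$ and exchanging the two integrations by Fubini's theorem, the inner integral for fixed $V$ collects the regions $\{y\leq (t-|V|)/2\}$ with a $+$ sign and $\{y\geq (t+|V|)/2\}$ with a $-$ sign, which yields
\[
I=\PE\left[\Phibf\!\parenthese{(t-|V|)/(2\varsigma)}+\Phibf\!\parenthese{(t+|V|)/(2\varsigma)}-1\right].
\]

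To finish, I would use the symmetry $V\stackrel{d}{=}-V$ to remove the absolute values: splitting $\{V\geq 0\}$ and $\{V<0\}$ shows that $\PE[\Phibf((t-|V|)/(2\varsigma))+\Phibf((t+|V|)/(2\varsigma))]=2\,\PE[\Phibf((t-V)/(2\varsigma))]$. Introducing an independent $W\sim N(0,1)$ and writing $\Phibf((t-V)/(2\varsigma))=\PP(W\leq (t-V)/(2\varsigma)\mid V)=\PP(2\varsigma W+V\leq t\mid V)$, one recognizes $2\varsigma W+V\sim N(0,4\varsigma^2+4\ar^2)$, so the expectation equals $\Phibf(t/(2\sqrt{\varsigma^2+\ar^2}))$. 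Combining this with the previous display gives $I=2\Phibf(t/(2\sqrt{\varsigma^2+\ar^2}))-1=1-2\Phibf(-t/(2(\varsigma^2+\ar^2)^{1/2}))$, as required.

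The main obstacle is keeping the bookkeeping of signs and the $\pm|V|$ decomposition under control; everything else is a direct manipulation of Gaussian quantities. I expect no subtle analytic issue because all integrands are bounded and integrable, so Fubini applies without further justification.
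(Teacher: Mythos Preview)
Your argument is correct. The reduction to $y<t/2$ via the explicit form of the likelihood ratio, the symmetrisation $y\mapsto t-y$, and the probabilistic reading $h(y)=\PP(|V|\le|2y-t|)$ followed by Fubini all go through exactly as you describe; the final Gaussian convolution step is the standard identity $\PE[\Phibf((t-V)/(2\varsigma))]=\PP(2\varsigma W+V\le t)$ with independent Gaussians. Everything is bounded and integrable, so no analytic subtlety arises.

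The paper does not give the proof here but defers it to the supplementary document, so a line-by-line comparison is not possible. That said, your approach is the natural one for this identity: the coupling kernel \eqref{eq:def_coupling_kernel_AR} is designed so that the one-step merging probability along the line through $\funreg_k(x)$ and $\funreg_k(y)$ is exactly $2\Phibf(-\norm{\Er_k}/(2\sigma_k))$, and the lemma encodes the resulting one-dimensional recursion for the non-merging probability; your computation recovers precisely the hitting-time law of the associated Gaussian random walk, which is what one expects the supplement to do as well.
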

\begin{proof}
  Let $\varsigma,\ar >0$ and $t \in \rset_+$.
Let us denote by $I$ the integral on the left hand side in the expression above.
% \begin{equation*}
%   I = \int_{\rset}\phibfvs(y) \defEns{1-\min\parenthese{1,\frac{\phibfvs(t-y)}{\phibfvs(y)}}}\defEns{1-2 \Phibf\parenthese{\frac{2y-t}{2 \ar}}
% }\rmd y \eqsp.
% \end{equation*}
Then,
\begin{align}
\nonumber
&I
%\nonumber
% & =
% \int_{\rset} \defEns{\phibfvs(y)-\min\parenthese{\phibfvs(y),\phibfvs(t-y)}}\defEns{2 \Phibf\parenthese{\frac{2y-t}{2 \ar}}
% }\rmd y \\
\nonumber
 = \int_{-\infty}^{t/2} \defEns{\phibfvs(y)-\phibfvs(t-y)}\defEns{1-2 \Phibf\parenthese{\frac{2y-t}{2 \ar}}
}\rmd y\\
\label{eq:proof_AR_gauss}
% & = \int_{-\infty}^{t/2} \phibfvs(y)\defEns{1-2 \Phibf\parenthese{\frac{2y-t}{2 \ar}}
% }\rmd y - \int_{t/2}^{\plusinfty} \phibfvs(y)\defEns{1-2 \Phibf\parenthese{\frac{t-2y}{2 \ar}}
% }\rmd y \eqsp,
& = \int_{-\infty}^{t/2} \phibfvs(y)\defEns{1-2 \Phibf\parenthese{\frac{2y-t}{2 \ar}}
}\rmd y \\
\nonumber
&\phantom{\defEns{1-2 \Phibf\parenthese{\frac{2y-t}{2 \ar}}}}
  - \int_{-\infty}^{-t/2} \phibfvs(y)\defEns{1-2 \Phibf\parenthese{\frac{t+2y}{2 \ar}}
}\rmd y \eqsp,
\end{align}
 Now to
simplify the proof, we give a probabilistic interpretation of this two
integrals. Let $\Xrd$ and $\Yrd$ be two real Gaussian random variables with
zero mean and variance $\ar^2$ and $\varsigma^2$ respectively.
Since for all $u \in \rset_+$, $1-2 \Phibf(-u/(2 \ar)) = \PP[\abs{\Xrd} \leq u/2]$, we have  by \eqref{eq:proof_AR_gauss}
\begin{multline*}
  I = \proba{\Yrd \leq t/2 , \Xrd + \Yrd \leq t/2, \Yrd - \Xrd \leq t/2}\\
-
\proba{\Yrd \geq t/2 , \Xrd + \Yrd \geq t/2, \Yrd - \Xrd \geq t/2} \eqsp.
\end{multline*}
Using that  $\Yrd$ and $-\Yrd$ have the same law in the second term, we get $I= I_1+I_2$ where
\begin{align}
\nonumber
I_1
&=\proba{\Yrd \leq t/2 , \Xrd + \Yrd \leq t/2, \Yrd - \Xrd \leq t/2, \Xrd \geq 0}\\
\nonumber
& \phantom{aaaaa}-
\proba{\Yrd \leq -t/2 , \Xrd - \Yrd \geq t/2, \Yrd + \Xrd \leq -t/2 ,\Xrd \geq 0} \\
\label{eq:proof_AR_gauss_2}
& = \proba{\abs{\Xrd + \Yrd} \leq t/2, \Xrd \geq 0} \eqsp,
\end{align}
and
\begin{multline*}
  I_2= \proba{\Yrd \leq t/2 , \Xrd + \Yrd \leq t/2, \Yrd - \Xrd \leq t/2, \Xrd \leq 0}\\
-
\proba{\Yrd \leq -t/2 , \Xrd - \Yrd \geq t/2, \Yrd + \Xrd \leq -t/2, \Xrd \leq 0}   \eqsp.
\end{multline*}
Using again that  $\Yrd$ and $-\Yrd$ have the same law  in the two terms we have
\begin{align}
\nonumber
I_2
& =\proba{\Yrd \geq -t/2 , \Xrd - \Yrd \leq t/2, \Yrd + \Xrd \geq -t/2, \Xrd \leq 0}
\\
\nonumber
&\phantom{aaaaa}-\proba{\Yrd \geq t/2 , \Xrd + \Yrd \geq t/2,  \Xrd -\Yrd \leq -t/2, \Xrd \leq 0} \\
\label{eq:proof_AR_gauss_3}
 &=\proba{\abs{\Xrd + \Yrd} \leq t/2, \Xrd \leq 0}  \eqsp.
\end{align}
Combining  \eqref{eq:proof_AR_gauss_2}, \eqref{eq:proof_AR_gauss_3}, we get $I = \PP(\vert \Xrd + \Yrd\vert \leq t /2)$.
The proof follows from the fact that   $\Xrd + \Yrd$ is a real
Gaussian random variable with mean zero and variance
$\ar^2+\varsigma^2$, since $\Xrd $ and $\Yrd$ are independent.

\end{proof}

\begin{proof}[Proof of \Cref{theo:strict_convergence_AR}]
%The proof follows the same idea as the proof of \Cref{theo:convergence_AR}.

 Since for all $k \geq 1$,
  $(\Xr_k,\Yr_k)$ is a coupling of $\delta_x \Qr^k$ and $\delta_y
  \Qr^k$,  $\tvnorm{\delta_x \Qr^k-\delta_y \Qr^k} \leq \PPtilde_{(x,y)}(\Xr_k \not = \Yr_k )$.

Define for all $k_1,k_2 \in \nset^*$,  $k_1 \leq k_2$, $\Xi_{k_1,k_2} = \sum_{i=k_1}^{k_2}\{ \sigma_{i}^2  / \prod_{j=k_1}^{i} \kappar_{j}^{2}\}$.
Let $n \geq 1$. We show by backward induction that for all $k \in \defEns{0,\cdots,n-1}$,
\begin{equation}
\label{eq:proof_convergence_AR_induction_strict}
  \PPtilde_{(x,y)}(\Xr_n \not = \Yr_n )  \leq \expeMarkovTilde{(x,y)}{
\1_{\Deltar^{\complem}}(\Xr_{k},\Yr_{k}) \parentheseDeux{1-2\Phibf\defEns{-\frac{\norm{\Xr_{k}-\Yr_{k}} }{2 \parenthese{\Xi_{k+1,n}}^{1/2}}}}
} \eqsp,
\end{equation}
Note that the inequality for $k=0$ will conclude the proof.

Since $\Xr_n \not = \Yr_n $ implies that $\Xr_{n-1} \not = \Yr_{n-1} $, the Markov property and \eqref{eq:proof_AR_1} imply
\begin{multline*}
\PPtilde_{(x,y)}(\Xr_n \not = \Yr_n ) = \expeMarkovTilde{(x,y)}{
\1_{\Deltar^{\complem}}(\Xr_{n-1},\Yr_{n-1}) \expeMarkovTilde{(\Xr_{n-1},\Yr_{n-1})}{\1_{\Deltar^{\complem}}(\Xr_{1},\Yr_{1})}}
\\
 \leq \expeMarkovTilde{(x,y)}{
\1_{\Deltar^{\complem}}(\Xr_{n-1},\Yr_{n-1}) \parentheseDeux{1-2\Phibf\defEns{-\frac{\norm{\Er_{n-1}(\Xr_{n-1},\Yr_{n-1})}}{2 \sigma_{n}}}}}
\end{multline*}
Using \Cref{assum:strict_contraction_AR} and
\eqref{eq:def_Er}, $\norm{\Er_{n}(\Xr_{n-1},\Yr_{n-1})} \leq \kappar_{n}
\norm{\Xr_{n-1} - \Yr_{n-1}}$, showing \eqref{eq:proof_convergence_AR_induction_strict} holds for
$k=n-1$.

Assume that \eqref{eq:proof_convergence_AR_induction_strict} holds for $k \in \{1,\ldots,n-1\}$.
% Using that  $\defEns{\Xr_{k} \not =
%   \Yr_{k}} \subset  \defEns{\Xr_{k-1} \not = \Yr_{k-1}}$ and
% \begin{equation*}
% \Xr_{k} -
%   \Yr_{k} = \Er_{k}(\Xr_{k-1},\Yr_{k-1})+ 2 \sigma_{k}\er_{k}( \Xr_{k-1} ,
%   \Yr_{k-1})\er_{k}( \Xr_{k-1} , \Yr_{k-1})^{\transp} \Zr_{k} \eqsp,
% \end{equation*}
% where $\Zr_{k}$ is a standard $d$-dimensional Gaussian random
%   variable independent of $\filtrationTilde_{k-1}$,
On $\defEns{\Xr_{k} \not =
  \Yr_{k}}$, we have
\begin{equation*}
  \norm{\Xr_{k} - \Yr_{k}} = \abs{-\norm{\Er_{k}(\Xr_{k-1},\Yr_{k-1})} + 2 \sigma_{k}
  \er_{k}( \Xr_{k-1} , \Yr_{k-1})^{\transp} \Zr_{k}} \eqsp,
\end{equation*}
which implies
\begin{align*}
%\label{eq:bound:convergence_AR_induction_0_strict}
 & \1_{\Deltar^{\operatorname{c}}}(\Xr_{k},\Yr_{k})
\parentheseDeux{1-2\Phibf\defEns{-\frac{\norm{\Xr_{k}-\Yr_{k}}}{2 \Xi_{k+1,n}^{1/2}}}}
\\
\nonumber
&= \1_{\Deltar^{\operatorname{c}}}(\Xr_{k},\Yr_{k})
\parentheseDeux{1-2\Phibf\defEns{-\frac{\abs{
 2  \sigma_{k}
  \er_{k}( \Xr_{k-1} , \Yr_{k-1})^{\transp} \Zr_{k}-\norm{\Er_{k}(\Xr_{k-1},\Yr_{k-1})}
}}{2 \Xi_{k+1,n}^{1/2}}}} \eqsp.
\end{align*}
% Using that
%  Let $n \geq 2$, then conditioning
% \wrt~ by the Markov property and
% \Cref{lem:proof_AR_1}, we have
%   % \begin{equation}
%   %   \label{eq:proof_conv_AR}
%   %   \PPtilde_{(x,y)}(\Xr_n \not = \Yr_n ) \leq \expeMarkovTilde{(x,y)}{\1_{\Deltar^{\complem}}(\Xr_{n-1},\Yr_{n-1}) \defEns{1-2\Phibf\parenthese{-\frac{\Er_{n-1}(\Xr_{n-1},\Yr_{n-}{2 \sigma_{n}}}}} \eqsp,
%   % \end{equation}
% Therefore since by assumption
%   Then
Since $\Zr_{k}$ is independent of $\filtrationTilde_{k-1}$, $ \sigma_{k} \er_{k}( \Xr_{k-1} , \Yr_{k-1})^{\transp} \Zr_{k}$ is a real  Gaussian random variable with zero mean and  variance $\sigma_k^2$, therefore by \Cref{lem:proof_AR_2}, we get
\begin{multline*}
%\label{eq:bound:convergence_AR_induction_1}
\expeMarkovTildeD{(x,y)}{
\1_{\Deltar^{\complem}}(\Xr_{k},\Yr_{k})
\parentheseDeux{1-2\Phibf\defEns{-\frac{\norm{\Xr_{k}-\Yr_{k}} }{2 \Xi_{k+1,n}^{1/2}}}}
}{\filtrationTilde_{k-1}}
\\
\leq \1_{\Deltar^{\complem}}(\Xr_{k-1},\Yr_{k-1}) \parentheseDeux{1- 2 \Phibf\defEns{-\frac{\norm{\Er_k(\Xr_{k-1},\Yr_{k-1})}}{2 \parenthese{ \sigma_{k}^2 +  \Xi_{k+1,n}}^{1/2}}}} \eqsp.
\end{multline*}
Using by \Cref{assum:strict_contraction_AR} that $\norm{\Er_k(\Xr_{k-1},\Yr_{k-1})} \leq \kappar_k\norm{\Xr_{k-1}-\Yr_{k-1}}$  concludes the induction.
\end{proof}

% With additional weak assumption on $\funreg_k$, $\Pr$ has a unique invariant distribution.

%%% Local Variables:
%%% mode: latex
%%% TeX-master: "main_arxiv"
%%% End:

\section*{Acknowledgements}
The authors would like to thank Arnak Dalalyan for helpful discussions.
The work of A.D. and E.M. is supported by the Agence Nationale de la Recherche, under grant ANR-14-CE23-0012 (COSMOS),  Initiative Data Science from Ecole Polytechnique and Chaire BayeScale "P. Laffitte".
\bibliographystyle{plain}
\bibliography{biblio}

\appendix

\section{Proofs of \Cref{sec:non-asympt-bounds}}
\label{sec:postponed-proofs}

In this section are gathered the postponed proofs of  \Cref{sec:non-asympt-bounds}.
If  \Cref{assum:regularity_2} holds, then  \cite[Theorem 2.1.12, Theorem 2.1.9]{nesterov:2004} show that for all $x,y \in \rset^d$:
\begin{equation}
\label{eq:convex_forte_contra1}
\ps{\nabla U(y) - \nabla U(x)}{y-x} \geq \frac{\kappa}{2}\norm[2]{y-x} + \frac{1}{m+L} \norm[2]{\nabla U(y) - \nabla U(x)} \eqsp,  \\
% \label{eq:convex_forte_contra2}
% \ps{\nabla U(y) - \nabla U(x)}{y-x} &\geq m \norm[2]{y-x} \eqsp,
\end{equation}
where
\begin{equation*}
\label{eq:definition-kappa}
\kappa = \frac{2 m L }{m+L} \eqsp.
\end{equation*}

\subsection{Proof of  \Cref{theo:convergence-WZ-strongly-convex}}
\label{proof:theo:convergence-WZ-strongly-convex}

\begin{enumerate}[label=(\roman*), wide=0pt, labelindent=\parindent]
\item
The generator $\generator$ associated with $(P_t)_{t \geq 0}$ is given,
for all $f \in C^2(\rset^d)$ and $y \in \rset^d$, by:
\begin{equation}
\label{eq:def_generator}
  \generator f(y) = -\ps{\nabla U(y)}{\nabla f(y)} + \Delta f(y) \eqsp.
\end{equation}
Denote for all $y \in \rset^d$ by $\VStar(y) = \norm{y - x^\star}^2$.
Let $x \in \rset^d$ and $(Y_t)_{t \geq 0}$ be a solution of
\eqref{eq:langevin_2} started at $x$.  Under \Cref{assum:regularity_2}
$\sup_{t \in \ccint{0,T}} \PE[\norm[2]{Y_t^{}}] < \plusinfty$ for
all $T \geq 0$. Therefore, the process $$\parenthese{\VStar(Y_t)
  -\VStar(x) - \int_0 ^t \generator \VStar(Y_s) \rmd s}_{ t \geq 0}$$
is a $(\mathcal{F}_t)_{ t \geq 0}$-martingale.  Denote for all $t \geq 0$ and $x \in \rset^d$ by $v(t,x) = P_t \VStar(x)$. Then we have,
$\partial v(t,x)/\partial t = P_t \generator \VStar(x)$.

Since $\nabla U(x ^\star) = 0$ and by
\Cref{assum:potentialU}, $\ps{\nabla U(x) - \nabla
  U(\xstar)}{x-\xstar} \geq m \norm[2]{x-\xstar}$, we have
\begin{equation}
\label{eq:generator}
\generator \VStar(x) = 2 \parenthese{ - \ps{\nabla U(x)- \nabla U(x^\star)}{x-x^{\star}} +d } \leq 2  \parenthese{ -m \VStar(x) +d } \eqsp.
\end{equation}
Therefore, we get
\[
\frac{\partial v(t,x)}{\partial t} = P_t \generator \VStar(x)  \leq  -2m P_t \VStar(x) + 2d = -2m v(t,x) + 2d \eqsp,
\]
and the proof follows from the Gr\"onwall inequality.
\item Set $\VStar(x) = \norm[2]{x-x^\star}$. By  \Cref{theo:convergence-WZ-strongly-convex}-\ref{item:moment_diffusion_2},  using that $\pi P_t = \pi$ for all $t > 0$ and that the function $z \mapsto z \wedge c$ is concave  for all $c > 0$, we get  using the Jensen inequality
\begin{align*}
\pi(\VStar \wedge c)
= \pi P_t (\VStar \wedge c)& \leq \pi (P_t \VStar \wedge c) \\
&\leq \int \pi(\rmd x)\, c \wedge \left\{ \VStar(x) \rme^{-2 m t} + \frac{d}{m} (1-\rme^{-2mt})  \right\} \end{align*}
Using Lebesgue's dominated convergence theorem and taking the limit as $t \to \plusinfty$, we get $\pi(\VStar \wedge c) \leq d/m$.
Using the monotone convergence theorem and taking the limit as $c \to \plusinfty$ concludes the proof.
% \end{enumerate}

% \subsection{Proof of  \Cref{theo:convergence-WZ-strongly-convexD}}
% \label{proof:theo:convergence-WZ-strongly-convexD}
\item
Let $x,y \in \rset^d$.
Consider the following SDE in $\rset^d \times \rset^d$:
\begin{equation}
\label{eq:double_langevin}
\begin{cases}
\rmd Y_t &=  - \nabla U (Y_t) \rmd t + \sqrt{2} \rmd B_t \eqsp,\\
\rmd \tilde{Y}_t &=  - \nabla U (\tilde{Y}_t) \rmd t + \sqrt{2} \rmd B_t \eqsp,
\end{cases}
\end{equation}
where $(Y_0,\tilde{Y}_0)=(x,y)$.
Since $\nabla U$ is Lipschitz, then by  \cite[Theorem 2.5, Theorem 2.9, Chapter 5]{karatzas:shreve:1991}, this SDE has a unique strong solution $(Y_t,\tilde{Y}_t)_{t \geq 0}$  associated with $(B_t)_{t \geq 0}$. Moreover since $(Y_t ,\tilde{Y}_t)_{t \geq 0}$ is a solution of \eqref{eq:double_langevin},
\[
\norm[2]{Y_t - \tilde{Y}_t} =  \norm[2]{Y_0 - \tilde{Y}_0}- 2\int_0^t  \ps{\nabla U (Y_s) - \nabla U(\tilde{Y}_s)}{Y_s - \tilde{Y}_s} \rmd s \eqsp,
\]
which implies using \Cref{assum:potentialU} and Gr\"onwall's inequality that
\begin{equation*}
\norm[2]{Y_t - \tilde{Y}_t} \leq \norm[2]{Y_0 - \tilde{Y}_0} - 2m \int_0^t \norm[2]{Y_s - \tilde{Y}_s} \rmd s \leq \norm[2]{Y_0 - \tilde{Y}_0} \rme^{-2mt}\eqsp.
\end{equation*}
Since for all $t \geq 0$, the law of $(Y_t,\tilde{Y}_t)$ is a coupling between $\delta_x P_t$ and $\delta_y P_t$, by definition of $W_{2}$, $W_{2}(\delta_x P_t , \delta_y P_t ) \leq \PE[\Vert  Y_t - \tilde{Y}_t  \Vert ^2]^{1/2}$, which  concludes the proof.
\item The proof is a direct consequence of \ref{item:moment_diffusion_item} and \ref{item:1:theo:convergence-WZ-strongly-convexD}
\end{enumerate}

\subsection{Proof of \Cref{theo:kind_drift}}
\label{proof:theo:kind_drift}
\begin{enumerateList}
\item
Note that the proof is trivial if $\ell <n$. Therefore we only need to consider the case $\ell \geq n$.
For any $\gamma \in \ooint{0,2/(m+L)}$, we have for all $x \in \rset^d$:
\[
\int_{\rset^d} \norm[2]{y-x^\star}   R_\gamma (x,\rmd y) = \norm{x-\gamma \nabla U(x) -x^\star}^2 + 2 \gamma d \eqsp.
\]
Using that $\nabla U (x^\star ) = 0$,   we get using the previous identity and \eqref{eq:convex_forte_contra1}:
\begin{align*}
\int_{\rset^d}& \norm[2]{y-x^\star}   R_\gamma (x,\rmd y)\\
%&= (2 \pi)^{-1/2} \int_{\rset^d \times \rset^d} \norm{x-x^\star - \gamma \nabla U(x) + \sqrt{2 \gamma }z}^2 \rme^{-\norm{z}^2/2} %\mu_0(\rmd y) \rmd z \\
%& = (2 \pi)^{-1/2} \int_{\rset^d \times \rset^d} \norm{x-x^\star - \gamma \nabla U(x)}^2 + (2 \gamma) \norm{  z}^2 \rme^{-\norm{z}^2/2} %\mu_0(\rmd y) \rmd z \\
%&= \norm{x-\gamma \nabla U(x) -x^\star}^2 + 4 \gamma d \\
& \leq \left(1-\kappa \gamma \right) \norm{x-x^\star}^2 + \gamma\left(\gamma - \frac{2}{m+L} \right)\norm{\nabla U(x) - \nabla U(x^\star)}^2 + 2 \gamma d \\
& \leq \left(1-\kappa \gamma \right) \norm{x-x^\star}^2  + 2 \gamma d \eqsp,
%& \leq \eta(\gamma) \norm{x-x^\star}^2 + 2 \gamma d \eqsp,
\end{align*}
where we have used for the last inequality that $\gamma \leq 2/(m+L)$.
Then  by definition \eqref{eq:iterate_kernel} of $Q^{n,\ell}_\gamma$ for $\ell,n \geq 1$, $\ell\geq n$,  the proof follows from a straightforward induction.
\item
By \ref{item:kind_drift_1}, we have for all $x \in \rset^d$ and $n \geq 1$,
\begin{align}
\nonumber
\int_{\rset^d} \norm[2]{y-\xstar}R_{\gamma}^n(x,\rmd y)& \leq (1-\kappa \gamma)^{n} \norm[2]{x-\xstar} + 2d \sum_{k=1}^{n} \gamma (1-\kappa \gamma)^{n-k} \\
\label{eq:proof:drift_euler}
&=  (1-\kappa \gamma)^{n} \norm[2]{x-\xstar} + 2\kappa^{-1}d (1-(1-\kappa \gamma)^{n}) \eqsp.
\end{align}
Since any compact set of $\rset^d$ is accessible and small for $R_{\gamma}$, then \cite[Theorem 15.0.1]{meyn:tweedie:2009} implies that $R_{\gamma}$ has a unique stationary distribution $\pi_{\gamma}$. Using \eqref{eq:proof:drift_euler}, the proof is along the same lines as \Cref{theo:convergence-WZ-strongly-convex}-\ref{item:moment_diffusion_item}.
% Since $\gamma_1 \leq 2/(m+L)$ and $(\gamma_k)_{k\geq 1}$ is non-increasing, by a straightforward induction, we have by definition \eqref{eq:iterate_kernel} of $Q^{n,p}_\gamma$ for $p,n \geq 1$, $p\leq n$,
% \begin{equation}
% \label{eq:lem_kind_drift_first_estimate}
% \int_{\rset^d} \norm{x-x^\star}^2  \mu_0 Q^{n,p}_\gamma(\rmd x) \leq \prod_{k=n}^p (1-\kappa \gamma_k)\int_{\rset^d}\norm[2]{x-x^\star}\mu_0(\rmd x) + (2d)\sum_{i=n}^p \prod_{k=i+1}^p (1-\kappa \gamma_k) \gamma_i\eqsp,
% \end{equation}
% Consider the second term in the right hand side of  \eqref{eq:lem_kind_drift_first_estimate}. Since $\gamma_1 \leq 2/(m+L)$, $ m \leq L$ and $(\gamma_k)_{k \geq 1}$ is non-increasing, $\max_{k \geq 1} \gamma_k \leq \kappa^{-1}$ and therefore:
% \begin{equation}
% \label{eq:lem_kind_drift_third_estimate}
% \sum_{i=n}^p \prod_{k=i+1}^p (1-\kappa \gamma_k) \gamma_i
% \leq \kappa^{-1}  \sum_{i=n}^p \defEns{\prod_{k=i+1}^p \left( 1-\kappa \gamma_k  \right) -\prod_{k=i}^p \left( 1-\kappa \gamma_k \right) }  \leq \kappa^{-1} \eqsp.
%\end{equation}
\end{enumerateList}

\subsection{Proof of \Cref{theo:convergence_p_Euler}}
\label{sec:proof:convergence_p_Euler}

\begin{enumerateList}
\item  Let $(Z_k)_{k \geq 1}$ be a sequence of \iid~$d$-dimensional
Gaussian random variables. For $n \in \nset$, define the process
 $(X_{k}^{n,1},X_{k}^{n,2})_{k \geq 0}$ as follows: $(X_0^{n,1},X_0^{n,2})= (x,y)$  and  for $k \geq 0$,
\begin{equation}
\label{eq:definition_couplage_chaine}
X_{k+1}^{n,j} = X_{k}^{n,j}- \gamma_{k+n} \nabla U(X_{k}^{n,j}) + \sqrt{2 \gamma_{k+n}} Z_{k+1}  \quad \text{ $j=1,2$} \eqsp.
\end{equation}
Note that $X_{\ell}^{n,1}$ and $X_{\ell}^{n,2}$ are distributed according to $\delta_x Q_\gamma^{n,\ell}$ and $\delta_y Q_\gamma^{n,\ell}$ respectively. Therefore by definition of the Wasserstein distance of order $2$,  we get for any $\ell \geq n \geq 1$.
$W_{2}^{2}(\delta_x Q_\gamma^{n,\ell},\delta_y Q_\gamma^{n,\ell}) \leq \PE[\normLigne{X^{n,1}_{\ell} - X^{n,2}_{\ell}}^{2} ]$ and
\eqref{eq:convex_forte_contra1} implies for $k \geq n-1$,
\begin{align*}
\norm{X_{k+1}^{n,1} - X_{k+1}^{n,2}}^2 &= \norm{X_{k}^{n,1} - X_{k}^{n,2}}^2 + \gamma_{n+k}^2\norm{\nabla U(X_{k}^{n,1})- \nabla U(X_{k}^{n,2})}^2 \\
&\qquad - 2 \gamma_{n+k}\ps{X_{k}^{n,1} - X_{k}^{n,2}}{\nabla U(X_{k}^{n,1}) - \nabla U(X_{k}^{n,2})} \\
& \leq \left(1-\kappa \gamma_{n+k}  \right) \norm{X_{k}^{n,1}-X_{k}^{n,2}}^2 \eqsp.
\end{align*}
Therefore by  a straightforward induction we get for all $\ell \geq n$,
\[
\norm{X^{n,1}_{\ell} - X^{n,2}_{\ell}}^2 \leq \prod_{k=n}^{\ell} (1- \kappa \gamma_k) \norm{X^{n,1}_{0}-X^{n,2}_{0}}^2\eqsp.
\]
\item Let $\mu \in \Pens_{2}(\rset^d)$.  For all $n \geq 0$, $\mu R_{\gamma}^n \in \Pens_{2}(\rset^d)$. Then, by \Cref{theo:convergence_p_Euler}-\ref{item:contraction_Euler-i} for $\gamma \leq 2/(m+L)$, $R_{\gamma}$
is a strict contraction in $(\Pens_{2}(\rset^d),W_{2})$ and there is a unique fixed point  $\pi_\gamma$ which is the unique invariant distribution.
\end{enumerateList}

\subsection{Proof of \Cref{theo:distance_Euler_diffusion} }
\label{sec:proof:theo:distance_Euler_diffusion}
We preface the proof by a technical Lemma.
\begin{lemma}
\label{lem:moment_diffusion}
Let $(Y_t)_{t \geq 0}$ be the solution of \eqref{eq:langevin_2}
started at $x \in \rset^d$. For all $t \geq 0$ and $x \in \rset^d$,
\begin{equation*}
\expe{\norm[2]{Y_t-x}} \leq  d t(2+L^2 t^2/3) + (3/2) t^2 L^2 \norm[2]{x-x^\star}  \eqsp.
\end{equation*}
\end{lemma}
\begin{proof}
Let $\generator$ be the generator associated with $(P_t)_{t \geq 0}$ defined by \eqref{eq:generator}.
  Denote for all $x,y \in \rset^d$, $\tilde{V}_x(y) = \norm[2]{y-x}$.  Note that the process $(\tilde{V}_x(Y_t^{}) -\tilde{V}_x(x) - \int_0 ^t \generator \tilde{V}_x(Y^{}_s) \rmd s)_{ t \geq 0}$,
is a $(\mathcal{F}_t)_{ t \geq 0}$-martingale. Denote for all $t \geq
0$ and $x \in \rset^d$ by $\tilde{v}(t,x) = P_t \tilde{V}_x(x)$. Then we get,
\begin{equation}
\label{eq:Dynkin0_2}
\frac{\partial \tilde{v}(t,x)}{\partial t} = P_t \generator \tilde{V}_x(x) \eqsp.
\end{equation}
By \Cref{assum:potentialU}, we have for all $y \in \rset^d$, $\ps{\nabla U(y) - \nabla U(x)}{y-x} \geq m\norm[2]{x-y}$, which implies
\begin{equation*}
%\label{eq:generator_2}
\generator \tilde{V}_x(y) = 2 \parenthese{ - \ps{\nabla U(y)}{y-x} +d }
\leq 2  \parenthese{ -m \tilde{V}_x(y) +d - \ps{\nabla U(x) } {y-x}} \eqsp.
\end{equation*}
Using \eqref{eq:Dynkin0_2}, this inequality and that $\tilde{V}_x$ is positive, we get
\begin{equation}
\label{eq:preparation_Grown}
\frac{\partial \tilde{v}(t,x)}{\partial t} = P_t \generator \tilde{V}_x(x)   \leq 2\parenthese{ d -
  \int_{ \rset^d}  \ps{\nabla U(x) } {y-x} P_t( x , \rmd y)}  \eqsp.
\end{equation}
By the Cauchy-Schwarz inequality, $\nabla U( x ^\star) = 0$, \eqref{eq:langevin_2} and the Jensen inequality, we have,
\begin{align*}
&\abs{\expe{ \ps{\nabla U(x) } {Y_t^{}-x}}} \leq \norm{\nabla U(x) } \norm{\expe{Y_t^{}-x}}\\
& \quad \leq \norm{\nabla U(x)} \norm{\expe{\int_0^t \left\{ \nabla U(Y_s^{}) - \nabla U(x^\star) \right\} \rmd s}} \\
&\quad \leq \sqrt{t} \norm{\nabla U(x)- \nabla U(x^\star) } \parenthese{ \int_0^t\PE\left[ \norm[2]{\nabla U(Y_s^{}) - \nabla U(x^\star) }\right]\rmd s }^{1/2}  \eqsp.
\end{align*}
Furthermore, by \Cref{assum:regularity_2} and \Cref{theo:convergence-WZ-strongly-convex}-\ref{item:moment_diffusion_2}, we have
\begin{align}
\nonumber
&\abs{\int_{ \rset^d}  \ps{\nabla U(x) } {y-x} P_t( x , \rmd y)} \leq  \sqrt{t} L^2 \norm{x-x^\star } \parenthese{\int_0^t\mathbb{E}\left[ \norm[2]{Y_s^{}-x^\star }\right]\rmd s}^{1/2}  \\
\nonumber
 &\qquad \qquad \leq  \sqrt{t}L^2 \norm{x-x^\star } \left(  \frac{1-\rme^{-2mt}}{2m} \norm[2]{x-x^\star} + \frac{2tm + \rme^{-2mt}-1}{2m} \frac{d}{m} \right)^{1/2} \\
\nonumber
%\label{eq:preparation_Grown_2}
&\qquad \qquad \leq  L^2 \norm{x-x^\star }  ( t \norm{x- x^{\star}} + t^{3/2} d^{1/2})\eqsp, \phantom{qfsdq}
\end{align}
where we used for the last line that by the Taylor theorem with remainder term, for all $s \geq 0 $, $(1-\rme^{-2ms})/(2m) \leq s$ and $(2ms+\rme^{-2ms}-1)/(2m) \leq m s^2$, and the inequality $\sqrt{a+b} \leq \sqrt{a} + \sqrt{b}$. Plugging this upper bound in \eqref{eq:preparation_Grown}, and since $ 2\norm{x-x^\star } t^{3/2} d^{1/2} \leq  t \norm[2]{x-x^\star} + t^2 d $, we get
\[
\frac{\partial \tilde{v}(t,x)}{\partial t} \leq   2d + 3 L^2 t \norm[2]{x-x^\star } + L^2t^{2} d
\]
Since $\tilde{v}(0,x) = 0$, the proof is completed by integrating this result.
\end{proof}

To show \Cref{theo:distance_Euler_diffusion} and \Cref{theo:distance_Euler_diffusionD}, since $\pi$ is invariant for
$P_t$ for all $t \geq 0$, it suffices to get some bounds on $W_2(\delta_x
Q^n_\gamma, \nu_0 P_{\Gamma_n})$, with $\nu_0 \in \Pens_2(\rset^d)$
and take $\nu_0 = \pi$. To do so, we construct a
coupling between the diffusion and the linear interpolation of the Euler discretization. An obvious candidate is the synchronous coupling $(Y_t,\overline{Y}_t)_{t \geq 0}$ defined for all $n \geq 0$ and  $t \in \coint{\Gamma_n,\Gamma_{n+1}}$ by
\begin{equation}
\label{eq:definition_couplage_2}
\begin{cases}
 Y_t = Y_{\Gamma_n}  - \int_{\Gamma_n}^t\nabla U( Y_s) \rmd s + \sqrt{2} (B_t - B_{\Gamma_n})  \\
%\overline{Y}_{t} = \overline{Y}_{\Gamma_{n}}- (t - \Gamma_n) \nabla U(\overline{Y}_{\Gamma_{n}}) + \sqrt{2} (B_{t} - B_{\Gamma_n})  &\text{ for $n \geq 0$ and $t \in \ccint{\Gamma_n,\Gamma_{n+1}}$ } \eqsp,
 \Ybar_t= \Ybar_{\Gamma_n}- \nabla U(\Ybar_{\Gamma_n}) (t-\Gamma_n) + \sqrt{2}( B_t - B_{\Gamma_n}) \eqsp,
\end{cases}
\end{equation}
with $Y_0$ is distributed according to $\nu_0$, $\Ybar_0=x$ and $(\Gamma_n)_{n \geq 1}$ is given in \eqref{eq:def_Gamma}.
Therefore since for all $n \geq 0$, $W_2^2(\delta_x  Q_\gamma^n , \nu_0 P_{\Gamma_n}) \leq \PE[\normLigne{Y_{\Gamma_n} - \Ybar_{\Gamma_n}}^2]$,   taking $\nu_0=\pi$,
we derive an explicit bound on the Wasserstein distance
between the sequence of distributions $(\delta_x Q^k_\gamma)_{k \geq 0}$
%of the Euler discretisation $(X_n)_{n \geq 0}$
and the stationary measure $\pi$ of the Langevin diffusion \eqref{eq:langevin_2}.

Let $(\filtration_t')_{t \geq 0}$ be the
filtration associated with $(B_t)_{t \geq 0}$ and $(Y_0,\overline{Y}_0)$.
\begin{lemma}
\label{lem:distance_Euler_diffusion_part1}
Assume \Cref{assum:regularity_2} and \Cref{assum:potentialU}. Let $(\gamma_k)_{k \geq 1}$
be a non-increasing sequence with $\gamma_1 \leq 1/(m+L)$.
Let $\zeta_0 \in \Pens_2(\rset^d \times \rset^d)$, $(Y_t,\overline{Y}_t)_{t \geq 0}$ such that $(Y_0, \overline{Y}_0)$ is distributed according to $\zeta_0$ and
given by \eqref{eq:definition_couplage_2}.  Then almost surely for all $n \geq 0$ and $\epsilon >0$,
\begin{align}
\label{eq:as_ineq1}
\norm{Y_{\Gamma_{n+1}}- \overline{Y}_{\Gamma_{n+1}}}^2 &\leq  \defEns{1- \gamma_{n+1}\left(\kappa-2\epsilon\right)} \norm{Y_{\Gamma_{n}}- \overline{Y}_{\Gamma_{n}}}^2
\\
\nonumber
& + (2 \gamma_{n+1}+(2 \epsilon)^{-1}) \int_{\Gamma_n}^{\Gamma_{n+1}}\norm{ \nabla U(Y_s) - \nabla U(Y_{\Gamma_n})}^2 \rmd s  \eqsp, \\
\label{eq:CPE_ineq1}
\CPE{\norm{Y_{\Gamma_{n+1}}- \overline{Y}_{\Gamma_{n+1}}}^2}{{\filtration^{'}_{\Gamma_n}}}
&\leq \defEns{1- \gamma_{n+1}\left(\kappa-2\epsilon\right)} \norm{Y_{\Gamma_{n}}- \overline{Y}_{\Gamma_{n}}}^2 \\
\nonumber
&  +  L^2\gamma_{n+1}^2 (1/(4\epsilon) +  \gamma_{n+1})\left(2 d +L^2\gamma_{n+1} \norm[2]{Y_{\Gamma_n}-x^\star} + dL^2 \gamma_{n+1}^2/6 \right) \eqsp.
\end{align}
\end{lemma}
\begin{proof}
We first show \eqref{eq:as_ineq1}. Set $\Theta_n= Y_{\Gamma_{n}}- \overline{Y}_{\Gamma_{n}}$.
By definition we have:
\begin{multline}
\label{eq:02/01}
\norm{\Theta_{n+1}}^2 =
 \norm{\Theta_n}^2+ \norm{ \int_{\Gamma_n}^{\Gamma_{n+1}}\defEns{ \nabla U(Y_s) - \nabla U(\overline{Y}_{\Gamma_{n}})} \rmd s}^2 \\- 2 \gamma_{n+1}\ps{\Theta_n}{\nabla U(Y_{\Gamma_n}) - \nabla U(\overline{Y}_{\Gamma_{n}})  } - 2 \int_{\Gamma_n}^{\Gamma_{n+1}}  \ps{\Theta_n}{\defEns{\nabla U(Y_s) - \nabla U(Y_{\Gamma_n})}} \rmd s   \eqsp.
\end{multline}
Young's inequality and  Jensen's inequality  imply
\begin{multline*}
 \norm{ \int_{\Gamma_n}^{\Gamma_{n+1}}\defEns{ \nabla U(Y_s) - \nabla U(\overline{Y}_{\Gamma_{n}}) } \rmd s}^2
%\label{eq:distance_diffusion_first_part_third_estimate}
\leq 2 \gamma_{n+1}^2 \norm{\nabla U ( Y_{\Gamma_n})- \nabla U(\overline{Y}_{\Gamma_{n}})}^2\\ +  2\gamma_{n+1} \int_{\Gamma_n}^{\Gamma_{n+1}}\norm{ \nabla U(Y_s) - \nabla U(Y_{\Gamma_n})}^2 \rmd s \eqsp.
\end{multline*}
Using \eqref{eq:convex_forte_contra1},  $\gamma_1 \leq 1/(m+L)$ and $(\gamma_k)_{k \geq 1}$ is non-increasing, \eqref{eq:02/01} becomes
\begin{align}
\nonumber
  \norm{\Theta_{n+1}}^2 \leq
 \defEns{1-\gamma_{n+1} \kappa } \norm{\Theta_n}^2+  2\gamma_{n+1} \int_{\Gamma_n}^{\Gamma_{n+1}}\norm{ \nabla U(Y_s) - \nabla U(Y_{\Gamma_n})}^2 \rmd s  \\
  \label{eq:002/01}
- 2 \int_{\Gamma_n}^{\Gamma_{n+1}}  \ps{\Theta_n}{\defEns{\nabla U(Y_s) - \nabla U(Y_{\Gamma_n})}} \rmd s   \eqsp.
\end{align}
%where we used \eqref{eq:convex_forte_contra2} in the last line.
Using the  inequality $|\ps{a}{b}| \leq \epsilon \|a\|^2 + (4 \epsilon)^{-1} \|b\|^2$  concludes the proof of \eqref{eq:as_ineq1}.

 We now prove \eqref{eq:CPE_ineq1}. Note that \eqref{eq:as_ineq1} implies that
\begin{multline}
\label{eq:distance_diffusion_first_part_fourth_estimate}
\CPE{\norm{\Theta_{n+1}}^2}{\filtration^{'}_{\Gamma_n}} \leq
 \defEns{1-\gamma_{n+1} (\kappa-2\epsilon) } \norm{\Theta_n}^2 \\
 +(2 \gamma_{n+1}+(2 \epsilon)^{-1}) \int_{\Gamma_n}^{\Gamma_{n+1}} \CPE{\norm{ \nabla U(Y_s) - \nabla U(Y_{\Gamma_n})  }^2}{\filtration^{'}_{\Gamma_n}} \rmd s \eqsp.
\end{multline}
By \Cref{assum:regularity_2}, the Markov property of $(Y_t)_{t \geq 0}$ and \Cref{lem:moment_diffusion}, we have
\begin{multline*}
\nonumber
\int_{\Gamma_n}^{\Gamma_{n+1}} \CPE{\norm{  \nabla U(Y_s) - \nabla U(Y_{\Gamma_n})  }^2}{\filtration^{'}_{\Gamma_n}} \rmd s %&
%\leq L^2\int_{\Gamma_n}^{\Gamma_{n+1}} \CPE{\norm{Y_s - Y_{\Gamma_n}}^2 }{\filtration^{'}_{\Gamma_n}} \rmd s\\
%\label{eq:distance_diffusion_first_part_second_estimate}
%\nonumber
%&
%\qquad \qquad \qquad
\\
 \leq L^2 \left( d \gamma_{n+1}^2 + dL^2 \gamma_{n+1}^4 /12+ (1/2)L^2 \gamma_{n+1}^3 \norm[2]{Y_{\Gamma_n}-x^\star}\right) \eqsp.
\end{multline*}
The proof is then concluded plugging this bound   in \eqref{eq:distance_diffusion_first_part_fourth_estimate} .
\end{proof}

\begin{proof}[Proof of \Cref{theo:distance_Euler_diffusion}]
  Let $x \in \rset^d$, $n \geq 1$ and $\zeta_0=\pi\otimes \delta_x$.
Let $(Y_t, \overlineY_t)_{t \geq 0}$ with $(Y_0,\overlineY_0)$ distributed according to $\zeta_0$ and defined by \eqref{eq:definition_couplage_2}. By definition of $W_2$ and since for all $t \geq 0$, $\pi$ is invariant for $P_t$, $W^2_2( \mu_0 Q^n , \pi) \leq \expe{\norm{Y_{\Gamma_n} - \overline{Y}_{\Gamma_{n}}}^2}$.
 \Cref{lem:distance_Euler_diffusion_part1} with $\epsilon = \kappa/4$,  \Cref{theo:convergence-WZ-strongly-convex}-\ref{item:moment_diffusion_2} imply, using a straightforward induction, that for all $n \geq 0$
\begin{equation}
\label{eq:distance_induction}
\expe{\norm[2]{Y_{\Gamma_n} - \overline{Y}_{\Gamma_n}}} \leq u_n^{(1)}(\gamma)
\int_{\rset^d} \norm[2]{y-x} \pi(\rmd y)  + A_n (\gamma) \eqsp,
\end{equation}
 where $(u_n^{(1)}(\gamma))_{n \geq 1}$ is given by \eqref{eq:def_u1}, and
\begin{multline}
\label{eq:def_A_n_proof_1}
A_n (\gamma) \eqdef  L^2 \sum_{i=1}^n \gamma_i^2 \defEns{\kappa^{-1} + \gamma_i} (2d + d L^2 \gamma_i^2/6) \prod_{k=i+1}^n(1- \kappa\gamma_k/2)
\\
+L^4 \sum_{i=1}^n \tilde{\delta}_i \gamma_i^3 \defEns{\kappa^{-1} + \gamma_{i}}  \prod_{k=i+1}^n(1- \kappa \gamma_k/2)
\end{multline}
with
\begin{equation*}
\label{eq:defdeltai}
\tilde{\delta}_i = \rme^{-2m \Gamma_{i-1}} \expe{\norm[2]{Y_0 - x^\star}} + (1-\rme^{-2m \Gamma_{i-1}} )(d/m) \leq d/m \eqsp.
\end{equation*}
Since $Y_0$ is distributed according to $\pi$, \Cref{theo:convergence-WZ-strongly-convex}-\ref{item:moment_diffusion_item} shows that for all $i \in \{ 1, \cdots, n \}$,
\begin{equation}
\label{eq:boundtilde_delta_1}
  \tilde{\delta}_i \leq d/m \eqsp.
\end{equation}
In addition since for all $y \in \rset^d$, $\norm[2]{x-y} \leq 2(\norm[2]{x-x^\star} +\norm[2]{x^\star-y})$, using \Cref{theo:convergence-WZ-strongly-convex}-\ref{item:moment_diffusion_item}, we get $\int_{\rset^d} \norm[2]{y-x} \pi(\rmd y) \leq \norm[2]{x-\xstar} +d/m$. Plugging this result, \eqref{eq:boundtilde_delta_1} and \eqref{eq:def_A_n_proof_1} in \eqref{eq:distance_induction}  completes the proof.
\end{proof}

\subsection{Proof of \Cref{coro:distance_Euler_target}}
\label{sec:proof-lem_rec_2}

We preface the proof by a technical lemma.
\begin{lemma}
\label{lem:suite_recurrence_2}
Let $(\gamma_k)_{k \geq 1}$ be a sequence of non-increasing real numbers, $\tildem >0$ and $\gamma_1 < \tildem^{-1}$. Then for all $n \geq 0$, $j \geq 1$
and $\ell \in \defEns{1,\dots,n+1}$,
\begin{equation*}
\sum_{i=1}^{n+1} \prod_{k=i+1}^{n+1} \left(1- \tildem \gamma_k  \right)\gamma_i^j
  \leq \prod_{k=\ell}^{n+1}(1  - \tildem \gamma_k)
  \sum_{i=1}^{\ell -1} \gamma_i^j + \frac{\gamma_{\ell}^{j-1}}{
    \tildem}  \eqsp.
\end{equation*}
\end{lemma}

\begin{proof}
  Let $ \ell \in \defEns{1, \dots, n+1}$. Since $(\gamma_k)_{k \geq 1}$ is non-increasing and $\gamma_1 < \varpi^{-1}$,
\begin{align}
\nonumber
\sum_{i=1}^{n+1} \prod_{k=i+1}^{n+1} \left(1- \tildem \gamma_k \right)\gamma_i^j &=\sum_{i=1}^{\ell-1} \prod_{k=i+1}^{n+1} \left(1- \tildem \gamma_k \right)\gamma_i^j  +\sum_{i=\ell}^{n+1} \prod_{k=i+1}^{n+1} \left(1- \tildem \gamma_k \right)\gamma_i^j \\
\nonumber
& \leq \prod_{k=\ell}^{n+1}(1  - \tildem \gamma_k) \sum_{i=1}^{\ell -1}  \gamma_i^j +\gamma_{\ell}^{j-1} \sum_{i=\ell}^{n+1} \prod_{k=i+1}^{n+1} \left(1- \tildem \gamma_k \right)\gamma_i\\
\nonumber
& \leq \prod_{k=\ell}^{n+1}(1  - \tildem \gamma_k)  \sum_{i=1}^{\ell -1} \gamma_i^j + \frac{\gamma_{\ell}^{j-1}}{ \tildem} \eqsp.
\end{align}
\end{proof}

\begin{proof}[Proof of \Cref{coro:distance_Euler_target}]
By \Cref{theo:distance_Euler_diffusion}, it suffices to show that $u_n^{(1)}(\gamma)$ and $u_n^{(2)}(\gamma)$, defined by \eqref{eq:def_u1} and \eqref{eq:def_u2} respectively, goes to $0$ as $n \to \plusinfty$. Using the bound $1+t \leq \rme^t$ for $t \in \rset$, and $\lim_{n \to \plusinfty} \Gamma_n = \plusinfty$, we have $\lim_{n \to \plusinfty} u_n^{(1)}(\gamma) =0$. Since   $(\gamma_k)_{k\geq 0}$ is non-increasing, note that to show that $\lim_{n \to \plusinfty} u_n^{(2)}(\gamma) =0$, it suffices to prove
$
\lim_{n\to \plusinfty}\sum_{i=1}^{n} \prod_{k=i+1}^{n} (1- \kappa \gamma_k/2  )\gamma^2_i = 0
$.
But since $(\gamma_k)_{k \geq 1}$ is non-increasing, there exists $c \geq 0$ such that  $c \Gamma_n \leq n-1$ and by \Cref{lem:suite_recurrence_2} applied with $\ell = \floor{c \Gamma_n}$ the integer part of $c \Gamma_n$:
\begin{equation}
\label{eq:coro_distance}
\sum_{i=1}^{n} \prod_{k=i+1}^{n} \left(1-\kappa \gamma_k/2 \right) \gamma^2_i \leq
2\kappa^{-1} \gamma_{\floor{c \Gamma_n}}
+ \exp \left(- 2^{-1}\kappa (\Gamma_n -  \Gamma_{\floor{ c \Gamma_n}}) \right) \sum_{i=1}^{\floor{c \Gamma_n}-1} \gamma_i \eqsp.
% 2\kappa^{-1} \gamma_{\floor{c \Gamma_n}}
% + \exp \left(- 2^{-1}\kappa \Gamma_n (1- \Gamma_n^{-1} \Gamma_{\floor{ c \Gamma_n}}) \right) \sum_{i=1}^{\floor{c \Gamma_n}-1} \gamma_i \eqsp.
\end{equation}
Since $\lim_{k \to \plusinfty} \gamma_k = 0$, by the Ces\'aro theorem, we have $\lim_{n \to \plusinfty} n^{-1} \Gamma_n = 0$.
 Then using that $\lim_{n \to \plusinfty}\Gamma_n = \plusinfty$, we get $\lim_{n \to \plusinfty}  \Gamma_{\floor{c \Gamma_n}}/\Gamma_n = 0$,
and the conclusion follows from combining in \eqref{eq:coro_distance}, this limit, $\lim_{k \to \plusinfty} \gamma_k = 0$, $\lim_{n \to \plusinfty} \Gamma_n = \plusinfty$ and $\sum_{i=1}^{\floor{c \Gamma_n}-1} \gamma_i \leq c \gamma_1 \Gamma_n$.
\end{proof}

\subsection{Proofs of \Cref{theo:distance_Euler_diffusionD}}
\label{sec:proof:distance_Euler_diffusionD}

\begin{lemma}
\label{lem:distance_Euler_diffusion_part1D}
Assume \Cref{assum:regularity_2}, \Cref{assum:potentialU} and \Cref{assum:reg_plus}. Let $(\gamma_k)_{k \geq 1}$
be a non-increasing sequence with $\gamma_1 \leq 1/(m+L)$.
Let  $\zeta_0 \in \Pens_2(\rset^d \times \rset^d)$ and $(Y_t,\overline{Y}_t)_{t \geq 0}$ be defined by \eqref{eq:definition_couplage_2} such that $(Y_0, \overline{Y}_0)$ is distributed according to $\zeta_0$. Then for all $n \geq 0$ and $\epsilon >0$, almost surely
\begin{multline*}
\CPE{\norm[2]{Y_{\Gamma_{n+1}}- \overline{Y}_{\Gamma_{n+1}}}}{{\filtration_{\Gamma_n}}}
\leq \defEns{1- \gamma_{n+1}\left(\kappa-2\epsilon\right)} \norm{Y_{\Gamma_{n}}- \overline{Y}_{\Gamma_{n}}}^2  \\
 \phantom{aaaaaaaaaaaaaa}
+ \gamma_{n+1}^3\left\lbrace d\parentheseDeux{2L^2 + \gamma_{n+1}^2 L^4 /6+ \epsilon^{-1}(d \tilde{L}^2/3 + \gamma_{n+1}L^4/4)  }\right.\\
 \phantom{aaaaaaaaaaaaaaaaaaaaaaaaa} \left. +
L^4(\epsilon^{-1}/3+\gamma_{n+1}) \norm[2]{Y_{\Gamma_n}-\xstar} \right\rbrace \eqsp.
% +   L^2\gamma_{n+1}^3 \left(2 d +L^2\gamma_{n+1} \norm[2]{Y_{\Gamma_n}-x^\star} + dL^2 \gamma_{n+1}^2/6 \right)\\
% + (2 L^4 \gamma_{n+1}^3/3)\norm[2]{Y_{\Gamma_n}-\xstar} + L^4 d \gamma_{n+1}^4/2 +  \gamma_{n+1}^3 d \tilde{L}^2/6
%  \eqsp.
\end{multline*}
\end{lemma}
\begin{proof}
Let $n \geq 0$ and $\epsilon >0$, and set $\Theta_n= Y_{\Gamma_{n}}- \overline{Y}_{\Gamma_{n}}$.
Using Itô's formula, we have for all $s \in \coint{\Gamma_n,\Gamma_{n+1}}$,
\begin{align}
\nonumber
  \nabla U(Y_s) - \nabla U(Y_{\Gamma_n}) = \int_{\Gamma_n}^s \defEns{\nabla^2 U(Y_u) \nabla U(Y_u) + \vec{\Delta} (\nabla U) (Y_u)} \rmd u \\
\label{eq:ito-D}
+ \sqrt{2}\int_{\Gamma_n}^s \nabla^2 U(Y_u) \rmd B_u \eqsp.
\end{align}
Since $\Theta_n$ is $\filtration_{\Gamma_n}$-measurable and $(\int_{0}^s \nabla^2 U(Y_u) \rmd B_u)_{s \in \ccint{0,\Gamma_{n+1}}}$ is a $(\mcf_{s})_{s \in \ccint{0,\Gamma_{n+1}}}$-martingale under \Cref{assum:regularity_2}, by \eqref{eq:ito-D} we have:
\begin{multline*}
  \abs{\CPE{\ps{\Theta_n}{\nabla U(Y_s) - \nabla U(Y_{\Gamma_n})}} {\filtration_{\Gamma_n}} } \\
 =  \abs{\ps{\Theta_n}{\CPE{\int_{\Gamma_n}^s \defEns{\nabla^2 U(Y_u) \nabla U(Y_u) + \vec{\Delta} (\nabla U) (Y_u)} \rmd u}{\filtration_{\Gamma_n}}}}
\end{multline*}
Combining this equality and  $|\ps{a}{b}| \leq \epsilon \|a\|^2 + (4 \epsilon)^{-1} \|b\|^2$ in  \eqref{eq:002/01}  we have
\begin{align}
\nonumber
&\CPE{\norm{\Theta_{n+1}}^2}{\filtration_{\Gamma_n}} \leq \defEns{1-\gamma_{n+1} (\kappa-2\epsilon) } \norm{\Theta_n}^2 + (2\epsilon)^{-1}A \\
\label{eq:distance_diffusion_part1_first_estimateD}
%\nonumber
& \qquad \qquad \qquad \qquad + 2\gamma_{n+1} \CPE{\int_{\Gamma_n}^{\Gamma_{n+1}}\norm{ \nabla U(Y_s) - \nabla U(Y_{\Gamma_n})}^2 \rmd s}{\filtration_{\Gamma_n}}
 \eqsp,
% &\qquad \qquad
% +\CPE{\norm{ \int_{\Gamma_n}^{\Gamma_{n+1}} \defEns{ \nabla U(Y_s) - \nabla U(\overline{Y}_{\Gamma_{n}})} \rmd s}^2 }{\filtration_{\Gamma_n}}  \eqsp.
\end{align}
where
\begin{equation*}
%  \label{eq:def_A_proof}
  A =  \int_{\Gamma_n}^{\Gamma_{n+1}} \norm[2]{\CPE{\int_{\Gamma_n}^s \nabla^2 U(Y_u) \nabla U(Y_u) + \vec{\Delta} (\nabla U) (Y_u)\rmd u}{\filtration_{\Gamma_n}} } \rmd s \eqsp.
\end{equation*}
We now separately bound the two last terms of the right hand side.
By \Cref{assum:regularity_2}, the Markov property of $(Y_t)_{t \geq 0}$ and \Cref{lem:moment_diffusion}, we have
\begin{multline}
\label{eq:04/022}
\int_{\Gamma_n}^{\Gamma_{n+1}} \CPE{\norm{  \nabla U(Y_s) - \nabla U(Y_{\Gamma_n})  }^2}{\filtration_{\Gamma_n}} \rmd s %&
%\leq L^2\int_{\Gamma_n}^{\Gamma_{n+1}} \CPE{\norm{Y_s - Y_{\Gamma_n}}^2 }{\filtration_{\Gamma_n}} \rmd s\\
%\label{eq:distance_diffusion_first_part_second_estimate}
%\nonumber
%&
%\qquad \qquad \qquad
\\
 \leq L^2 \left( d \gamma_{n+1}^2 + dL^2 \gamma_{n+1}^4 /12+ (1/2)L^2 \gamma_{n+1}^3 \norm[2]{Y_{\Gamma_n}-x^\star}\right) \eqsp.
\end{multline}
We now bound $A$. We get using Jensen's inequality,  Fubini's theorem, $\nabla U(\xstar) = 0$ and \eqref{eq:borne_Lap_vec}
\begin{align}
\nonumber
A
% &\leq   \int_{\Gamma_n}^{\Gamma_{n+1}} (s - \Gamma_n)\int_{\Gamma_n}^s \CPE{\norm[2]{ \nabla^2 U(Y_u) \nabla U(Y_u) + 2^{-1}\vec{\Delta} (\nabla U) (Y_u)} } {\filtration_{\Gamma_n}}  \rmd u\, \rmd s \\
% \nonumber
 &\leq 2 \int_{\Gamma_n}^{\Gamma_{n+1}} (s - \Gamma_n)\int_{\Gamma_n}^s \CPE{\norm[2]{ \nabla^2 U(Y_u) \nabla U(Y_u) }}{\filtration_{\Gamma_n}}  \rmd  u \, \rmd s \\
\nonumber
 &\phantom{aaaaaaaaa}+  2 \int_{\Gamma_n}^{\Gamma_{n+1}} (s - \Gamma_n)\int_{\Gamma_n}^s \CPE{\norm[2]{\vec{\Delta} (\nabla U) (Y_u)}}{\filtration_{\Gamma_n}}    \rmd u\, \rmd s\\
\label{eq:04/023}
& \leq 2 \int_{\Gamma_n}^{\Gamma_{n+1}} (s - \Gamma_n) L ^4 \int_{\Gamma_n}^s \CPE{\norm[2]{ Y_u- \xstar} } {\filtration_{\Gamma_n}}\rmd u \, \rmd s  + 2 \gamma_{n+1}^3 d^2 \tilde{L}^2/3  \eqsp.
\end{align}
By \Cref{lem:moment_diffusion}-\ref{item:moment_diffusion_2}, the Markov property and for all $t \geq 0$, $1-\rme^{-t} \leq t$, we have for all $s \in \ccint{\Gamma_n,\Gamma_{n+1}}$,
\begin{equation*}
%  \label{eq:04/024}
  \int_{\Gamma_n}^s \CPE{\norm[2]{ Y_u- \xstar} } {\filtration_{\Gamma_n}}\rmd u \leq
(2m)^{-1}(1-\rme^{-2m(s-\Gamma_n)})\norm[2]{Y_{\Gamma_n}-\xstar} + d(s-\Gamma_n)^2 \eqsp.
\end{equation*}
Using this inequality in \eqref{eq:04/023} and for all $t \geq 0$, $1-\rme^{-t} \leq t$ , we get
\begin{equation*}
A  \leq (2 L^4 \gamma_{n+1}^3/3)\norm[2]{Y_{\Gamma_n}-\xstar} + L^4 d \gamma_{n+1}^4/2 +  2\gamma_{n+1}^3 d^2 \tilde{L}^2/3 \eqsp.
\end{equation*}
Combining this bound and \eqref{eq:04/022}   in \eqref{eq:distance_diffusion_part1_first_estimateD} concludes the proof.
\end{proof}

\begin{proof}[Proof of \Cref{theo:distance_Euler_diffusionD}]
  The proof of is along the same lines as \Cref{theo:distance_Euler_diffusion}, using \Cref{lem:distance_Euler_diffusion_part1D} in place of \Cref{lem:distance_Euler_diffusion_part1}.
\end{proof}

\section{Proofs of \Cref{sec:quant-bounds-total}}
In this section are gathered the postponed proofs of \Cref{sec:quant-bounds-total}.
% \subsection{Proof of \Cref{theo:convergence_discrete_chain}}
% \label{sec:proof_theo:convergence_discrete_chain}
% \begin{proof}
% \end{proof}

\subsection{Proof of \Cref{theo:tv_decreasing}}
\label{sec:proof-crefth_tv_decreasing}
Applying \Cref{lem:distance_Euler_diffusion_part1} or
   \Cref{lem:distance_Euler_diffusion_part1D}, we get that for all $x \in \rset^d$
\begin{equation}
\label{lem:distance_wasserstein_semi_group_RKER}
  W_1(\delta_x \QKer_{\gaStep}^n , \delta_x P_{\GaStep_n} ) \leq
\defEns{  \rateFun_n(x) }^{1/2} \eqsp,  \rateFun_n(x) =
\begin{cases}
\rateFun^{(1)}_{n,0}(x) & \text{(\Cref{assum:regularity_2}, \Cref{assum:potentialU})} \eqsp,\\
\rateFun^{(2)}_{n,0}(x) & \text{(\Cref{assum:regularity_2}, \Cref{assum:potentialU}, \Cref{assum:reg_plus})} \eqsp,
\end{cases}
\end{equation}
%where  $\rateFun^{(1)}_{n,0}(y)$ is given by \eqref{eq:defrateFun_1} and $\rateFun^{(2)}_{n,0}(y)$ is given by \eqref{eq:defrateFun_2}.
By the triangle inequality, we get
    \begin{equation}
      \label{eq:proof:theo_tv_first_decomp}
\tvnormEq{\delta_x P_{ \GaStep_\ell} - \delta_x \QKer_{\gaStep}^\ell} \leq
\tvnormEq{\defEns{\delta_x P_{ \GaStep_{n}} - \delta_x \QKer_{\gaStep}^{n} } P_{ \GaStep_{n+1,\ell}}} + \tvnormEq{\delta_x \QKer^{n}_{\gaStep}\defEns{\QKer_{\gaStep}^{n+1,\ell}-P_{\GaStep_{n+1,\ell}}} } \eqsp.
    \end{equation}
    Using \eqref{eq:item:coro:lip_smigroup_ii}
    and \eqref{lem:distance_wasserstein_semi_group_RKER}, we have
\begin{equation}
  \label{eq:proof:theo_tv_first_decomp_1}
  \tvnormEq{\defEns{\delta_x P_{ \GaStep_{n}} - \delta_x \QKer_{\gaStep}^{1,n} } P_{ \GaStep_{n+1,\ell}}} \leq (\rateFun_n(x) /( 4 \uppi \GaStep_{n+1,\ell}))^{1/2}  \eqsp.
\end{equation}
%where $\rateFun_n(x)$ is given in \eqref{lem:distance_wasserstein_semi_group_RKER}.
For the second term, by \cite[Equation 15]{durmus:moulines:2016} (note
that we have a different convention for the total variation distance)
and the Pinsker inequality, we have
\begin{multline*}
  \tvnormEq{\delta_x \QKer^{1,n}_{\gaStep}\defEns{\QKer_{\gaStep}^{n+1,\ell}-P_{\GaStep_{n+1,\ell}}} }^2  \\
\leq
2^{-3} L^2 \sum_{k=n+1}^\ell \defEns{(\gaStep_{k}^3/3) \int_{\rset^d} \norm[2]{\nabla U(z) - \nabla U(\xstar)} \QKer_{\gaStep}^{k-1}(x,\rmd z) +d \gaStep_{k}^2} \eqsp.
\end{multline*}
By \Cref{assum:regularity_2} and \Cref{theo:kind_drift}, we get
\begin{equation*}
   \tvnormEq{\delta_x \QKer^{1,n}_{\gaStep}\defEns{\QKer_{\gaStep}^{n+1,\ell}-P_{\GaStep_{n+1,\ell}}} }^2
% & \phantom{\delta_x \QKer^{1,n}_{\gaStep} \delta_x \QKer^{1,n}_{\gaStep}}
% \leq 2^{-3} L^2 \sum_{k=n+1}^\ell \defEns{(\gaStep_{k}^3/3) \int_{\rset^d} \norm[2]{\nabla U(z)-\nabla U(\xstar)} \QKer_{\gaStep}^{k-1}(x,\rmd z) +d \gaStep_{k}^2} \\
\leq 2^{-3} L^2 \sum_{k=n+1}^\ell \defEns{(\gaStep_{k}^3L^2 /3)\FunMomentDEuler_{1,k-1}(x) +d \gaStep_{k}^2}
\eqsp.
\end{equation*}
Combining the last inequality and \eqref{eq:proof:theo_tv_first_decomp_1} in \eqref{eq:proof:theo_tv_first_decomp} concludes the proof.

\subsection{Proof of \eqref{eq:borne_tv_1_fixed_step_size}}
\label{sec:proof-eqref}

Consider the constant sequence $\gaStep_k = \gamma$ for all $k \in \nset^*$ with  $\gamma \in \ocint{0,1/(m+L)}$.  By \eqref{eq:defrateFun_1}, we have for all $n \in \nset^*$ and $x \in \rset^d$
\begin{equation*}
%\label{eq:bound_ratefun_1_const_gamma}
\rateFun^{(1)}_{n,0}(x) \leq \gamma \constD_1(\gamma,d) + \gamma^3 \constD_2(\gamma) \sum_{i=1}^n (1-\kappa \gamma/2)^{n-i}  \delta_{i,n,0}(x)  \eqsp,
\end{equation*}
where
\begin{equation*}
  \constD_1(\gamma,d) = 2 L^2 \kappa^{-1} \parenthese{\kappa^{-1}+ \gamma}\parenthese{2d + L^2 \gamma^2/6} \eqsp, \quad
  \constD_2(\gamma) =L^4  \parenthese{\kappa^{-1}+ \gamma} \eqsp.
\end{equation*}
In addition, using that $\kappa \geq 2m$ and for all $t \geq 0$, $1-t \leq \rme^{-t}$,
\begin{align}
\nonumber
  \sum_{i=1}^n (1-\kappa \gamma/2)^{n-i}  \delta_{i,n,0}(x) & =
  \sum_{i=1}^n \left[(1-\kappa \gamma/2)^{n-i}\defEnsG{\rme^{-2m \gaStep(i-1)} \norm[2]{x-\xstar}} \right.\\
\nonumber
& \qquad \qquad
\left. \defEnsD{ + \parenthese{1-\rme^{-2m \gaStep(i-1)}}(d/m)} \right] \\
\label{eq:bound_ratefun_1_const_gamma_2}
%\nonumber
&  \leq n \rme^{-m \gaStep(n-1)}  \norm[2]{x-\xstar} + 2d(\kappa \gamma m)^{-1} \eqsp.
\end{align}
Therefore for all $n \geq 1$ and $x \in \rset^d$ we get
\begin{equation}
  \label{eq:bound_ratefun_1_const_gamma}
\rateFun^{(1)}_{n,0}(x) \leq \gamma \constD_1(\gamma) + \gamma^3 \constD_2(\gamma) \defEns{n \rme^{-m \gaStep(n-1)}  \norm[2]{x-\xstar} + 2d(\kappa \gamma m)^{-1} } \eqsp.
\end{equation}
Let now $\ell \in \nset^*$, $\ell \geq \ceil{\gaStep^{-1}}+1$ and $n = \ell - \ceil{\gaStep^{-1}}$. Then,
\begin{align}
\nonumber
& \sum_{k=n+1}^{\ell} \defEns{(\gaStep_{k}^3 L^2/3) \FunMomentDEuler_{1,k-1}(x)  + d \gaStep_{k}^2}\\
\nonumber
  &
 \leq
(L^{2}\gaStep^3/3)\defEns{(1-\kappa \gamma)^{n}(\ell-n)\norm[2]{x-\xstar} + 2 \kappa^{-1} \gaStep d (\ell-n)}
+
d \gamma^{2}(\ell-n)  \\
%\label{eq:bound_remainder_gamma_fixed_1}
\nonumber
 &
 \leq  (L^{2}\gaStep^3/3)\defEns{(1+\gaStep^{-1} )(1-\kappa \gamma)^{\ell-\ceil{\gaStep^{-1}}}\norm[2]{x-\xstar} + 2(1+\gamma)\kappa^{-1}  d}
+ d \gamma(1+\gamma)  \eqsp.
\end{align}
Combining this inequality and \eqref{eq:bound_ratefun_1_const_gamma}  in the bound given by \Cref{theo:tv_decreasing} shows \eqref{eq:borne_tv_1_fixed_step_size}.

\subsection{Proof of \Cref{theo:bias_tv_gamma_fixed_imp}}
\label{sec:proof-theo_tv_fix}
We preface the proof by a preliminary lemma.
Define
for all $\gaStep >0$, the function $\nFun : \rset_+^* \to \nset$ by
\begin{equation}
\label{eq:def_nFun}
  \nFun(\gaStep) = \ceil{\log\parenthese{\gaStep^{-1}}/ \log(2)} \eqsp.
\end{equation}
  \begin{lemma}
    \label{theo:tv_fix}
Assume \Cref{assum:regularity_2}, \Cref{assum:potentialU} and \Cref{assum:reg_plus}. Let $\gaStep \in \ooint{0,1/(m+L)}$.
Then for all $x \in \rset^d$ and $\ell \in \nset^*$, $\ell > 2^{\nFun(\gaStep)}$,
\begin{multline*}
%  \label{eq:bound_tv_second}
  \tvnorm{\delta_x P_{\ell \gaStep} - \delta_x \RKer_{\gaStep}^{\ell}} \leq
(\rateFun^{(2)}_{\ell-2^{\nFun(\gaStep)},0}(x) /( \uppi 2^{\nFun(\gaStep)+2} \gaStep))^{1/2} \\+
2^{-3/2} L  \defEns{(\gaStep^3L ^2/3)\FunMomentDEuler_{1,\ell-1}(x) +d \gaStep^2}^{1/2}
+ \sum_{k=1}^{\nFun(\gaStep)} (\rateFun_{2^{k-1},\ell-2^k}^{(2)}(x) /( \uppi 2^{k+1} \gaStep))^{1/2}   \eqsp.
\end{multline*}
where $\FunMomentDEuler_{1,\ell-1}(x)$ is defined by \eqref{eq:FunMomentDEuler} and for all $n_1,n_2 \in \nset$,  $\rateFun^{(2)}_{n_1,n_2}$ is given by \eqref{eq:defrateFun_2}.
% where the sequence $(w_k)_{k \geq 1}$ is equal to $(u_{k}^{(3)})_{k
%   \geq 1}$ if \Cref{assum:reg_plus} holds and $(u_{k}^{(2)})_{k \geq
%   1}$ otherwise.
  \end{lemma}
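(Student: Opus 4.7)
The plan is to interpolate between $\delta_x P_{\ell\gamma}$ and $\delta_x R_\gamma^\ell$ by replacing dyadically growing blocks of Euler steps with the corresponding pieces of continuous-time diffusion. Writing $p = \nFun(\gamma)$, so that $2^p \geq \gamma^{-1}$, and introducing for $j = 0,1,\dots,p$ the intermediate laws
\[
\nu_j = \delta_x R_\gamma^{\ell - 2^j}\, P_{2^j \gamma}
\]
(well-defined under the hypothesis $\ell > 2^p$), the triangle inequality applied to the telescoping
\[
\delta_x P_{\ell\gamma} - \delta_x R_\gamma^\ell \;=\; (\delta_x P_{\ell\gamma} - \nu_p) + \sum_{j=1}^{p}(\nu_j - \nu_{j-1}) + (\nu_0 - \delta_x R_\gamma^\ell)
\]
reduces the proof to bounding the $p+2$ pieces on the right; each will contribute exactly one of the three terms in the claim.

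The last piece satisfies $\nu_0 - \delta_x R_\gamma^\ell = \delta_x R_\gamma^{\ell-1}(P_\gamma - R_\gamma)$, i.e.\ the one-step Euler vs.\ diffusion discrepancy preceded by $\ell-1$ Euler steps. I would bound it exactly as in the second half of the proof of \Cref{theo:tv_decreasing}: combine Pinsker's inequality with the Girsanov KL computation of \cite[Eq.~11]{dalalyan:2014} specialised to a single step, then use \Cref{assum:regularity_2} together with the moment bound of \Cref{theo:kind_drift}-\ref{item:kind_drift_1} to control the resulting $\int \norm[2]{\nabla U(z) - \nabla U(\xstar)}\,\delta_x R_\gamma^{\ell-1}(\rmd z)$. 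This yields $2^{-3/2}L\{(\gamma^3 L^2/3)\FunMomentDEuler_{1,\ell-1}(x) + d\gamma^2\}^{1/2}$, matching the second term of the statement.

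For the middle pieces the key algebraic identity is
\[
\nu_j - \nu_{j-1} \;=\; \delta_x R_\gamma^{\ell - 2^j}\bigl(P_{2^{j-1}\gamma} - R_\gamma^{2^{j-1}}\bigr) P_{2^{j-1}\gamma},
\]
which follows from $2^j = 2\cdot 2^{j-1}$ after inserting $R_\gamma^{2^{j-1}} P_{2^{j-1}\gamma}$ as an interpolant. The outer factor $P_{2^{j-1}\gamma}$ permits the TV-to-$W_1$ regularisation of \Cref{propo:reflexion_coupling_lip}-\ref{item:coro:lip_smigroup_ii} at time $t = 2^{j-1}\gamma$, producing the factor $(4\pi\cdot 2^{j-1}\gamma)^{-1/2} = (\pi\,2^{j+1}\gamma)^{-1/2}$. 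The remaining $W_1$ distance between the Euler and Langevin marginals run for $2^{j-1}$ comparison steps from the initial law $\delta_x R_\gamma^{\ell-2^j}$ is then controlled by the synchronous-coupling analysis of \Cref{lem:distance_Euler_diffusion_part1D} applied to this initial distribution, using the second-moment estimate of \Cref{theo:kind_drift}-\ref{item:kind_drift_1} as input to the $\delta_{i,\cdot,\cdot}(x)$ quantities; this gives $W_1^2 \leq \rateFun^{(2)}_{2^{j-1},\ell-2^j}(x)$ and reproduces the $k=j$ summand of the third term. The long-time piece $\delta_x P_{\ell\gamma} - \nu_p = \delta_x(P_{(\ell-2^p)\gamma} - R_\gamma^{\ell-2^p}) P_{2^p\gamma}$ is handled identically with $t = 2^p\gamma$ and initial law $\delta_x$, giving the first term.

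The main conceptual choice is the dyadic block structure: a naive telescoping through all $\delta_x R_\gamma^k P_{(\ell-k)\gamma}$ would sum $\ell$ one-step regularised Girsanov errors and yield a useless polynomial-in-$\ell$ bound, whereas doubling the block size at each level absorbs the blow-up into the regularisation factor $(2^{j-1}\gamma)^{-1/2}$ and keeps the cascade to $\nFun(\gamma) = \bigO(\log(\gamma^{-1}))$ terms; stopping at $j = p$ where $2^p\gamma \asymp 1$ then leaves a single long-time piece controllable by one application of the baseline Wasserstein estimate \Cref{lem:distance_Euler_diffusion_part1D}. The step I expect to be the main obstacle is verifying that \Cref{lem:distance_Euler_diffusion_part1D} can be applied with a non-Dirac initial distribution — this reduces to inspecting its proof to confirm that only the initial second moment $\int \norm[2]{\cdot-\xstar}\rmd\zeta_0$ enters — and then aligning the moment input $\FunMomentDEuler_{2^{j-1},\ell-2^j}(x)$ that is hardwired into $\rateFun^{(2)}_{2^{j-1},\ell-2^j}(x)$ with the actual second-moment bound for $\delta_x R_\gamma^{\ell-2^j}$ supplied by \Cref{theo:kind_drift}-\ref{item:kind_drift_1}.
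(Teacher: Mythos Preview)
Your proposal is correct and follows essentially the same route as the paper: the dyadic telescoping through $\nu_j = \delta_x R_\gamma^{\ell-2^j}P_{2^j\gamma}$ reproduces exactly the paper's decomposition \eqref{eq:decomp_proof_tv_fix_1}, and each of the three kinds of pieces is bounded by the same combination of tools (regularisation via \eqref{eq:item:coro:lip_smigroup_ii} plus the synchronous-coupling Wasserstein estimate for the first and middle pieces, Pinsker with \cite[Eq.~11]{dalalyan:2014} for the last). Your anticipated obstacle---extending \Cref{lem:distance_Euler_diffusion_part1D} to the non-Dirac initial law $\delta_x R_\gamma^{\ell-2^j}$ via the moment bound of \Cref{theo:kind_drift}---is precisely the ``using in addition \Cref{theo:kind_drift}'' step in the paper's proof.
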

\begin{proof}
Let $ \gaStep \in \ooint{0,1/(m+L)}$ and $\ell \in \nset^*$.
For ease of notation, let $\nFunD = \nFun(\gaStep)$, and assume  that $\ell > 2^{\nFunD}$.
    Consider the following decomposition
%     \begin{multline*}
% %      \label{eq:1}
% \delta_x P_{p \gaStep} - \delta_x \RKer_{\gaStep}^p =
% \defEns{\delta_x P_{(p-\DnFunD) \gaStep} - \delta_x \RKer_{\gaStep}^{p-2^{\nFunD}}} P_{\DnFunD \gaStep} + \delta_x\RKer_{\gaStep}^{p-1}\defEns{P_{\gaStep}-\RKer_{\gaStep}}\\
% + \sum_{k=1}^{\nFunD} \delta_x \RKer_{\gaStep}^{p-2^{k}}\defEns{  P_{2^{k-1}\gaStep}-\RKer_{\gaStep}^{2^{k-1}}} P_{2^{k-1} \gaStep}   \eqsp.
%     \end{multline*}
% Therefore using the triangle inequality, we have
\begin{multline}
\label{eq:decomp_proof_tv_fix_1}
\tvnormEq{\delta_x P_{\ell \gaStep} - \delta_x \RKer_{\gaStep}^\ell} \leq
\tvnormEq{\defEns{\delta_x P_{(\ell-\DnFunD) \gaStep} - \delta_x \RKer_{\gaStep}^{\ell-2^{\nFunD}}} P_{\DnFunD \gaStep}} \\
+ \tvnormEq{\delta_x\RKer_{\gaStep}^{\ell-1}\defEns{P_{\gaStep}-\RKer_{\gaStep}}}
+ \sum_{k=1}^{\nFunD} \tvnormEq{\delta_x \RKer_{\gaStep}^{\ell-2^{k}}\defEns{  P_{2^{k-1}\gaStep}-\RKer_{\gaStep}^{2^{k-1}}} P_{2^{k-1} \gaStep} }   \eqsp.
\end{multline}
We bound each term in the right hand side.
First by \eqref{eq:item:coro:lip_smigroup_ii} and \Cref{lem:distance_wasserstein_semi_group_RKER}, we have
\begin{equation}
  \label{eq:first_bound_proof_tv_fix}
  \tvnormEq{\defEns{\delta_x P_{(\ell-\DnFunD) \gaStep} - \delta_x \RKer_{\gaStep}^{\ell-2^{\nFunD}}} P_{\DnFunD \gaStep}} \leq
(\rateFun_{\ell-\DnFunD,0}^{(2)}(x) /( \uppi 2^{n+2} \gaStep))^{1/2}  \eqsp,
\end{equation}
where  $\rateFun^{(2)}_{n,0}(x)$ is given by \eqref{eq:defrateFun_2}. Similarly but using in addition \Cref{theo:kind_drift}, we have for all $k \in \defEns{1,\cdots,\nFunD}$,
\begin{equation}
\label{eq:second_bound_proof_tv_fix}
\tvnormEq{ \delta_x \RKer_{\gaStep}^{\ell-2^{k}}\defEns{  P_{2^{k-1}\gaStep}-\RKer_{\gaStep}^{2^{k-1}}} P_{2^{k-1} \gaStep}}
\leq (\rateFun_{2^{k-1},\ell-2^k}^{(2)}(x) /( \uppi 2^{k+1} \gaStep))^{1/2}  \eqsp,
\end{equation}
where $\rateFun_{2^{k-1},\ell-2^k}^{(2)}(x)$ is  given by \eqref{eq:defrateFun_2}.
For the last term, by \cite[Equation 11]{dalalyan:2014} and the Pinsker inequality, we have
\begin{equation*}
 \tvnormEq{\delta_x \RKer_{\gaStep}^{\ell-1}\defEns{P_{\gaStep}-\RKer_{\gaStep}}}^2  \leq
2^{-3} L^2  \defEns{(\gaStep^3/3) \int_{\rset^d} \norm[2]{\nabla U(z)} \RKer_{\gaStep}^{\ell-1}(x,\rmd z) +d \gaStep^2} \eqsp.
\end{equation*}
By \Cref{assum:regularity_2} and \Cref{theo:kind_drift}, we get
\begin{equation}
  \label{eq:third_bound_proof_tv_fix}
 \tvnormEq{\delta_x \RKer_{\gaStep}^{\ell-1}\defEns{\RKer_{\gaStep}-P_{\gaStep}}}^2  \leq
2^{-3} L^2  \defEns{(\gaStep^3L^2/3)\FunMomentDEuler_{1,\ell-1}(x)  +d \gaStep^2} \eqsp.
\end{equation}
Combining \eqref{eq:first_bound_proof_tv_fix}, \eqref{eq:second_bound_proof_tv_fix} and \eqref{eq:third_bound_proof_tv_fix} in \eqref{eq:decomp_proof_tv_fix_1} concludes the proof.
% For $\ell \leq 2^n$, the bound also holds using the same reasoning and the convention that for $n_1,n_2 \in \nset$, $n_1>n_2$, $\rateFun_{n_1,n_2}$ is equal to $0$.
\end{proof}

\begin{proof}[Proof of  \Cref{theo:bias_tv_gamma_fixed_imp}]
First for all $n \geq 1$ and $x \in \rset^d$, we have
\begin{equation*}
\label{eq:defrateFun_2_s}
\rateFun_{n,0}^{(2)}(x) \leq
% \sum_{i=1}^n \gamma_i^3 \prod_{k=i+1}^n(1- \kappa\gamma_k/2) \left[ d\defEns{2L^2+ 4 \kappa^{-1}(\tilde{L}^2/12 + \gamma_{n+1}L^4/4) + \gamma_{n+1}^2 L^4 /6 } \right.
%  +
% \left. L^4\delta_{i,n,0}(x) (4 \kappa^{-1}/3+\gamma_{n+1}) \right] \\
 \gamma^2 \constE_1(\gamma,d) + \gamma^3 \constE_2(\gamma)   \sum_{i=1}^n  \prod_{k=i+1}^n(1- \kappa\gamma_k/2) \delta_{i,n,0}(x) \eqsp,
\end{equation*}
where
\begin{equation*}
\constE_1(\gamma,d) = 2 d \kappa^{-1}  \defEns{2L^2+ 4 \kappa^{-1}(d \tilde{L}^2/3 + \gamma L^4/4) + \gamma^2 L^4 /6 }  \eqsp,
 \constE_2(\gamma) = L^4(4 \kappa^{-1}/3 + \gamma) \eqsp.
\end{equation*}
% \pagebreak

% \begin{align*}
%   \constE_3(\gamma,d,x)&= (\ell - \gamma^{-1}) \rme^{-m \gaStep(\ell-2\gamma^{-1}-1)}  \norm[2]{x-\xstar} + 2d/(\kappa \gamma m) \\
%   \constE_4(\gamma,d,x) &= \rme^{-m \gamma(\ell-2 \gamma^{-1}-1)}\norm[2]{x-\xstar}+ 2\kappa^{-1}d+d/m  \eqsp.
% \end{align*}
By \eqref{eq:bound_ratefun_1_const_gamma_2}, we get for all $n \geq 1$ and $x \in \rset^d$,
\begin{equation}
\label{eq:bound_ratefun_2_const_gamma_1}
\rateFun_{n,0}^{(2)}(x) \leq \gamma^2 \constE_1(\gamma,d) + \gamma^3 \constE_2(\gamma)  \defEns{n \rme^{-m \gaStep(n-1)}  \norm[2]{x-\xstar} + 2d(\kappa \gamma m)^{-1}  }  \eqsp.
\end{equation}

On the other hand, for all $x \in \rset^d$, $\ell,n \in \nset$, $n \geq 1$, $\ell >n$ we have using that $\kappa \geq 2m$ and for all $t \geq 0$, $1-t \leq \rme^{-t}$,
\begin{align}
\nonumber
  \rateFun_{n,\ell}^{(2)}(x) &\leq  \gamma^3 n  \constE_1(\gamma)  +  \gamma^3 n \constE_2(\gamma) \defEns{\rme^{-m \gamma(n-1)} \FunMomentDEuler_{n,\ell}(x) +d/m } \\
\nonumber
& \leq  \gamma^3 n  \constE_1(\gamma) +  \gamma^3 n \constE_2(\gamma) \defEns{\rme^{-m \gamma(n-1)}\parenthese{(1-\kappa \gamma)^{\ell-n}\norm[2]{x-\xstar}+2\kappa^{-1}d} +d/m }\\
\label{eq:bound_ratefun_2_const_gamma_2}
& \leq \gamma^3 n  \constE_1(\gamma)  +  \gamma^3 n \constE_2(\gamma) \defEns{\rme^{-m \gamma(\ell-1)}\norm[2]{x-\xstar}+ 2\kappa^{-1}d+d/m } \eqsp.
\end{align}

Finally, for all $\ell \in \nset^*$ and $x \in\rset^d$, we have
\begin{equation}
\label{eq:bound_remainder_gamma_fixed_2}
(\gaStep^3L ^2/3)\FunMomentDEuler_{1,\ell-1}(x) +d \gaStep^2
\leq
  (\gaStep^3L ^2/3)\defEns{(1-\kappa \gamma)^{\ell-1}\norm[2]{x-\xstar} + 2d \kappa^{-1}} +d \gaStep^2  \eqsp.
\end{equation}

Combining \eqref{eq:bound_ratefun_2_const_gamma_1}, \eqref{eq:bound_ratefun_2_const_gamma_2} and \eqref{eq:bound_remainder_gamma_fixed_2} in the bound given by \Cref{theo:tv_fix}, and using that $\gamma^{-1} \leq 2^{\nFun(\gaStep)} \leq 2 \gamma^{-1}$ we have for all $\ell \in \nset^*$, $\ell > 2^{\nFun(\gaStep)}$,
  \begin{align*}
%\label{eq:borne_tv_2_fixed_step_size}
&      \tvnorm{\delta_x P_{\ell \gaStep} - \delta_x \RKer_{\gaStep}^{\ell}} \leq
2^{-3/2} L  \parentheseDeux{  (\gaStep^3L ^2/3)\defEns{(1-\kappa \gamma)^{\ell-1}\norm[2]{x-\xstar} + 2d \kappa^{-1}} +d \gaStep^2 }^{1/2} \\
&+( 4 \uppi)^{-1/2}\parentheseDeux{
\gamma^2 \constE_1(\gamma) + \gamma^3 \constE_2(\gamma)  \defEns{(\ell - \gamma^{-1}) \rme^{-m \gaStep(\ell-2\gamma^{-1}-1)}  \norm[2]{x-\xstar} + 2d(\kappa \gamma m)^{-1}  }}^{1/2} \\
&+ \sum_{k=1}^{\nFun(\gaStep)} \parentheseDeux{\frac{\gamma^3 2^{k-1}  \constE_1(\gamma)  +  \gamma^3 2^{k-1} \constE_2(\gamma) \defEns{\rme^{-m \gamma(\ell-2^k-1)}\norm[2]{x-\xstar}+ 2\kappa^{-1}d+d/m }} { \uppi 2^{k+1} \gaStep}}^{1/2} \\
& \phantom{    \tvnorm{\delta_x P_{\ell \gaStep} - \delta_x \RKer_{\gaStep}^{\ell}}}\leq 2^{-3/2} L  \defEns{  (\gaStep^3L ^2/3)\defEns{(1-\kappa \gamma)^{\ell-1}\norm[2]{x-\xstar} + 2d \kappa^{-1}} +d \gaStep^2 }^{1/2} \\
&+( 4 \uppi)^{-1/2}\parentheseDeux{
\gamma^2 \constE_1(\gamma) + \gamma^3 \constE_2(\gamma)  \defEns{(\ell - \gamma^{-1}) \rme^{-m \gaStep(\ell-2\gamma^{-1}-1)}  \norm[2]{x-\xstar} + 2d(\kappa \gamma m)^{-1}  }}^{1/2} \\
&+ ( 4 \uppi)^{-1/2} \nFun(\gaStep) \parentheseDeux{\gamma^2  \constE_1(\gamma)  +  \gamma^2  \constE_2(\gamma) \defEns{\rme^{-m \gamma(\ell-2 \gamma^{-1}-1)}\norm[2]{x-\xstar}+ 2\kappa^{-1}d+d/m } }^{1/2} \eqsp.
  \end{align*}
% Therefore, we obtain
%  \begin{multline}
% \label{eq:borne_tv_2_fixed_step_size_main}
% \tvnorm{\delta_x P_{\ell \gaStep} - \delta_x \RKer_{\gaStep}^{\ell}} \leq  2^{-3/2} L  \defEns{  (\gaStep^3L ^2/3)\defEns{(1-\kappa \gamma)^{\ell-1}\norm[2]{x-\xstar} + 2\kappa^{-1} d } +d \gaStep^2 }^{1/2} \\
% +( 4 \uppi)^{-1/2}\parentheseDeux{
% \gamma^2 \constE_1(\gamma,d) + \gamma^3 \constE_2(\gamma)   \constE_3(\gamma,d,x) }^{1/2}
% \\+ \nFun(\gaStep) ( 4 \uppi)^{-1/2} \parentheseDeux{\gamma^2  \constE_1(\gamma,d)  +  \gamma^2  \constE_2(\gamma)\constE_4(\gamma,d,x)  }^{1/2} \eqsp.
% \end{multline}
% where
% \begin{align*}
%   \constE_3(\gamma,d,x)&= (\ell - \gamma^{-1}) \rme^{-m \gaStep(\ell-2\gamma^{-1}-1)}  \norm[2]{x-\xstar} + 2d/(\kappa \gamma m) \\
%   \constE_4(\gamma,d,x) &= \rme^{-m \gamma(\ell-2 \gamma^{-1}-1)}\norm[2]{x-\xstar}+ 2\kappa^{-1}d+d/m  \eqsp.
% \end{align*}
Letting $\ell$ go to infinity, using \Cref{propo:reflexion_coupling_lip}-\ref{item:reflexion_coupling_lip_2} and \Cref{theo:convergence_discrete_chain}-\ref{item:convergence_discrete_chain_2}, we get the desired conclusion.
\end{proof}

% \subsection{Proof of \eqref{eq:borne_tv_2_fixed_step_size_main}}
% \label{sec:proof-eqref-1}

\section{Proof of \Cref{SEC:MSE_TV}}
In this section are gathered the postponed proofs of \Cref{SEC:MSE_TV}.

\subsection{Proof of \Cref{theo:var} and \Cref{theo:var_tv}}
Our main tool in the proof of \Cref{theo:var} and \Cref{theo:var_tv}  is the
Gaussian Poincar{\'e} inequality \cite[Theorem~3.20]{boucheron:lugosi:massart:2013}
%(see also \cite[Theorem 4.1.1]{bakry:gentil:ledoux:2014})
which
% states that if $Z=(Z_1,\dots,Z_d)$ is a Gaussian vector with identity covariance matrix, then $\Var{g(Z)} \leq \norm{g}_{\Lip}^2$ for all Lipschitz function $g : \rset^d \to \rset$.
% For all $y\in \rset^d$ and $\gaStep >0$, the Gaussian Poincar{\'e} inequality
can be applied to $R_\gamma(y,\cdot)$ defined by \eqref{eq:definition-Rgamma},  noticing that  $R_\gamma(y,\cdot)$ is a Gaussian distribution  with
mean $y - \gamma \nabla U(y)$ and covariance matrix $2 \gamma \operatorname{I}_d$:
for all Lipschitz function $g : \rset^d \to \rset$
\begin{equation}
\label{lem:var_1}
 R_\gamma \defEns{ g(\cdot) - R_\gamma g(y)}^2(y)   \leq 2 \gamma  \norm{g}_{\Lip}^2 \eqsp.
\end{equation}
% \begin{lemma}
% \label{lem:var_1-new}
% Assume \Cref{assum:regularity_2}.
% Let $g : \rset^d \to \rset$ be a Lipschitz function. Then for all $\gamma >0$, $y \in \rset^d$,
% \[
% 0 \leq R_\gamma \defEns{ g(\cdot) - R_\gamma g(y)}^2(y)= \int R_\gamma(y,\rmd z) \defEns{ g(z) - R_\gamma g(y)}^2   \leq 2 \gamma  \norm{g}_{\Lip}^2 \eqsp.
% \]
% \end{lemma}
To go further, we decompose $\hat{\pi}_n^N(f)-\PE_x[\hat{\pi}_n^N(f)]$, for $f : \rset^d\to \rset$, Lipschitz or measurable and bounded, as the sum of martingale increments,
% \begin{equation}
% \label{eq:decomposition-error}
% \hat{\pi}_n^N(f)- \PE_x[\hat{\pi}_n^N(f)]= \sum_{k=N}^{N+n-1} \left\{ \CPE[x]{\hat{\pi}_n^N(f)}{\mcg_{k+1}} - \CPE[x]{\hat{\pi}_n^N(f)}{\mcg_k} \right\} + \CPE[x]{\hat{\pi}_n^N(f)}{\mcg_N} - \PE_x[\hat{\pi}_n^N(f)] \eqsp,
% \end{equation}
\wrt~$(\mcg_n)_{n \geq 0}$, the natural filtration associated with Euler approximation $(X_n)_{n \geq 0}$, and we get
\begin{multline}
\label{eq:decomposition-variance-new}
\VarDeux{x}{\hat{\pi}_n^N(f)}= \sum_{k=N}^{N+n-1} \PE_x\left[ \parenthese{ \CPE[x]{\hat{\pi}_n^N(f)}{\mcg_{k+1}} - \CPE[x]{\hat{\pi}_n^N(f)}{\mcg_k}}^2\right] \\
+
\expeMarkov{x}{\parenthese{\CPE[x]{\hat{\pi}_n^N(f)}{\mcg_N} - \PE_x[\hat{\pi}_n^N(f)]}^2}\eqsp.
\end{multline}
Since $\hat{\pi}_n^N(f)$ is an additive functional, the martingale increment
$\CPE[x]{\hat{\pi}_n^N(f)}{\mcg_{k+1}}- \CPE[x]{\hat{\pi}_n^N(f)}{\mcg_k}$ has a simple expression.
For $k = N+n-1,\dots,N+1$, define backward in time the function
\begin{equation}
\label{eq:def_mart_f-new}
\martInc^N_{n,k}: x_k \mapsto \omega_{k,n}^N f(x_{k}) +R_{\gamma_{k+1}} \martInc^N_{n,k+1}( x_k ) \eqsp,
\end{equation}
where $\martInc_{n,N+n}^N : x_{N+n} \mapsto \martInc_{n,N+n}^N (x_{N+n}) =  \weight{N+n} f(x_{N+n})$. Denote finally
\begin{equation}
\label{eq:def_F_f-new}
\martIncF^{N}_{n}: x_N \mapsto R_{\gamma_{N+1}} \martInc^N_{n,N+1}(x_N)\eqsp.
\end{equation}
Note that for $k \in \{N,\dots,N+n-1\}$, by the Markov property,
\begin{equation}
\label{eq:relation_mart_var_cond-new}
\martInc^N_{n,k+1}(X_{k+1}) - R_{\gamma_{k+1}}\martInc^N_{n,k+1}(X_{k})= \CPE[x]{\hat{\pi}_n^N(f)}{\mcg_{k+1}} - \CPE[x]{\hat{\pi}_n^N(f)}{\mcg_k} \eqsp,
\end{equation}
and $\martIncF^N_{n}(X_N)= \CPE[x]{\hat{\pi}_n^N(f)}{\mcg_N}$.
With these notations, \eqref{eq:decomposition-variance-new} may be equivalently expressed as
\begin{align}
\nonumber
\VarDeux{x}{\hat{\pi}_n^N(f)}
 = \sum_{k=N}^{N+n-1} \PE_x\left[ R_{\gamma_{k+1}}\defEns{\martInc^N_{n,k+1}(\cdot)
 - R_{\gamma_{k+1}}\martInc^N_{n,k+1}(X_{k}) }^2 (X_k) \right]\\
\label{eq:decomposition-variance-alter-new}
  + \VarDeux{x}{\martIncF_n^N(X_{N})}  \eqsp.
\end{align}
Now for $k=N+n-1,\dots,N$, we will use the Gaussian Poincar{\'e} inequality \eqref{lem:var_1}
to the sequence of function $\martInc_{n,k+1}^N$
to prove that $x \mapsto R_{\gamma_{k+1}}\{\martInc^N_{n,k+1}(\cdot) - R_{\gamma_{k+1}}\martInc^N_{n,k+1}(x)\}^2(x)$ is uniformly bounded.
It is required to bound the Lipschitz constant of $\martInc_{n,k}^N$ .
%  For $k \in \defEns{N,\dots,N+n-1}$ and
% for all $y,z \in \rset^d$, we have
% \begin{equation}
%   \label{eq:fixed_x_N_Nplusk_mart}
% \abs{\martInc^N_{n,k+1}(y) -\martInc^N_{n,k+1}(z) } =  \left| \weight{k+1}\defEns{f(y) - f(z)}
% + \sum_{i=k+2}^{N+n} \weight{i} \defEns{Q^{k+2,i}_\gamma f(y) - Q^{k+2,i}_\gamma f(z)} \right| \eqsp.
% \end{equation}

\subsubsection{Proof of  \Cref{theo:var}}
\label{sec:proof-creftheo:var}
We preface the proof by two lemmas.
\begin{lemma}
\label{coro:induction_function_mart-new}
Assume \Cref{assum:regularity_2} and \Cref{assum:potentialU}. Let $(\gamma_k)_{k \geq 1}$ be a non-increasing sequence
with $\gamma_1 \leq 2/(m+L)$. Let $N \geq 0$ and $n \geq 1$.
 Then for all $y \in \rset^d$,  Lipschitz function $f$ and $k\in \defEns{N,\dots,N+n-1}$,
\begin{equation*}
 R_{\gamma_{k+1}}\defEns{\martInc^N_{n,k+1}(\cdot) - R_{\gamma_{k+1}}\martInc^N_{n,k+1}(y) }^2 (y)
\leq 8   \gamma_{k+1}\norm[2]{f}_{\Lip} (\kappa\Gamma_{N+2,N+n+1})^{-2} \eqsp,
%2 \norm[2]{f}_{\Lip} \gamma_{k+1} \left(\sum_{i=k+1}^{N+n} \weight{i} \prod_{j=k+1}^i(1- \kappa \gamma_j)^{1/2} \right)^2 \eqsp,
 \end{equation*}
 where $\martInc^N_{n,k+1}$ is given by \eqref{eq:def_mart_f-new}.
\end{lemma}

\begin{proof}
By \eqref{eq:def_mart_f-new}, $\normLigne{\martInc^N_{n,k}}_{\Lip} \leq \sum_{i=k+1}^{N+n} \weight{i} \normLigne{Q_\gamma^{k+2,i}f}_{\Lip}$.
Using  \Cref{propo:contraction_Euler-new}, the bound $(1-t)^{1/2} \leq 1 - t/2$ for $t \in \ccint{0,1}$ and the definition of $\weight{i}$ given by \eqref{eq:def_GammaN_plusn}, we have
\begin{equation*}
\norm{\martInc^N_{n,k}}_{\Lip} \leq \norm{f}_{\Lip} \sum_{i=k+1}^{N+n} \weight{i} \prod_{j=k+2}^i (1- \kappa \gamma_j/2) \leq 2 \norm{f}_{\Lip}( \kappa \Gamma_{N+2,N+n+1} )^{-1} \eqsp.
% & \leq 2 \norm{f}_{\Lip}( \kappa \Gamma_{N+2,N+n+1} )^{-1} \sum_{i=k+1}^{N+n}  \defEns{\prod_{j=k+2}^i (1- \kappa \gamma_j/2) - \prod_{j=k+2}^{i+1} (1- \kappa \gamma_j/2)} \\
\end{equation*}
Finally, the proof follows from \eqref{lem:var_1}.
\end{proof}

Also to control the last term in right hand side of \eqref{eq:decomposition-variance-alter-new},
we need to control the variance of $\martIncF_n^N(X_N)$ under $\delta_x Q^N_\gamma$. But similarly to the sequence of functions $\martInc_{n,k}^N$,
$\martIncF_n^N$ is Lipschitz by \Cref{propo:contraction_Euler-new} by definition, see \eqref{eq:def_F_f-new}.
% since for all $y,z \in \rset^d$, we have
% \begin{equation}
% \label{eq:mart_n_0^N_dev}
% \abs{\martIncF_{n}^N(y) -\martIncF^{N}_{n}(z) } =  \abs{ \sum_{i=N+1}^{N+n} \weight{i} \defEns{Q^{N+1,i}_\gamma f(y) - Q^{N+1,i}_\gamma f(z) }}  \eqsp.
% \end{equation}
Therefore it suffices to find some bound for the variance of $g$ under $\delta_y Q_\gamma^{n,p}$ , for $g: \rset^d \to \rset$ a Lipschitz function, $y \in \rset^d$ and $\gamma >0$, which is done using the following result.

\begin{lemma}
\label{lem:var_2}
Assume \Cref{assum:regularity_2} and \Cref{assum:potentialU}. Let  $(\gamma_k)_{k \geq 1}$ be a non-increasing sequence with $\gamma_1 \leq  2/(m+L)$. Let $g : \rset^d \to \rset$ be a Lipschitz function. Then for all $n,p \geq 1$, $n \leq p$ and $y \in \rset^d$
\[
0 \leq \int_{\rset^d } Q^{n,p}_\gamma(y, \rmd z) \defEns{ g(z) - Q^{n,p}_\gamma g(y)}^2 \leq  2 \kappa^{-1} \norm{g}^2_{\Lip} \eqsp,
%\sum_{k=n}^p \gamma_k  \prod_{i=k+1}^p (1-\kappa \gamma_i)     \eqsp,
\]
where $Q_\gamma ^{n,p}$ is given by \eqref{eq:iterate_kernel}.
\end{lemma}
\begin{proof}
By decomposing $g(X_p)- \CPE[y]{g(X_p)}{\mcg_n}= \sum_{k=n+1}^p \{ \CPE[y]{g(X_p)}{\mcg_k} - \CPE[y]{g(X_p)}{\mcg_{k-1}} \}$,
and using $\CPE[y]{g(X_p)}{\mcg_k}=  Q_\gamma^{k+1,p} g(X_k)$, we get
\begin{align*}
\CPVar[y]{g(X_p)}{\mcg_n}
&= \sum_{k=n+1}^p \CPE[y]{\CPE[y]{\left( \CPE[y]{g(X_p)}{\mcg_k} - \CPE[y]{g(X_p)}{\mcg_{k-1}} \right)^2}{\mcg_{k-1}} }{\mcg_n} \\
&= \sum_{k=n+1}^p \CPE[y]{ R_{\gamma_k} \defEns{ Q_\gamma^{k+1,p} g(\cdot) - R_{\gamma_k} Q_\gamma^{k+1,p} g(X_{k-1})}^2(X_{k-1})}{\mcg_n} \eqsp.
\end{align*}
\Cref{lem:var_1} implies $\CPVar[y]{g(X_p)}{\mcg_n} \leq 2 \sum_{k=n+1}^p \gamma_k \normLigne[2]{Q_\gamma^{k+1,p} g}_{\Lip}$. The proof follows from
\Cref{propo:contraction_Euler-new} and \Cref{lem:suite_recurrence_2}, using the bound $(1-t)^{1/2} \leq 1 - t/2$ for $t \in \ccint{0,1}$.
\end{proof}

\begin{corollary}
\label{coro:function_F}
Assume \Cref{assum:regularity_2} and \Cref{assum:potentialU}. Let $(\gamma_k)_{k \geq 1}$ be a non-increasing sequence
with $\gamma_1 \leq 2/(m+L)$.
Then for all Lipschitz function $f$ and $x \in \rset^d$, $\VarDeuxLigne{x}{\martIncF^N_n(X_N)}
\leq  8 \kappa^{-3} \norm{f}_{\Lip}^2  \Gamma_{N+2,N+n+1} ^{-2}$,
where $\martIncF^N_n$ is given by \eqref{eq:def_F_f-new}.
\end{corollary}
\begin{proof}
By \eqref{eq:def_F_f-new}  and \Cref{propo:contraction_Euler-new}, $\martIncF_n^N$ is Lipschitz function with $\normLigne{\martIncF^N_n}_{\Lip} \leq  \sum_{i=N+1}^{N+n} \weight{i} \normLigne{Q_\gamma^{N+1,i}f}_{\Lip}$.
Using  \Cref{propo:contraction_Euler-new}, the bound $(1-t)^{1/2} \leq 1 - t/2$ for $t \in \ccint{0,1}$ and the definition of $\weight{i}$ given by \eqref{eq:def_GammaN_plusn}, we have
\begin{equation*}
\norm{\martIncF^N_n}_{\Lip} \leq \norm{f}_{\Lip} \sum_{i=N+1}^{N+n} \weight{i} \prod_{j=N+2}^i (1- \kappa \gamma_j/2) \leq 2 \norm{f}_{\Lip}( \kappa \Gamma_{N+2,N+n+1} )^{-1} \eqsp.
 % \leq 2\norm{f}_{\Lip}( \kappa\Gamma_{N+2,N+n+1} )^{-1} \sum_{i=N+1}^{N+n}  \defEns{\prod_{j=N+2}^i (1- \kappa \gamma_j/2) - \prod_{j=N+2}^{i+1} (1- \kappa \gamma_j/2)} \\
\end{equation*}
The proof follows from \Cref{lem:var_2}.
\end{proof}
Plugging the bounds given by \Cref{coro:induction_function_mart-new} and \Cref{coro:function_F} in \eqref{eq:decomposition-variance-alter-new}, we have
\begin{align*}
\VarDeux{x}{\hat{\pi}_n^N(f)} &\leq  8 \kappa^{-2} \norm{f}_{\Lip}^2 \defEns{\Gamma_{N+2,N+n+1}^{-2} \Gamma_{N+1,N+n} + \kappa^{-1}\Gamma_{N+2,N+n+1}^{-2}} \\
&\leq 8\kappa^{-2} \norm{f}_{\Lip}^2 \defEns{\Gamma_{N+2,N+n+1}^{-1} + \Gamma_{N+2,N+n+1}^{-2}(\gamma_{N+1} + \kappa^{-1})} \eqsp.
\end{align*}
Using that $\gamma_{N+1} \leq 2/(m+L)$ concludes the proof of \Cref{theo:var}.

\subsubsection{Proof of \Cref{theo:var_tv}}
\label{sec:proof_theo_var_tv}
Let $k\in \defEns{N,\dots,N+n-1}$. We cannot
directly apply the Poincaré inequality \eqref{lem:var_1} since the function
$\martInc_{n,k}^N$, defined in \eqref{eq:def_mart_f-new}, is not Lipschitz. However, \Cref{theo:convergence_discrete_chain}-\ref{item:convergence_discrete_chain_3} shows that for all $ \ell, n \in \nset^*$, $n < \ell$, $Q^{n,\ell}_\gamma f$ is a Lipschitz function with
\begin{equation}
\label{eq:lipschitz-constant-Q}
\norm{Q^{n,\ell}_\gamma f}_{\Lip} \leq   \osc{f}/\{4 \uppi \LambdarMSE_{n,\ell}(\gamma)\}^{1/2} \eqsp.
\end{equation}
Using \eqref{eq:def_mart_f-new}, we may decompose
$\martInc^N_{n,k}= \weight{k+1} f + \tilde{\martInc}^N_{n,k}$, where
$\tilde{\martInc}^N_{n,k}= \sum_{i=k+2}^{N+n} \weight{i}
Q_\gamma^{k+2,i}f$ which is Lipshitz with constant
\begin{equation}
\label{eq:bound_norm_lip_phitilde}
  \norm{\tilde{\martInc}^N_{n,k}}_{\Lip} \leq \sum_{i=k+2}^{N+n} \weight{i}
\norm{  Q_\gamma^{k+2,i}f}_{\Lip} \leq  \osc{f} \sum_{i=k+2}^{N+n} \weight{i}
/ \left\{4 \uppi \LambdarMSE_{k+2,i}(\gamma) \right\}^{1/2} \eqsp.
\end{equation}
Using the inequality $(a+b)^2 \leq 2 a^2 + 2 b^2$,  \eqref{lem:var_1}, we finally get for any $y \in \rset^d$
\begin{multline}
\label{eq:lem:induction_function_mart_1_tv_1}
 R_{\gamma_{k+1}}\defEns{\martInc^N_{n,k+1}(\cdot) - R_{\gamma_{k+1}}\martInc^N_{n,k+1}(y) }^2 (y)
\leq 2 (\weight{k+1})^{2} \osc{f}^2 \\
+   \gamma_{k+1}  \osc{f}^2\defEns{\sum_{i=k+2}^{N+n} \weight{i} /\{\uppi \LambdarMSE_{k+2,i}(\gamma)\}^{1/2} }^2\eqsp.
 \end{multline}
%  \begin{lemma}
% \label{lem:induction_function_mart_1_tv}
% \alain{virer les lemmes}
% Assume \Cref{assum:regularity_2} and \Cref{assum:potentialU}. Let $(\gamma_k)_{k \geq 1}$  be a non-increasing sequence
% with $\gamma_1 \leq 2/(m+L)$. Let $N \geq 0$ and $n \geq 1$.
%  Then for all $y \in \rset^d$,  $f \in \functionspace[b]{\rset^d}$,  and $k\in \defEns{N,\dots,N+n-1}$,
% \begin{multline*}
% %\label{eq:lem:induction_function_mart_1_tv_1}
%  R_{\gamma_{k+1}}\defEns{\martInc^N_{n,k+1}(\cdot) - R_{\gamma_{k+1}}\martInc^N_{n,k+1}(y) }^2 (y)
% \leq 2 (\weight{k+1})^{2} \osc{f}^2 \\
% +  4 \gamma_{k+1}  \osc{f}^2\defEns{\sum_{i=k+2}^{N+n} \weight{i} /(\uppi \LambdarMSE_{k+2,i})^{1/2} }^2\eqsp,
%  \end{multline*}
% where $\martInc_{n,k+1}^N$ is given by \eqref{eq:def_mart_f} and
% $\LambdarMSE_{k+2,i}$ in \eqref{eq:def_xirmse} for all $i \in
% \{k+2,\cdots,N+n \}$.
% \end{lemma}
%\begin{proof}
It remains to control $\VarDeux{x}{\martIncF_n^N(X_{N})}$, where $\martIncF^{N}_{n}$ is defined in \eqref{eq:def_F_f-new}.
Using  \eqref{eq:lipschitz-constant-Q},
$\martIncF_n^N$ is a Lipschitz function with Lipschitz constant bounded by:
\begin{equation}
\label{eq:bound_norm_lip_psitilde}
\norm{\martIncF^N_n}_{\Lip}  \leq   \sum_{i=N+1}^{N+n} \weight{i} \norm{Q_\gamma^{N+1,i}f}_{\Lip} \leq  \osc{f} \sum_{i=N+1}^{N+n} \weight{i} /\{ 4 \pi \LambdarMSE_{N+1,i}(\gamma) \}^{1/2} \eqsp.
\end{equation}

By \Cref{lem:var_2}, we have the following
result which is the counterpart of \Cref{coro:function_F}:
for all $y \in \rset^d$,
\begin{equation}
\label{eq:lem:induction_function_mart_2_tv_1}
\VarDeux{y}{\martIncF^N_n(X_N)}
\leq  2 \kappa^{-1} \norm{f}_{\infty}^2 \defEns{ \sum_{i=N+1}^{N+n} \weight{i} /(\uppi \LambdarMSE_{N+1,i})^{1/2} }^2 \eqsp.
 \end{equation}
Finally, the proof follows from combining \eqref{eq:lem:induction_function_mart_1_tv_1} and \eqref{eq:lem:induction_function_mart_2_tv_1} in \eqref{eq:decomposition-variance-alter-new}.

\subsection{Bounds on $u_{0,n}^{(4)}(\gamma)$}
\label{sec:bounds-u_0-n4gamma}
Let $(\gamma_k)_{k \geq 1}$ be a non-increasing sequence of step size such that $\lim_{k \to \plusinfty} \gamma_k =0$ and $\lim_{k \to \plusinfty} \Gamma_k = \plusinfty$.
In this section, we show that  there exist $C_1,C_2 >0$ independent of $(\gamma_k)_{ k \geq 1}$  satisfying for any $n \in \nset^*$
\begin{equation}
  \label{eq:bound_u_n_4}
  C_1 \Gamma_n^{-1} \leq u_{0,n}^{(4)}(\gamma) \leq C_2 \Gamma_n^{-1} \eqsp,
\end{equation}
for 
$u_{0,n}^{(4)}$ defined in \eqref{eq:u_lap_tv}.
We consider the following decomposition of  $   u_{0,n}^{(4)}(\gaStep)$
\begin{align*}
  u_{N,n}^{(4)}(\gaStep) & =  w_n^1 + w_n^2 \eqsp,\\
  w^1_n &= \sum_{k=0}^{n-1} \gaStep_{k+1} \defEns{ \sum_{i=k+2}^{n} \frac{\weightD{i}}{(\uppi \LambdarMSE_{k+2,i}(\gamma))^{1/2}} }^2 \eqsp, \,
\,   w_n^2 =   \kappa^{-1} \defEns{ \sum_{i=1}^{n}\frac{ \weightD{i} }{(4 \uppi \LambdarMSE_{1,i}(\gamma))^{1/2}} }^2 \eqsp.
\end{align*}

% For any $i \in \nset^*$, $\gamma_1 \leq \kappa \Lambda_{1,i}(\gamma)$ using that for $t \geq 0$, $1-t \leq \rme^{-t}$ and since $\kappa \Lambda_{n,\ell} =   \prod_{j=n}^{\ell}(1-\kappa \gaStep_j)^{-1} -1$.%  Therefore, we get by definition of $\weightD{i}$ in \eqref{eq:def_GammaN_plusn}, that there exists $C \geq  0$ independent of $(\gamma_k)_{ k \geq 1}$ such that
% \begin{equation}
%   \label{eq:bound_w_n_2}
%   w_n^2 \leq \leq C C/\Gamma_{2,n+1} \eqsp.
% \end{equation}

Since $\kappa \Lambda_{n,\ell} =   \prod_{j=n}^{\ell}(1-\kappa \gaStep_j)^{-1} -1$  for $n,\ell \in \nset^*$ , using that for all $(a_i)_{i\in\{1,\ldots,k\}} \in \coint{0,1}^k$, $k\in \nset^*$, $\prod_{i=1}^k (1-a_i)^{-1}- 1 \geq \exp({\sum_{i=1}^k a_i}) - 1 \geq \sum_{i=1}^k a_i$, we have
\begin{equation}
  \label{eq:bound_Lambda}
   \prod_{j=n}^{\ell}(1-\kappa \gaStep_j) \leq 1/ (\kappa \Lambda_{n,\ell})  \leq 1/(\kappa^2 \Gamma_{n,\ell})\eqsp.
 \end{equation}

 From the left inequality, we conclude using the definition of
 $\weightD{i}$, $i \in \iint{1}{n}$, in \eqref{eq:def_GammaN_plusn}
 and the bound $(1-t)^{1/2} \leq 1 - t/2$ for $t \in \ccint{0,1}$,
 that there exists $C_1 >0$ independent of $(\gamma_k)_{k \geq 1}$ such that for any $n \in \nset^*$,
 \begin{equation}
   \label{eq:bound_w_n_1_1}
   C_1 \Gamma_{2,n+1} \leq w_n^1 \eqsp.
 \end{equation}

 Now from the right inequality in \eqref{eq:bound_Lambda} and using $(a+b)^2 \leq 2 (a^2+b^2)$, we have for any $n \in \nset^*$,
 \begin{equation}
   \label{eq:bound_w_n_2_0}
     w^1_n = 2 \sum_{k=0}^{n-1} \gaStep_{k+1} \defEns{ \sum_{i=k+2}^{p_k} \frac{\weightD{i}}{(\uppi \kappa^2 \Gamma_{k+2,i})^{1/2}} }^2 +  2\sum_{k=0}^{n-1} \gaStep_{k+1} \defEns{ \sum_{i=p_k+1}^{n} \frac{\weightD{i}}{(\uppi \Lambda_{k+2,i}(\gamma))^{1/2}} }^2 \eqsp,
   \end{equation}
   where $(p_k)_{k \in \nset^*}$ is any sequence of integers. Also we
   have using that $(\gamma_j)_{j \geq 1}$ is non-increasing and
   an integral comparison test that there exists $C \geq 0$ independent of $(\gamma_k)_{k \geq 1}$ such that
   for any $k,p \in \nset^*$, $k+2 \leq p$,
   \begin{align*}
    \sum_{i=k+2}^{p} \frac{\weightD{i}}{(\uppi  \kappa^2 \Gamma_{k+2,i}(\gamma))^{1/2}} &  \leq \Gamma_{2 , n+1}^{-1}  \sum_{i=k+2}^{p} \frac{\gamma_{i+1}}{(\uppi \kappa^2  \Gamma_{k+2,i})^{1/2}} \leq \Gamma_{2 , n+1}^{-1}  \sum_{i=k+2}^{p} \frac{ \Gamma_{k+2,i} - \Gamma_{k+2,i-1}}{(\uppi \kappa^2  \Gamma_{k+2,i})^{1/2}} \\
     &  \leq C  \Gamma_{2 , n+1}^{-1} \Gamma_{k+1,p}^{1/2}  \eqsp.
   \end{align*}
   Using this result in \eqref{eq:bound_w_n_2_0}, we obtain that for any $n \in \nset^*$,
 \begin{equation}
   \label{eq:bound_w_n_2_1}
     w^1_n \leq 2 C  \Gamma_{2 , n+1}^{-2} \sum_{k=0}^{n-1} \gaStep_{k+1} \Gamma_{k+1,p_k} +  2\sum_{k=0}^{n-1} \gaStep_{k+1} \defEns{ \sum_{i=p_k+1}^{n} \frac{\weightD{i}}{(\uppi \Lambda_{k+2,i}(\gamma))^{1/2}} }^2 \eqsp.
   \end{equation}
   Now taking for any $n \in \nset^*$, $k \in \iint{0}{n-1}$,
   \begin{equation}
     \label{eq:def_p_k}
     p_{k}= n \wedge \inf\defEns{ p \in \{k+1,\ldots,n-1\} \, : \, \Gamma_{k+1,p} \geq 1 }\eqsp,
   \end{equation}
   with the convention $\inf \emptyset = \plusinfty$,
   we have for any $i \in \iint{p_k+1}{n}$, $p_k+1 \leq n$, using for $t \geq 0$, $1-t \leq \rme^{-t}$,
     \begin{multline*}
\kappa       \Lambda_{k+1,i}(\gamma) =   \defEns{ \prod_{j=k+1}^{i}(1-\kappa \gaStep_j)}^{-1} \defEns{ 1 - \prod_{j=k+1}^{i}(1-\kappa \gaStep_j)} \geq    \defEns{ \prod_{j=k+1}^{i}(1-\kappa \gaStep_j)}^{-1}  (1-\rme^{-\kappa \Gamma_{k+1,p_k}})  \\ \geq \defEns{ \prod_{j=k+1}^{i}(1-\kappa \gaStep_j)}^{-1} (1-\rme^{-\kappa })   \eqsp.
     \end{multline*}
     Using this result, we get by \eqref{eq:bound_w_n_2_1} and $(1-t)^{1/2}\leq 1-t/2$ for $t \in \ccint{0,1}$, that there exists $\tilde{C} \geq 0$ independent of $(\gamma_k)_{k \geq 1}$ such that
   for any $n \in \nset^*$,
   \begin{align*}
     w^1_n &\leq 2 C  \Gamma_{2 , n+1}^{-2} \sum_{k=0}^{n-1} \gaStep_{k+1}  \Gamma_{k+1,p_k} +  2(1-\rme^{-\kappa})^{-1}  \sum_{k=0}^{n-1} \gaStep_{k+1} \defEns{ \sum_{i=p_k+1}^{n} \weightD{i}\prod_{j=k+1}^{i}(1-\kappa \gaStep_j)^{1/2} }^2 \\
           & \leq 2 C  \Gamma_{2 , n+1}^{-1} \sum_{k=0}^{n-1} \gaStep_{k+1}  \Gamma_{k+1,p_k} +  2(1-\rme^{-\kappa})^{-1}  \sum_{k=0}^{n-1} \gaStep_{k+1} \defEns{ \sum_{i=p_k+1}^{n} \weightD{i}\prod_{j=k+1}^{i}(1-\kappa \gaStep_j/2) }^2 \\
     & \leq 2 C  \Gamma_{2 , n+1}^{-1} \sum_{k=0}^{n-1} \gaStep_{k+1}  \Gamma_{k+1,p_k} +  2(1-\rme^{-\kappa})^{-1} \tilde{C}  \Gamma_{2 , n+1}^{-1}  \eqsp,
  \end{align*}
  Since  $\Gamma_{k+1,p_k} \leq 1+\gamma_1$, for any $n \in \nset^*$, $k \in \iint{0}{n-1}$ and definition of $p_k$ \eqref{eq:def_p_k}, we obtain
  that there exists $C \geq 0$ such that
  for any $n \in \nset^*$,
  \begin{equation}
    \label{eq:bound_w_n_1_2}
    w^1_n \leq C  \Gamma_{2 , n+1}^{-1} \eqsp.
  \end{equation}
  Similarly, we have that there exists $C \geq 0$ independent of $(\gamma_k)_{k \geq 1}$  satisfying for any $n\in \nset^*$, $w^2_n \leq  C  \Gamma_{2 , n+1}^{-1}$
  Combining this result, \eqref{eq:bound_w_n_1_1} and \eqref{eq:bound_w_n_1_2} concludes the proof of \eqref{eq:bound_u_n_4}.

  \subsection{Proof of \Cref{theo:concentration_gauss}}
\label{sec:proof-crefth_conc_gauss}
Let  $N \geq 0$, $n \geq 1$, $x \in \rset^d$ and $f$ be a
Lipschitz function.
 To prove \Cref{theo:concentration_gauss}, we derive an upper bound of the Laplace transform of
$\estimateur{f} - \PE_x[ \estimateur{f} ]$. Consider the decomposition by martingale increments
\begin{multline*}
%\label{eq:link_lap_transf_mart_N_n_n}
\expeMarkov{x}{\rme^{\lambda \{ \estimateur{f} - \PE_x[ \estimateur{f} ] \}}}
 =  \expeMarkov{x}{\rme^{\lambda\{\CPE[x]{\hat{\pi}_n^N(f)}{\mcg_N} - \PE_x[\hat{\pi}_n^N(f)]\} + \sum_{k=N}^{N+n-1} \lambda  \{ \CPE[x]{\hat{\pi}_n^N(f)}{\mcg_{k+1}} - \CPE[x]{\hat{\pi}_n^N(f)}{\mcg_k} \} }} \eqsp.
\end{multline*}
Now using  \eqref{eq:relation_mart_var_cond-new} with the sequence of functions $(\martInc_{n,k}^N)$ and $\martIncF^N_n$ given by \eqref{eq:def_mart_f-new} and \eqref{eq:def_F_f-new}, respectively, we have by the Markov property
\begin{multline}
  \label{eq:link_lap_transf_mart_N_n_n}
\expeMarkov{x}{\rme^{\lambda \{ \estimateur{f} - \PE_x[ \estimateur{f} ] \}}}
%  &=\expeMarkov{x}{ \exp\parenthese{\lambda\{\martIncF_n^N(X_n)
%      -\expeMarkov{x}{\martIncF_n^N(X_n)}\} } \prod_{k=N}^{N+n-1} \CPE[x]{\exp \parenthese{
%      \lambda \{ D_{n,k+1}^N(X_k,X_{k+1}) \}}} {\mcg_{k}} }
\\
= \expeMarkov{x}{ \rme^{\lambda\defEns{\martIncF_n^N(X_n)
      -\expeMarkov{x}{\martIncF_n^N(X_n)}} } \prod_{k=N}^{N+n-1} R_{\gamma_{k+1}}\parentheseDeux{\rme^{
      \lambda \{ \martInc^N_{n,k+1}(\cdot) - R_{\gamma_{k+1}}\martInc^N_{n,k+1}(X_k) \} }}(X_k) } \eqsp,
\end{multline}
where $R_\gamma$ is given by \eqref{eq:definition-Rgamma} for $\gamma
>0$.
% and for $k \in \{N,\ldots,N+n-1\}$
% \begin{equation}
%   \label{eq:Ak}
% A_{k} =
% \end{equation}
 We use the same strategy to get concentration inequalities than
to bound the variance term in the previous section, replacing the
Gaussian Poincar{\'e} inequality by the log-Sobolev inequality to get
 uniform bound on $$
R_{\gamma_{k+1}}\{\exp ( \lambda
\lbrace \martInc^N_{n,k+1}( \cdot ) -
R_{\gamma_{k+1}}\martInc^N_{n,k+1}(X_{k}) \rbrace ) \}(X_k)$$
\wrt~$X_k$, for all $k \in \{N+1,\dots,N+n \}$.  Indeed for all $x \in \rset^d$ and $\gamma >0$, recall
that $R_\gamma(x,\cdot)$ is a Gaussian distribution with mean $x -
\gamma \nabla U(x)$ and covariance matrix $2 \gamma
\operatorname{I}_d$. The log-Sobolev inequality \cite[Theorem
5.5]{boucheron:lugosi:massart:2013} shows that for all Lipschitz
function $g :\rset^d \to \rset$, $x \in \rset^d$, $\gamma >0$ and
$\lambda >0$,
\begin{equation}
\label{lem:log_sob_R_gamma}
\int R_\gamma(x,\rmd y) \defEns{\exp\parenthese{\lambda \{g(y) - R_\gamma g(x) \}}} \leq \exp\parenthese{\gamma \lambda^2 \norm{g}_{\Lip}^2 } \eqsp.
\end{equation}
% \begin{lemma}[\protect{\cite[Theorem 5.5]{boucheron:lugosi:massart:2013}}]
% \label{lem:log_sob_R_gamma}
% Assume \Cref{assum:regularity_2}. Then for all Lipschitz function $g$, $\gamma >0$, $x \in \rset^d$ and $\lambda >0$,
% \[
% \int R_\gamma(x,\rmd y) \defEns{\exp\parenthese{\lambda \{g(y) - R_\gamma g(x) \}}} \leq \exp\parenthese{\gamma \lambda^2 \norm{g}_{\Lip}^2 } \eqsp.
% \]
% where $R_\gamma$ is given by \eqref{eq:definition-Rgamma}.
% \end{lemma}
We deduced from this result, \eqref{eq:relation_mart_var_cond-new} and  \Cref{propo:contraction_Euler-new}, an equivalent
of \Cref{coro:induction_function_mart-new} for the Laplace transform of
$\martInc^N_{n,k+1}$ under $\delta_y R_{\gamma_{k+1}}$ for $k \in \{N+1,\dots,N+n\}$ and all $y \in \rset^d$.

\begin{corollary}
\label{coro:lap_transf_mart}
Assume \Cref{assum:regularity_2} and \Cref{assum:potentialU}. Let $(\gamma_k)_{k \geq 1}$ be a non-increasing sequence with $\gamma_1 \leq 2/(m+L)$. Let $N \geq 0$ and $n \geq 1$. Then for all $k\in \defEns{N,\dots,N+n-1}$, $y \in \rset^d$ and $\lambda >0$,
\begin{equation*}
 R_{\gamma_{k+1}}\defEns{\rme^{
      \lambda \{ \martInc^N_{n,k+1}( \cdot ) -
      R_{\gamma_{k+1}}\martInc^N_{n,k+1}(y) \} }}(y)
 \leq \exp\parenthese{4  \gamma_{k+1}  \lambda^2 \norm{f}_{\Lip}^2 (\kappa \Gamma_{N+2,N+n+1})^{-2}} \eqsp,
\end{equation*}
where $\martInc_{n,k}^N$ is given by \eqref{eq:def_mart_f-new}.
% \begin{multline*}
% \int_{\rset^d} \rme^{ \lambda \martInc^N_{n,k+1}(\chunk{x}{N+1}{N+k+1})} R_{\gamma_{N+k+1}}(x_{N+k},\rmd x_{N+k+1} )\\
% \leq \exp\parenthese{  \lambda \martInc^N_{n,k}(\chunk{x}{N+1}{N+k}) +  \lambda^2 \norm{f}_{\Lip}^2    \gamma_{N+k+1} \left(\sum_{i=N+k+1}^{N+n} \weight{i} \exp\left( -(\kappa/2) \Gamma_{N+k+2,i} \right) \right)^2} \eqsp.
% \end{multline*}
% For all $x_N, \in \rset^d$,
% \begin{multline*}
% \int_{\rset^d}\rme^{\lambda \martInc^N_{n,1}(x_{N+1})}  R_{\gamma_{N+1}}(x_{N},\rmd x_{N+1} )\\
%  \leq \exp \parenthese{\lambda \martInc^N_{n,0}(x_N) + \lambda^2 \norm{f}_{\Lip}^2  (\Gamma^{N+1}_n)^{-2}  \gamma_{N+1} \left(\sum_{i=N+1}^{N+n} \gamma_i \exp\left( -(\kappa/2)\Gamma_{N+2,i}\right) \right)^2 }\eqsp,
% \end{multline*}
% where  $(\martInc^N_{n,k})_{k \in \defEns{0,\cdots,n}}$ are defined by
%   \eqref{eq:def_mart_f}.
\end{corollary}
It remains to control the Laplace
transform of $\martIncF_n^N$ under $\delta_x Q^N_\gamma$, where $\delta_x Q^N_\gamma$ is defined
by \eqref{eq:iterate_kernel}.  For this, using again that by
\eqref{eq:def_F_f-new} and
\Cref{propo:contraction_Euler-new},
$\martIncF_n^N$ is a Lipschitz function, we iterate \eqref{lem:log_sob_R_gamma}
to get bounds on the Laplace transform of Lipschitz function $g$ under
$Q^{n,\ell}_\gamma(y,\cdot)$ for all $y \in \rset^d$ and $n,\ell \geq 1$,
since for all $n,\ell \geq 1$, $Q^{n,\ell}_\gamma g$ is a Lipschitz function
by \Cref{propo:contraction_Euler-new}.

\begin{lemma}
\label{lem:transf_laplace_kernel}
Assume \Cref{assum:regularity_2} and \Cref{assum:potentialU}. Let $(\gamma_k)_{k \geq 1}$ be a non-increasing sequence with $\gamma_1 \leq 2/(m+L)$.  Let $g : \rset^d \to \rset$ be a Lipschitz function, then for all $n,p \geq 1$, $n \leq p$, $y \in \rset^d$ and $\lambda >0$:
\begin{equation}
\label{eq:induction-hypothesis}
Q^{n,p}_\gamma\defEns{\exp\parenthese{\lambda \{g(\cdot) -Q^{n,p}_\gamma g(y) \}}} (y) \leq
 \exp\left( \kappa^{-1}\lambda ^2 \norm{g}^2_{\Lip} \right) \eqsp,
\end{equation}
where $Q_{n,p}^\gamma$ is given by \eqref{eq:iterate_kernel}.
\end{lemma}

\begin{proof}
Let $(X_n)_{n \geq 0} $ the Euler approximation given by \eqref{eq:euler-proposal-2} and started at $y \in \rset^d$.
By decomposing $g(X_p)- \CPE[y]{g(X_p)}{\mcg_n}= \sum_{k=n+1}^p \{ \CPE[y]{g(X_p)}{\mcg_k} - \CPE[y]{g(X_p)}{\mcg_{k-1}} \}$,
and using $\CPE[y]{g(X_p)}{\mcg_k}=  Q_\gamma^{k+1,p} g(X_k)$, we get
\begin{align*}
&\CPE[y]{\exp\left(\lambda \defEns{g(X_p) - \CPE[y]{g(X_p)}{\mcg_n}} \right)}{\mcg_n} \\
&=  \CPE[y]{\prod_{k=n+1}^p\CPE[y]{\exp\left( \lambda \defEns{\CPE[y]{g(X_p)}{\mcg_k} - \CPE[y]{g(X_p)}{\mcg_{k-1}} }\right)}{\mcg_{k-1}} } {\mcg_n}\\
&= \CPE[y]{\prod_{k=n+1}^p R_{\gamma_k} \exp\left( \lambda\defEns{ Q_\gamma^{k+1,p} g(\cdot) -R_{\gamma_k} Q_\gamma^{k+1,p} g(X_{k-1})}\right) (X_{k-1})} {\mcg_n} \eqsp.
\end{align*}
By the Gaussian log-Sobolev inequality \eqref{lem:log_sob_R_gamma}, we get
\[
\CPE[y]{\exp\left(\lambda \defEns{g(X_p) - \CPE[y]{g(X_p)}{\mcg_n}} \right)}{\mcg_n} \leq \exp\left(\lambda^2 \sum_{k=n+1}^p \gamma_k \norm[2]{Q_\gamma^{k+1,p} g}_{\Lip} \right) \eqsp.
\]
 The proof follows from
\Cref{propo:contraction_Euler-new} and \Cref{lem:suite_recurrence_2}, using the bound $(1-t)^{1/2} \leq 1 - t/2$ for $t \in \ccint{0,1}$.

\end{proof}
Combining this result and $\normLigne{\martIncF^N_n}_{\Lip} \leq 2 \kappa^{-1} \norm{f}_{\Lip} \Gamma_{N+2,N+n+1}^{-1}$ by \Cref{propo:contraction_Euler-new}, we get an analogue of \Cref{coro:function_F} for the Laplace transform of $\martIncF^N_n$:
\begin{corollary}
\label{coro:lap_transf_F}
  Assume \Cref{assum:regularity_2} and \Cref{assum:potentialU}. Let
  $(\gamma_k)_{k \geq 1}$ be a non-increasing sequence with $\gamma_1
  \leq 2/(m+L)$. Let $N
  \geq 0$ and $n \geq 1$. Then for all $\lambda >0$ and $x \in \rset^d$,
  \begin{equation*}
  \expeMarkov{x}{ \rme^{\lambda\{\martIncF_n^N(X_n)
      -\expeMarkov{x}{\martIncF_n^N(X_n)}\} } }
\leq \exp\parenthese{4 \kappa^{-3}\lambda^2 \norm{f}_{\Lip}^2 \Gamma_{N+2,N+n+1}^{-2}}\eqsp,
 \end{equation*}
%\begin{multline*}
%  \expeMarkov{x}{ \rme^{\lambda\{\martIncF_n^N(X_n)
%      -\expeMarkov{x}{\martIncF_n^N(X_n)}\} } }
%    \\
%\leq \exp\parenthese{\lambda^2 \norm{f}_{\Lip}^2
%      \left(\sum_{i=N+1}^{N+n} \weight{i} \exp\left( -(\kappa/2) \Gamma_{N+2,i} \right) \right)^2
%      \sum_{k=1}^N \gamma_k  \exp \left( -\kappa\Gamma_{k+1,N} \right) }\eqsp,
% \end{multline*}
 where $\martIncF^N_{n}$ is given by \eqref{eq:def_F_f-new}.
\end{corollary}
 The Laplace transform of $\estimateur{f}$ can be explicitly bounded using \Cref{coro:lap_transf_mart} and \Cref{coro:lap_transf_F} in \eqref{eq:link_lap_transf_mart_N_n_n}.
\begin{proposition}
\label{lem:concentration_estimator}
Assume \Cref{assum:regularity_2} and \Cref{assum:potentialU}. Let $(\gamma_k)_{k \geq 1}$ be a non-increasing sequence with $\gamma_1 \leq 2/(m+L)$.  Then for all $N \geq 0$, $n \geq 1$, Lipschitz functions $f : \rset^d \to \rset$, $\lambda >0$ and $x \in \rset^d$:
\begin{equation*}
\expeMarkov{x}{\rme^{\lambda \{ \estimateur{f} - \PE_x[ \estimateur{f} ] \}}} \leq \exp \parenthese{ 4 \kappa^{-2} \lambda^2 \norm{f}_{\Lip}^2 \Gamma^{-1}_{N+2,N+n+1} u_{N,n}^{(3)}(\gamma) }
% \\
%  \times \exp \parenthese{\lambda^2  \norm{f}_{\Lip}^2  (\Gamma_n^N)^{-2}\sum_{k=0}^{n-1} \gamma_{N+n-k} \left(\sum_{i=N+n-k}^{N+n} \gamma_i \exp\left( -\sum_{l=N+n-k+1}^{i} \eta_l/2 \right) \right)^2} \\
% \times \exp \parenthese{\lambda^2 \norm{f}_{\Lip}^2  (\Gamma_n^N)^{-2} \left( \sum_{i=N+1}^{N+n} \gamma_i  \exp\left( -\sum_{l=N+n-k+1}^{i} \eta_l/2 \right)\right)^2 \sum_{k=1}^N \gamma_k \exp\left( -\sum_{i=k+1}^{N} \eta_i \right) }
\eqsp,
\end{equation*}
where  $u_{N,n}^{(3)}(\gamma)$ is  given by  \eqref{eq:def_u_n_3}.
\end{proposition}

\begin{proof}[Proof of \Cref{theo:concentration_gauss}]
  Using the Markov inequality and \Cref{lem:concentration_estimator}, for all $\lambda >0$, we have:
\begin{equation*}
\probaMarkov{x}{\estimateur{f} \geq  \PE_x[ \estimateur{f} ]+r }  \leq  \exp\left( -\lambda r +4 \kappa^{-2} \lambda^2 \norm{f}_{\Lip}^2 \Gamma^{-1}_{N+2,N+n+1} v_{N,n}(\gamma)    \right) \eqsp.
\end{equation*}
Then the result follows from taking $\lambda =  (r \kappa^2 \Gamma_{N+2,N+n+1}) /(8\norm{f}_{\Lip}^2  v_{N,n}(\gamma) )$.
\end{proof}

%Similar to \Cref{theo:var}, the upper bound on the right hand side may be more conveniently expressed as follows
%\begin{corollary}
%\label{coro:concentration}
%Assume \Cref{assum:regularity_2} and \Cref{assum:potentialU}. Let $(\gamma_k)_{k \geq 1}$ be a non-increasing sequence with $\gamma_1 \leq 2/(m+L)$. Let $(X_n)_{n \geq 0}$ be given by \eqref{eq:euler-proposal-2} and started at $x \in \rset^d$. Then for all $N \geq 0$, $ n \geq 1$ and Lipschitz functions $f : \rset^d \to \rset$:
%\[
%\probaMarkov{x}{\estimateur{f} \geq  \PE_x[ \estimateur{f} ]+r} \leq  \exp\left(  -\frac{r^2 \Gamma_{N+1,N+n}}{8 \kappa^{-2} \norm{f}_{\Lip}^2(1+ \Gamma_{N+1,N+n}^{-1}C_3)}\right) \eqsp,
%\]
%where $8 \kappa^{-2}$ and $C_3$ are given by \eqref{eq:def_const_MSE}.
%\end{corollary}
%
%\begin{proof}
%The proof follows from \Cref{theo:concentration_gauss}, \eqref{eq:bound_u_3_n_var} and using that $\gamma_1 \leq 1/(m+L)$.
%\end{proof}

\subsection{Proof of \Cref{theo:concentration_gauss_tv}}
\label{sec:proof-concentration_tv}

Let $N \geq 0$, $n \geq 1$, $x \in \rset^d$ and $f \in
\functionspace[b]{\rset^d}$. The main idea of the proof is to consider
the decomposition \eqref{eq:link_lap_transf_mart_N_n_n} again but
combined with the decomposition of $\martInc^N_{n,k+1}$, for $k \in
\{N,\ldots,N+n-1\}$, into a Lipschitz component and a bounded
measurable component as it is done in the proof of
\eqref{eq:lem:induction_function_mart_1_tv_1}.  Let $k\in
\defEns{N,\dots,N+n-1}$.  By definition
\eqref{eq:def_mart_f-new}, $\martInc^N_{n,k}= \weight{k+1} f +
\tilde{\martInc}^N_{n,k}$, where $\tilde{\martInc}^N_{n,k}=
\sum_{i=k+2}^{N+n} \weight{i} Q_\gamma^{k+2,i}f$. Using that $f$ is
bounded, we get for all $y \in \rset^d$ and $\lambda >0$,
  \begin{multline*}
%\label{eq:lem:induction_function_mart_1_tv_2}
 R_{\gamma_{k+1}}\defEns{\rme^{
      \lambda \{ \martInc^N_{n,k+1}( \cdot ) -
      R_{\gamma_{k+1}}\martInc^N_{n,k+1}(y) \} }}(y)
\\\leq \rme^{\lambda \, \osc{f}\, \gaStep_{k+2} (\Gamma_{N+2,N+n+1})^{-2} }R_{\gamma_{k+1}}\defEns{\rme^{
      \lambda \{ \tilde{\martInc}^N_{n,k+1}( \cdot ) -
      R_{\gamma_{k+1}}\tilde{\martInc}^N_{n,k+1}(y) \} }}(y)
\end{multline*}
% In  addition by  \eqref{eq:lipschitz-constant-Q}, we get
% \begin{equation*}
%   \norm{\tilde{\martInc}^N_{n,k}}_{\Lip} \leq \sum_{i=k+2}^{N+n} \weight{i}
% \norm{  Q_\gamma^{k+2,i}f}_{\Lip} \leq  \norm{f}_{\infty} \sum_{i=k+2}^{N+n} \weight{i}
% / (\uppi \LambdarMSE_{k+2,i})^{1/2} \eqsp.
% \end{equation*}
By \eqref{eq:bound_norm_lip_phitilde} and \eqref{lem:log_sob_R_gamma}, we obtain for all $y \in \rset^d$ and $\lambda >0$,
\begin{multline}
\label{eq:lap_transf_mart_tv}
 R_{\gamma_{k+1}}\defEns{\rme^{
      \lambda \{ \martInc^N_{n,k+1}( \cdot ) -
      R_{\gamma_{k+1}}\martInc^N_{n,k+1}(y) \} }}(y)
\\ \leq \exp\parenthese{\lambda \, \osc{f}\, \gaStep_{k+2} (\Gamma_{N+2,N+n+1})^{-2} +  (\lambda \, \osc{f})^2 \gamma_{k+1}   \parenthese{\sum_{i=k+2}^{N+n} \weight{i}
/ (\uppi \LambdarMSE_{k+2,i})^{1/2}}^{2}} \eqsp.
\end{multline}

It remains to control the Laplace transform of $\martIncF_n^N$ under
$\delta_x Q^N_\gamma$.  For this, note that by
\eqref{eq:bound_norm_lip_psitilde} $\martIncF_n^N$ is a Lipschitz
function. Therefore using \Cref{lem:transf_laplace_kernel}, we get an
analogue of \Cref{coro:lap_transf_F}: for all $y \in \rset^d$ and
$\lambda >0$,
  \begin{equation}
\label{eq:lap_transf_F_tv}
  \expeMarkov{y}{ \rme^{\lambda\{\martIncF_n^N(X_n)
      -\expeMarkov{x}{\martIncF_n^N(X_n)}\} } }
\leq \exp\parenthese{ \kappa^{-1}\lambda^2 \osc{f}^2 \parenthese{ \sum_{i=N+1}^{N+n} \weight{i} /(\uppi \LambdarMSE_{N+1,i})^{1/2} }^2 }\eqsp,
 \end{equation}
% \begin{lemma}
% \label{coro:lap_transf_F_tv}
%   Assume \Cref{assum:regularity_2} and \Cref{assum:potentialU}. Let
%   $(\gamma_k)_{k \geq 1}$ be a non-increasing sequence with $\gamma_1
%   \leq 2/(m+L)$. Let $N
%   \geq 0$ and $n \geq 1$. Then for all $x \in \rset^d$, $f \in \functionspace[b]{\rset^d}$ and $\lambda >0$
%   \begin{equation*}
%   \expeMarkov{x}{ \rme^{\lambda\{\martIncF_n^N(X_n)
%       -\expeMarkov{x}{\martIncF_n^N(X_n)}\} } }
% \leq \exp\defEns{ \kappa^{-1}\lambda^2 \norm{f}_{\infty}^2 \parenthese{ \sum_{i=N+1}^{N+n} \weight{i} /(\uppi \LambdarMSE_{N+1,i})^{1/2} }^2 }\eqsp,
%  \end{equation*}
% where $\martIncF_{n}^N$ is given by \eqref{eq:def_F_f-new} and
% $\LambdarMSE_{N+1,i}$ in \eqref{eq:def_xirmse} for all $i \in
% \{N+1,\cdots,N+n \}$.
% \end{lemma}

Combining \eqref{eq:lap_transf_mart_tv} and \eqref{eq:lap_transf_F_tv} in \eqref{eq:link_lap_transf_mart_N_n_n}, the Laplace transform of $\estimateur{f}$ can be explicitly bounded: for all $\lambda >0$,
\begin{equation*}
\label{eq:concentration_estimator_tv}
\expeMarkov{x}{\rme^{\lambda \{ \estimateur{f} - \PE_x[ \estimateur{f} ] \}}}
\leq \rme^{ \lambda\, \osc{f}  (\Gamma_{N+2,N+n+1})^{-1} +   (\lambda\, \osc{f})^2 u_{N,n}^{(5)}(\gaStep)   } \eqsp.
\end{equation*}
Using this result and the Markov inequality, for all $\lambda >0$, we have:
\begin{multline*}
\probaMarkov{x}{\estimateur{f} \geq  \PE_x[ \estimateur{f} ]+r } \\ \leq  \exp\left( -\lambda r +\lambda\, \osc{f}  (\Gamma_{N+2,N+n+1})^{-1} +  (\lambda\, \osc{f})^2 u_{N,n}^{(5)}(\gaStep)   \right) \eqsp.
\end{multline*}
Then the proof follows from taking
\begin{equation*}
\lambda =  (r - \osc{f}(\Gamma_{N+2,N+n+1})^{-1}) /(2\osc{f}^2 u_{N,n}^{(5)}(\gaStep)) \eqsp.
\end{equation*}

\section{Additional technical results}
\label{sec:proof-crefs-technical_result}

 \subsection{Coupling}
\label{sec:coupling}
 \begin{lemma}
Assume \Cref{assum:strict_contraction_AR}. For all $x,y \in \rset^d$ and $k \geq 1$, $\Kr_k((x,y),\cdot)$ is a transference plan of $\Pr_k(x,\cdot)$ and $\Pr_k(y,\cdot)$
\end{lemma}
\begin{proof}
  By construction,
  $\Kr_k((x,y), \cdot \times \rset^d) = \Pr_k(x,\cdot)$ for all
  $x,y \in \rset^d$ and
  $\Kr_k((x,y) , \rset^d \times \cdot) = \Pr_k(y,\cdot)$ for all
  $(x,y)$ such that $\funreg_k(x) = \funreg_k(y)$. Therefore, it
  remains to show that
  $\Kr_k((x,y),  \rset^d \times  \cdot) = \Pr_k(y,\cdot)$ for any $(x,y) \in \rset^d\times \rset^d$,  $\funreg_k(x) \neq \funreg_k(y)$.
  First for all
  $\eventA \in \borelSet(\rset^d)$, we have
 \begin{align}
   \label{eq:coupling_kernel_AR_proof_coupl_proof}
 &\Kr_k((x,y) , \rset^d \times \eventA) =  \frac{1}{ (2 \uppi \sigmakD)^{d/2}}\int_{\rset^d} \1_{\eventA}(\tildex)
 p_k(x,y,\tildex-\funreg_k(x)) \rme^{-\norm[2]{\tildex - \funreg_k(x)}/(2\sigmakD)} \rmd \tildex
 \\
 \nonumber
 &+ \frac{1}{ (2 \uppi \sigmakD)^{d/2}}\int_{\rset^d} \1_{\eventA}(\transfrr_k(x,y,\tildex- \funreg_k(x)))
 \defEns{1-p_k(x,y,\tildex-\funreg_k(x))} \rme^{-\norm[2]{\tildex - \funreg_k(x)}/(2\sigmakD)} \rmd \tildex \eqsp.
 \end{align}
 Since $(\Id - 2 \er_k(x,y) \er_k(x,y)^{\transp})$ is an orthogonal matrix, making the change of variable
 $
 \tildey =\transfrr_k(x,y,\tildex-\funreg_k(x))$ and using that
 \[
 \ps{\er_k(x,y)}{\funreg_k(y)-\tildey} = \ps{\er_k(x,y)}{\tildex - \funreg_k(x)}
 \]
 we get that
 \begin{multline}
   \label{eq:coupling_kernel_AR_proof_coupl_proof_1}
   \int_{\rset^d} \1_{\eventA}(\transfrr_k(x,y,\tildex-\funreg_k(x)))
 \defEns{1-p_k(x,y,\tildex-\funreg_k(x))} \rme^{-\norm[2]{\tildex - \funreg_k(x)}/(2\sigmakD)} \rmd \tildex \\
 = \int_{\rset^d} \1_{\eventA}(\tildey)
 \defEns{1-p_k(x,y,\funreg_k(y)-\tildey)} \rme^{-\norm[2]{\tildey - \funreg_k(y)}/(2\sigmakD)} \rmd \tildey \eqsp.
 \end{multline}
 By definition of $\alphar_k$
 \eqref{def:alphar}, we have for all $\tildex \in \rset^d$,
 %On fait apparaître $\funreg_k(y)$
 \begin{equation}
   \label{eq:coupling_kernel_AR_proof_coupl_proof_0}
 \alphar_k\parenthese{x,y, \tildex-\funreg_k(x)} = \frac{\phibfs[k]\parenthese{ \ps{\er_k(x,y)}{\funreg_k(y)-\tildex}}}{\phibfs[k]\parenthese{\norm{\Er_k(x,y)}-\ps{\er_k(x,y)}{\funreg_k(y)-\tildex}}} = \frac{1}{\alphar_k\parenthese{x,y, \funreg_k(y)-\tildex}}  \eqsp.
 \end{equation}
 In addition using that
 $$
 \norm[2]{\tildex -\funreg_k(x)} = \norm[2]{\tildex -\funreg_k(y)} -
 2 \ps{\funreg_k(y)-\tildex}{\Er_k(x,y)} + \norm[2]{\Er_k(x,y)} \eqsp,
 $$
 we  obtain
 \begin{equation}
 %%%Attention dans le premier termeon a $\funreg_k(x)$ et l'autre $\funreg_k(x)$ ce qui fait que ça marche
 \label{eq:coupling_kernel_AR_proof_coupl_proof_2}
 p_k(x,y,\tildex-\funreg_k(x)) \rme^{-\norm[2]{\tildex-\funreg_k(x)}/(2\sigmakD)} =
 p_k(x,y,\funreg_k(y)-\tildex) \rme^{-\norm[2]{\tildex-\funreg_k(y)}/(2\sigmakD)}  \eqsp.
 \end{equation}
 Plugging \eqref{eq:coupling_kernel_AR_proof_coupl_proof_1}
 and \eqref{eq:coupling_kernel_AR_proof_coupl_proof_2} into
 \eqref{eq:coupling_kernel_AR_proof_coupl_proof} implies that
 $\Kr_k((x,y),\rset^d\times \eventA) = \Pr_k(y,\eventA)$.
\end{proof}

%%% Local Variables:
%%% mode: latex
%%% TeX-master: "main_supplement"
%%% End:

\subsection{Distribution of hitting time of $0$ for Ornstein-Ulhenbeck processes}
\label{sec:distr-hitt-time}
Consider the one-dimensional Ornstein-Ulhenbeck process $(\tmsu_t)_{t \geq 0}$ defined for $t \geq 0$  by
\begin{equation*}
  \tmsu_t=a \rme^{-\theta t} +  \sigma \int_0^{t} \rme^{\theta(s-t)} \rmd B^1_s = a \rme^{-\theta t} + \frac{\sigma}{\sqrt{2 \theta}} \rme^{-\theta t} B^1_{\rme^{2 \theta t} -1} \eqsp,
\end{equation*}
where $a \in \rset$, $ \theta, \sigma >0$ and $(B^1_t)_{t \geq 0}$ is a one-dimensional Brownian motion. Note that with our convention, $(\tmsu_t)_{t \geq 0}$ is the solution of the SDE
$\rmd \tmsu_t = -\theta \, \tmsu_t \rmd t + \sigma \rmd B^1_t$ with initial condition $\tmsu_0= a$.
Define the hitting time of $(\tmsu_t)_{t \geq 0}$ of $0$ by $ \tT_0 = \inf\{ t \geq 0 \, : \, \tmsu_t = 0 \}$.
\begin{proposition}[\protect{\cite[Formula 2.0.2, page 542]{borodin:salminen:2002}}]
  \label{propo:hitting_time_OU}
  For all $a \in \rset$, $\theta,\sigma >0$, and $t > 0$, it holds
  \begin{equation*}
   \proba{\min_{0 \leq s \leq t} \tmsu_s  > 0} =  \proba{\tT_0 >t} = 1-2 \Phibf\parenthese{ - \frac{\sqrt{2 \theta}\abs{a}}{\sigma \sqrt{\rme^{2\theta t} -1}}} \eqsp,
  \end{equation*}
  where $\Phibf$ is the cumulative distribution function of the standard normal distribution.
\end{proposition}

% \subsection{Proof of  \Cref{lem:proof_AR_2}}
% \label{sec:proof-crefl}

%%% Local Variables:
%%% mode: latex
%%% TeX-master: "main_supplement"
%%% End:

\end{document}